\newtheorem{theorem}{Theorem}
\newtheorem{lemma}{Lemma}
\newtheorem{proposition}{Proposition}
\newtheorem{remark}{Remark}
\newtheorem{definition}{Definition}
\newtheorem{corollary}{Corollary}
\newtheorem{example}{Example}
\numberwithin{equation}{section}
\begin{document}
\title[Foliation Divisorial Contraction by the Sasaki-Ricci Flow]{Foliation Divisorial Contraction by the Sasaki-Ricci Flow on Sasakian Five-Manifolds}
\author{$^{\ast}$Shu-Cheng Chang}
\address{Department of Mathematics, National Taiwan University, Taipei, Taiwan}
\email{scchang@math.ntu.edu.tw}
\author{$^{\ast\ast}$Chien Lin}
\address{Mathematical Science Research Center, Chongqing University of Technology,
400054, Chongqing, P.R. China}
\email{chienlin@cqut.edu.cn}
\author{$^{\ast\ast\ast}$Chin-Tung Wu}
\address{Department of Applied Mathematics, National Pingtung University, Pingtung
90003, Taiwan}
\email{ctwu@mail.nptu.edu.tw }
\thanks{$^{\ast}$Shu-Cheng Chang and $^{\ast\ast\ast}$Chin-Tung Wu are partially
supported in part by the MOST of Taiwan.}
\subjclass{Primary 53E50, 53C25; Secondary 53C12, 14E30.}
\keywords{Foliation singularities, Foliation canonical surgical contraction, Foliation
extremal ray contraction, Sasaki-Ricci flow, Transverse Mori program,
Foliation minimal model program.}

\begin{abstract}
Let $(M,\eta,\xi,\Phi,g)$ be a compact quasi-regular Sasakian $5$-manifold
with finite cyclic quotient foliation singularities of type $\frac{1}%
{r}(1,a).$ First, we derive the foliation minimal model program by applying
the resolution of cyclic quotient foliation singularities. Secondly, based on
the study of local model of resolution of foliation singularities, we prove
the foliation canonical surgical contraction or the foliation extremal ray
contraction under the Sasaki-Ricci flow. As a consequence, we prove%
 a Sasaki analogue of%
analytic
minimal model program with the Kaehler-Ricci flow due to Song-Tian
and Song-Weinkove%
.

\end{abstract}
\maketitle
\tableofcontents

\section{Introduction}

Sasakian geometry is very rich as the odd-dimensional analogous of K\"{a}hler
geometry. A Sasaki-Einstein $(2n+1)$-manifold is to say that its Kaehler cone
is a Calabi-Yau $(n+1)$-fold. Let $(M,\eta,\xi,\Phi,g)$ be a compact Sasakian
$(2n+1)$-manifold. If the orbits of the Reeb vector field $\xi$ are all
closed, and hence circles, then integrates to an isometric $U(1)$ action on
$(M,g)$. Since it is nowhere zero this action is locally free; that is, the
isotropy group of every point in $M$ is finite. If the $U(1$) action is in
fact free then the Sasakian structure is said to be regular. Otherwise, it is
said to be quasi-regular. Thus the space of leaves of the canonical
$U(1)$-fibration will have orbifold singularities (cf section $3$). In
general, it is said to be irregular if the orbits of are not all closed.

In particular, Sasaki-Einstein $5$-manifolds provide interesting examples of
the AdS/CFT correspondence. On the other hand, The class of simply connected,
closed, oriented, smooth, $5$-manifolds is classifiable under diffeomorphism
due to Smale-Barden (\cite{s}, \cite{b}). Then, in this paper, it is our goal
to focus on a classification of compact quasi-regular Sasakian $5$-manifolds
according to the global properties of the Reeb\textbf{\ }$U(1)$-fibration.

Let $(M,\eta,\xi,\Phi,g)$ be a compact quasi-regular Sasakian $5$-manifold.
Then by the first structure theorem, $M$ \ is a principal $S^{1}$-orbibundle
($V$-bundle) over an orbifold $\mathbf{Z}=(Z\emph{,}\Delta)$ which is also a
$Q$-factorial, polarized, normal projective orbifold surface such that there
is an orbifold Riemannian submersion$\ $%
\[
\pi:(M,g,\omega)\rightarrow(Z,h,\omega_{h})
\]
and
\begin{equation}
K_{M}^{T}=\pi^{\ast}(K_{Z}^{orb})=\pi^{\ast}(\varphi^{\ast}(K_{Z}%
+[\Delta])).\label{41}%
\end{equation}
If the orbifold structure of the leave space $Z$ is well-formed (c.f. section
$3$), then the orbifold canonical divisor $K_{\emph{Z}}^{orb}$ and canonical
divisor $K_{Z}$ are the same and thus
\[
K_{M}^{T}=\pi^{\ast}(\varphi^{\ast}(K_{Z})).
\]
In a such case, there is the Sasaki analogue of Mori's minimal model program
with respect to $K_{Z}$ in a such compact quasi-regular Sasakian $5$-manifold.
More precisely, one can ask the following so-called foliation minimal model
program :

Is there a foliation $(-1)$-curve ? One of Mori program (\cite{kmm}, \cite{m},
\cite{km}) is to replace this criterion with the one dictated by the
transverse canonical divisor $K_{M}^{T}$ of $M$ : Does the transverse
canonical divisor have nonnegative intersection
\[
K_{M}^{T}\cdot V\geq0
\]
with any invariant submanifold \ $V$ on $M$ which is a Sasakian $3$%
-submanifold. In other words,
\[
\mathrm{is}\ \ \ K_{M}^{T}\ \ \ \text{\ }\mathrm{nef\ }?
\]
\ If $K_{M}^{T}$ is not nef, there is an extremal transverse contraction map
which turns out not only to generalize the Sasaki analogue of Castelnuovo's
contractibility criterion but also to provide decisive information on the
global structures of the end results of the foliation minimal model program.
Then an end result of foliation MMP starting from $M$ is a transverse Mori
fiber space if and only if there exists a nonempty open set $U\subset M$ such
that for any $S^{1}$-fibre in $U$, there is an irreducible invariant
$3$-submanifold $V$ passing through such a fibre $S^{1}$ with $\ K_{M}%
^{T}\cdot V<0.$ We refre to Proposition \ref{P41}. Moreover, the Mori's
minimal model program in birational geometry can be viewed as the complex
analogue of Thurston's geometrization conjecture which was proved via
Hamilton's Ricci flow with surgeries on $3$-dimensional Riemannian manifolds
by Perelman (\cite{p1}, \cite{p2}, \cite{p3}). Likewise, there is a conjecture
picture by Song-Tian (\cite{st}) that the Kaehler-Ricci flow should carry out
an analytic minimal model program with scaling on projective varieties.
Recently, Song and Weinkove (\cite{sw1}) established the above conjecture on a
projective algebraic surface.

The Sasaki--Ricci flow is introduced by Smoczyk--Wang--Zhang (\cite{swz}) to
study the existence of Sasaki $\eta$-Einstein metrics on Sasakian manifolds.
They showed that the flow has the longtime solution and asymptotic converges
to a Sasaki $\eta$-Einstein metric when the basic first Chern class is
negative or null. It can be viewed as a Sasaki analogue of Cao's result
(\cite{cao}) for the Kaehler--Ricci flow.

In view of the previous discusses, it is natural to conjecture that the
Sasaki-Ricci flow will carry out an analytic foliation minimal model program
with scaling on quasi-regular Sasakian $5$-manifolds as well. In this paper,
we deal with the case of its orbifold structure $(Z,\Delta)$ of the leave
space $Z$ is well-formed which has the codimension two fixed point set of
every non-trivial isotropy subgroup with no branch divisors ($\Delta
=\emptyset$).

In section $3,$ we first derive the following result concering its foliation
cyclic quotient singularities\textbf{\ }on\textbf{\ }a compact quasi-regular
Sasakian $5$-manifold \textbf{:}

\begin{theorem}
\label{T34} Let $(M,\eta,\xi,\Phi,g)$ be a compact quasi-regular Sasakian
$5$-manifold and $Z$ be its leave space of the characteristic
foliation\textbf{. } Then $Z$ is a $Q$-factorial normal projective algebraic
orbifold surface satisfying

\begin{enumerate}
\item if its leave space $(Z,\emptyset)$ has at least codimension two fixed
point set of every non-trivial isotropy subgroup. That is to say $Z$ is
\textbf{well-formed}, then $Z$ has isolated singularies of a finite cyclic
quotient of $\mathbf{C}^{2}$ and the action is
\[
\mu_{Z_{r}}:(z_{1},z_{2})\rightarrow(\zeta^{a}z_{1},\zeta^{b}z_{2}),
\]
where $\zeta$ is a primitive $r$-th root of unity. We denote the cyclic
quotient singularity by $\frac{1}{r}(a,b)$ with $(a,r)=1=(b,r)$. In
particular, the action can be rescaled so that every cyclic quotient
singularity corresponds to a $\frac{1}{r}(1,a)$-type singularity with
$(r,a)=1,\zeta=e^{\frac{2\pi i}{r}}$. In particular, i\textbf{t is klt
(Kawamata log terminal) singularities}. Moreover, the corresponding
singularities in $(M,\eta,\xi,\Phi,g)$\textbf{\ }is called\textbf{\ foliation
cyclic quotient singularities of type }%
\[
\frac{1}{r}(1,a)
\]
at \textbf{a singular fibre} $\mathbf{S}_{p}^{1}$ in $M$.

\item if its leave space $(Z,\Delta)$ has the codimension one fixed point set
of some non-trivial isotropy subgroup. Then the action is
\[
\mu_{Z_{r}}:(z_{1},z_{2})\rightarrow(e^{\frac{2\pi a_{1}i}{r_{1}}}%
z_{1},e^{\frac{2\pi a_{2}i}{r_{2}}}z_{2}),
\]
for some positive integers $r_{1,}r_{2}$ whose least common multipler is $r$,
and $a_{i},i=1,2$ are integers coprime to $r_{i},i=1,2$. Then the foliation
singular set contains some $3$-dimensional submanifolds of $M.$ More
precisely, the corresponding singularities in $(M,\eta,\xi,\Phi,g)$%
\textbf{\ }is called\textbf{\ the Hopf }$S^{1}$\textbf{-orbibundle over a
Riemann surface }$\Sigma_{h}.$
\end{enumerate}
\end{theorem}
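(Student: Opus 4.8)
The plan is to establish the structure theorem by analyzing the local structure of the Reeb $U(1)$-action on the quasi-regular Sasakian $5$-manifold $M$ and transferring this to orbifold data on the leaf space $Z$. Since the action is locally free (being quasi-regular), the isotropy group $\Gamma_p$ of any point $p \in M$ is a finite cyclic subgroup of $U(1)$, say of order $r$. The key is that near a singular fiber $\mathbf{S}_p^1$ the local structure of the leaf space is modeled on the quotient of a transverse slice $\mathbf{C}^2$ by the linearized isotropy action. By the slice theorem for the $U(1)$-action (together with the compatible transverse holomorphic/CR structure coming from $\Phi$), this linear action is diagonalizable and takes the form $(z_1,z_2)\mapsto(\zeta^a z_1,\zeta^b z_2)$ for a primitive $r$-th root of unity $\zeta$, with the coprimality conditions $(a,r)=(b,r)=1$ forced by effectiveness and local freeness away from the fiber.

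I would then split the argument into the two announced cases according to the codimension of the fixed-point sets. First I would treat the well-formed case (1): if every non-trivial isotropy subgroup fixes only a codimension-two set, then the singularities of $Z$ are isolated, giving a finite cyclic quotient $\mathbf{C}^2/\mu_r$ of the stated type. After rescaling the $\mu_r$-action (replacing the generator by a suitable power, using $(a,r)=1$ to invert $a$ modulo $r$), I can normalize to the $\frac{1}{r}(1,a)$ form. The klt assertion then follows from the standard fact that every two-dimensional cyclic quotient singularity is log terminal — indeed a quotient singularity in any dimension is klt — which I would either cite or verify directly by computing discrepancies via the minimal (toric) resolution of $\frac{1}{r}(1,a)$, whose exceptional divisors all have discrepancy strictly greater than $-1$.

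For case (2), where some non-trivial isotropy fixes a codimension-one set, I would observe that the local isotropy acts with different orders $r_1, r_2$ on the two coordinate directions, giving the action $(z_1,z_2)\mapsto(e^{2\pi i a_1/r_1}z_1, e^{2\pi i a_2/r_2}z_2)$ with $r = \mathrm{lcm}(r_1,r_2)$. The presence of a codimension-one fixed locus in $Z$ means the singular set of the foliation on $M$ contains $3$-dimensional invariant submanifolds; I would identify each such component as an $S^1$-orbibundle over the Riemann surface $\Sigma_h$ obtained as the image in $Z$, recognizing it as the Hopf fibration structure by examining the restricted $U(1)$-action on the codimension-one stratum. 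That $Z$ is a $Q$-factorial normal projective algebraic orbifold surface I would obtain from the first structure theorem already invoked in the introduction, combined with Baily's theorem that a polarized compact complex orbifold is projective algebraic and normality from the quotient construction.

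The main obstacle I expect is the precise normalization and justification of the local linearized action—that is, showing the transverse $U(1)$-isotropy action on the slice $\mathbf{C}^2$ is genuinely linearizable and diagonal in coordinates compatible with the transverse holomorphic structure, and that the coprimality/rescaling bookkeeping correctly yields the canonical $\frac{1}{r}(1,a)$ form. This requires a careful application of the equivariant slice theorem adapted to the CR/transverse-Kähler setting, rather than to a bare Riemannian $U(1)$-action, so that the resulting quotient singularity is honestly a complex-analytic cyclic quotient and the klt computation applies.
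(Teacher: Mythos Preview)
Your proposal is correct and follows essentially the same route as the paper: the slice theorem gives an $S^1$-invariant neighborhood of each singular fiber modeled on $S^1\times\mathbb{C}^2/\mu_p$ with $\mu_p$ a faithful representation of $\mathbb{Z}_r$; diagonalization yields the $(e^{2\pi i a_1/r_1},e^{2\pi i a_2/r_2})$ action with $\mathrm{lcm}(r_1,r_2)=r$; and the dichotomy between $r_1=r_2=r$ (isolated singularities, normalizable to $\frac{1}{r}(1,a)$) and $r_i<r$ (codimension-one fixed locus, $3$-dimensional singular set in $M$) is exactly the paper's case split. You supply more detail than the paper does on the klt assertion and on the CR/transverse-holomorphic compatibility of the slice, which the paper simply asserts.
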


In particular, it is a foliation $(-1)$-curve (Theorem \ref{D31}) at the
regular point if $r=a=1$ as in $(1)$ of Theorem \ref{T34}. Then we have the
following definition (\cite{cu}):

\begin{definition}
\label{D11} Let $(M,\eta,\xi,\Phi,g)$ be a compact quasi-regular Sasakian
$5$-manifold with \textbf{foliation singularities of type }$\frac{1}{r}(1,a).
$ A foliation $(-1)$-curve $V$ in $M$ is said to be \textbf{floating} if $V$
is entirely contained in the smooth locus of $M$ with respective to the
foliation $\mathcal{F}_{\xi}$. Then $M$ is said to be \textbf{minimal} if it
has no floating foliation $(-1)$-curves.
\end{definition}

Secondly, a detail work on the local model of resolution of foliation cyclic
quotient singularities and a Castelnuovo's contraction theorem, we have the
following Sasaki analogue of Castelnuovo's contraction Theorem on Sasakian $5
$-Manifolds as in section $3:$

\begin{theorem}
\label{T33}(Sasaki Castelnuovo's Contraction Theorem) Let $(M,\eta,\xi
,\Phi,g)$ be a compact quasi-regular Sasakian $5$-manifold with
\textbf{foliation singularities of type }$\frac{1}{r}(1,a)$ and $V$ a
\textbf{floating} foliation $(-1)$-curve. Then there exists a transverse
birational morphism $f:M\rightarrow N$ that contracts $V\in S^{3}\times
D^{2}\subset M$ to a regular fibre $S_{\mathbf{o}}^{1}\in S^{1}\times
D^{4}\subset N$ and it is an isomorphism outside of $V$. $N$ is again a
compact quasi-regular Sasakian $5$-manifold. In particular, every proper
transverse birational morphism between compact regular Sasakian $5$-manifolds
can be factored in a sequence of contractions of foliation $(-1)$-curves.
\end{theorem}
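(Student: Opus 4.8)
The plan is to reduce the statement to the classical Castelnuovo contraction on the leaf space and then lift the resulting blow-down back to the Sasakian level through the first structure theorem. First I would use the Reeb $S^1$-fibration $\pi : M \to Z$ of Theorem \ref{T34}. Since $V$ is a \emph{floating} foliation $(-1)$-curve in the sense of Definition \ref{D11}, it lies entirely in the locus where the $S^1$-action is free, so there $\pi$ restricts to a genuine principal circle bundle and $V$ descends to a smooth rational curve $C = \pi(V)$ contained in the regular (nonsingular) locus of $Z$ with $C^{2} = -1$. Thus $C$ is an ordinary $(-1)$-curve on the smooth surface underlying a neighborhood of $C$ in $Z$.

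Next I would invoke the classical Castelnuovo contractibility criterion on $Z$: there is a normal projective surface $\overline{Z}$ and a birational morphism $g : Z \to \overline{Z}$ that contracts $C$ to a smooth point $p$ and realizes $Z = \mathrm{Bl}_p \overline{Z}$, an isomorphism away from $C$. Because the contraction takes place in the regular locus, all quotient singularities of type $\frac{1}{r}(1,a)$ (which sit away from $C$) are carried isomorphically onto $\overline{Z}$, so $\overline{Z}$ is again a $Q$-factorial normal projective orbifold surface with the same singular structure. I would then descend the polarization: the ample orbifold class $[\omega_h]$ representing the transverse Kähler class pushes forward to a polarization $[\overline{\omega}_h]$ on $\overline{Z}$, so $(\overline{Z},[\overline{\omega}_h])$ is again a polarized projective orbifold.

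I would then reconstruct $N$ by the inverse direction of the first structure theorem: the polarized orbifold $(\overline{Z},[\overline{\omega}_h])$ determines a compact quasi-regular Sasakian $5$-manifold $N$ as the total space of the associated $S^1$-orbibundle, with contact form $\eta_N$ whose transverse Kähler form descends from $\overline{\omega}_h$ and with $K^T_N = \pi_N^{\ast}(K^{orb}_{\overline{Z}})$. The transverse birational morphism $f : M \to N$ is the lift of $g$; it is an $S^1$-equivariant isomorphism over $\overline{Z}\setminus\{p\} \cong Z\setminus C$, hence an isomorphism outside $V$, and over $p$ it collapses the circle bundle over $C$ onto the single regular fibre $S^1_{\mathbf o} = \pi_N^{-1}(p)$. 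The essential local computation is to identify this collapse with the surgery model in the statement: a tubular neighborhood of $V$ is the restriction of $M$ to a disk bundle around $C$, which is the pullback of the Hopf bundle $S^3 \to \mathbb P^1$ over $\mathrm{Bl}_0\mathbb C^2$ and hence diffeomorphic to $S^3 \times D^2$, while a tubular neighborhood of the regular fibre $S^1_{\mathbf o}$ over the smooth point $p$ is $S^1 \times D^4$; the two share the boundary $S^3 \times S^1 = \partial(S^1\times D^4)$, and $f$ realizes the Hopf-type collapse $S^3 \times D^2 \to S^1 \times D^4$.

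The main obstacle I anticipate is not the topological surgery but the verification that the reconstructed data $(\eta_N,\xi_N,\Phi_N,g_N)$ is a genuine \emph{smooth} Sasakian structure across the new fibre $S^1_{\mathbf o}$, that is, that the descended transverse Kähler metric extends smoothly over the contracted point $p\in\overline Z$ and that $\eta_N$ and $\Phi_N$ glue compatibly with the unchanged part of $M$. This is exactly where the floating hypothesis is used: because $V$ avoids the foliation singular set, the local model is the standard smooth blow-down of $\mathbb C^2$ and no orbifold uniformizing chart intervenes at $C$, so the smoothing is the transverse analogue of the classical fact that a Kähler metric descends under the contraction of a $(-1)$-curve. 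Finally, for the factorization statement, a proper transverse birational morphism between compact \emph{regular} Sasakian $5$-manifolds descends to a proper birational morphism $Z \to Z'$ of smooth projective surfaces; by the classical structure theory this factors as a finite sequence of blow-downs of $(-1)$-curves, and lifting each blow-down by the construction above expresses the original morphism as a sequence of foliation $(-1)$-curve contractions.
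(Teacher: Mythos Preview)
Your proposal is correct and follows essentially the same route as the paper: descend the floating foliation $(-1)$-curve to an honest $(-1)$-curve in the smooth locus of the leaf space $Z$, apply classical Castelnuovo there, and lift the blow-down back to the Sasakian level via the commutative square
\[
\begin{array}{ccc}
M\supset V & \overset{\psi}{\longrightarrow} & N\supset\mathbf{S}_{p}^{1}\\
\downarrow\pi_{M} &  & \downarrow\pi_{N}\\
X\supset E & \overset{\phi}{\longrightarrow} & Y\ni\pi_{N}(p),
\end{array}
\]
exactly as the paper does immediately before stating Theorem~\ref{T33}.

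The one difference in emphasis is this: where you invoke the converse direction of the first structure theorem (Proposition~\ref{P21}) abstractly to produce $N$ and then worry about smoothness of the Sasakian data across the new fibre, the paper instead builds the explicit $S^{1}$-equivariant local model $\mathcal{S}(-1)\simeq\mathbb{C}\times S^{3}$ in Section~3.2 (the coordinates on $\mathcal{S}(-k)$, the embedding $\Psi$ of~(\ref{eq:s1_equivariant_embedding}), and Lemma~\ref{L52}) and glues by hand. This is how the paper justifies the precise identifications $V\in S^{3}\times D^{2}$ and $S^{1}_{\mathbf{o}}\in S^{1}\times D^{4}$ appearing in the theorem statement, and it resolves your anticipated obstacle about the smooth extension of $(\eta_{N},\Phi_{N},g_{N})$ concretely rather than by appeal to the structure theorem. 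Your approach is cleaner conceptually; the paper's buys the explicit surgery description and the distance formulas (\ref{21})--(\ref{23}) that are used later in the Sasaki--Ricci flow estimates.
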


In section $4,$ we work on some basic facts of the Sasaki analogue of the
Kaehler-Ricci flow through singularities due to Song-Tian (\cite{st}). In
particular, we have the following definition for the foliation canonical
surgical contraction on a compact quasi-regular Sasakian $5$-manifold.

\begin{definition}
\label{D12}We say that the solution $g(t)$ of the Sasaki-Ricci flow on a
compact Sasakian $5$-manifold $M$ performs a floating foliation canonical
surgical contraction if the following holds: There exist distinct
\textbf{floating} foliation $(-1)$-curve\textbf{s} $V_{1},...,V_{k}$ of $M$, a
compact Sasakian $5$-manifold $N$ and a divisorial contraction $\psi
:M\rightarrow N$ with $\psi(V_{i})=S_{i}^{1}\subset N$ \ and \ $\psi
:M\backslash\cup_{i=1}^{k}V_{i}\rightarrow N\backslash\{S_{1}^{1}%
,...,S_{k}^{1}\}$ a basic transverse biholomorphism onto \ $N\backslash
\{S_{1}^{1},...,S_{k}^{1}\}$\ such that :

\begin{enumerate}
\item The metrics $g(t)$ converge to a smooth Sasakain metric $g_{T}$ on
$M\backslash\cup_{i=1}^{k}V_{i},$ as $t\rightarrow T^{-}$, smoothly on compact
subsets of $M\backslash\cup_{i=1}^{k}V_{i}.$

\item $(M,g(t))$ converges to a unique compact metric space $(N,d_{T})$ in the
Gromov-Hausdorff sense as $t\rightarrow T^{-}$. In particular, $(N,d_{T}) $ is
homeomorphic to the Sasakain $5$-manifold $N$. Here $d_{T}$ is defined to be
the metric on $N$ by extending $(\psi^{-1})^{\ast}g_{T}$ to be zero on
$\{S_{1}^{1},...,S_{k}^{1}\}$.

\item There exists a unique maximal smooth solution $g(t)$ of the Sasaki-Ricci
flow on $N$ for $t\in(T,T_{N})$ with $T<T_{N}\leq\infty$, such that $g(t)$
converges to $(\psi^{-1})^{\ast}g_{T}$ as $t\rightarrow T^{+}$ smoothly on
compact subsets of $N\backslash\{S_{1}^{1},...,S_{k}^{1}\}.$

\item $(N,g(t))$ converges to $(N,d_{T})$ in the Gromov-Hausdorff sense as
$t\rightarrow T^{+}$.
\end{enumerate}
\end{definition}

In the final section, we will apply the results of local model of resolution
of foliation singularities as in section $3$ to prove the foliation canonical
surgical contraction on a compact \textbf{quasi-regular} Sasakian $5$-manifold
$M.$

Along the lines of the arguments in \cite{sw2}, we first deal with the case of
floating foliation canonical surgical contraction :

\begin{theorem}
\label{T61} Let $g(t)$ be a smooth solution of the Sasaki-Ricci flow on a
compact \textbf{quasi-regular} Sasakian $5$-manifold $M$ with the foliation
singularitie of type $\frac{1}{r}(1,a)$ for $t\in\lbrack0,T)$ and assume
$T<\infty$. Suppose there exists a blow-down map $\psi:M\rightarrow N$
contracting disjoint \textbf{floating} foliation $(-1)$-curves $V_{1}%
,...,V_{k}$ on $M$ with $\psi(V_{i})=S_{i}^{1}\subset N$, for a smooth compact
Sasakian $5$-manifold $(N,\omega_{N})$ such that the limiting transverse
Kaehler class satisfies%
\begin{equation}
\lbrack\omega_{0}]_{B}-Tc_{1}^{B}(M)=[\psi^{\ast}\omega_{N}]_{B}.\label{2022}%
\end{equation}
Then the Sasaki-Ricci flow $g(t)$ performs a foliation canonical surgical
contraction with respect to the data $V_{1},...,V_{k}$, $N$ and $\psi$.
\end{theorem}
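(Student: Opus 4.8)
The plan is to reduce the Sasaki-Ricci flow to a scalar parabolic transverse complex Monge-Amp\`ere equation for a basic potential and then to run the a priori estimate scheme of Song-Weinkove \cite{sw2}, carried out $S^{1}$-equivariantly in the transverse (basic) category. Since $M$ is quasi-regular, every basic object descends to the polarized projective orbifold surface $Z$ of Theorem \ref{T34}, under which the Sasaki-Ricci flow corresponds to the orbifold Kaehler-Ricci flow on $Z$, the floating foliation $(-1)$-curves $V_{i}$ to genuine $(-1)$-curves in the smooth locus of $Z$, and $\psi:M\rightarrow N$ to the Castelnuovo contraction of these curves (Theorem \ref{T33}). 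This lets me import the surface-level MMP-with-flow machinery on $Z$ and lift all conclusions to $M$ via $\pi$. Concretely, using (\ref{2022}) I first fix a smooth family of reference basic transverse Kaehler forms $\hat{\omega}_{t}$ representing $[\omega_{0}]_{B}-tc_{1}^{B}(M)$, for instance the linear path $\hat{\omega}_{t}=\frac{1}{T}\big((T-t)\omega_{0}+t\,\psi^{\ast}\omega_{N}\big)$ interpolating between $\omega_{0}$ and the limiting (degenerate) class $[\psi^{\ast}\omega_{N}]_{B}$. Writing $\omega(t)=\hat{\omega}_{t}+\sqrt{-1}\,\partial_{B}\bar{\partial}_{B}\varphi$ and choosing a fixed transverse volume form $\Omega$ whose transverse Ricci form equals $-\partial_{t}\hat{\omega}_{t}$, the Sasaki-Ricci flow becomes
\[
\partial_{t}\varphi=\log\frac{\big(\hat{\omega}_{t}+\sqrt{-1}\,\partial_{B}\bar{\partial}_{B}\varphi\big)^{2}}{\Omega}
\]
for a basic potential $\varphi$ on $M\times[0,T)$.

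Next I would establish the two uniform a priori estimates that drive everything. The first is a uniform $C^{0}$ bound for $\varphi$ and for $\dot{\varphi}$ on $M\times[0,T)$, obtained by the transverse maximum principle exactly as in \cite{st} and \cite{sw2}, where the basic nature of all quantities reduces the argument to the leaf space. The second is a family of uniform local $C^{\infty}$ bounds for the transverse metric on compact subsets of $M\setminus\bigcup_{i}V_{i}$, obtained from the transverse second-order (Calabi-type) estimate together with parabolic Schauder theory in the basic setting; here the degeneration of $\hat{\omega}_{t}$ as $t\rightarrow T^{-}$ is confined to the $V_{i}$, so the estimates are uniform away from them. These local higher-order bounds immediately yield conclusion $(1)$ of Definition \ref{D12}: $g(t)$ converges, smoothly on compact subsets of $M\setminus\bigcup_{i}V_{i}$, to a smooth Sasakian metric $g_{T}$.

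For the Gromov-Hausdorff statements $(2)$ and $(4)$ I would use the local model of the resolution of the foliation singularities together with volume and distance estimates. From (\ref{2022}) the transverse volume of a tubular neighborhood of each $V_{i}$ tends to zero, so each floating foliation $(-1)$-curve collapses; using the explicit local geometry $V_{i}\subset S^{3}\times D^{2}\rightarrow S^{1}\times D^{4}$ of Theorem \ref{T33} one controls the transverse distance function and shows the rescaled distances converge, forcing each $V_{i}$ to shrink to the regular fibre $S_{i}^{1}=\psi(V_{i})$. Defining $d_{T}$ as the completion of the metric induced by $(\psi^{-1})^{\ast}g_{T}$, extended by zero on $\{S_{1}^{1},\dots,S_{k}^{1}\}$, one then obtains $(M,g(t))\rightarrow(N,d_{T})$ in the Gromov-Hausdorff sense with $(N,d_{T})$ homeomorphic to $N$; the same distance estimates, now on $N$, give the $t\rightarrow T^{+}$ convergence required in $(4)$.

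Finally, for conclusion $(3)$ I must show that $(\psi^{-1})^{\ast}g_{T}$ extends across the contracted fibres to a genuine smooth Sasakian metric on $N$ and that the flow restarts from it. The cohomological identity (\ref{2022}) guarantees that the limiting transverse Kaehler class on $N$ is exactly the positive class $[\omega_{N}]_{B}$, and a removable-singularity argument in the transverse category, descending to the isolated klt point on the leaf space of $N$, shows the extension is smooth. Short-time existence and uniqueness of the Sasaki-Ricci flow from this initial datum then produces the maximal smooth solution on $(T,T_{N})$ converging to $(\psi^{-1})^{\ast}g_{T}$ as $t\rightarrow T^{+}$, completing the four conditions. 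I expect the main obstacle to be precisely conclusions $(2)$--$(4)$: controlling the transverse distance function uniformly up to $t=T$ so that the collapse of each $V_{i}$ to the circle $S_{i}^{1}$ is metrically exact, and at the same time proving the limit metric extends smoothly across the contracted loci so that the flow can be continued on $N$. This is where the explicit local foliation model of Theorem \ref{T33} and the $S^{1}$-equivariance of all estimates are essential, since they replace the point-collapse of the Kaehler case by a controlled collapse onto regular Reeb orbits.
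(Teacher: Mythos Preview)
Your overall architecture matches the paper's closely: reduce to a transverse parabolic Monge--Amp\`ere equation with reference forms $\hat{\omega}_{t}=\frac{1}{T}((T-t)\omega_{0}+t\,\psi^{\ast}\omega_{N})$, obtain uniform $C^{0}$ bounds on $\varphi,\dot{\varphi}$ and local $C^{\infty}$ bounds away from $\bigcup V_{i}$ to get part (1), and then attack the Gromov--Hausdorff and continuation statements. That outline is correct.

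The genuine gap is in your treatment of part (3). You assert that ``a removable-singularity argument in the transverse category \dots\ shows the extension is smooth'' and then invoke ordinary short-time existence from a smooth initial datum. This is not what happens, and in fact the limit metric $(\psi^{-1})^{\ast}g_{T}$ is \emph{not} known to extend smoothly to $N$ at $t=T$: one only obtains a closed positive basic $(1,1)$-current $\omega'=\omega_{N}+\sqrt{-1}\,\partial_{B}\bar{\partial}_{B}\phi_{T}$ with $\phi_{T}$ bounded (in fact continuous, by Ko\l odziej-type pluripotential results) and with volume density $\omega'^{\,2}\wedge\eta_{N}/(\omega_{N}^{2}\wedge\eta_{N})\in L^{p}(N)$ for some $p>1$. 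Standard short-time existence does not apply. The paper instead follows the Song--Tian weak-flow strategy: smooth the initial volume density to $\Omega_{\varepsilon}$, solve transverse complex Monge--Amp\`ere equations (via El Kacimi-Alaoui) to produce smooth approximants $\phi_{T,\varepsilon}$, run the flow from these, and pass to the limit using uniform estimates in $\varepsilon$. Equally essential, and absent from your sketch, are the higher-order estimates of the form $|(\nabla^{T})^{m}Rm^{T}(g^{T}(t))|\leq C_{m}|s|_{h}^{-2\alpha_{m}}$ on $[0,T)\times(M\setminus V)$ and the matching estimates on $[T,T']\times(N\setminus S^{1})$; these are what allow one to glue the two flows smoothly across $t=T$ away from the exceptional locus.

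Your handling of parts (2) and (4) is also too vague to go through. ``Volume and distance estimates'' and ``the local model'' are not enough: the paper needs the specific second-order inequalities $\omega\leq C|s|_{h}^{-2}\psi^{\ast}\omega_{N}$ and $\omega\leq C|s|_{h}^{-2(1-\delta)}\omega_{0}$ (obtained by a maximum-principle argument on a quantity of the form $\log\mathrm{tr}_{\omega_{0}}\omega+A\log(|s|_{h}^{2+2\varepsilon}\mathrm{tr}_{\psi^{\ast}\omega_{N}}\omega)-A^{2}\varphi$), together with the radial vector-field estimate $|X|_{\omega}^{2}\leq C|s|_{h}$ for $X=\sum z_{i}\partial_{z_{i}}$. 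These yield the diameter bound and the quantitative shrinking $d^{T}_{\omega}(x_{1},x_{2})\leq C(T-t)^{1/3}$ for $x_{1},x_{2}\in V$, which is what actually forces each $V_{i}$ to collapse onto the Reeb orbit $S_{i}^{1}$ in Gromov--Hausdorff sense. Without identifying these estimates your argument for (2) and (4) is only a wish.
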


By applying Theorem \ref{T61} to the minimal resolution of $\frac{1}{r}%
(1,a)$-type foliation singularities of $M$ as in Theorem \ref{T32}, we have

\begin{corollary}
\label{C61}Let $g(t)$ be a smooth solution of the Sasaki-Ricci flow on a
compact \textbf{regular} Sasakian $5$-manifold $M$ for $t\in\lbrack0,T)$ and
assume $T<\infty$. Suppose there exists a divisorial contraction
$\psi:M\rightarrow N$ contracting disjoint foliation $(-1)$-curves
$V_{1},...,V_{k}$ on $M$ with $\psi(V_{i})=S_{i}^{1}\subset N$, for a smooth
compact Sasakian $5$-manifold $(N,\omega_{N})$ such that the limiting
transverse Kaehler class satisfies%
\[
\lbrack\omega_{0}]_{B}-Tc_{1}^{B}(M)=[\psi^{\ast}\omega_{N}]_{B}.
\]
Then the Sasaki-Ricci flow $g(t)$ performs a foliation canonical surgical
contraction with respect to the data $V_{1},...,V_{k}$, $N$ and $\psi$.
\end{corollary}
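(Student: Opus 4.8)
The plan is to obtain Corollary \ref{C61} as the \textbf{regular} specialization of Theorem \ref{T61}, so that the whole task reduces to checking that, when $M$ is regular, the hypotheses of Theorem \ref{T61} hold automatically; once they do, every conclusion of Definition \ref{D12} is inherited verbatim and there is nothing further to prove. In particular I would not re-run the analytic estimates from \cite{sw2} underlying Theorem \ref{T61}; I would only show that the regular setting is a genuine instance of the quasi-regular one in which the floating condition comes for free.

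First I would record that a regular Sasakian $5$-manifold is precisely the $r=1$ case of Theorem \ref{T34}(1): the Reeb $U(1)$-action is free, so the characteristic foliation $\mathcal{F}_{\xi}$ has no singular fibres and its foliation singular set is empty. Hence the smooth locus of $M$ with respect to $\mathcal{F}_{\xi}$ is all of $M$, and every foliation $(-1)$-curve $V_{i}$ is entirely contained in that smooth locus. By Definition \ref{D11} each $V_{i}$ is therefore \textbf{floating}, so the disjoint foliation $(-1)$-curves $V_{1},\dots,V_{k}$ of the corollary are automatically the disjoint \emph{floating} foliation $(-1)$-curves demanded by Theorem \ref{T61}.

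Next I would match the remaining data. The divisorial contraction $\psi\colon M\to N$ is exactly the blow-down map of Theorem \ref{T61}: by the Sasaki Castelnuovo contraction Theorem \ref{T33} it sends each floating $V_{i}\subset S^{3}\times D^{2}$ to a regular fibre $S_{i}^{1}\subset S^{1}\times D^{4}\subset N$, is an isomorphism away from $\bigcup_{i=1}^{k} V_{i}$, and keeps $N$ a smooth compact Sasakian $5$-manifold. The local picture around each $V_{i}$ is the $r=a=1$ instance of the local model for the minimal resolution of $\tfrac{1}{r}(1,a)$-type foliation singularities in Theorem \ref{T32}, namely the blow-up of a regular fibre with single $(-1)$-curve exceptional set; thus the local convergence and cut-off estimates used to prove Theorem \ref{T61} apply near each $V_{i}$ without change. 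Finally the cohomological normalization $[\omega_{0}]_{B}-Tc_{1}^{B}(M)=[\psi^{\ast}\omega_{N}]_{B}$ is identical to the one hypothesized there. With items $(1)$--$(4)$ of Definition \ref{D12} reduced to the quasi-regular statement, Theorem \ref{T61} delivers the foliation canonical surgical contraction for the data $V_{1},\dots,V_{k}$, $N$ and $\psi$.

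The one point that is more than bookkeeping is confirming that the local model invoked by Theorem \ref{T61} really degenerates to the regular case, i.e.\ that a floating $(-1)$-curve in a smooth Sasakian $5$-manifold carries the standard neighbourhood $S^{3}\times D^{2}$ supplied by Theorem \ref{T33}; granting this, the smooth convergence to $g_{T}$ and the Gromov--Hausdorff limit $(N,d_{T})$ of items $(1)$--$(2)$, together with the continuation of the flow on $N$ in items $(3)$--$(4)$, all follow by restricting the quasi-regular argument to an empty singular set. I expect the main (and only mild) obstacle to be verifying that the resolution chain of Theorem \ref{T32} collapses correctly at $r=a=1$ to a single exceptional $(-1)$-curve, so that no spurious singular fibre is created on $N$ under $\psi$ and the regularity of $N$ is preserved throughout the surgery.
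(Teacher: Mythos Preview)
Your proposal is correct and matches the paper's own reasoning: the paper derives Corollary~\ref{C61} in one line, stating that it follows ``by applying Theorem~\ref{T61} to the minimal resolution of $\frac{1}{r}(1,a)$-type foliation singularities of $M$ as in Theorem~\ref{T32}'', which amounts precisely to your observation that a regular Sasakian $5$-manifold is the $r=1$ (empty singular set) instance of the quasi-regular setting, so every foliation $(-1)$-curve is automatically floating and Theorem~\ref{T61} applies verbatim. Your additional verification that the local model of Theorem~\ref{T32} collapses to a single exceptional $(-1)$-curve when $r=a=1$ is exactly the content of Theorem~\ref{D31}, so no further work is needed there.
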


As a consequence of Theorem \ref{T61} and Proposition \ref{P41}, we have our
main results in the paper on the analytic f
oliation minimal model program with scaling in a compact
quasi-regular Sasakian %
$%
5
$%
-Mmanifold.

\begin{theorem}
\label{T62} Let $(M,\xi,\omega_{0})$ be a compact \textbf{quasi-regular}
Sasakian $5$-manifold with the foliation singularitie of type $\frac{1}%
{r}(1,a).$ Then there exists a unique maximal Sasaki-Ricci flow $\omega(t)$ on
$M_{0},M_{1},...,M_{k}$ starting at $(M,\omega_{0})$ with floating foliation
canonical surgical contractions of a finite number of disjoint
\textbf{floating} foliation $(-1)$-curves \textbf{or} foliation extremal ray
contractions of foliation $K_{M}^{T}$-negative curves $\psi_{i}:M_{i-1}%
\rightarrow M_{i}$. In addition, we have

\begin{enumerate}
\item Either $T_{k}<\infty$ and the flow $\omega(t)$ collapses in the sense
that
\[
Vol_{\xi}(M_{k},\omega(t))\rightarrow0
\]
$\mathrm{as}\ \ \ t\rightarrow T_{k}^{-}.$

\begin{enumerate}
\item there exists a contraction%
\[
\varphi:M_{k}\rightarrow pt,
\]
then $K_{M}^{T}<0$ and thus $M_{k}$ is transverse minimal Fano and the
foliation space $M_{k}/\mathcal{F}_{\xi}$ is minimal log del Pezzo surface of
at worst $\frac{1}{r}(1,a)$-type singularities and Picard number one, or

\item there exists a fibration%
\[
\varphi:M_{k}\rightarrow\Sigma_{h},
\]
then $M_{k}$ is an $S^{1}$-orbibundle of a rule surface over \ Riemann
surfaces $\Sigma_{h}$\ of genus $h$. Or
\end{enumerate}

\item $T_{k}=\infty$ and $M_{k}$ is nef : $\psi_{i}=\psi_{k}$, and $M_{k}$ has
at worst foliation cyclic quotient singularities (orbifold singularities) and
has no foliation $K_{M}^{T}$-negative curves.
\end{enumerate}
\end{theorem}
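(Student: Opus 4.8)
The plan is to run the Sasaki-Ricci flow, detect each finite-time singularity through the transverse Kähler cone, resolve it by the surgical contraction of Theorem \ref{T61}, and iterate until the flow either exists for all time or collapses. First I would invoke the maximal existence property of the Sasaki-Ricci flow (the transverse analogue of the Kähler-Ricci flow existence theorem, building on the longtime theory of Smoczyk-Wang-Zhang \cite{swz}): starting from $(M,\omega_0)$ the basic Kähler class evolves by $[\omega(t)]_B = [\omega_0]_B - t\,c_1^B(M)$, and the flow admits a smooth solution precisely on $[0,T_1)$, where $T_1 = \sup\{t>0 : [\omega_0]_B - t\,c_1^B(M) \text{ is a transverse K\"ahler class}\}$. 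If $T_1 = \infty$ then $K_M^T$ is nef and we land directly in case (2) with $k=0$; otherwise $T_1 < \infty$ and the limiting class $[\omega_0]_B - T_1 c_1^B(M)$ lies on the boundary of the transverse Kähler cone.

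Next I would analyze this boundary class by the transverse Mori cone theorem recorded in Proposition \ref{P41}. Being nef and on the cone boundary, $[\omega_0]_B - T_1 c_1^B(M)$ determines a transverse contraction morphism $\psi_1 : M = M_0 \to M_1$ that is either birational, namely a foliation divisorial contraction of finitely many foliation $K_M^T$-negative curves (specializing to floating foliation $(-1)$-curves when they lie in the smooth locus, by Definition \ref{D11}), or of fiber type. In the birational case I would check that the limiting polarization on $M_1$ satisfies the class identity \eqref{2022} for $\psi_1$, and then apply Theorem \ref{T61}, together with Corollary \ref{C61} for the floating resolution of the $\frac{1}{r}(1,a)$-singularities, to conclude that the flow performs the foliation canonical surgical contraction and continues as a smooth Sasaki-Ricci flow on the new compact quasi-regular Sasakian $5$-manifold $M_1$.

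I would then iterate, producing $M_0, M_1, \ldots$ and singular times $T_1 < T_2 < \cdots$. The finiteness step rests on the observation that each foliation divisorial contraction strictly decreases the transverse Picard number $\rho^B$; since $\rho^B$ is a positive integer, only finitely many divisorial surgeries can occur, so the procedure terminates after $k$ steps. After the last contraction the flow on $M_k$ either exists for all time, forcing $K_{M_k}^T$ to be nef with at worst foliation cyclic quotient singularities and no foliation $K_M^T$-negative curves (case (2)), or it reaches a finite singular time $T_k$ whose contraction is of fiber type. In the fiber-type case the transverse volume collapses, $\mathrm{Vol}_\xi(M_k,\omega(t)) \to 0$ as $t \to T_k^-$, and I would classify the base of $\psi : M_k \to B$ by dimension: if $B$ is a point then $c_1^B(M_k) > 0$, hence $K_{M_k}^T < 0$, so $M_k$ is transverse minimal Fano and its leaf space is a minimal log del Pezzo surface with at worst $\frac{1}{r}(1,a)$-singularities and Picard number one (case (1a)); if $B$ is a curve then $\psi$ exhibits $M_k$ as an $S^1$-orbibundle of a ruled surface over a Riemann surface $\Sigma_h$ of genus $h$ (case (1b)).

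The main obstacle will be twofold. First, one must establish the precise dictionary between the analytic degeneration of the flow at each $T_i$ and the algebraic contraction supplied by the transverse Mori theory: verifying the limiting class identity \eqref{2022} so that Theorem \ref{T61} applies, and showing that the Gromov-Hausdorff limit $(M_i, d_{T_i})$ of Definition \ref{D12} is genuinely the contracted Sasakian manifold $M_{i+1}$ rather than a more singular object. Second, the termination together with the final dichotomy requires controlling $\rho^B$ across surgeries and excluding an infinite alternation of contractions; the delicate point is proving that the collapsing limit in the fiber-type case carries exactly the del Pezzo or ruled structure claimed, which leans on the well-formed hypothesis $\Delta = \emptyset$ and the klt nature of the $\frac{1}{r}(1,a)$-singularities from Theorem \ref{T34}.
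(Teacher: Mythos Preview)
Your outline is correct and follows the same architecture as the paper's proof: run the flow to its first singular time, read off the limiting nef class, invoke the foliation cone/contraction theorem (Proposition~\ref{P41}) to obtain an extremal contraction, perform the surgical contraction via Theorem~\ref{T61}, iterate, and terminate by the drop in transverse Picard number; the final trichotomy (nef / Fano / ruled) is exactly the paper's case split according to whether the limiting class $L_k^T$ is big.

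The one place where the paper does real work that your sketch elides is the birational step when an extremal curve $\Gamma$ \emph{passes through} a foliation singular fibre of type $\frac{1}{r}(1,a)$. Then $\Gamma$ is not a floating $(-1)$-curve and Theorem~\ref{T61} does not apply on $M_l$ directly. The paper separates the birational case into two subcases: (i) all $W_i$ avoid the singular locus, in which case the Hodge index theorem plus adjunction force each $W_i$ to be a $(-1)$-curve and the $W_i$ to be pairwise disjoint, so Theorem~\ref{T61} applies verbatim; (ii) some $\Gamma$ meets a singular fibre, in which case one lifts the entire Monge--Amp\`ere flow to the minimal resolution $\varphi:\widetilde{M_l}\to M_l$ of Theorem~\ref{T32} (this is the content of Theorem~\ref{T51}), applies Corollary~\ref{C61} upstairs where the strict transform $\widetilde\Gamma$ is a genuine floating $(-1)$-curve, and then descends, tracking the change in the singularity type via the Hirzebruch--Jung continued fraction $[b_1,\dots,(b_j-1),\dots,b_m]$. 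Your clause ``together with Corollary~\ref{C61} for the floating resolution'' gestures at this, but the mechanism---lift, contract, descend, update the singularity---is the substance of the quasi-regular case and is what distinguishes Theorem~\ref{T62} from the regular Corollary~\ref{C11}; it deserves to be spelled out.
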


\begin{corollary}
\label{C11} Let $(M,\xi,\omega_{0})$ be a compact \textbf{regular} Sasakian
$5$-manifold with a smooth transverse Kaehler metric $\omega_{0}$. Then there
exists a unique maximal Sasaki-Ricci flow $\omega(t)$ on $M_{0},M_{1}%
,...,M_{k}$ with foliation canonical surgical contractions starting at
$(M,\omega_{0})$. Moreover, each foliation canonical surgical contraction
corresponds to a divisorial contraction $\psi_{i}:M_{i-1}\rightarrow M_{i}$ of
a finite number of disjoint foliation $(-1) $-curves on $M_{i}$. In addition,
we have

\begin{enumerate}
\item Either $T_{k}<\infty$ and the flow $\omega(t)$ collapses in the sense
that
\[
Vol_{\xi}(M_{k},\omega(t))\rightarrow0,
\]
$\mathrm{as}\ \ \ t\rightarrow T_{k}^{-}.$Then $M_{k}$ is transverse
birational to a regular Sasakian sphere $\mathbf{S}^{5}$ or $(\mathbf{S}%
^{2}\mathbf{\times S}^{3})$ or $\mathbf{S}^{2}\widetilde{\times}\mathbf{S}%
^{3}$or $\Sigma_{h}\mathbf{\times S}^{3}$ or $\Sigma_{h}\widetilde{\times
}\mathbf{S}^{3}.$

\item Or $T_{k}=\infty$ and $M_{k}$ has no foliation $(-1)$-curves.
\end{enumerate}
\end{corollary}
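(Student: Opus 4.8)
The plan is to deduce Corollary \ref{C11} as the \emph{regular} specialization of Theorem \ref{T62}, where the entire transverse Mori program collapses onto the classical minimal model program of a \emph{smooth} projective surface. First I would record what regularity buys us: when the Reeb $U(1)$-action is free, the leaf space $Z$ carries no orbifold points, so in the language of Theorem \ref{T34} every local model is of type $\frac{1}{r}(1,a)$ with $r=a=1$; thus $Z$ is a genuine smooth projective surface and $\pi\colon M\to Z$ is an honest principal $S^{1}$-bundle with $K_{M}^{T}=\pi^{\ast}K_{Z}$. Under this correspondence a floating foliation $(-1)$-curve $V\subset M$ is exactly the $S^{1}$-bundle over a classical $(-1)$-curve $C\subset Z$ (i.e. $C^{2}=K_{Z}\cdot C=-1$), and Theorem \ref{T33} identifies each floating foliation canonical surgical contraction $\psi_{i}\colon M_{i-1}\to M_{i}$ with the $S^{1}$-bundle lift of the classical Castelnuovo blow-down of the corresponding disjoint $(-1)$-curves on $Z_{i-1}$. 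Because contracting $(-1)$-curves preserves smoothness of a surface, every intermediate $Z_{i}$, and hence every $M_{i}$, remains regular; in particular no orbifold (cyclic quotient) singularities are ever created, which is why the general cyclic-quotient alternative of Theorem \ref{T62}(2) does not occur here and each surgery is a genuine divisorial contraction.

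With this dictionary in hand I would apply Theorem \ref{T62} (via Corollary \ref{C61} at each surgery time) essentially verbatim: it produces the unique maximal Sasaki-Ricci flow $\omega(t)$ on the tower $M_{0},\ldots,M_{k}$ together with the surgical contractions, and furnishes the basic dichotomy $T_{k}<\infty$ versus $T_{k}=\infty$. Finiteness of $k$ is immediate from the surface picture: each divisorial contraction strictly decreases the Picard number $\rho(Z_{i})$ by the number of curves contracted, and $\rho$ is a positive integer, so the classical surface MMP, and with it the sequence of foliation canonical surgical contractions, terminates after finitely many steps. The alternative $T_{k}=\infty$ corresponds exactly to $K_{Z_{k}}$ nef, which on the smooth surface $Z_{k}$ is equivalent to the absence of $(-1)$-curves, that is, to the absence of foliation $(-1)$-curves on $M_{k}$; this is item (2).

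The substantive part is the explicit identification in the collapsing case $T_{k}<\infty$. Here the terminal $Z_{k}$ is a smooth two-dimensional Mori fiber space, so it is either a del Pezzo surface of Picard number one or admits a $\mathbb{P}^{1}$-fibration over a smooth curve. Smoothness forces $Z_{k}\cong\mathbb{P}^{2}$ in the first case, and a minimal (geometrically) ruled surface $\mathbb{P}(E)\to\Sigma_{h}$ in the second: over $\mathbb{P}^{1}$ these are the rational ruled (Hirzebruch) surfaces, over $\Sigma_{h}$ the ruled surfaces of positive genus. I would then read off the total space of the regular $S^{1}$-bundle over each base, distinguishing the untwisted from the twisted $S^{3}$-bundle by the parity of the first Chern (Euler) class of the circle bundle along the ruling, equivalently by $w_{2}$. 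Matching against the Smale--Barden classification of simply connected closed $5$-manifolds yields, up to transverse birational equivalence, precisely $\mathbf{S}^{5}$ over $\mathbb{P}^{2}$, then $\mathbf{S}^{2}\mathbf{\times S}^{3}$ or $\mathbf{S}^{2}\widetilde{\times}\mathbf{S}^{3}$ over the rational ruled surfaces, and $\Sigma_{h}\mathbf{\times S}^{3}$ or $\Sigma_{h}\widetilde{\times}\mathbf{S}^{3}$ over the higher-genus ruled surfaces, which is item (1).

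The main obstacle I anticipate is not the flow analysis, which is inherited wholesale from Theorem \ref{T62}, but the topological bookkeeping that converts the coarse output of the transverse MMP into precise diffeomorphism types. Concretely, one must verify that regularity is genuinely propagated at every surgery, so that an honest $S^{1}$-bundle description (and not merely an orbibundle description) is available at the end, and one must correctly pin down the twisted-versus-untwisted dichotomy through the Stiefel--Whitney / Euler-class invariant in the Smale--Barden table. Once these invariants are matched against the classification of minimal rational and ruled surfaces, the corollary follows.
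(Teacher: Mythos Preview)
Your approach is essentially the same as the paper's: Corollary \ref{C11} is obtained by specializing Theorem \ref{T62} to the regular case (where $r=a=1$, the leaf space $Z$ is smooth, and each surgery is governed by Corollary \ref{C61}), and then invoking the classical surface MMP classification of the terminal $Z_{k}$ as $\mathbb{P}^{2}$ or a minimal ruled surface, lifted to the total space via the $S^{1}$-bundle. The paper carries out the topological identification in the final two corollaries of the appendix rather than through Smale--Barden; note that Smale--Barden only applies in the simply connected case and so does not directly cover the $\Sigma_{h}\times\mathbf{S}^{3}$ and $\Sigma_{h}\widetilde{\times}\mathbf{S}^{3}$ outputs for $h\geq 1$, but your underlying argument via the $S^{1}$-bundle over the ruled surface is the correct one and agrees with the paper.
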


In this paper, we assume that $M$ \ is a compact quasi-regular transverse
Sasakian manifold and the space $Z$ of leaves is well-formed which means its
orbifold singular locus and algebro-geometric singular locus coincide,
equivalently $Z$ has no branch divisors. However when its leave space $Z$ is
not well-formed, the orbifold structure $(Z,\Delta)$ of the leave space $Z$
has the codimension one fixed point set of some non-trivial isotropy subgroup.
Then\ the corresponding singularities in $(M,\eta,\xi,\Phi,g)$\ is the Hopf
$S^{1}$-orbibundle over a Riemann surface $\Sigma_{h}.$ The orbifold canonical
divisor $K_{\emph{Z}}^{orb}$ and canonical divisor $K_{Z}$ are related by
\[
K_{\emph{Z}}^{orb}=\varphi^{\ast}(K_{Z}+[\Delta])
\]
and then
\[
K_{M}^{T}=\pi^{\ast}(K_{\emph{Z}}^{orb}).
\]
There is a Sasaki analogue of Log minimal model program (\cite{km}) with
respect to $K_{Z}+[\Delta].$ Thus we conjecture that there is a Sasaki
analogue of analytic Log minimal model program with respect to $K_{Z}%
+[\Delta]$ via the conical Sasaki-Ricci flow which is the odd dimensional
counterpart of the conical K\"{a}hler-Ricci flow (\cite{lz}, \cite{shen}). We
hope to address this issue in the near future.

\textbf{Acknowledgement. }The authors would like to thank Yi-Sheng Wang for
sharing his ideas on the construction of foliation singularities as in the
section three.

\section{Sasakian Geometry}

In this section, we will give some preliminaries on structures theorems for
Sasakian structures and the orbifold structure of its leave space, the
foliation normal local coordinates, the basic cohomology and its
Type II deformation
. We refer to \cite{bg}, \cite{fow}, and references therein for some details.

\subsection{Sasakian Structure and Orbifold Structure of Its Leave Space}

Let $(M,g,\nabla)$ be a Riemannian $(2n+1)$-manifold. $(M,g)$ is called Sasaki
if \ the cone
\[
(C(M),\overline{g}):=(\mathbf{R}^{+}\times M\mathbf{,\ dr^{2}+r}^{2}g)
\]
such that $(C(M),\overline{g},J,\overline{\omega})$ is Kaehler with
\[
\overline{\omega}=\frac{1}{2}i\partial\overline{\partial}r^{2}.
\]
The function $\frac{1}{2}r^{2}$ is hence a global Kaehler potential for the
cone metric. As $\{r=1\}=\{1\}\times M\subset C(M)$, we may define%

\[
\overline{\xi}=J(r\frac{\partial}{\partial r})
\]
and the Reeb vector field $\xi$ on $M$
\[
\xi=J(\frac{\partial}{\partial r}).
\]
Also
\[
\overline{\eta}(\cdot)=\frac{1}{2}\overline{g}(\xi,\cdot)
\]
and the contact $1$-form $\eta$ on $TM$
\[
\eta(\cdot)=g(\xi,\cdot)
\]
Then $\xi$ is the killing vector field with unit length such that
\[
\eta(\xi)=1\ \text{\textrm{and}}\ \ d\eta(\xi,X)=0.
\]

The tensor field of $type(1,1)$, defined by $\Phi(Y)=\nabla_{Y}\xi$, satisfies
the condition%
\[
(\nabla_{X}\Phi)(Y)=g(\xi,Y)X-g(X,Y)\xi
\]
for any pair of vector fields $X$ and $Y$ on $M$. Then such a triple
$(\eta,\xi,\Phi)$ is called a Sasakian structure on a Sasakian manifold
$(M,g).$ Note that the Riemannian curvature satisfying the following
\[
R(X,\xi)Y=g(\xi,Y)X-g(X,Y)\xi
\]
for any pair of vector fields $X$ and $Y$ on $M$. In particular, the sectional
curvature of every section containing $\xi$ equals one.%

The first structure theorem on Sasakian manifolds states that

\begin{proposition}
\label{P21}(\cite{ru}, \cite{sp}) Let $(M,\eta,\xi,\Phi,g)$ be a compact
quasi-regular Sasakian manifold of dimension $2n+1$ and $Z$ denote the space
of leaves of the characteristic foliation $\mathcal{F}_{\xi}$ (just as
topological space). Then

(i) $Z$ carries the structure of a Hodge orbifold $\mathcal{Z=}(Z,\Delta)$
with an orbifold Kaehler metric $h$ and Kaehler form $\omega$ which defines an
integral class $[p^{\ast}\omega]$ \ in $H_{orb}^{2}(Z,\mathbf{Z)}$ in such a
way that $\pi:$ $(M,g)\rightarrow(Z,h)$ is an orbifold Riemannian submersion,
and a principal $S^{1}$-orbibundle ($V$-bundle) over $Z.$ Furthermore,it
satisfies $\frac{1}{2}d\eta=\pi^{\ast}(\omega).$The fibers of $\pi$ are geodesics.

(ii) $Z$ is also a $Q$-factorial, polarized, \textbf{normal projective}
algebraic variety.

(iii) $(M,\xi,g)$ is Sasaki-Einstein if and only if $(Z,h)$ is
Kaehler-Einstein with scalar curvature $4n(n+1).$

(iv) If $(M,\eta,\xi,\Phi,g)$ is regular then the orbifold structure is
trivial and $\pi$ is a principal circle bundle over a smooth projective
algebraic variety.

(v) As real cohomology classes, there is a relation between the basic Chern
class and orbifold chern class
\[
c_{k}^{B}(M):=c_{k}(\emph{F}_{\xi})=\pi^{\ast}c_{k}^{orb}(\mathbf{Z}).
\]

Conversely, let $\pi$: $M\rightarrow Z$ be a $U(1)$-orbibundle over a compact
Hodge orbifold $(Z,h)$ whose first Chern class is an integral class defined by
$[\omega_{Z}]$, and $\eta$ be a $1$-form with $\frac{1}{2}d\eta=\pi^{\ast
}\omega_{Z}$. Then $(M,\pi^{\ast}h+\eta\otimes\eta)$ is a Sasakian manifold if
all the local uniformizing groups inject into the structure group $U(1).$
\end{proposition}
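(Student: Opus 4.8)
The plan is to realize $Z$ as the quotient of $M$ by the locally free isometric $U(1)$-action generated by $\xi$, equip it with the induced orbifold and transverse Kähler structures, and then invoke the orbifold Kodaira--Baily embedding theorem to obtain projectivity. First I would observe that quasi-regularity makes every orbit of $\xi$ a closed circle, so $\xi$ integrates to an isometric $U(1)$-action which is locally free because $\xi$ is nowhere vanishing; the isotropy group $\Gamma_p\subset U(1)$ at any point $p$ is then finite cyclic. Applying the slice theorem to a small disk $D$ transverse to $\xi$ at $p$ identifies a neighborhood of $\pi(p)$ in $Z$ with $D/\Gamma_p$, which simultaneously produces the orbifold uniformizing charts and exhibits $\pi:M\rightarrow Z$ as a principal $S^1$-orbibundle. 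Since $\xi$ is a unit Killing field one has $\nabla_\xi\xi=0$, so the fibers are geodesics, which completes the structural claims in (i).

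Next I would descend the transverse Kähler data. Writing $g=\eta\otimes\eta+g^T$ with $g^T$ basic, the metric $h$ determined by $\pi^\ast h=g^T$ makes $\pi$ a Riemannian submersion. The form $\tfrac12 d\eta$ is basic, closed, positive and of transverse type $(1,1)$, hence descends to an orbifold Kähler form $\omega$ with $\tfrac12 d\eta=\pi^\ast\omega$. Because $\eta$ is a connection $1$-form for the orbibundle, its curvature $d\eta$ computes, up to normalization, the orbifold first Chern class of the bundle, which forces $[\pi^\ast\omega]$ to be an integral class in $H^2_{orb}(Z,\mathbf{Z})$; thus $(Z,\omega)$ is a Hodge orbifold. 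For (ii) I would then apply Baily's orbifold version of the Kodaira embedding theorem: a compact Hodge orbifold carrying an integral polarization embeds into projective space, so $Z$ is a normal projective algebraic variety, and it is $Q$-factorial and normal because its only singularities are the finite cyclic quotient singularities $D/\Gamma_p$, which are automatically of this type. Claim (iv) is the special case in which the action is free, so every $\Gamma_p$ is trivial, the orbifold structure is smooth, and $\pi$ becomes an honest principal circle bundle over a smooth projective variety.

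For (iii) I would run the O'Neill formulas for the Riemannian submersion $\pi$, which on horizontal vectors give $\mathrm{Ric}^T=\mathrm{Ric}_g+2\,g^T$; together with the Sasakian identity $\mathrm{Ric}_g(\cdot,\xi)=2n\,\eta$ this shows that $\mathrm{Ric}_g=2n\,g$ is equivalent to $\mathrm{Ric}_h=(2n+2)\,h$, a Kähler--Einstein condition whose trace is the scalar curvature $2n(2n+2)=4n(n+1)$. Claim (v) follows by identifying the normal bundle $\nu(\mathcal{F}_\xi)$, as a complex orbibundle, with $\pi^\ast T^{orb}Z$, so that the basic Chern classes are pulled back from the orbifold Chern classes of $Z$. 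Finally, the converse is the orbifold Boothby--Wang construction: given the $U(1)$-orbibundle over $(Z,h)$ together with a $1$-form $\eta$ satisfying $\tfrac12 d\eta=\pi^\ast\omega_Z$, the metric $\pi^\ast h+\eta\otimes\eta$ is Sasakian, and the hypothesis that all local uniformizing groups inject into $U(1)$ is exactly the condition guaranteeing that the total space is a smooth manifold rather than a genuine orbifold.

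The main obstacle I expect is making the orbifold package fully rigorous rather than the computations: one must verify that the transversal slices glue into a consistent $V$-manifold atlas, that $\eta$ is genuinely an orbibundle connection so that $d\eta$ represents an integral orbifold Chern class, and that Baily's embedding theorem applies verbatim in this orbifold setting. By comparison the O'Neill computation for (iii) and the bundle identification for (v) are routine, though constant-sensitive and convention-dependent.
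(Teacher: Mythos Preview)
The paper does not give its own proof of this proposition: it is stated as the ``first structure theorem on Sasakian manifolds'' and attributed to the references \cite{ru} and \cite{sp} (with background in \cite{bg}), so there is no in-paper argument to compare against. Your sketch is the standard proof one finds in those sources: the slice theorem for the locally free $U(1)$-action produces the orbifold atlas and the $V$-bundle structure, the basic transverse K\"ahler form descends and its class is integral because $\eta$ is an orbibundle connection, Baily's orbifold Kodaira embedding yields normal projectivity, O'Neill's formulas give the Einstein equivalence in (iii), and the orbifold Boothby--Wang construction handles the converse. Nothing in your outline is wrong, and the obstacles you flag (gluing of slices, integrality of the orbifold Chern class, applicability of Baily's theorem) are exactly the points where the cited references do the real work.
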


On the other hand, the second
structure theorem on Sasakian manifolds states that 

\begin{proposition}
(\cite{ru}) Let $(M,g)$ be a compact Sasakian manifold of dimension $2n+1$.
Any Sasakian structure $(\xi,\eta,\Phi,g)$ on $M$ is either quasi-regular or
there is a sequence of quasi-regular Sasakian structures $(M,\xi_{i},\eta
_{i},\Phi_{i},g_{i})$ converging in the compact-open $C^{\infty}$-topology to
$(\xi,\eta,\Phi,g).$ In particular, if $M$ admits an irregular Sasakian
structure, it admits many \textbf{locally free circle} actions.
\end{proposition}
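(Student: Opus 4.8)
The plan is to realize the Reeb flow inside a torus of isometries and then to perturb the Reeb field within the associated cone of admissible directions. First I would observe that, since $M$ is compact and $\xi$ is a Killing field of unit length, the isometry group $G=\mathrm{Isom}(M,g)$ is a compact Lie group (Myers--Steenrod). The closure in $G$ of the one-parameter flow $\{\exp(t\xi)\}_{t\in\mathbf{R}}$ is a compact connected abelian subgroup, hence a torus $T\cong T^{k}$ for some $k\ge 1$; because the conditions $\mathcal{L}_{V}\eta=0$, $\mathcal{L}_{V}\Phi=0$, $\mathcal{L}_{V}g=0$ are closed, every element of the Lie algebra of $T$ preserves the full Sasakian structure $(\eta,\xi,\Phi,g)$. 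If $k=1$ then each leaf of $\mathcal{F}_{\xi}$ is a circle and the structure is quasi-regular, so there is nothing to prove; I therefore assume $k\ge 2$, which is exactly the irregular case.

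Next I would describe the space of candidate Reeb fields. Let $\mathfrak{t}\cong\mathbf{R}^{k}$ be the Lie algebra of $T$ with integral lattice $\Lambda=\ker(\exp\colon\mathfrak{t}\to T)$, so $\xi\in\mathfrak{t}$, and define the Sasaki cone
\[
\mathfrak{t}^{+}=\{\zeta\in\mathfrak{t}:\eta(\zeta)>0\ \text{everywhere on}\ M\}.
\]
This is an open convex cone, and $\xi\in\mathfrak{t}^{+}$ since $\eta(\xi)=1$. For each $\zeta\in\mathfrak{t}^{+}$ I would produce a new Sasakian structure with Reeb field $\zeta$ by a deformation within the cone that changes only the Reeb direction: set $\eta_{\zeta}=\eta/\eta(\zeta)$, retain the subbundle $\mathcal{D}=\ker\eta=\ker\eta_{\zeta}$ with its transverse complex structure, let $\Phi_{\zeta}$ act by the given transverse $J$ on $\mathcal{D}$ and by $0$ on $\zeta$, and take the associated metric $g_{\zeta}=\eta_{\zeta}\otimes\eta_{\zeta}+\tfrac12\,d\eta_{\zeta}(\cdot,\Phi_{\zeta}\cdot)$. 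Because $\zeta\in\mathfrak{t}$ preserves $\eta$, one has $\mathcal{L}_{\zeta}\eta_{\zeta}=0$ and $\eta_{\zeta}(\zeta)=1$, whence $\iota_{\zeta}d\eta_{\zeta}=0$, so $\zeta$ is the Reeb field of $\eta_{\zeta}$; since the transverse holomorphic structure is unchanged and invariant under $\zeta$, the quadruple $(\zeta,\eta_{\zeta},\Phi_{\zeta},g_{\zeta})$ is again Sasakian.

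I would then exploit the arithmetic of the torus. A vector $\zeta\in\mathfrak{t}$ generates a closed (circle) subgroup precisely when the ray $\mathbf{R}\zeta$ meets $\Lambda$ nontrivially, i.e. points in a rational direction; such rays are dense because the normalized lattice directions $\lambda/|\lambda|$, $\lambda\in\Lambda\setminus\{0\}$, are dense in the unit sphere of $\mathfrak{t}$. Since $\mathfrak{t}^{+}$ is open and contains $\xi$, I can choose $\xi_{i}\to\xi$ in $\mathfrak{t}^{+}$ with each $\xi_{i}$ rational, hence quasi-regular; the structures $(\xi_{i},\eta_{i},\Phi_{i},g_{i})$ from the previous paragraph then converge to $(\xi,\eta,\Phi,g)$ in $C^{\infty}$ (the compact-open $C^{\infty}$-topology being the $C^{\infty}$-topology on the compact $M$), because the deformation depends smoothly on $\zeta$. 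Finally, each nonvanishing $\xi_{i}$ integrates to a circle action with finite isotropy, which furnishes the asserted locally free circle actions.

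The main obstacle, I expect, lies in the deformation step: one must verify rigorously that rescaling $\eta$ by $\eta(\zeta)^{-1}$ yields an honest \emph{Sasakian} metric for every $\zeta\in\mathfrak{t}^{+}$, i.e. that the transverse Kaehler data (integrable transverse $J$ together with a compatible transverse metric) survive the change of Reeb field, rather than merely a contact metric structure. This is exactly where the fact that $\zeta$ lies in the common isometry torus $T$, so that $\mathcal{L}_{\zeta}\Phi=0$ and $\mathcal{L}_{\zeta}\eta=0$, is indispensable; without it the normality condition defining a Sasakian structure could fail. Once this is established, the density of rational Reeb rays and the smooth dependence on $\zeta$ make the convergence and the locally-free conclusions routine.
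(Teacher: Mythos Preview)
Your proposal is correct and follows the standard argument for this structure theorem: embed the Reeb flow in a torus of automorphisms, identify the Sasaki cone $\mathfrak{t}^{+}$, and approximate the Reeb vector by rational directions therein. The one point you flag as delicate---that the rescaled contact form $\eta_{\zeta}=\eta/\eta(\zeta)$ yields a genuine Sasakian structure for each $\zeta\in\mathfrak{t}^{+}$---is indeed the crux, and your observation that $\mathcal{L}_{\zeta}\eta=0$ and $\mathcal{L}_{\zeta}\Phi=0$ (because $\zeta$ lies in the automorphism torus) is exactly what makes it work; this is the Type~I deformation of Sasakian structures treated, for instance, in Boyer--Galicki \cite{bg}.

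There is, however, nothing to compare against in the paper itself: the authors do not supply a proof of this proposition but simply quote it as the ``second structure theorem on Sasakian manifolds'' with a citation to Rukimbira \cite{ru}. Your sketch is essentially the argument one finds in the cited literature.
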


\subsection{The Foliated Normal Coordinate}

Let $(M,\eta,\xi,\Phi,g)$ be a compact Sasakian $(2n+1)$-manifold with
$g(\xi,\xi)=1$ and the integral curves of $\xi$ are geodesics. For any point
$p\in M$, we can construct local coordinates in a neighbourhood of $p$ which
are simultaneously foliated and Riemann normal coordinates (\cite{gkn}). That
is, we can find Riemann normal coordinates $\{x,z^{1},z^{2},\cdot\cdot
\cdot,z^{n}\}$ on a neighbourhood $U$ of $p$, such that $\frac{\partial
}{\partial x}=\xi$ on $U$. Let $\{U_{\alpha}\}_{\alpha\in A}$ be an open
covering of the Sasakian manifold and
\[
\pi_{\alpha}:U_{\alpha}\rightarrow V_{\alpha}\subset%
\mathbb{C}
^{n}%
\]
submersion such that $\pi_{\alpha}\circ\pi_{\beta}^{-1}:\pi_{\beta}(U_{\alpha
}\cap U_{\beta})\rightarrow\pi_{\alpha}(U_{\alpha}\cap U_{\beta})$ is
biholomorphic. On each $V_{\alpha},$ there is a canonical isomorphism
\[
d\pi_{\alpha}:D_{p}\rightarrow T_{\pi_{\alpha}(p)}V_{\alpha}%
\]
for any $p\in U_{\alpha},$ where $D=\ker\xi\subset TM.$ Since $\xi$ generates
isometries, the restriction of the Sasakian metric $g$ to $D$ gives a
well-defined Hermitian metric $g_{\alpha}^{T}$ on $V_{\alpha}.$ This Hermitian
metric in fact is K\"{a}hler. More precisely, let $z^{1},z^{2},\cdot\cdot
\cdot,z^{n}$ be the local holomorphic coordinates on $V_{\alpha}$. We pull
back these to $U_{\alpha}$ and still write the same. Let $x$ be the coordinate
along the leaves with $\xi=\frac{\partial}{\partial x}.$ Then we have the
foliation local coordinate $\{x,z^{1},z^{2},\cdot\cdot\cdot,z^{n}\}$ on
$U_{\alpha}\ $and $(D\otimes%
\mathbb{C}
)$ is spanned by the form
\[
Z_{\alpha}=\left(  \frac{\partial}{\partial z^{\alpha}}-\theta\left(
\frac{\partial}{\partial z^{\alpha}}\right)  \frac{\partial}{\partial
x}\right)  ,\ \ \ \alpha=1,2,\cdot\cdot\cdot,n.
\]
Moreover%
\[
\Phi=i\left(  \frac{\partial}{\partial z^{j}}+ih_{j}\frac{\partial}{\partial
x}\right)  \otimes dz^{j}+conj
\]
and
\[
\ \eta=dx-ih_{j}dz^{j}+ih_{\overline{j}}d\overline{z}^{j}.
\]
Here $h$ is basic : $\frac{\partial h}{\partial x}=0$ and $h_{j}%
=\frac{\partial h}{\partial z^{j}},h_{j\overline{l}}=\frac{\partial^{2}%
h}{\partial z^{j}\partial\overline{z}^{l}}$ with
\begin{equation}
\text{\textrm{The normal coordinate :} }h_{j}(p)=0,h_{j\overline{l}}%
(p)=\delta_{j}^{l},dh_{j\overline{l}}(p)=0.\label{AAA3}%
\end{equation}
A frame
\[
\{\frac{\partial}{\partial x},Z_{j}=\left(  \frac{\partial}{\partial z^{j}%
}+ih_{j}\frac{\partial}{\partial x}\right)  ,\ \ \ j=1,2,\cdot\cdot\cdot,n\}
\]
and the dual
\[
\{\eta,dz^{j},\ j=1,2,\cdot\cdot\cdot,n\}
\]
with
\[
\lbrack Z_{i},Z_{j}]=[\xi,Z_{j}]=0.
\]

Since $i(\xi)d\eta=0,$
\[
d\eta(Z_{\alpha},\overline{Z_{\beta}})=d\eta(\frac{\partial}{\partial
z^{\alpha}},\frac{\partial}{\overline{\partial}z^{\beta}}).
\]
Then the K\"{a}hler $2$-form $\omega_{\alpha}^{T}$ of the Hermitian metric
$g_{\alpha}^{T}$ on $V_{\alpha},$ which is the same as the restriction of the
Levi form $d\eta$ to $\widetilde{D_{\alpha}^{n}}$, the slice $\{x=$
\textrm{constant}$\}$ in $U_{\alpha},$ is closed. The collection of K\"{a}hler
metrics $\{g_{\alpha}^{T}\}$ on $\{V_{\alpha}\}$ is so-called a transverse
K\"{a}hler metric. We often refer to $d\eta$ as the K\"{a}hler form of the
transverse K\"{a}hler metric $g^{T}$ in the leaf space $\widetilde{D^{n}}.$

The Kaehler form $d\eta$ on $D$ and the Kaehler metric $g^{T}$ is define such
that
\[
g=g^{T}+\eta\otimes\eta
\]
and
\[
g_{i\overline{j}}^{T}=g^{T}(\frac{\partial}{\partial z^{i}},\frac{\partial
}{\partial\overline{z}^{j}})=d\eta(\frac{\partial}{\partial z^{i}},\Phi
\frac{\partial}{\partial\overline{z}^{j}})=2h_{i\overline{j}}.
\]
In terms of the normal coordinate, we have:%
\[
g^{T}=g_{i\overline{j}}^{T}dz^{i}d\overline{z}^{j},\omega=d\eta
=2ih_{i\overline{j}}dz^{i}\wedge d\overline{z}^{j}.
\]

The transverse Ricci curvature $Ric^{T}$ of the Levi-Civita connection
$\nabla^{T}$ associated to $g^{T}$ is
\[
Ric^{T}=Ric+2g^{T}%
\]
and%
\[
R^{T}=R+2n.
\]
The transverse Ricci form $\rho^{T}$
\[
\rho^{T}=Ric^{T}(J\cdot,\cdot)=-iR_{i\overline{j}}^{T}dz^{i}\wedge
d\overline{z}^{j}%
\]
with
\[
R_{i\overline{j}}^{T}=-\frac{\partial_{2}}{\partial z^{i}\partial\overline
{z}^{j}}\log\det(g_{\alpha\overline{\beta}}^{T})
\]
and it is a closed basic $(1,1)$-form
\[
\rho^{T}=\rho+2d\eta.
\]

\subsection{Basic Cohomology and
Type II Deformation in a Sasakian Manifold
}

\begin{definition}
Let $(M,\eta,\xi,\Phi,g)$ be a Sasakian $(2n+1)$-manifold. Define

A $p$-form $\gamma$ is called basic if%
\[
i(\xi)\gamma=0
\]
and
\[
\mathcal{L}_{\xi}\gamma=0.
\]

\end{definition}

Let $\Lambda_{B}^{p}$ be the sheaf of germs of basic $p$-forms and
\ $\Omega_{B}^{p}$ be the set of all global sections of $\Lambda_{B}^{p}$. It
is easy to check that $d\gamma$ is basic if $\gamma$ is basic. Set
$d_{B}=d|_{\Omega_{B}^{p}}.$ Then%
\[
d_{B}:\Omega_{B}^{p}\rightarrow\Omega_{B}^{p+1}.
\]
We then have the well-defined operators
\[
d_{B}:=\partial_{B}+\overline{\partial}_{B}%
\]
with
\[
\partial_{B}:\Lambda_{B}^{p,q}\rightarrow\Lambda_{B}^{p+1,q}%
\]
and
\[
\overline{\partial}_{B}:\Lambda_{B}^{p,q}\rightarrow\Lambda_{B}^{p,q+1}.
\]
Then for $d_{B}^{c}:=\frac{i}{2}(\overline{\partial}_{B}-\partial_{B}),$ we
have%
\[
d_{B}d_{B}^{c}=i\partial_{B}\overline{\partial}_{B},d_{B}^{2}=(d_{B}^{c}%
)^{2}=0.
\]

The basic Laplacian is defined by
\[
\Delta_{B}:=d_{B}d_{B}^{\ast}+d_{B}^{\ast}d_{B}.
\]
Then we have the basic de Rham complex $($\ $\Omega_{B}^{\ast},d_{B})$ and the
basic Dolbeault complex $($\ $\Omega_{B}^{p,\ast},\overline{\partial}_{B})$
and its cohomology group \ $H_{B}^{\ast}(M,\mathbf{R})$ (\cite{eka}]).

\begin{definition}
(i) We define the basic Cohomology of the foliation $F_{\xi}$by
\[
H_{B}^{\ast}(F_{\xi}):=H_{B}^{\ast}(M,\mathbf{R}).
\]
Then by transverse Hodge decomposition and transverse Serre duality
\[
H_{B}^{p,q}(F_{\xi})\simeq H_{B}^{q,p}(F_{\xi})
\]
and the cohomology of the leaf space $Z=M/U(1)$ to this basic cohomology of
the foliation :
\[
H_{orb}^{\ast}(Z,\mathbf{R})=H_{B}^{\ast}(F_{\xi}):=H_{B}^{\ast}%
(M,\mathbf{R}).
\]

(ii) ([34], Theorem 7.5.17) : Define the basic first Chern class $c_{1}%
^{B}(M)$ by
\[
c_{1}^{B}=[\rho^{T}]_{B}%
\]
and the transverse Einstein (Sasaki $\eta$-Einstein) equation up to a
$D$-homothetic deformation
\[
\lbrack\rho^{T}]_{B}=\varkappa\lbrack d\eta]_{B},\varkappa=-1,0,1.
\]
Basic $k$-th Chern class $c_{k}^{B}(M)$ is represented by a closed basic
$(k,k)$-form $\gamma_{k}$ which is determined by the formula :%
\[
\det(\mathbf{I}_{n}-\frac{1}{2\pi i}\Omega^{T})=1+\gamma_{1}+...+\gamma_{k}.
\]
Here $\Omega^{T}$ is the curvature $2$-form of type basic $(1,1)$ with respect
to the transverse connection $\nabla^{T}$.
\end{definition}

\begin{example}
Let $(M,\eta,\xi,\Phi,g)$ be a compact Sasakian $(2n+1)$-manifold. If $g^{T}$
is a transverse Kaehler metric on $M,$ then $h_{\alpha}=\det((g_{i\overline
{j}}^{\alpha})^{T})$ on $U_{\alpha}$ defines a basic Hermitian metric on the
canonical bundle $K_{M}^{T}$. The inverse $(K_{M}^{T})^{-1}$ of $K_{M}^{T}$ is
sometimes called the anti-canonical bundle. Its basic first Chern class
$c_{1}^{B}((K_{M}^{T})^{-1})$ is called the basic first Chern class of $M$ and
is often denoted by $c_{1}^{B}(M).$Then it follows from the previous result
that $c_{1}^{B}(M)$ that $c_{1}^{B}(M)=[Ric^{T}(\omega)]_{B}$ for any
transversal Kaehler metric $\omega$ on a Sasakian manifold $M$.
\end{example}

\begin{definition}
We define
Type II deformations of Sasakian structures
$(M,\eta,\xi,\Phi,g)$ by fixing the $\xi$ and varying $\eta$. That is, for
$\varphi\in\Omega_{B}^{0}$, define
\[
\widetilde{\eta}=\eta+d_{B}^{c}\varphi,
\]
then
\[
d\widetilde{\eta}=d\eta+d_{B}d_{B}^{c}\varphi=d\eta+i\partial_{B}%
\overline{\partial}_{B}\varphi
\]
and
\[
\widetilde{\omega}=\omega+i\partial_{B}\overline{\partial}_{B}\varphi.
\]

\end{definition}

Note that
we have
\textbf{the same transversal holomorphic foliation ( }$\xi$ is fixed) but with
\textbf{the new Kaehler structure} on the Kaehler cone $C(M)$ and new contact
bundle $\widetilde{D}$ : $\widetilde{\omega}=dd^{c}\widetilde{r}%
,\widetilde{r}=re^{\varphi}$. \textbf{The same holomorphic structure} :
$r\frac{\partial}{\partial r}=\widetilde{r}\frac{\partial}{\partial
\widetilde{r}};\xi=J($ $r\frac{\partial}{\partial r})$ and $\xi+ir\frac
{\partial}{\partial r}=\xi-i\Phi(\xi)$ is the holomorphic vector field on
$C(M).$ Moreover, we have%
\[%
\begin{array}
[c]{ccc}%
\widetilde{\Phi} & = & \Phi-\xi\otimes(d_{B}^{c}\varphi)\circ\Phi,\\
\widetilde{g} & = & d\widetilde{\eta}\circ(Id\otimes\widetilde{\Phi
})+\widetilde{\eta}\otimes\widetilde{\eta}.
\end{array}
\]
and
\[
\mathcal{L}_{\xi}\widetilde{\Phi}=\mathcal{L}_{\xi}\Phi=0.
\]

\section{Blow-up and Resolution of Foliation Singularities}

\subsection{Foliation Singularities}

In this section, parts of notions are due to the papers of Boyer-Galicki
(\cite{bg}) and Wang (\cite{w}). Let $(M,\eta,\xi,\Phi,g)$ be a compact
quasi-regular Sasakian manifold of dimension five and $Z$ denote the space of
leaves of the characteristic foliation $\mathcal{F}_{\xi}$. It follows from
the first srtucture theorem as in Proposition \ref{P21} that $Z$ is a compact
normal, orbifold surface locally given by charts written as quotients of
smooth coordinate charts. That is, $Z$ can be covered by open charts $Z=\cup
U_{i}.$ The orbifold charts on $(Z,U_{i},\varphi_{i})$ is defined by the local
uniformizing systems $(\widetilde{U_{i}},G_{i},\varphi_{i})$ centered at the
point $z_{i}$, where $G_{_{i}}$ is the local uniformizing finite group acting
on a smooth complex space $\widetilde{U_{i}} $ such that
\[
\varphi_{i}:\widetilde{U_{i}}\rightarrow U_{i}=\widetilde{U_{i}}/G_{_{i}}%
\]
is the biholomorphic map. A point $x$ of complex orbifold $Z$ whose isotropy
subgroup $\Gamma_{x}\neq Id$ is called a singular point. Those points with
$\Gamma_{x}=Id$ are called regular points. The set of singular points is
called the orbifold singular locus or orbifold singular set, and is denoted by
$\Sigma^{orb}(Z)$. Let $\Gamma\subset GL(2,\mathbf{C})$ be a finite subgroup.
Then the quotient space $\mathbf{C}^{2}/\Gamma$ is smooth if and only if
$\Gamma$ is a reflection group which fixes a hyperplane in $\mathbf{C}^{2}$.
By the first structure theorem, the underlying complex orbifold space
$\emph{Z}=(Z\emph{,U}_{i}\emph{)}$ is a normal, orbifold variety with the
algebro-geometric singular set $\Sigma(Z)$. Then $\Sigma(Z)\subset\Sigma
^{orb}(Z)$ and it follows that $\Sigma(Z)=\Sigma^{orb}(Z)$ if and only if none
of the local uniformizing groups $\Gamma_{i}$ of the orbifold $\emph{Z}%
=(Z\emph{,U}_{i}\emph{)}$ contain a refletion. Moreover if some $\Gamma_{i}$
contains a reflection, then the reflection fixes a hyperplane in
$\widetilde{\emph{U}_{i}}$ giving rise to a ramification divisor on
$\widetilde{\emph{U}_{i}}$ and a branch divisor on $Z.$ Then we have the
following definition regarding orbifold singular locus and algebro-geometric
singular locus.

\begin{definition}
\label{d2}(i) The branch divisor $\Delta$ of an orbifold $\mathbf{Z}%
=(Z\emph{,}\Delta)$ is a $Q$-divisor on $Z$ of the form
\[
\Delta=\sum_{\alpha}(1-\frac{1}{m_{\alpha}})D_{\alpha},
\]
where the sum is taken over all Weil divisors $D_{\alpha}$ that lie in the
orbifold singular locus $\Sigma^{orb}(Z)$, and $m_{\alpha}$ is the $gcd$ of
the orders of the local uniformizing groups taken over all points of
$D_{\alpha}$ and is called the ramification index of $D_{\alpha}.$

(ii) The orbifold structure $\mathbf{Z}=(Z\emph{,}\Delta)$ is called
\textbf{well-formed} if the fixed point set of every non-trivial isotropy
subgroup has \textbf{codimension at least two}. Then $Z$ is well-formed if and
only if its orbifold singular locus and algebro-geometric singular locus
coincide, equivalently $Z$ has no branch divisors.
\end{definition}

\begin{example}
Fo instance, the weighted projective $\mathbf{CP}(1;4;6)$ has a branch divisor
$\frac{1}{2}D_{0}=\{z_{0}=0\}.$ But $\mathbf{CP}(1;2;3)$ is a unramified
\textbf{well-formed} \ orbifold with two singular points, $(0;1;0) $ with
local uniformizing group the cyclic group $\mathbf{Z}_{2}$, and $(0;0;1)$ with
local uniformizing group $\mathbf{Z}_{3}$.
\end{example}

Let $(M,\xi,\alpha_{\xi})$ be a compact quasi-regular Sasakian $5$-manifold
and the leave space $Z:=M/S^{1}$ be an orbifold Kaehler surface. Denote by the
quotient map
\[
\pi:M\rightarrow Z
\]
as before, and call such a $5$-manifold $(M,\xi)$ an $S^{1}$-orbibundle. More
precisely, $M$ admits a locally free, effective $S^{1}$-action
\[
\alpha_{\xi}:S^{1}\times M\rightarrow M
\]
such that $\alpha_{\xi}(t)$ is orientation-preserving, for every $t\in S^{1}$.
Since $M$ is compact, $\alpha_{\xi}$ is proper and the isotropy group
$\Gamma_{p}$ of every point $p\in M$ is finite.

\begin{definition}
The principal orbit type $M_{\operatorname{reg}}$ corresponds to points in
$(M,\xi,\alpha_{\xi})$ with the trivial isotropy group and
$M_{\operatorname{reg}}\rightarrow M_{\operatorname{reg}}/S^{1}$ is a
principle $S^{1}$-bundle. Furthermore, the orbit $\mathbf{S}_{p}^{1}$ of a
point $p\in M$ is called a regular fiber if $p\in M_{\operatorname{reg}}$, and
a singular fiber otherwise. In this case, $M_{\mathrm{sing}}/S^{1}\simeq
\Sigma^{orb}(Z).$
\end{definition}

\textbf{The Proof of Theorem \ref{T34} : }

\begin{proof}
Since each orbit ${\mathsf{S_{p}^{1}}}$ in an $S^{1}$-orbibundle
$(M,\xi,\alpha_{\xi})$ admits an $S^{1}$-invariant neighborhood $U_{p}$ such
that it is $S^{1}$-equivalent to a standard $S^{1}$-structure $V_{p}%
:=S^{1}\times\mathbb{C}^{2}/\mu_{p}$, where $\mu_{p}$ is a linear
representation of $\Gamma_{p}$ of $p$ on the vector space $\mathbb{C}^{2}$,
and the $S^{1}$-action on $\mathbb{C}^{2}$ is induced by the Lie group
structure of $S^{1}$. Since $\alpha$ is effective and $\mu_{p}$ is faithful.
If in addition $\mathbb{C}^{2}$ is endowed with a complex structure, one can
enhance the notion of a standard $S^{1}$-structure, and define the standard
complex $S^{1}$-structure which is the standard $S^{1}$-structure $S^{1}%
\times\mathbb{C}^{2}/\mu_{p}$ with $\mu_{p}$ complex linear. Note that, up to
conjugation, a representation $\mu_{p}$ of $\mathbb{Z}_{r}$ on $\mathbb{C}%
^{2}$ is conjugate to
\begin{align*}
\mathbb{Z}_{r} &  \rightarrow\mathbb{C}^{2}\\
\gamma &  \mapsto%
\begin{bmatrix}
e^{2\pi i\frac{a_{1}}{r_{1}}} & 0\\
0 & e^{2\pi i\frac{a_{2}}{r_{2}}},
\end{bmatrix}
\end{align*}
for some positive integers $r_{1},r_{2}$ whose least common multipler is $r$,
and $a_{i},i=1,2$ are integers coprime to $r_{i},i=1,2$. If $r_{1},r_{2}<r$,
then $(M,\xi,\alpha_{\xi})$ has a non-discrete set of fibers; more precisely,
$M_{\mathrm{sing}}$ contains some $3$-dimensional submanifolds of $M$. In
particular, in the case where $M_{\mathrm{sing}}$ is discrete, we have%
\[
r_{1}=r_{2}=r.
\]
Up to change of generator, the representation $\mu_{p}$ can be normalized as
follows:
\begin{align}
\mathbb{Z}_{r} &  \rightarrow\mathbb{C}^{2}\label{2022B}\\
\gamma &  \mapsto%
\begin{bmatrix}
e^{2\pi i\frac{1}{r}} & 0\\
0 & e^{2\pi i\frac{a}{r}},
\end{bmatrix}
\nonumber
\end{align}
for some $a$ coprime to $r$. Then Theorem \ref{T34} follows easily.
\end{proof}

Now we come out with the following definition.

\begin{definition}
Given $p\in M_{\mathrm{sing}}$, the singular fiber $\mathbf{S}_{p}^{1}$ is a
foliation singularity of type $\frac{1}{r}(1,a)$ at $p$ with $r>0,a$ two
integers and $a$ coprime with $r$, if there exists an isomorphism $\Gamma
_{p}\simeq\mathbb{Z}_{r}=<\gamma>$ such that $\mu_{p}$ can be identified with
the representation (\ref{2022B}), up to conjugation. This corresponds to the
orbifold structure $\mathbf{Z}$ is well-formed where the fixed point set of
every non-trivial isotropy subgroup has codimension at least two.
\end{definition}

Since $M-M_{\operatorname{reg}}$ is a union of smooth submanifolds of $M$. The
present paper concerns mainly the case where $M_{\mathrm{sing}}%
=M-M_{\operatorname{reg}}$ is a discrete, and hence finite set.

\subsection{Local Model of $\frac{1}{k}(1,1)$-type Foliation Singularities}

Let $(M,\eta,\xi,\Phi,g)$ be a compact quasi-regular Sasakian $5$-manifold and
$\mathbf{S}_{p}^{1}$ be a discrete singular fiber of type $\frac{1}{k}(1,a)$
at $p\in M.$ Then there is an $\mathbf{S}^{1}$-equivariant neighborhood
$U_{p}$ of $\mathbf{S}_{p}^{1}$ and $\mathbf{S}^{1}$-equivalent to a standard
complex $\mathbf{S}^{1}$-structure $\mathbf{S}^{1}\times\mathbf{C}%
^{2}/\mathbf{Z}_{r}.$ In this subsection, we examine the $S^{1}$-structure of
a neighborhood of a singular fiber of $\frac{1}{k}(1,a)$ type. Then, by the
slice theorem as discussed in the previous subsection, there is an $S^{1}%
$-equivariant neighborhood $U_{p}$ of ${\mathsf{S_{p}^{1},}}$ $S^{1}
$-equivalent to a standard complex $S^{1}$-structure $S^{1}\times
\mathbb{C}^{2}/\mathbb{Z}_{r}$. The $S^{1}$-action on $S^{1}\times
_{\mathbb{Z}_{r}}\mathbb{C}^{2}$ is induced by the $S^{1}$-action on
$S^{1}\times\mathbb{C}^{2}$ given by
\begin{align*}
S^{1}\times S^{1}\times\mathbb{C}^{2} &  \rightarrow S^{1}\times\mathbb{C}%
^{2}\\
(t,w,z_{1},z_{2}) &  \mapsto(tw,z_{1},z_{2}).
\end{align*}

Now we consider the local diffeomorphism
\begin{align*}
\pi:S^{1}\times\mathbb{C}^{2} &  \rightarrow S^{1}\times\mathbb{C}^{2}\\
(w,z_{1},z_{2}) &  \mapsto(w^{k},w^{-1}z_{1},w^{-a}z_{2})
\end{align*}
which descends to a diffeomorphism from $S^{1}\times\mathbb{C}^{2}%
/\mathbb{Z}_{r}$ to $S^{1}\times\mathbb{C}^{2}$. Via the diffeomorphism, one
can identify the neighborhood $U_{p}$ with $S^{1}\times\mathbb{C}^{2}$ with
the $S^{1}$-action on $S^{1}\times\mathbb{C}^{2}$ given by
\begin{align}
S^{1}\times S^{1}\times\mathbb{C}^{2} &  \rightarrow S^{1}\times\mathbb{C}%
^{2}\label{eq:s1_action_nbhd_singular}\\
(t,u,v_{1},v_{2}) &  \mapsto(t^{k}u,t^{-1}v_{1},t^{-a}v_{2}).\nonumber
\end{align}

The above identification also yields an $S^{1}$-equivalence between a closed
tubular neighborhood of ${\mathsf{S_{p}^{1}}}$ and $S^{1}\times D^{4}$, where
$D^{4}$ is the $4$-ball in $\mathbb{C}^{2}$, and the $S^{1}$-action on
$S^{1}\times D^{4}$ is given by
\begin{align}
S^{1}\times S^{1}\times D^{4} &  \rightarrow S^{1}\times D^{4}%
\label{eq:s1_action_closed_nbhd_singular}\\
(t,u,v_{1},v_{2}) &  \mapsto(t^{k}u,t^{-1}v_{1},t^{-a}v_{2}).\nonumber
\end{align}

Now we are ready to consider the resolution of a singular fiber
${\mathsf{S_{\mathbf{p}}^{1}}}$ of type $\frac{1}{k}(1,1)$ at $p\in M$.
Consider the space ${\mathcal{S}(-k)}$ given by
\[
\{\big((z_{1},z_{2}),(u_{1},u_{2})\big)\in\mathbb{C}^{2}\times S^{3}\mid
z_{1}u_{2}^{k}=z_{2}u_{1}^{k}\},
\]
where $S^{3}$ is the standard sphere in $\mathbb{C}^{2}$ and oriented using
inward normal. ${\mathcal{S}(-k)}$ admits a natural free $S^{1}$-action given
by the restriction of
\begin{align}
\alpha_{k}:S^{1}\times\mathbb{C}^{2}\times S^{3} &  \rightarrow\mathbb{C}%
^{2}\times S^{3}\label{eq:s1_action_on_Sk}\\
(t,z_{1},z_{2},u_{1},u_{2}) &  \mapsto(z_{1},z_{2},t^{-1}u_{1},t^{-1}%
u_{2}).\nonumber
\end{align}
${\mathcal{S}(-k)}$ can be identified with $\mathbb{C}\times S^{3}$ via the
diffeomorphism
\begin{align}
\mathbb{C}\times S^{3} &  \rightarrow{\mathcal{S}(-k)}%
\label{eq:identification_Sk_CS3}\\
(z,(w_{1},w_{2})) &  \mapsto(zw_{1}^{k},zw_{2}^{k},w_{1},w_{2})\nonumber
\end{align}
which induces a free $S^{1}$-action $\beta_{k}$ on $\mathbb{C}\times S^{3}$:
\begin{align}
\beta_{k} &  :S^{1}\times\mathbb{C}\times S^{3}\rightarrow\mathbb{C}\times
S^{3}\label{eq:s1_action_on_tildeU}\\
(t,z,w_{1},w_{2}) &  \mapsto(t^{k}z,t^{-1}w_{1},t^{-1}w_{2}).\nonumber
\end{align}
We remark that the quotient of the embedding ${\mathcal{S}(-k)}\subset
\mathbb{C}^{2}\times S^{3}$ by $\alpha_{0}$ is the embedding ${\mathcal{O}%
(-k)}\subset\mathbb{C}^{2}\times\mathbf{C}P^{1}$, and furthermore, there is a
pullback diagram%
\begin{equation}%
\begin{array}
[c]{ccccc}%
\overline{M}\supset\mathcal{S}(-k)\thickapprox\mathbf{C}\times\mathbf{S}^{3} &
\overset{\pi_{2}}{\rightarrow} & \mathbf{S}^{3} & \overset{}{\longrightarrow}
& M\supset\mathbf{S}_{\mathbf{p}}^{1}\\
\downarrow &  & \downarrow-H &  & \downarrow\pi_{M}\\
Z\supset O(-k) & \overset{\pi_{d}}{\rightarrow} & \mathbf{CP}^{1} &
\overset{}{\longrightarrow} & Z\ni\pi_{M}(\mathbf{p}).
\end{array}
\label{2021a}%
\end{equation}
Here $\pi_{2}$ is the projection onto the second factor, and $\mathsf{H}$ is
the Hopf fibration. Note that $D^{2}\times\mathbf{S}^{3}\simeq\mathbf{C\times
S}^{3}$ can be identified with the subspace of $\mathbf{C}^{2}\times
\mathbf{S}^{3}$ given by%
\[
\{(z_{1},z_{2}),(u_{1},u_{2})|\text{ }z_{1}u_{2}^{k}-z_{2}u_{1}^{k}=0\}
\]
which is the coordinate for the canonical line bundle $\mathcal{S}(-k)$ over
$\mathbf{S}^{3}$and $\overline{H}$ can be realized by projecting down the
first three coordinates to with the subspace of $\mathbf{C}^{2}\times
\mathbf{CP}^{1}$ given by%
\[
\{(z_{1},z_{2},[l_{1},l_{2}])|\text{ }z_{1}l_{2}^{k}-z_{2}l_{1}^{k}=0\}
\]
which is the coordinate for the canonical line bundle $\mathcal{O}(-k)$ over
$\mathbf{CP}^{1}.$

Now, to resolve the singular fiber ${\mathsf{S_{\mathbf{p}}^{1}}}$ amounts to
replacing $U$ with ${\mathcal{S}(-k)}$. To see how this is done, we observe
that there is an $S^{1}$-equivariant embedding
\begin{align}
\Psi:S^{1}\times(\mathbb{C}^{2}-\{0,0\}) &
\xrightarrow{\iota} {\mathcal{S}(-k)}\subset\mathbb{C}^{2}\times
S^{3}\label{eq:s1_equivariant_embedding}\\
(s,z_{1},z_{2}) &  \mapsto(sz_{1}^{k},sz_{2}^{k},\frac{z_{1}}{\sqrt
{|z_{1}|^{2}+|z_{2}|^{2}}},\frac{z_{2}}{\sqrt{|z_{1}|^{2}+|z_{2}|^{2}}%
}).\nonumber
\end{align}
The complement of the image in $\tilde{U}$ is precisely the $3$-sphere
$\{0,0\}\times S^{3}$ where $\alpha_{0}$ restricts to give the negative Hopf
fibration $-\mathsf{H}$.

We define $(M^{\prime},\alpha^{\prime})$ to be the new $S^{1}$-orbibundle
given by gluing $M-{\mathsf{S_{\mathbf{p}}^{1}}}$ and ${\mathcal{S}(-k)}$ via
the identification between $U-{\mathsf{S_{\mathbf{p}}^{1}}}$ and
${\mathcal{S}(-k)}-\{0,0\}\times S^{3}$ given in above with $\alpha^{\prime}$
induced by $\alpha_{\xi}$ and $\alpha_{k}$. In other words, we remove the
singular fiber by replacing it with a negative Hopf fibration. It follows that

\begin{lemma}
\label{L52} $(M^{\prime},\alpha^{\prime})$ is again a compact quasi-regular
Sasakian $5$-manifold.
\end{lemma}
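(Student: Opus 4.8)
The plan is to verify in turn the three defining ingredients of a compact quasi-regular Sasakian $5$-manifold — a smooth compact manifold structure, a quasi-regular (locally free, closed-orbit) $S^1$-action, and a compatible transverse Kähler structure — and then to package the last of these by invoking the converse half of the first structure theorem (Proposition \ref{P21}).

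First I would check that $M'$ is a smooth compact $5$-manifold. The two pieces $M\setminus\mathbf{S}_p^1$ and $\mathcal{S}(-k)$ are glued along the $S^1$-equivariant diffeomorphism identifying $U\setminus\mathbf{S}_p^1$ with $\mathcal{S}(-k)\setminus(\{0,0\}\times S^3)$ furnished by the embedding $\Psi$ of (\ref{eq:s1_equivariant_embedding}); since this is a diffeomorphism of open sets, the quotient inherits a well-defined smooth structure. Compactness follows because the unbounded directions of $\mathcal{S}(-k)\approx\mathbb{C}\times S^3$ are glued into the bounded tubular neighborhood $U$, while the added exceptional set $\{0,0\}\times S^3$ (the negative Hopf fibration) is itself compact. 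Next I would verify that $\alpha'$ is locally free with all orbits closed. Away from the surgery region $\alpha'$ restricts to $\alpha_\xi$, which is quasi-regular by hypothesis; on $\mathcal{S}(-k)$ the action $\alpha_k$ (equivalently $\beta_k$ of (\ref{eq:s1_action_on_tildeU})) is free, so every orbit in the inserted piece is a regular fibre. Because $\Psi$ is $S^1$-equivariant, these patch to a single locally free action with closed orbits, and indeed the singular fibre $\mathbf{S}_p^1$ has been removed; hence $M'$ is quasi-regular.

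The heart of the argument is the Sasakian structure, for which I would pass to the leaf space $Z':=M'/S^1$. Via the pullback square (\ref{2021a}) I would identify $Z'$ as the surface obtained from $Z$ by replacing the $\tfrac{1}{k}(1,1)$ cyclic quotient singularity at $\pi_M(p)$ with its minimal resolution, namely a neighborhood of the zero section of $\mathcal{O}(-k)\to\mathbb{CP}^1$; the modification is an isomorphism away from the exceptional curve, so $Z'$ is again a compact normal projective — hence Hodge — orbifold surface. I would then construct an orbifold Kähler form $\omega'$ on $Z'$ by patching a standard Kähler form on the $\mathcal{O}(-k)$ model (built from an explicit potential near the exceptional $\mathbb{CP}^1$) to a transverse Kähler form inherited from $(M,\eta,g)$ outside $U$, arranged so that $[\omega']$ represents the Euler class of the orbibundle $M'\to Z'$. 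Taking $\eta'$ to be the corresponding connection $1$-form with $\tfrac12 d\eta'=\pi'^{\ast}\omega'$, and noting that the local uniformizing groups inject into $U(1)$ (trivially along the free locus and, at any remaining singular fibres, by the $\tfrac{1}{k}(1,a)$ normalization (\ref{2022B})), the converse direction of Proposition \ref{P21} endows $M'$ with a Sasakian structure.

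The main obstacle is the gluing of the transverse Kähler data across the surgery annulus: one must produce a single global basic Kähler form on $Z'$ that agrees with the fixed model on $\mathcal{O}(-k)$ near the exceptional curve, agrees with the original transverse metric outside $U$, and represents the integral orbifold class matching the Euler class of $M'\to Z'$ required by Proposition \ref{P21}. This calls for a cutoff/potential-patching construction, and the delicate point is verifying positivity of the interpolated $(1,1)$-form throughout the overlap region while keeping its cohomology class correct.
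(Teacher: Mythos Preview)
The paper does not supply a proof of this lemma: it is asserted immediately after the gluing construction with only the phrase ``It follows that''. Your outline is therefore considerably more detailed than what the authors provide, and its overall strategy --- verify that the glued space is a compact smooth $5$-manifold carrying a locally free $S^1$-action with closed orbits, identify the leaf space $Z'$ with the minimal resolution of the $\tfrac{1}{k}(1,1)$ point of $Z$, and then invoke the converse direction of Proposition~\ref{P21} --- is the natural way to justify the claim and is consistent with how the paper uses the diagram~(\ref{2021a}).

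Your identification of the obstacle is accurate: the only step that is not purely formal is producing a Hodge orbifold K\"ahler class on $Z'$ representing the Euler class of the $S^1$-orbibundle $M'\to Z'$. Two remarks may streamline this. First, since $Z'$ is obtained from $Z$ by a single blow-up at a (quotient) point, $Z'$ is again projective, so the existence of \emph{some} integral K\"ahler class is automatic; the issue is only matching the specific Euler class. Second, that Euler class is determined by the gluing: away from the exceptional curve it agrees with the class of the original $\omega_h$ on $Z$, and along the exceptional $\mathbb{CP}^1$ it is the class of the Hopf fibration, i.e.\ the restriction of the tautological class of $\mathcal{O}(-k)$. A class with these restrictions is precisely $\phi^{\ast}[\omega_h]-\varepsilon[E]$ for small $\varepsilon>0$ (compare the construction of $\omega_M$ in Lemma~\ref{L63}), and positivity of such a form is the standard blow-up K\"ahler-form construction rather than a genuine new difficulty. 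With this observation the cutoff/patching you describe is routine, and the hypothesis on injection of local uniformizing groups in Proposition~\ref{P21} is satisfied exactly as you say.
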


\subsubsection{The Coordinates on $\mathcal{S}(-k)$}

Let $U_{i},i=1,2$ be two copies of $\mathbf{C}\times\mathbf{C}\times
\mathbf{S}^{1}$ denoted by $(x_{i},v_{i},s_{i})$ the coordinate of a point in
$U_{i}$. Consider the embedding%
\[%
\begin{array}
[c]{ccl}%
U_{1} & \overset{}{\rightarrow} & \mathcal{S}(-k)\\
(x_{1},v_{1},s_{1}) & \mapsto & (v_{1},v_{1}x_{1}^{k},(\frac{s_{1}}%
{\sqrt{1+|x_{1}|^{2}}},\frac{s_{1}x_{1}}{\sqrt{1+|x_{1}|^{2}}}))
\end{array}
\]
and
\[%
\begin{array}
[c]{ccl}%
U_{2} & \overset{}{\rightarrow} & \mathcal{S}(-k)\\
(x_{2},v_{2},s_{2}) & \mapsto & (v_{2}x_{2}^{k},v_{2},(\frac{s_{2}x_{2}}%
{\sqrt{1+|x_{2}|^{2}}},\frac{s_{2}}{\sqrt{1+|x_{2}|^{2}}}))
\end{array}
\]
which induce coordinate charts covering $\mathcal{S}(-k)$ with the transition
function given by
\[
x_{2}=x_{1}^{-1},v_{2}=v_{1}x_{1}^{k},s_{2}=s_{1}\frac{x_{1}}{x_{2}}.
\]

Note that the quotients $\overline{U}_{i}$ of $U_{i},i=1,2,$ give us the
standard coordinates of $\mathcal{O}(-k),$ namely,%
\[%
\begin{array}
[c]{ccl}%
\overline{U}_{1} & \overset{}{\rightarrow} & \mathcal{O}(-k)\\
(x_{1},v_{1}) & \mapsto & (v_{1},v_{1}x_{1}^{k},[1,x_{1}])
\end{array}
\]
and
\[%
\begin{array}
[c]{ccl}%
\overline{U}_{i2} & \overset{}{\rightarrow} & \mathcal{O}(-k)\\
(x_{2},v_{2}) & \mapsto & (v_{2}x_{2}^{k},v_{2},[x_{2},1]),
\end{array}
\]
where $[l_{1},l_{2}]$ is the homogeneous coordinate of a point in
$\mathbf{CP}^{1}$. In particular, we have $x_{1}=\frac{l_{2}}{l_{1}}%
;x_{2}=\frac{l_{1}}{l_{2}}$, and $v_{i},i=1,2$ correspond to vectors in a
fiber of $\mathcal{O}(-k).$

\subsubsection{\textbf{Distance to the Zero Section}}

Let $\mathbf{S}_{\mathbf{0}}^{3}$ be the zero section of
\[
\pi_{2}:\mathcal{S}(-k)\subset\mathbf{C}^{2}\mathbf{\times S}^{3}%
\rightarrow\mathbf{S}^{3}%
\]
\ and endow $\mathcal{S}(-k)$ with the metric induced by the standard metric
on $\mathbf{C}^{2}\mathbf{\times S}^{3}.$ Then the distance square
$dist^{2}(y,\mathbf{S}^{3})$ between a point $y\in\mathcal{S}(-k)$ and
$\mathbf{S}^{3}$ expressed in terms of the coordinates charts is
\[
dist^{2}(y,\mathbf{S}_{0}^{3})=(1+|x_{i}|^{2k})|v_{i}|^{2},
\]
where $y=(x_{i},v_{i},s_{i})\in U_{i},i=1,2.$

Now we define the section $s$ of $[V\mathcal{]}$ over $\mathcal{S}(-k)$ by%
\[
s_{i}:U_{i}\rightarrow\mathbf{C,}\text{ \ \ }s_{i}=v_{i}.
\]
Note that $s$ is vanishing along the \ exceptional basic divisor $V$ and the
hermitian metric $h$ on $[V]$ such that
\[
h_{i}=(\frac{|l_{1}|^{2k}+|l_{2}|^{2k}}{|l_{i}|^{2k}})=(1+|x_{i}|^{2k})
\]
on $U_{i}.$ If furthermore the composition
\[%
\begin{array}
[c]{ccc}%
\mathcal{S}(-k)\thickapprox\mathbf{C}\times\mathbf{S}^{3} & \overset{\pi
_{2}}{\rightarrow} & \mathbf{S}^{3}\\
&  & \downarrow-H\\
&  & \mathbf{CP}^{1}%
\end{array}
\]
send $y$ to $[l_{1},l_{2}],$ then
\begin{equation}
dist^{2}(y,\mathbf{S}^{3})=(\frac{|l_{1}|^{2k}+|l_{2}|^{2k}}{|l_{i}|^{2k}%
})|v_{i}|^{2}:=|s|_{h}^{2}\label{21}%
\end{equation}
where $y\in U_{i}.$

\subsubsection{Distance to the Singular Fiber}

The standard metric on $\mathbf{C}^{2}\mathbf{\times S}^{3}$ also induces a
metric on $U-\mathbf{S}_{\mathbf{p}}^{1}=$ $\mathbf{S}^{1}\times
(\mathbf{C}^{2}-\{(0,0)\})$ via
\[%
\begin{array}
[c]{ccl}%
\mathbf{S}^{1}\times(\mathbf{C}^{2}-\{(0,0)\}) & \overset{i}{\rightarrow} &
\mathcal{S}(-k)=\mathbf{C}\times\mathbf{S}^{3}\subset\mathbf{C}^{2}%
\times\mathbf{S}^{3}.
\end{array}
\]
In particular, given a point $x=(s,z_{1},z_{2})$, the image $\Psi\circ\tau(t)$
of the ray $\tau(t)=(s,tz_{1},tz_{2}),0\leq t\leq1$ is a geodesic
\[
(sz_{1}^{k}t^{k},sz_{2}^{k}t^{k},\frac{z_{1}}{\sqrt{|z_{1}|^{2}+|z_{2}|^{2}}%
},\frac{z_{2}}{\sqrt{|z_{1}|^{2}+|z_{2}|^{2}}})\subset\mathbf{C}^{2}%
\times\mathbf{S}^{3}%
\]
away from zero, so the distance square to the singular fiber is%
\begin{equation}
dist^{2}(x,\mathbf{S}_{p}^{1})=|z_{1}|^{2k}+|z_{2}|^{2k}:=r_{\mathbf{S}%
_{\mathbf{p}}^{1}}^{2k}.\label{22}%
\end{equation}

Furthermore, $\Psi\circ\tau(t)$ is a geodesic from $\Psi(x)\in\mathcal{S}(-k)$
to the zero section $\mathbf{S}_{\mathbf{0}}^{3}$. Then
\begin{equation}
\Psi^{\ast}(dist^{2}(\Psi(x),\mathbf{S}^{3}))=dist^{2}(x,\mathbf{S}_{p}%
^{1}).\label{2022-9}%
\end{equation}

\subsubsection{Foliation Blow-up and Sasaki Castelnuovo's Contraction Theorem}

As in (\ref{2021a}) with $k=1,$ we pull back a negative Hopf fibration $-H$
along the canonical disk bundle $\pi_{d}:O(-1)\rightarrow\mathbf{CP}^{1}$, we
get a $\mathbf{S}^{1}$-bundle $\mathcal{S}(-1)$ over $O(-1)$, which is
necessarily diffeomorphic to $\mathbf{S}^{3}\times D^{2}$, since the only
$D^{2}$-bundle over $\mathbf{S}^{3}$ is the trivial one :
\begin{equation}%
\begin{array}
[c]{ccccc}%
\overline{M}\supset\mathcal{S}(-1)=\mathbf{S}^{3}\times D^{2} & \rightarrow &
\mathbf{S}^{3} & \overset{\psi|_{V}}{\longrightarrow} & \mathbf{S}_{p}%
^{1}\subset\mathbf{S}^{1}\times D^{4}\subset M\\
\downarrow &  &  &  & \downarrow\pi\\
\overline{Z}\supset O(-1) & \rightarrow & \mathbf{CP}^{1} & \overset{\phi
|_{E}}{\longrightarrow} & z_{p}=\pi(p)\in Z.
\end{array}
\label{2021b}%
\end{equation}
Here $\phi:\overline{Z}\rightarrow Z$ is a blow-up of $Z$ at a nonsingular
point $z_{p}=\pi(p)\in Z.$

As the first consequence of previous subsections (cf Lemma \ref{L52}, etc), it
follows that we have the following foliation blow-up along a regular fibre
$\mathbf{S}_{p}^{1}$ of type $\frac{1}{k}(1,1)$ with $k=1:$

\begin{theorem}
\label{D31} Let $(M,\eta,\xi,\Phi,g)$ be a compact regular Sasakian
$5$-manifold and $Z$ denote the space of leaves of the characteristic
foliation which is a smooth projective surface with the regular $\mathbf{S}%
^{1}$-principal bundle over $M$. Then

\begin{enumerate}
\item There is a compact regular Sasakian $5$-manifold $\overline{M}$ which is
obtained by gluing
\[
M-\overset{\circ}{\mathcal{N}}({\mathsf{S_{\mathbf{p}}^{1}}})
\]
and $\mathbf{S}^{3}\times D^{2}$ along their boundary via an
orientation-preserving diffeomorphism%
\[
f:\mathbf{S}^{3}\times\mathbf{S}^{1}\simeq\partial(\mathbf{S}^{3}\times
D^{2})\rightarrow-\partial(M-\overset{\circ}{\mathcal{N}}(\mathbf{S}_{p}%
^{1}))\simeq\mathbf{S}^{3}\times\mathbf{S}^{1}.
\]
In other words, we have the blow-up map $\psi$
\[
\psi:\overline{M}\rightarrow M
\]
and $\overline{M}$ is called the foliation blowing up along a regular fibre
$\mathbf{S}_{p}^{1}$ at a point $p\in M$
\[
\overline{M}=(M-\overset{\circ}{\mathcal{N}}(\mathbf{S}_{p}^{1}))\cup
_{f}\mathbf{S}^{3}\times D^{2}%
\]

\item For the irreducible transverse exceptional divisor $V\in Exc(\psi)$ and
the irreducible exceptional divisor $E\in Exc(\phi)$, the foliation
$(-1)$-curve $V$ of such a regular fibre $\mathbf{S}_{p}^{1}$ at a point $p\in
M$ is a regular $\mathbf{S}^{1}$-principal bundle over a compact Riemann
sphere $\mathbf{S}^{2}$. Moreover, $E$ is biholomorphic to $\mathbf{CP}^{1}$
and $V$ is transverse biholomorphic to $\mathbf{S}^{3}.$ Furthermore it
follows from (\ref{A1}) and (\ref{A2}) that
\[
E\cdot E=-1
\]
and then
\[
V\cdot V=-1.
\]

\item For the foliation $(-1)$-curve blow-up map $\psi:\overline{M}\rightarrow
M$, it follows from (\ref{21}), (\ref{22}) and (\ref{2022-9}) that
\begin{equation}
\psi^{\ast}r_{\mathbf{S}_{p}^{1}}^{2}=|s|_{h}^{2}.\label{23}%
\end{equation}
In general, for the foliation $(-k)$-curve contraction map, we have
\begin{equation}
\psi^{\ast}r_{\mathbf{S}_{p}^{1}}^{2k}=|s|_{h}^{2}.\label{24}%
\end{equation}

\end{enumerate}
\end{theorem}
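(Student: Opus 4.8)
The plan is to read off all three assertions from the local models built in the previous subsections, in each case reducing the foliated statement to its counterpart on the leaf space $Z$ via the orbifold Riemannian submersion $\pi:M\to Z$, and then lifting. For part (1) I would specialize the gluing construction to the regular case $k=1$. Since $\mathbf{S}_p^1$ is a regular fibre, $z_p=\pi(p)$ is a smooth point of the projective surface $Z$, and its ordinary blow-up $\phi:\overline{Z}\to Z$ replaces $z_p$ by the exceptional curve $E=\mathbf{CP}^1$ with total space the line bundle $\mathcal{O}(-1)$ in a neighbourhood of $E$. Lifting through $\pi$, the tubular-neighbourhood $\overset{\circ}{\mathcal{N}}(\mathbf{S}_p^1)$ of the regular fibre is replaced by $\mathcal{S}(-1)$; because the only $D^2$-bundle over $\mathbf{S}^3$ is the trivial one, $\mathcal{S}(-1)\cong\mathbf{S}^3\times D^2$, and the gluing map $f$ is the orientation-preserving boundary diffeomorphism induced by the $S^1$-equivariant embedding $\Psi$ of (\ref{eq:s1_equivariant_embedding}). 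That the resulting $\overline{M}=(M-\overset{\circ}{\mathcal{N}}(\mathbf{S}_p^1))\cup_f \mathbf{S}^3\times D^2$ carries a compact (here in fact regular) Sasakian structure is exactly Lemma \ref{L52} with $k=1$, and the blow-down $\psi:\overline{M}\to M$ is the map covering $\phi$ in the pullback diagram (\ref{2021b}): it restricts to $\Psi^{-1}$ (an isomorphism) away from the exceptional set and collapses that set onto $\mathbf{S}_p^1$.

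For part (2) I would identify the exceptional objects directly from the model. The exceptional divisor $V=Exc(\psi)$ is the zero section $\mathbf{S}_0^3$ of $\pi_2:\mathcal{S}(-1)\to\mathbf{S}^3$, on which $\alpha_0$ restricts to the negative Hopf fibration $-\mathsf{H}$; hence $V$ is a regular $S^1$-principal bundle over $V/S^1=\mathbf{S}^3/\mathbf{S}^1=\mathbf{CP}^1=E$, i.e.\ a circle bundle over $\mathbf{S}^2$, with $V$ transverse biholomorphic to $\mathbf{S}^3$ and $E$ biholomorphic to $\mathbf{CP}^1$. The self-intersection is computed downstairs: since $\phi$ is the blow-up of the smooth surface $Z$ at a point, $E\cdot E=-1$, and transporting this through the orbibundle identity $c_1^B(M)=\pi^\ast c_1^{orb}(Z)$ of Proposition \ref{P21}(v) together with $V=\pi^{-1}(E)$ gives $V\cdot V=E\cdot E=-1$; this is precisely what the relations (\ref{A1}) and (\ref{A2}) encode.

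For part (3) I would simply combine the three distance computations already recorded. Setting $y=\Psi(x)\in\mathcal{S}(-k)-V$, equation (\ref{21}) gives $dist^2(y,\mathbf{S}^3)=|s|_h^2$, equation (\ref{22}) gives $dist^2(x,\mathbf{S}_p^1)=r_{\mathbf{S}_p^1}^{2k}$, and equation (\ref{2022-9}) intertwines the two distances under $\Psi$. Since $\psi$ restricts to $\Psi^{-1}$ on the complement of the exceptional set (under the identification $\mathcal{S}(-k)\approx\mathbf{C}\times\mathbf{S}^3$), pulling back the function $r_{\mathbf{S}_p^1}^{2k}$ from $M$ yields $\psi^\ast r_{\mathbf{S}_p^1}^{2k}=|s|_h^2$ on $\overline{M}-V$, which extends continuously across $V$; this is (\ref{24}), and taking $k=1$ gives (\ref{23}).

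The main obstacle I anticipate lies in part (2): whereas the identifications $V\cong\mathbf{S}^3$ and $E\cong\mathbf{CP}^1$ are immediate from the local model, making the equality $V\cdot V=E\cdot E$ rigorous requires a precise definition of the foliation (transverse) self-intersection of an invariant $3$-submanifold and a verification that this pairing is compatible with ordinary intersection on $Z$ under $\pi$. This is where (\ref{A1}) and (\ref{A2}) do the real work: one must check that a basic representative of the class of $V$ pushes forward to the class of $E$, so that the $S^1$-orbibundle projection preserves the intersection pairing and the classical value $E\cdot E=-1$ transfers faithfully to the foliated setting. Everything else is bookkeeping on top of Lemma \ref{L52} and the distance formulas.
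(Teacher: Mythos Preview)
Your proposal is correct and matches the paper's approach exactly: the paper presents Theorem \ref{D31} not with a separate proof but explicitly ``as the first consequence of previous subsections (cf Lemma \ref{L52}, etc)'', and your write-up simply unpacks which ingredients from those subsections feed into each of the three parts. Your identification of the one genuine subtlety---that the equality $V\cdot V=E\cdot E$ rests on the compatibility of the transverse intersection pairing with the ordinary one on $Z$ encoded in (\ref{A1}) and (\ref{A2})---is also precisely where the paper punts to those appendix formulas.
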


In particular, let $X$ and $Y$ be smooth projective surfaces with the
Castelnuovo's Contraction map $\phi:X\rightarrow Y,$ we have the following
lifting transverse contraction morphism $\psi:M\rightarrow N$ via the regular
$S^{1}$-principal bundles $\pi_{M}$ and $\pi_{N}$ \ as in
\[%
\begin{array}
[c]{ccl}%
M\supset V & \overset{\psi}{\longrightarrow} & N\supset\mathbf{S}_{p}^{1}\\
\downarrow\pi_{M} & \circlearrowright & \downarrow\pi_{N}\\
X\supset E & \overset{\phi}{\longrightarrow} & Y\ni\pi_{N}(p).
\end{array}
\]
Then, as the consequence of Theorem \ref{D31}, we have the Sasaki analogue of
Castelnuovo's contraction Theorem \ref{T33} on Sasakian Five-Manifolds.

\begin{remark}
In general for a Sasakian $3$-manifold, it is an $S^{1}$-Seifert $3$-manifold
and the Euler characteristic $e$ of the Seifert bundle is nonzero if it is
closed. For a Seifert bundle, \textbf{the geometry is determined by the Euler
characteristic }$\chi$\textbf{\ of the base }$2$\textbf{-orbifold }%
(\cite{gei}) :
\[%
\begin{tabular}
[c]{llll}
& $\chi>0$ & $\chi=0$ & $\chi<0$\\
$e=0$ & $S^{2}\times\mathbb{E}$ & $\mathbb{E}^{3}=\mathbb{E}^{2}%
\times\mathbb{E}$ & $H^{2}\times\mathbb{E}$\\
$e\neq0$ & $S^{3}$ & $Nil$ & $\widetilde{SL(2,\mathbb{R)}}$%
\end{tabular}
\ \ \ \ \ \ \ \ \ \ \ \ \ \ .\ \
\]

\end{remark}

\begin{example}
As an example, $\mathbf{S}^{5}$ is equipped with a free $\mathbf{S}^{1}%
$-action given by the Hopf bration:%
\[%
\begin{array}
[c]{ccl}%
\mathbf{S}^{5} & \overset{H}{\longrightarrow} & \mathbf{CP}^{2}.
\end{array}
\]
Then the blowing up along a regular fibre $\mathbf{S}_{p}^{1}$ in
$\mathbf{S}^{5}$ via the the gluing map gives us the $\mathbf{S}^{1}$-bundle
\[%
\begin{array}
[c]{ccl}%
\mathbf{S}^{5}\#(\mathbf{S}^{3}\mathbf{\times S}^{2}) & \overset{\pi
}{\longrightarrow} & \mathbf{CP}^{2}\#\overline{\mathbf{CP}^{2}}%
\end{array}
\]
or%
\[%
\begin{array}
[c]{ccl}%
\mathbf{S}^{5}\#(\mathbf{S}^{3}\widetilde{\times}\mathbf{S}^{2}) &
\overset{\pi}{\longrightarrow} & \mathbf{CP}^{2}\#\overline{\mathbf{CP}^{2}}.
\end{array}
\]
In the case $N$ is simply-connected, the blowing up along a regular fibre
$\mathbf{S}_{p}^{1}$ is either $N\#(\mathbf{S}^{3}\mathbf{\times S}^{2})$ or
$N\#(\mathbf{S}^{3}\widetilde{\times}\mathbf{S}^{2})$, twisted $\mathbf{S}%
^{3}$-bundle over $\mathbf{S}^{2}$, a non-spin manifold. More precisely,
since
\[
\pi_{1}(Diff(\mathbf{S}^{3}))\simeq\pi_{1}(SO(4))\simeq\mathbf{Z}_{2},
\]
the blowing up along $\mathbf{S}_{p}^{1}$ can produce at most two
$\mathbf{S}^{1}$-manifolds, depending on the gluing map. As an example,
$\mathbf{S}^{5}$ is equipped with a free $\mathbf{S}^{1}$-action given by the
negative Hopf fibration. Then blowing up along $\mathbf{S}_{p}^{1}$ via $f=id$
gives us the $\mathbf{S}^{1}$-bundle
\[%
\begin{array}
[c]{ccc}%
M\#(\mathbf{S}^{3}\mathbf{\times S}^{2}) & \rightarrow & \mathbf{S}^{5}\\
\downarrow &  & \downarrow-H\\
\mathbf{CP}^{2}\#\overline{\mathbf{CP}^{2}} & \overset{}{\rightarrow} &
\mathbf{CP}^{2}.
\end{array}
\]
On the other hand, if we take the gluing map $f$ to be
\[
(t,[%
\begin{array}
[c]{cccc}%
\cos t & -\sin t & 0 & 0\\
\sin t & \cos t & 0 & 0\\
0 & 0 & 1 & 0\\
0 & 0 & 0 & 1
\end{array}
]),
\]
then%
\[%
\begin{array}
[c]{ccc}%
M\#(\mathbf{S}^{3}\widetilde{\times}\mathbf{S}^{2}) & \rightarrow &
\mathbf{S}^{5}\\
\downarrow &  & \downarrow-H\\
\mathbf{CP}^{2}\#\overline{\mathbf{CP}^{2}} & \overset{}{\rightarrow} &
\mathbf{CP}^{2}.
\end{array}
\]

\end{example}

\subsection{Reid's Model and Resolution of $\frac{1}{r}(1,a)$-type Foliation
Singularities}

In general, for a foliation singularity of type $\frac{1}{r}(1,a),$ we can
reduce to a foliation singularity of type $\frac{1}{k}(1,1)$ in this
subsection. More precisely, given $p\in M_{\mathrm{sing}}$, the singular fiber
${\mathsf{S_{p}^{1}}}$ is a foliation singularity of type $\frac{1}{r}(1,a)$
at $p$, one can measure its complexity by the Hirzebruch-Jung continued fraction.

\begin{definition}
Given $p\in M_{\mathrm{sing}}$, the singular fiber ${\mathsf{S_{p}^{1}}}$ is
foliation singularity of type $\frac{1}{r}(1,a)$. The Hirzebruch Jung
continued fraction $[b_{1},\dots,b_{l}]$ of $\frac{r}{a}$ is defined by
\[
\frac{r}{a}=b_{1}-\frac{1}{b_{2}-\frac{1}{b_{3}-\dots}}.
\]
Then we define the length of the singular fiber to be $l$. Note that the
fraction can be calculated by the recursive formula:
\begin{align}
r &  =ab_{1}-a_{1},\nonumber\\
a &  =a_{1}b_{2}-a_{2},\nonumber\\
a_{i} &  =a_{i+1}b_{i+2}-a_{i+2},\nonumber\\
&  \dots.
\end{align}

\end{definition}

In the paper of \cite[section $2$]{w}, Wang presents a construction that
reduce a foliation singularity of type $\frac{1}{r}(1,a)$ to a foliation
singularity of type $\frac{1}{a}(1,a_{1})$ with $r=ab_{1}-a_{1}.$ \ More
precisely, we consider the five manifold ${\mathbf{C}}\times{\mathbf{S}^{3}} $
equipped with the ${\mathbf{S}^{1}}$-action
\begin{align*}
\delta_{a}:{\mathbf{S}^{1}}\times({\mathbf{C}}\times{\mathbf{S}^{3}}) &
\rightarrow{\mathbf{C}}\times{\mathbf{S}^{3}}\\
(x,y_{1},y_{2}) &  \mapsto(t^{r}x,t^{-1}y_{1},t^{-a}y_{2}),
\end{align*}
where $(x,y_{1},y_{2})\in{\mathbf{C}}\times{\mathbf{S}^{3}}\subset{\mathbf{C}%
}^{3}$. $({\mathbf{C}}\times{\mathbf{S}^{3}},\delta_{a})$ has a singular fiber
of type $\frac{1}{a}(1,a_{1})$ with $\frac{a}{a_{1}}=[b_{2},\dots,b_{l}]$ and
$\frac{r}{a}=[b_{1},b_{2},\dots,b_{l}]$.

Now, given a Sasakian manifold $(M,\alpha),$ $p\in M_{\mathrm{sing}}$, the
singular fiber ${\mathsf{S_{\mathbf{p}}^{1}}}$ is of type $\frac{1}{r}(1,a)$
of lenth $l$. Then there exists an ${\mathbf{S}^{1}}$-equivariant neighborhood
$\overset{\circ}{\mathcal{N}}({\mathsf{S_{\mathbf{p}}^{1}}})$ of
${\mathsf{S_{\mathbf{p}}^{1}}}$ such that $\overset{\circ}{\mathcal{N}}$ is
${\mathbf{S}^{1}}$-equivariant to $(N,\nu)$ with $N\simeq{\mathbf{S}^{1}%
}\times{\mathbf{C}}^{2}$ and $\nu$ given by%
\begin{equation}%
\begin{array}
[c]{ccc}%
{\mathbf{S}^{1}}\times N & \rightarrow & N\\
(t,u,v_{1},v_{2}) & \mapsto & (t^{r}u,t^{-1}v_{1},t^{-a}v_{2}).
\end{array}
\label{2022A}%
\end{equation}
For the ${\mathbf{S}^{1}}$-equivariant embedding
\begin{align}
\Psi:(\overset{\circ}{\mathcal{N}}({\mathsf{S_{\mathbf{p}}^{1}}}%
)-{\mathsf{S_{\mathbf{p}}^{1}}})\simeq(N-{\mathbf{S}^{1}}\times(0,0)) &
\rightarrow{\mathbf{C}}\times{\mathbf{S}^{3}}\nonumber\\
(u,v_{1},v_{2}) &  \mapsto(u|v|,\frac{v_{1}}{|v|},\frac{v_{2}}{|v|}%
),\label{eq:gluing_embedding}%
\end{align}
where $v=(v_{1},v_{2})$, we glue the manifold%
\begin{equation}
(M^{\prime},\alpha^{\prime})=(M-\overset{\circ}{\mathcal{N}}%
({\mathsf{S_{\mathbf{p}}^{1}}}),\alpha)\underset{\Psi}{\cup}({\mathbf{C}%
}\times{\mathbf{S}^{3}},\delta_{a}).\label{2022b}%
\end{equation}
Then resulting new Sasakian manifold $(M^{\prime},\alpha^{\prime})$ has all
singular fibers the same as $(M,\alpha)$ except the singular fiber
${\mathsf{S_{\mathbf{p}}^{1}}}$ is now replaced with a (singular) fiber of
smaller length. Moreover, we have
\[
M^{\prime}\simeq M\#{\mathbf{S}^{3}}\times{\mathbf{S}^{2}}%
\]
topologically.

Furthermore, it follows from Reid's model (\cite{r}) on Kaehler surfaces that
the gluing map $\Psi$ descends to the gluing maps $(2.9)$ and $(2.10)$ of the
paper \cite[section $2.2$]{w}. Then we have the following the minimal
resolution of the general $\frac{1}{r}(1,a)$-type foliation singularities of
$M$ which is compatibable with the minimal resolution in case of Kaehler surfaces.

\begin{theorem}
\label{T32} Let $(M,\xi,g)$ be a compact quasi-regular Sasakian $5$-manifold
and $Z$ denote the space of leaves of the characteristic foliation which is a
normal projective orbifold surface of $\frac{1}{r}(1,a)$-type singularities.
Let $\phi:\overline{Z}\rightarrow Z$ be a resolution of singularities of $Z$
for a nonsigular projective variety $\overline{Z}$ \ Then there exist a
lifting $\psi:\overline{M}\rightarrow M$ and an $\overline{\pi}:\overline
{M}^{5}\rightarrow\overline{Z}$ the regular $S^{1}$-principal bundle over
$\overline{Z}$ such that the following diagram is commutative :%
\begin{equation}%
\begin{array}
[c]{lcl}%
\overline{M}^{5}\supset V_{i} & \overset{\psi}{\longrightarrow} & M^{5}%
\supset\mathbf{S}_{p}^{1}\\
\downarrow\overline{\pi} & \circlearrowright & \downarrow\pi\\
\overline{Z}\supset E_{i} & \overset{\phi}{\longrightarrow} & Z\ni\pi(p),
\end{array}
\label{3}%
\end{equation}
where the irreducible transverse exceptional divisors $V_{i}\in Exc(\psi)$ and
the irreducible exceptional divisors $E_{i}\in Exc(\phi).$ $\psi$ is defined
to be the resolution of $\frac{1}{r}(1,a)$-type foliation singularities from
$\overline{M}$ to$M.$ Moreover, the Hirzebruch Jung continued fraction
\[
\frac{r}{a}=[b_{1},\cdot\cdot\cdot,b_{l}]
\]
gives the information on the foliation resolution $\psi:\overline
{M}\rightarrow M$ of $M$. The exceptional foliation $(-b_{i})$-curves form a
chain of $\{V_{1},\cdot\cdot\cdot,V_{l}\}$ such that each $V_{i}$ has self
intersection
\[
V_{i}^{2}=-b_{i}%
\]
for every $i=1,...,l$ and $V_{i}$ intersects another foliation curve $V_{j}$
transversely only if $j=i-1$ or $j=i+1.$ In particular, for a foliation cyclic
quotient singularity of type $\frac{1}{k}(1,1),$ we have $\frac{k}{1}=[k]:$
the foliation $(-k)$-curve.
\end{theorem}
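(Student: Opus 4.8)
The plan is to transfer the classical Hirzebruch--Jung resolution of the cyclic quotient surface singularity on the leaf space $Z$ up to a foliated resolution on $M$, using the $S^{1}$-orbibundle correspondence of Proposition \ref{P21} together with the explicit local models constructed in the preceding subsections. First I would record the downstairs picture, which is purely a statement about Kaehler surfaces: by Hirzebruch--Jung theory the minimal resolution $\phi:\overline{Z}\rightarrow Z$ of a $\frac{1}{r}(1,a)$ cyclic quotient singularity has exceptional locus a chain of smooth rational curves $E_{1},\dots,E_{l}$ with $E_{i}\cong\mathbf{CP}^{1}$, $E_{i}^{2}=-b_{i}$, $E_{i}\cdot E_{i+1}=1$ and $E_{i}\cdot E_{j}=0$ for $|i-j|\geq2$, where $\frac{r}{a}=[b_{1},\dots,b_{l}]$ is the continued fraction. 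This is precisely the combinatorial data to be lifted through the diagram (\ref{3}).

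Next I would build $\overline{M}$ by iterating the length-reduction gluing (\ref{2022b}). Reid's model reduces a singular fiber of type $\frac{1}{r}(1,a)$ to one of type $\frac{1}{a}(1,a_{1})$ with $r=ab_{1}-a_{1}$ and $\frac{a}{a_{1}}=[b_{2},\dots,b_{l}]$, at the cost of gluing in one copy of $\mathbf{C}\times\mathbf{S}^{3}$ along $\Psi$ and thereby inserting a single exceptional foliation curve. By (the argument of) Lemma \ref{L52}, each intermediate space remains a compact quasi-regular Sasakian $5$-manifold, and after $l$ steps the length drops to zero, so that every singular fiber is resolved and the resulting $\overline{M}^{5}$ is regular. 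The converse half of Proposition \ref{P21} then identifies $\overline{M}^{5}$ as a regular $S^{1}$-principal bundle $\overline{\pi}:\overline{M}^{5}\rightarrow\overline{Z}$ over the smooth surface $\overline{Z}$, carrying a Sasakian structure, and the successive gluings assemble into the lifting $\psi:\overline{M}\rightarrow M$ covering $\phi$ and making (\ref{3}) commute.

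Finally I would transfer the intersection theory. Each exceptional foliation curve $V_{i}$ sits over $E_{i}\cong\mathbf{CP}^{1}$ as its negative Hopf $S^{1}$-bundle, so $V_{i}$ is transverse biholomorphic to $\mathbf{S}^{3}$, exactly in the manner of Theorem \ref{D31}(2). The distance identities (\ref{23})--(\ref{24}) upgrade the $k=1$ computation of that theorem to $(-b_{i})$-curves, yielding $V_{i}\cdot V_{i}=E_{i}\cdot E_{i}=-b_{i}$; the commutativity of (\ref{3}) forces the incidence pattern of the $V_{i}$ to mirror that of the $E_{i}$, so $V_{i}$ meets $V_{j}$ transversely precisely when $j=i-1$ or $j=i+1$. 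The case $\frac{1}{k}(1,1)$, with continued fraction $[k]$ of length one, recovers the single foliation $(-k)$-curve.

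The main obstacle will be step two: verifying that the $l$ local gluings are mutually compatible and close up into a single globally defined smooth Sasakian structure on $\overline{M}$, and that the self-intersection datum computed in each inserted $\mathcal{S}(-b_{i})$-chart glues to the global value $-b_{i}$. In other words, the delicate point is that the transverse Kaehler data on the $\mathcal{S}(-b_{i})$-pieces agree along their overlaps and faithfully reproduce the Kaehler intersection numbers downstairs through the $S^{1}$-bundle projection $\overline{\pi}$, so that the foliated chain upstairs is genuinely the pullback of the Hirzebruch--Jung chain below.
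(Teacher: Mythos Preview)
Your proposal is correct and follows essentially the same route as the paper: record the Hirzebruch--Jung data on $Z$ via Reid's model, lift it through the iterated length-reduction gluing (\ref{2022b}) to obtain the regular $\overline{M}$ with the commutative square (\ref{3}), and then read off the self-intersections $V_{i}^{2}=-b_{i}$ and the chain incidence pattern from the corresponding data for the $E_{i}$ via the $S^{1}$-bundle projection. The paper treats the $\frac{1}{k}(1,1)$ case first as a warm-up using the explicit $\mathcal{S}(-k)$ model (diagram (\ref{777})) before passing to the general case, and for the compatibility of the successive gluings it simply appeals to the construction in \cite{w} and to (\ref{A1}), (\ref{A2}); the obstacle you flag is precisely what is being deferred to those references.
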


\begin{proof}
Note that, for the quasi-regular Sasakian structure on $M$ with
\textbf{foliation singularities of type }$\frac{1}{r}(1,a)$ in which the
well-formed leave (foliation) space $Z$ has orbifold singularities as same as
the algebraic singularities
\[
\pi^{\ast}(K_{Z})=K_{M}^{T};\text{ \ \ }\overline{\pi}^{\ast}(K_{\overline{Z}%
})=K_{\overline{M}}^{T}%
\]
and
\[
\psi^{\ast}\circ\pi^{\ast}=\overline{\pi}^{\ast}\circ\phi^{\ast}.
\]
It follows from (\ref{a}) that
\[
\overline{\pi}^{\ast}(K_{\overline{Z}})=\overline{\pi}^{\ast}(\phi^{\ast
}(K_{Z}))+\sum_{i}a_{i}\overline{\pi}^{\ast}([E_{i}]),
\]
where the sum is over the irreducible exceptional divisors $E_{i}\in
Exc(\phi)\subset\overline{Z}$, $\pi(p)\in Z$ such that%
\[
\overline{\pi}(V_{i})=E_{i}%
\]
with $\mathbf{S}_{p}^{1}\subset M,$\quad$p\in M.$

\textbf{(i) Foliation singularity of type }$\frac{1}{k}(1,1):$ It is the most
simple case. For a resolution of singularities of type $\frac{1}{k}(1,1) $ in
$Z$
\[
\phi:\overline{Z}\rightarrow Z
\]
with a nonsigular projective surface $\overline{Z}$ and $E$ is the exceptional
curve of such resolution. Then, over the singular point $\pi(\mathbf{S}%
_{p}^{1})$, the exceptional $(-k)$-curve $E$ has self intersection
\[
E^{2}=-k.
\]
Then, from the previous section construction for $\mathcal{S}(-k)$ in
$\overline{M}^{5}$ as in subsection $3.3,$ it follows that there exists a
lifting $\psi:\overline{M}\rightarrow M$ as in Theorem \ref{D31}. Thus there
is an $\overline{\pi}:\overline{M}^{5}\rightarrow\overline{Z}$ the regular
$S^{1}$-principal bundle over $\overline{Z}$ such that the following diagram
is commutative
\begin{equation}%
\begin{array}
[c]{ccccc}%
\overline{M}^{5}\supset\mathcal{S}(-k)\thickapprox\mathbf{C}\times
\mathbf{S}^{3} & \overset{\pi_{2}}{\rightarrow} & \mathbf{S}^{3}\thickapprox
V & \overset{\psi|_{V}}{\longrightarrow} & \mathbf{S}_{p}^{1}\in M\\
\downarrow &  &  &  & \downarrow\pi_{M}\\
\overline{Z}\supset O(-k) & \overset{\pi_{d}}{\rightarrow} & \mathbf{CP}%
^{1}\thickapprox E & \overset{\phi|_{E}}{\longrightarrow} & \pi_{M}(p)\in Z.
\end{array}
\label{777}%
\end{equation}
Hence the exceptional foliation $(-k)$-curve $V$ has self intersection
\[
V^{2}=-k.
\]

\textbf{(ii) Foliation singularity of type }$\frac{1}{r}(1,a):$ The similar
situation as in (i). Let
\[
\phi:\overline{Z}\rightarrow Z
\]
be a resolution of singularities of type $\frac{1}{r}(1,a)$ in $Z$ with a
nonsigular projective surface $\overline{Z}$ and $E_{i}$ be the exceptional
curves of such resolution. Then it follows from M. Reid (\cite{r}) thar the
Hirzebruch Jung continued fraction
\[
\frac{r}{a}=[b_{1},\cdot\cdot\cdot,b_{l}]
\]
gives the information on the resolution $\phi:\overline{Z}\rightarrow Z$. More
precisely, over the singular point $\pi(\mathbf{S}_{p}^{1})$, the exceptional
curves form a chain of $\{E_{1},\cdot\cdot\cdot,E_{l}\}$ such that each
$E_{i}$ has self intersection
\[
E_{i}^{2}=-b_{i}%
\]
for every $i=1,...,l$ and the $(-b_{i})$-curve $E_{i}$ intersects another
$(-b_{j})$-curve $E_{j}$ transversely only if $j=i-1$ or $j=i+1.$

Again we consider the resolution of foliation singularities of type $\frac
{1}{r}(1,a)$
\[
\psi:\overline{M}\rightarrow M
\]
in $M$ with a regular Sasakian $5$-manifold $\overline{M}$. Let $V_{i}$ be the
exceptional foliation curves of $\psi$ lifting from $E_{i}$ via the diagram
(\ref{777}) such that
\[
\pi(V_{i})=E_{i}%
\]
Then, \textbf{by the previous construction (\ref{2022b}) of foliation
singularity of type }$\frac{1}{r}(1,a)$\textbf{\ over the singular fiber
}$\mathbf{S}_{p}^{1}$ as in subsection $3.4.$, the exceptional foliation
curves form a chain of $\{V_{1},\cdot\cdot\cdot,V_{l}\}$ such that each
$V_{i}$ has self intersection
\[
V_{i}^{2}=-b_{i}%
\]
for every $i=1,...,l$ and the foliation $(-b_{i})$-curve $V_{i}$ intersects
another foliation $(-b_{j})$-curve $V_{j}$ transversely only if $j=i-1$ or
$j=i+1.$ We refer to (\ref{A1}), (\ref{A2}) and \cite[$(6.3)$]{chlw} for some details.
\end{proof}

Here we come out some definition on the $\frac{1}{r}(1,a)$-type foliation
singularities of $M:$

\begin{definition}
Let $(M,\xi,g)$ be a compact quasi-regular Sasakian $5$-manifold and $Z$
denote the space of leaves of the characteristic foliation which is a normal
projective orbifold surface of $\frac{1}{r}(1,a)$-type singularities. Let
$\psi:\overline{M}\rightarrow M$ be a minimal resolution of $\frac{1}{r}(1,a)
$-type foliation singularities from $\overline{M}$ to $M.$ If
\begin{equation}
K_{\overline{M}}^{T}=\psi^{\ast}(K_{M}^{T})+\sum_{i}a_{i}[V_{i}]_{B},\label{a}%
\end{equation}
where the sum is over the irreducible transverse exceptional divisors
$V_{i}\in Exc(\psi)$ over $\mathbf{S}_{p}^{1}\subset M$ and the $a_{i}$ are
rational numbers, called the discrepancies. Then the $\frac{1}{r}(1,a)$-type
foliation singularities of $M$ are called :

\begin{enumerate}
\item terminal if $a_{i}>0$ for all $i.$

\item canonical if $a_{i}\geq0$ for all $i.$

\item log terminal if $a_{i}>-1$ for all $i.$

\item log canonical if $a_{i}\geq-1$ for all $i$.
\end{enumerate}
\end{definition}

\section{The Sasaki-Ricci Flow Through Singularities}

In this section, we warm up some basic facts of the Sasaki analogue of the
Kaehler-Ricci flow through singularities due to Song-Tian (\cite{st}).

\subsection{The Sasaki-Ricci Flow}

By a $\partial_{B}\overline{\partial}_{B}$-Lemma (\cite{eka}]) in the basic
Hodge decomposition, there is a basic function $F:M\rightarrow%
\mathbb{R}
$ such that
\[
\rho^{T}(x,t)-\varkappa d\eta(x,t)=d_{B}d_{B}^{c}F=i\partial_{B}%
\overline{\partial}_{B}F.
\]

We focus on finding a new $\eta$-Einstein Sasakian structure $(M,\xi
,\widetilde{\eta},\widetilde{\Phi},\widetilde{g})$ with
\[
\widetilde{\eta}=\eta+d_{B}^{c}\varphi,\varphi\in\Omega_{B}^{0}%
\]
and
\[
\widetilde{g}^{T}=(g_{i\overline{j}}^{T}+\varphi_{i\overline{j}}%
)dz^{i}d\overline{z}^{j}=2i(h_{i\overline{j}}+\frac{1}{2}\varphi
_{i\overline{j}})dz^{i}\wedge d\overline{z}^{j}%
\]
such that
\[
\widetilde{\rho}^{T}=\varkappa d\widetilde{\eta}.
\]
Hence
\[
\widetilde{\rho}^{T}-\rho^{T}=\kappa d_{B}d_{B}^{c}\varphi-d_{B}d_{B}^{c}F
\]
and it follows \ that
\begin{equation}
\frac{\det(g_{\alpha\overline{\beta}}^{T}+\varphi_{\alpha\overline{\beta}}%
)}{\det(g_{\alpha\overline{\beta}}^{T})}=e^{-\kappa\varphi+F}.\label{B}%
\end{equation}

This is a Sasakian analogue of the Monge-Ampere equation for the orbifold
version of Calabi-Yau Theorem ((\cite{eka}]).

Now we consider the Sasaki-Ricci flow on $M\times\lbrack0,T)$%
\begin{equation}
\frac{d}{dt}g^{T}(x,t)=-(Ric^{T}(x,t)-\varkappa g^{T}(x,t))\label{2021}%
\end{equation}
or
\[
\frac{d}{dt}d\eta(x,t)=-(\rho^{T}(x,t)-\varkappa d\eta(x,t)).
\]
It is equivalent to consider%
\begin{equation}
\frac{d}{dt}\varphi=\log\det(g_{\alpha\overline{\beta}}^{T}+\varphi
_{\alpha\overline{\beta}})-\log\det(g_{\alpha\overline{\beta}}^{T}%
)+\kappa\varphi-F.\label{C}%
\end{equation}
Note that,for any two Saskian structures with the fixed Reeb vector field
$\xi,$ we have
\[
Vol(M,g)=Vol(M,g^{\prime})
\]
and
\[
\widetilde{\omega}^{n}\wedge\eta=i^{n}\det(g_{\alpha\overline{\beta}}%
^{T}+\varphi_{\alpha\overline{\beta}})dz^{1}\wedge d\overline{z}^{1}%
\wedge...\wedge dz^{n}\wedge d\overline{z}^{n}\wedge dx.
\]

With all the above discussiones, let $(M,\eta,\xi,\Phi,g)$ be a compact
quasi-regular Sasakian manifold of dimension $2n+1$ and $Z$ denote the space
of leaves of the characteristic foliation $\mathcal{F}_{\xi}$. There is an
orbifold Riemannian submersion, and a principal $S^{1}$-orbibundle
($V$-bundle) $\pi:(M,g,\omega)\rightarrow(Z,h,\omega_{h})$ with $\omega
=\pi^{\ast}(\omega_{h}).$ Now for the natual projection
\begin{equation}
\Pi:(C(M),\overline{g},J,\overline{\omega})\rightarrow(Z,h,\omega
_{h})\label{2022-b}%
\end{equation}
with $\Pi|_{(M,g,\omega)}=\pi,$ then we have the volume form of the Kaehler
cone metric on the cone $C(M):$
\[
\overline{\omega}^{n+1}=r^{2n+1}(\Pi^{\ast}\omega_{h})^{n}\wedge
dr\wedge\overline{\eta}%
\]
and the volume form of the Sasaki metric on $M:$%
\begin{equation}
i_{\frac{\partial}{\partial r}}\overline{\omega}^{n+1}=(\Pi^{\ast}\omega
_{h})^{n}\wedge\eta.\label{2022-a}%
\end{equation}

\subsection{Basic Cohomology Characterization of the Maximal-Time Solution}

Let $(M,\xi_{0},\eta_{0},\Phi_{0},g_{0},\omega_{0})$ be a compact
quasi-regular Sasakian $(2n+1)$-manifold and its leave space $Z$ of the
characteristic foliation be \textbf{well-formed. }We consider a solution
$\omega=\omega(t)$ of the Sasaki-Ricci flow%
\begin{equation}%
\begin{array}
[c]{c}%
\frac{\partial}{\partial t}\omega(t)=-\mathrm{Ric}^{T}(\omega(t)),\text{
}\omega(0)=\omega_{0}.
\end{array}
\label{1}%
\end{equation}
As long as the solution exists, the cohomology class $[\omega(t)]_{B}$ evolves
by%
\[%
\begin{array}
[c]{c}%
\frac{\partial}{\partial t}\left[  \omega(t)\right]  _{B}=-c_{1}^{B}(M),\text{
}\left[  \omega(0)\right]  _{B}=\left[  \omega_{0}\right]  _{B},
\end{array}
\]
and solving this ordinary differential equation gives%
\[%
\begin{array}
[c]{c}%
\left[  \omega(t)\right]  _{B}=\left[  \omega_{0}\right]  _{B}-tc_{1}^{B}(M).
\end{array}
\]
We see that a necessary condition for the Sasaki-Ricci flow to exist for $t>0
$ such that%
\[
\left[  \omega_{0}\right]  _{B}-tc_{1}^{B}(M)>0.
\]
This necessary condition is in fact sufficient. In fact we define
\[%
\begin{array}
[c]{c}%
T_{0}:=\sup\{t>0|\text{ }\left[  \omega_{0}\right]  _{B}-tc_{1}^{B}(M)>0\}.
\end{array}
\]
That is to say that
\begin{equation}
\left[  \omega_{0}\right]  _{B}-T_{0}c_{1}^{B}(M)\in\overline{C_{M}^{B}%
}\label{E}%
\end{equation}
which is a nef class.

For a representative $\chi\in-C_{1}^{B}(M)$, we can fix the adapted measure
$\Omega$ on the leave space $Z$ \ and then a volume form $\Omega\wedge\eta
_{0}$ on $(M,\xi_{0},\eta_{0},\Phi_{0},g_{0},\omega_{0})$ such that
\begin{equation}
\Omega\wedge\eta_{0}=(\sqrt{-1})^{n}F(z_{1},...,z_{n})dz_{1}\wedge
d\overline{z}_{1}\wedge...\wedge dz_{n}\wedge d\overline{z}_{n}\wedge
dx\label{F}%
\end{equation}
with
\[
\sqrt{-1}\partial_{B}\overline{\partial}_{B}\log F=-Ric^{T}(\Omega)=\chi
\]
and \
\[
\int_{M}\Omega\wedge\eta_{0}=\int_{M}\omega_{0}^{n}\wedge\eta_{0}.
\]
We choose a reference (transverse) Kaehler metric%
\[
\widehat{\omega}_{t}:=\omega_{0}+t\chi.
\]
Then the corresponding transverse parabolic Monge-Ampere equation for the
basic function $\varphi(x,t)$ to (\ref{1}) on $M\times\lbrack0,T_{0})$ is%
\begin{equation}
\left\{
\begin{array}
[c]{rll}%
\frac{\partial}{\partial t}\varphi(x,t) & = & \log\frac{(\widehat{\omega}%
_{t}+\sqrt{-1}\partial_{B}\overline{\partial}_{B}\varphi)^{n}\wedge\eta_{0}%
}{\Omega\wedge\eta_{0}},\\
\widehat{\omega}_{t} & = & \omega_{0}+t\chi.\\
\sqrt{-1}\partial_{B}\overline{\partial}_{B}\log\Omega & = & \chi,\\
\widehat{\omega}_{t}+\sqrt{-1}\partial_{B}\overline{\partial}_{B}\varphi & > &
0,\\
\varphi(0) & = & 0.
\end{array}
\right. \label{2}%
\end{equation}

Based on \cite{sw1}, \cite{t}, \cite{chlw} and references therein as in the
Kaehler case, we have the following cohomological characterization for the
maximal solution of the Sasaki-Ricci flow :

\begin{theorem}
There exists a unique maximal solution $\omega(t)$ of the Sasaki-Ricci flow
(\ref{1}) on $M\times\lbrack0,T_{0})$ for $t\in\lbrack0,T_{o})$.
\end{theorem}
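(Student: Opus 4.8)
The plan is to pass from the geometric flow \eqref{1} to the scalar transverse parabolic complex Monge--Amp\`ere equation \eqref{2} for the basic potential $\varphi$, and to prove long-time existence of \eqref{2} precisely on $[0,T_0)$ by transverse a priori estimates, mirroring the Tian--Zhang argument for the K\"ahler--Ricci flow. First I would record the equivalence: a basic solution $\varphi(x,t)$ of \eqref{2} produces $\omega(t)=\widehat{\omega}_t+\sqrt{-1}\,\partial_B\overline{\partial}_B\varphi$ solving \eqref{1} with $[\omega(t)]_B=[\omega_0]_B-t\,c_1^B(M)$, and conversely. Since $\xi$ is fixed throughout, each $\omega(t)$ is a genuine Type~II deformation living on the same transverse holomorphic foliation, so the flow stays within the Sasakian category. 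Short-time existence is standard parabolic theory (and is already available for the Sasaki--Ricci flow by \cite{swz}), so the entire content is the identification of the maximal interval as $[0,T_0)$.

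For existence up to $T_0$, I would fix an arbitrary $T<T_0$. By the definition of $T_0$ the class $[\omega_0]_B-T\,c_1^B(M)$ lies in the open basic K\"ahler cone, so one may select $\chi\in -c_1^B(M)$ for which the reference forms $\widehat{\omega}_t=\omega_0+t\chi$ stay uniformly positive on $[0,T]$. On this non-degenerate interval I would then run the usual hierarchy of uniform estimates on $M\times[0,T]$: (i) a $C^0$ bound on $\varphi$ and on $\dot\varphi$ from the maximum principle applied to \eqref{2} and to its time derivative, which controls the volume form $\omega(t)^n\wedge\eta_0$; (ii) the second-order estimate bounding $\operatorname{tr}_{\widehat{\omega}_t}\omega(t)$ by a parabolic transverse Schwarz/Aubin--Yau computation, giving two-sided bounds on $\omega(t)$; (iii) higher-order regularity from the transverse Evans--Krylov theorem followed by basic Schauder estimates. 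All of these are carried out in the foliated normal coordinates of Section~2.2, where basic quantities descend to the orbifold leaf space $Z$ and the estimates become the orbifold counterparts of the K\"ahler ones in \cite{sw1}, \cite{t}, \cite{chlw}. These bounds continue the solution to $t=T$, and letting $T\to T_0$ yields a solution on $[0,T_0)$.

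For maximality I would argue by contradiction: if the flow extended smoothly to some $[0,T_0+\varepsilon)$, then for $t$ slightly larger than $T_0$ the form $\omega(t)$ would be a positive transverse K\"ahler representative of $[\omega_0]_B-t\,c_1^B(M)$, placing that class in the open basic K\"ahler cone and contradicting the definition of $T_0$ together with the boundary (nef) membership \eqref{E}. Uniqueness is then immediate from the parabolic maximum principle: the difference of two basic solutions of \eqref{2} satisfies a linear parabolic equation with vanishing initial data, hence is identically zero, and the corresponding transverse K\"ahler forms coincide.

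The hard part will be the transverse and orbifold bookkeeping in step two rather than any new flow phenomenon. One must verify at each stage that the scalar estimates genuinely remain within the class of basic functions, so that the limiting objects reassemble into authentic Sasakian/transverse-K\"ahler data with $\xi$ fixed, and one must confirm that the descent to $Z$ together with the orbifold regularity theory is valid at the finite cyclic quotient singularities $\frac1r(1,a)$, so that the maximum-principle and Evans--Krylov arguments apply across the singular fibres and not merely on the regular locus $M_{\operatorname{reg}}$.
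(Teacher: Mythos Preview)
Your proposal is correct and follows exactly the approach the paper has in mind: the paper does not give an independent proof of this theorem but simply cites \cite{sw1}, \cite{t}, \cite{chlw} for the standard Tian--Zhang type argument transplanted to the transverse setting, and your outline (reduction to the basic parabolic Monge--Amp\`ere equation \eqref{2}, short-time existence from \cite{swz}, uniform $C^0$/trace/higher-order estimates on each $[0,T]$ with $T<T_0$, maximality by the cohomological obstruction \eqref{E}, uniqueness by the maximum principle) is precisely that argument.

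One small comment: the concerns in your final paragraph about orbifold regularity across the $\tfrac{1}{r}(1,a)$ points are unnecessary here. The Sasakian manifold $M$ itself is smooth; the cyclic quotient singularities live on the leaf space $Z$, and the transverse estimates for \eqref{2} are carried out directly on $M$ in the foliated normal coordinates of Section~2.2 without ever needing to descend to $Z$. The genuine orbifold and resolution machinery enters only later, in Section~4.3 and Theorem~\ref{T51}, where the initial data are merely in $\mathcal{PSH}_p$ and one must lift to a regular model $\overline{M}$.
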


\subsection{The Weak Sasaki--Ricci Flow%
}

Let $(M,\eta,\xi,\Phi,g)$ be a compact quasi-regular Sasakian $5$-manifold
with finite cyclic quotient foliation singularities of type $\frac{1}%
{r}(1,a).$ Such singularities are rather mild and they do not become worse
after divisorial contractions are performed in the foliation minimal model
program we considered. More precisely, $\overline{M}\rightarrow M$ be a
resolution of foliation singularity, then the pullback of any volume measure
$\Omega\wedge\eta$ on $M$ is $L^{p}$-integrable on the nonsingular model
$\overline{M}$ for some $p>1$.

\begin{definition}
Let $(M,\eta,\xi,\Phi,g)$ be a compact quasi-regular Sasakian $5$-manifold
with finite cyclic quotient foliation singularities of type $\frac{1}{r}(1,a)$
and $H^{T}$ be a big and semi-ample $Q$-divisor with a basic transverse
birationalmorphism $\Psi_{mH^{T}}:M\rightarrow\mathbf{CP}^{N_{m}}$ induced by
the linear system $|mH^{T}|$ for some $m>>0$. Let $\overline{\omega}=\frac
{1}{m}\Psi^{\ast}(\omega_{FS})\in\lbrack H^{T}]$ and an volume measure
$\Omega\wedge\eta$ on $M$, where $\omega_{FS}$ is the Fubini-Study metric on
$\mathbf{CP}^{N_{m}}$ and define for $p\in(1,\infty]$, $\varphi$ is the basic
function%
\[
\mathcal{PSH}_{p}(M,\overline{\omega},\Omega\wedge\eta)=\{\varphi
\in\mathcal{PSH}(M,\overline{\omega})\cap L^{\infty}|\frac{(\overline{\omega
}+i\partial_{B}\overline{\partial}_{B}\varphi)^{n}}{\Omega\wedge\eta}\in
L^{p}(M,\Omega\wedge\eta)\}
\]
and
\[
K_{H_{T_{0},,}^{T}p}(M)=\{\overline{\omega}+i\partial_{B}\overline{\partial
}_{B}\varphi|\varphi\in\mathcal{PSH}_{p}(M,\overline{\omega},\Omega\wedge
\eta)\}.
\]

\end{definition}

Now we can define the weak Sasaki-Ricci flow on a compact quasi-regular
Sasakian $5$-manifold for which its leave space $Z$ is normal projective
surface with mild singularities.

\begin{definition}
Let $(M,\xi,\eta_{0})$ be a compact quasi-regular Sasakian $5$-manifold and
its leave space \ $Z$ be normal projective variety with finite cyclic quotient
singularities of type $\frac{1}{r}(1,a)$ which are klt singularities and
$H^{T}$ be a big and semi-ample $Q$-divisor so that $H^{T}+tK_{M}^{T}$ is
ample for a small $t\in Q^{+}$ such that
\[
T_{0}=\sup\{t>0\mathbf{\ |\ }H^{T}+tK_{M}^{T}\text{ \textrm{nef }}\}>0.
\]
A family of closed basic semi-positive $(1,1)$-currents $\omega(t,\cdot)$ on
$M$ for $t\in\lbrack0,T_{0})$ are said to be a solution of the weak
Sasaki--Ricci flow starting $\omega_{0}\in K_{H_{,,}^{T}p}(M)$ for some $p>1 $
if the following conditions hold

\begin{enumerate}
\item $\omega\in C^{\infty}((0,T_{0})\times M_{\operatorname{reg}})$ and
$\varphi\in L^{\infty}([0,T]\times M),$

\item $\omega(t,\cdot)$ satisfies
\begin{equation}
\left\{
\begin{array}
[c]{lcc}%
\frac{\partial}{\partial t}\omega=-Ric^{T}(\omega) & \mathrm{on} &
(0,T_{0})\times M_{\operatorname{reg}},\\
\omega(0)=\omega_{0} & \mathrm{on} & M.
\end{array}
\right. \label{1a}%
\end{equation}

\end{enumerate}
\end{definition}

Let $(M,\xi,\eta_{0})$ be a compact quasi-regular Sasakian $5$-manifold and
the space of leaves $Z$ be normal projective surface with klt singularities
and $H^{T}$ be a big and semi-ample $Q$-divisor. There exists a basic
transverse holomorphic map $\Psi:M\rightarrow(\mathbf{CP}^{N},\omega_{FS})$
defined by the basic transverse holomorphic section $\{s_{0},s_{1},...s_{N}\}$
of $H^{0}(M.(K_{M}^{T})^{m})$ which is $S^{1}$-equivariant with respect to the
weighted $\mathbf{C}^{\ast}$action in $\mathbf{C}^{N+1}$ with $N=\dim
H^{0}(M.(K_{M}^{T})^{m})-1$ for a large positive integer $m$ and
$\widehat{\omega}_{\infty}=\frac{1}{m}\Psi^{\ast}(\omega_{FS})\in\lbrack
H_{0}^{T}]$ and an adapted measure $\Omega_{Z}$ on $Z$ as in \cite[Theorem
4.3]{st} with $\Omega=\pi^{\ast}(\Omega_{Z})$, where $\omega_{FS}$ is the
Fubini-Study metric on $\mathbf{CP}^{N_{m}}.$ We also denote
\[
\chi=\sqrt{-1}\partial_{B}\overline{\partial}_{B}\log\Omega\in-c_{1}^{B}(M)
\]
and
\[
\widehat{\omega}_{t}=\widehat{\omega}_{\infty}+t\chi.
\]

The Sasaki-Ricci flow is equivalent to the following Monge-Ampere flow for the
basic function $\varphi(x,t)$ :
\begin{equation}
\left\{
\begin{array}
[c]{lll}%
\frac{\partial}{\partial t}\varphi(x,t) & = & \log\frac{(\widehat{\omega}%
_{t}+\sqrt{-1}\partial_{B}\overline{\partial}_{B}\varphi)^{n}\wedge\eta_{0}%
}{\Omega\wedge\eta_{0}},\\
\widehat{\omega}_{t}+\sqrt{-1}\partial_{B}\overline{\partial}_{B}\varphi & > &
0,\\
\varphi(0) & = & \varphi_{0}.
\end{array}
\right. \label{1b}%
\end{equation}

In order to define the Monge-Ampere flow (\ref{1b}) on $M,$ one might want to
lift the equation to the regular Sasakian manifold $\overline{M}$ and its
foliation space $\overline{Z}$ is a smooth projective surfaces. However,
$\widehat{\omega}_{Z}$ is not Kaehler on $\overline{Z}$ and $\Omega_{Z}$ in
general vanishes or blow up along $E_{i}\in Exc(f)$, so $\widehat{\omega}$ is
not transverse Kaehler on $\overline{M}$ and $\Omega$ in general vanishes or
blow up along $V_{i}\in Exc(\overline{f}).$ Then the lifted equation is
degenerate near the exceptional locus\ $Exc(\overline{f})$. we have to perturb
the Monge--Ampere equation (\ref{1b}) to (\ref{1d}) and obtain uniform
estimates so that the equation descend to $M$.

\begin{theorem}
\label{T51}Let $(M,\xi,\eta_{0})$ be a compact quasi-regular Sasakian
$5$-manifold and its leave space \ $Z$ be normal projective variety with
finite cyclic quotient singularities of type $\frac{1}{r}(1,a)$ which are klt
singularities and $\varphi_{0}\in\mathcal{PSH}_{p}(M,\widehat{\omega},\Omega)$
for some $p>1$. Define the minimal resolution of foliation singularities of
$M$ to be $\overline{f}:\overline{M}\rightarrow M$. Then the Monge-Ampere flow
on $\overline{M}$ defined by%
\begin{equation}
\left\{
\begin{array}
[c]{lll}%
\frac{\partial}{\partial t}\overline{\varphi}(x,t) & = & \log\frac
{(\overline{f}^{\ast}\widehat{\omega}_{t}+\sqrt{-1}\partial_{B}\overline
{\partial}_{B}\overline{\varphi})^{n}\wedge\eta_{0}}{\overline{f}^{\ast}%
\Omega\wedge\eta_{0}},\\
\overline{f}^{\ast}\widehat{\omega}_{t}+\sqrt{-1}\partial_{B}\overline
{\partial}_{B}\overline{\varphi} & > & 0,\\
\overline{\varphi}(0) & = & \overline{f}^{\ast}\varphi_{0}.
\end{array}
\right. \label{1d}%
\end{equation}
has a unique solution
\[
\overline{\varphi}\in C^{\infty}((0,T_{0})\times\overline{M}\backslash V)\cap
C^{0}([0,T_{0})\times\overline{M}\backslash V)
\]
such that
\[
\overline{\varphi}\in L^{\infty}(\overline{M})\cap\mathcal{PSH}(\overline
{M},\overline{f}^{\ast}\omega_{t}),
\]
for all $t\in\lbrack0,T_{0}).$ Moreover, $\overline{\varphi}$ is constant
along each fibre of $\overline{f}$, and so $\overline{\varphi}$ descends to a
unique solution
\[
\varphi\in C^{\infty}((0,T_{0})\times M_{\operatorname{reg}})\cap
C^{0}([0,T_{0})\times M_{\operatorname{reg}})
\]
of the Monge--Ampere flow (\ref{1b}) such that
\[
\varphi\in C^{0}(M)\cap\mathcal{PSH}(M,\omega_{t})
\]
for each $t\in\lbrack0,T_{0}).$
\end{theorem}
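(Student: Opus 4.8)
The plan is to solve the degenerate parabolic transverse Monge--Amp\`ere flow (\ref{1d}) by the classical regularization-plus-a-priori-estimates scheme of Song--Tian and Song--Weinkove, carried out in the transverse (basic) category on the regular Sasakian $5$-manifold $\overline{M}$. Since both the reference form $\overline{f}^{\ast}\widehat{\omega}_{t}$ and the measure $\overline{f}^{\ast}\Omega\wedge\eta_{0}$ degenerate along the exceptional basic divisor $V=\cup_{i}V_{i}$ of Theorem \ref{T32}, I would first fix a genuine transverse K\"ahler form $\omega_{\overline{M}}$ on $\overline{M}$ and, for $\varepsilon\in(0,1]$, replace $\overline{f}^{\ast}\widehat{\omega}_{t}$ by $\overline{f}^{\ast}\widehat{\omega}_{t}+\varepsilon\,\omega_{\overline{M}}$ and $\overline{f}^{\ast}\Omega\wedge\eta_{0}$ by a smooth positive measure $\Omega_{\varepsilon}\wedge\eta_{0}$ increasing to it. Each $\varepsilon$-regularized equation is then a non-degenerate basic parabolic Monge--Amp\`ere flow on $\overline{M}$, whose smooth solution $\overline{\varphi}_{\varepsilon}$ exists on $[0,T_{0})$ by the existence theory for the Sasaki--Ricci flow on a regular Sasakian manifold. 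The whole game is to prove estimates on $\overline{\varphi}_{\varepsilon}$ that are uniform in $\varepsilon$ and then to let $\varepsilon\to 0$.

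The a priori estimates divide into a global zeroth-order bound and interior higher-order bounds. First I would establish a uniform $L^{\infty}$ estimate $\|\overline{\varphi}_{\varepsilon}\|_{L^{\infty}(\overline{M})}\le C$ independent of $\varepsilon$ and uniform on $[0,T_{0})$. This is the decisive point: it rests on the \emph{klt} hypothesis, which guarantees that $\overline{f}^{\ast}\Omega\wedge\eta_{0}\in L^{p}$ for some $p>1$ on $\overline{M}$, and on a transverse (basic) analogue of Ko\l odziej's pluripotential $L^{\infty}$ estimate for the complex Monge--Amp\`ere operator, together with a parabolic argument bounding $\partial_{t}\overline{\varphi}_{\varepsilon}$ from above and below. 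Next, on every compact subset $K\Subset\overline{M}\setminus V$ I would prove a second-order (transverse Laplacian) estimate by the Aubin--Yau / Chern--Lu maximum-principle computation, using the explicit weight $|s|_{h}^{2}$ (equivalently the pulled-back distance square of (\ref{21})--(\ref{24})) as a barrier that degenerates precisely along $V$; the transverse Evans--Krylov theorem and basic Schauder estimates then upgrade this to uniform $C^{\infty}_{\mathrm{loc}}(\overline{M}\setminus V)$ bounds on $(0,T_{0})$.

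With these bounds in hand I would extract, by Arzel\`a--Ascoli and a diagonal argument, a limit $\overline{\varphi}=\lim_{\varepsilon\to 0}\overline{\varphi}_{\varepsilon}$ that is smooth on $(0,T_{0})\times(\overline{M}\setminus V)$, lies in $L^{\infty}(\overline{M})\cap\mathcal{PSH}(\overline{M},\overline{f}^{\ast}\omega_{t})$ for each $t$, and solves (\ref{1d}) on $\overline{M}\setminus V$; global continuity on $[0,T_{0})\times(\overline{M}\setminus V)$ follows from the uniform modulus-of-continuity estimate implicit in the Ko\l odziej bound. Uniqueness I would obtain from the comparison principle for the parabolic Monge--Amp\`ere operator: if $\overline{\varphi},\overline{\varphi}^{\prime}$ are two solutions, the concavity of $\log\det$ and the maximum principle applied to $\overline{\varphi}-\overline{\varphi}^{\prime}$ force them to coincide.

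Finally, for the descent to $M$, I would use that all the data defining (\ref{1d}) --- the reference forms $\overline{f}^{\ast}\widehat{\omega}_{t}$, the measure $\overline{f}^{\ast}\Omega\wedge\eta_{0}$ and the initial datum $\overline{f}^{\ast}\varphi_{0}$ --- are $\overline{f}$-pullbacks, hence constant along the fibres of $\overline{f}$, i.e. along each contracted exceptional chain $\{V_{1},\dots,V_{l}\}$. Since $\overline{f}$ is a transverse biholomorphism over $M_{\operatorname{reg}}$, $\overline{\varphi}$ already descends there to a basic function $\varphi\in C^{\infty}((0,T_{0})\times M_{\operatorname{reg}})$ solving (\ref{1b}); because $\overline{f}$ collapses each $V_{i}$ to a single singular fibre $\mathbf{S}_{p}^{1}$ and $\overline{\varphi}$ is continuous across $V$, $\overline{\varphi}$ must be constant on each $V_{i}$, so $\overline{\varphi}=\overline{f}^{\ast}\varphi$ for a globally continuous $\varphi\in C^{0}(M)\cap\mathcal{PSH}(M,\omega_{t})$, which is the desired solution. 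I expect the main obstacle to be the $\varepsilon$-uniform $L^{\infty}$ estimate and, with it, the genuine continuity of $\overline{\varphi}$ across the exceptional divisor $V$: these are exactly where the klt / $L^{p}$-integrability with $p>1$ and the transverse pluripotential theory must be deployed with care.
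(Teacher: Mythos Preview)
Your proposal is correct in spirit and would yield the theorem, but it takes a different route from the paper. The paper does \emph{not} carry out the regularization/a-priori-estimates scheme intrinsically in the transverse category on $\overline{M}$; instead it exploits the first structure theorem (Proposition~\ref{P21}) and the commutative diagram
\[
\begin{array}{ccc}
\overline{M}\supset V_{i} & \overset{\overline{f}}{\longrightarrow} & M\supset S_{p}^{1}\\
\downarrow\overline{\pi} & & \downarrow\pi\\
\overline{Z}\supset E_{i} & \overset{f}{\longrightarrow} & Z
\end{array}
\]
to reduce everything to the K\"ahler base: the flow on $Z$ is lifted to its smooth minimal resolution $\overline{Z}$, where Song--Tian's \cite[Theorem~4.2]{st} applies verbatim, and the resulting estimates are then pulled back to $\overline{M}$ via $\overline{\pi}$. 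Your approach buys self-containment and makes the role of the klt discrepancies ($a_{i}>-1 \Rightarrow \overline{f}^{\ast}\Omega\in L^{p}$, $p>1$) and of the barrier $|s|_{h}^{2}$ explicit, at the cost of having to invoke or reprove transverse versions of Ko\l odziej, Evans--Krylov, and Schauder; the paper's approach is shorter precisely because in the quasi-regular setting the transverse PDE on $\overline{M}$ is literally the pullback of an ordinary K\"ahler PDE on $\overline{Z}$, so all those tools come for free. One small imprecision in your descent step: continuity of $\overline{\varphi}$ across $V$ does not by itself force constancy on each fibre of $\overline{f}$; the correct reason is that $\overline{\varphi}\in\mathcal{PSH}(\overline{M},\overline{f}^{\ast}\omega_{t})$ and $\overline{f}^{\ast}\omega_{t}$ restricts to the trivial class on each compact fibre, so a bounded $\overline{f}^{\ast}\omega_{t}$-psh function must be constant there.
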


\begin{proof}
As notions in Proposition \ref{P21}, for a transverse Kaehler metric
$\omega_{Z}$ on the foliation space $Z=M/\mathcal{F}_{\xi}$ such that

(i) $\omega_{Z}\in C^{\infty}((0,T_{0})\times Z_{\text{\textrm{reg}}})$ and
$\varphi\in L^{\infty}([0,T]\times Z),$

(ii) $\omega_{Z}(t,\cdot)$ satisfies%
\begin{equation}
\left\{
\begin{array}
[c]{lcl}%
\frac{\partial}{\partial t}\omega_{Z}=-Ric(\omega_{Z}) & \mathrm{on} &
(0,T_{0})\times Z_{\operatorname{reg}},\\
\pi^{\ast}(\omega_{Z}(0))=\omega_{0} & \mathrm{on} & Z.
\end{array}
\right. \label{1c}%
\end{equation}
It follows from Proposition \ref{P21} that
\[
\pi:(M,\xi,\eta,g)\rightarrow(Z,h,\omega_{h})
\]
is an orbifold Riemannian submersion and a principal $S^{1}$-orbibundle
($V$-bundle) over $Z$ with
\[
\frac{1}{2}d\eta=\pi^{\ast}(\omega).
\]
Then the basic transverse Kaehler form
\[
\omega=\pi^{\ast}(\omega_{Z})
\]
satisfies the flow (\ref{1a}).

Let $f:\overline{Z}\rightarrow Z$ be a minimal resolution of singularities of
$Z$ for a nonsigular projective variety $\overline{Z}$ and $\overline{\pi
}:\overline{M}\rightarrow\overline{Z}$ is the regular $S^{1}$-principal bundle
over $\overline{Z}$ and regular Sasakian manifold $\overline{M}.$ The minimal
resolution of foliation singularities of $M$ to be $\overline{f}:\overline
{M}\rightarrow M$ such that the following diagram is commutative :%
\begin{equation}%
\begin{array}
[c]{ccc}%
\overline{M}\supset V_{i} & \overset{\overline{f}}{\longrightarrow} & M\supset
S_{p}^{1}\\
\downarrow\overline{\pi} & \circlearrowright & \downarrow\pi\\
\overline{Z}\supset E_{i} & \overset{f}{\longrightarrow} & Z\ni\pi(p).
\end{array}
\label{1e}%
\end{equation}
Then we have
\[
K_{\overline{M}}^{T}=\overline{f}^{\ast}(K_{M}^{T})+\sum_{i}a_{i}V_{i},
\]
where $V_{i}\in Exc(\overline{f})\subset\overline{M}$ over $\mathbf{S}_{p}%
^{1}\subset M$ and $a_{i}>-1$ for all $i.$

As in \cite[Theorem 4.2]{st}, they lifted (\ref{1c}) to the minimal resolution
$\overline{Z}$ of singularities of $Z$ to have the desired estimates. Here we
have to lift the Monge--Ampere equation (\ref{1b}) to (\ref{1d}). Thus we
obtain the estimates via the diagram (\ref{1e}).
\end{proof}

As a consequence, we have the following existence theorem for the solution of
the weak Sasaki-Ricci flow \ (\ref{1a}).

\begin{corollary}
\label{C51}(\cite[Theorem 4.3]{st}) Let $(M,\xi,\eta_{0})$ be a compact
quasi-regular Sasakian $5$-manifold and its leave space $Z$ be normal
projective surface with finite cyclic quotient orbifold singularities of type
$\frac{1}{r}(1,a)$ which are klt singularities. Let $H^{T}$ be an ample
$Q$-divisor such that
\[
T_{0}=\sup\{t>0\mathbf{\ |\ }H^{T}+tK_{M}^{T}\text{ \textrm{nef }}\}>0.
\]
If $\omega_{0}\in K_{H_{,,}^{T}p}(M)$ \ for some $p>1$, then there exists a
unique solution $\omega(t,\cdot)$ of the weak Sasaki--Ricci flow (\ref{1a})
for $t\in\lbrack0,T_{0})$. Moreover, $\omega(t,\cdot)$ is a smooth transverse
orbifold Kaehler-metric on $M$ for $t\in(0,T_{0})$ and so the weak
Sasaki-Ricci flow becomes the smooth orbifold Sasaki-Ricci flow on $M$
immediately when $t>0$.
\end{corollary}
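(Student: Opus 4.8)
The plan is to reduce the weak Sasaki--Ricci flow (\ref{1a}) to a scalar degenerate parabolic transverse Monge--Amp\`ere flow, solve it on the minimal resolution, and then descend both the solution and its regularity back to $M$ and to the orbifold leaf space $Z$. First I would invoke the first structure theorem (Proposition \ref{P21}) to identify the transverse Sasaki--Ricci flow on $(M,\xi,\eta_{0})$ with the orbifold K\"{a}hler--Ricci flow on the normal projective surface $Z=M/\mathcal{F}_{\xi}$: writing $\omega(t)=\pi^{\ast}(\omega_{Z}(t))$ and $\omega(t)=\widehat{\omega}_{t}+i\partial_{B}\overline{\partial}_{B}\varphi$ with $\widehat{\omega}_{t}=\widehat{\omega}_{\infty}+t\chi$, the flow (\ref{1a}) becomes the transverse Monge--Amp\`ere flow (\ref{1b}) for the basic potential $\varphi$. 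Because the foliation singularities are of type $\frac{1}{r}(1,a)$, hence klt by Theorem \ref{T34}, the adapted measure $\Omega\wedge\eta$ has the crucial property that its pullback under the minimal resolution $\overline{f}:\overline{M}\to M$ of Theorem \ref{T32} is $L^{p}$-integrable for some $p>1$; this is exactly the integrability that makes $\omega_{0}$ lie in $K_{H^{T},p}(M)$ and that renders the degenerate Monge--Amp\`ere theory applicable.

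Second, for existence I would apply Theorem \ref{T51} directly. It produces a unique solution $\overline{\varphi}$ of the lifted regularized flow (\ref{1d}) on $\overline{M}$ which is constant along the fibres of $\overline{f}$ and so descends to $\varphi\in C^{\infty}((0,T_{0})\times M_{\operatorname{reg}})\cap C^{0}([0,T_{0})\times M)$ solving (\ref{1b}). Setting $\omega(t,\cdot)=\widehat{\omega}_{t}+i\partial_{B}\overline{\partial}_{B}\varphi$ yields a family of closed basic semi-positive $(1,1)$-currents with $\omega(0)=\omega_{0}$ satisfying $\frac{\partial}{\partial t}\omega=-Ric^{T}(\omega)$ on $(0,T_{0})\times M_{\operatorname{reg}}$, which is precisely the definition of a weak Sasaki--Ricci flow solution. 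The $L^{\infty}$ bound on $\varphi$ from Theorem \ref{T51}, together with the choice of $T_{0}$ as the nef threshold for $H^{T}+tK_{M}^{T}$, keeps $\varphi\in\mathcal{PSH}_{p}(M,\widehat{\omega},\Omega)$ for every $t<T_{0}$, so the flow remains in the admissible class on all of $[0,T_{0})$.

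Third, for uniqueness I would compare two solutions through their basic potentials. Since both descend from fibrewise-constant solutions on $\overline{M}$ as in (\ref{1e}), their difference $\overline{\psi}=\overline{\varphi}_{1}-\overline{\varphi}_{2}$ satisfies $\frac{\partial}{\partial t}\overline{\psi}=\log\frac{(\overline{f}^{\ast}\widehat{\omega}_{t}+i\partial_{B}\overline{\partial}_{B}\overline{\varphi}_{1})^{n}}{(\overline{f}^{\ast}\widehat{\omega}_{t}+i\partial_{B}\overline{\partial}_{B}\overline{\varphi}_{2})^{n}}$, and the concavity of $\log\det$ gives at a spatial maximum of $\overline{\psi}$ that $\frac{\partial}{\partial t}\overline{\psi}\le 0$. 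The maximum principle on the compact manifold $\overline{M}$, using the $C^{0}$-control at $t=0$ and the uniform $L^{\infty}$-bounds away from $Exc(\overline{f})$, forces $\overline{\psi}\equiv 0$, hence $\varphi_{1}=\varphi_{2}$ and $\omega_{1}=\omega_{2}$. Because all objects are basic, this comparison can be run equivariantly upstairs on $\overline{M}$ or equivalently on $\overline{Z}$.

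The main obstacle is the instantaneous smoothing in the final sentence: showing that for every $t\in(0,T_{0})$ the current $\omega(t,\cdot)$ is a smooth \emph{transverse orbifold} K\"{a}hler metric across the singular fibres, not merely smooth on $M_{\operatorname{reg}}$. Following \cite[Theorem 4.3]{st}, the strategy is to derive uniform interior-in-$t$ a priori estimates for the regularized flow on $\overline{M}$---a second-order $C^{0}$ estimate controlling $\mathrm{tr}_{\widehat{\omega}_{t}}\omega(t)$ and then higher-order parabolic Schauder estimates---with constants that degenerate only along $Exc(\overline{f})$ but stay uniform on compact subsets of $\overline{M}\setminus V$. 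Since each foliation singularity is a cyclic quotient $\frac{1}{r}(1,a)$, in a local uniformizing chart the transverse geometry lifts to an honest smooth $S^{1}$-equivariant K\"{a}hler datum on $\mathbf{C}^{2}$ via the local models of Section $3$, so the estimates on $\overline{M}\setminus V$ can be reinterpreted as orbifold-uniform estimates in these charts. The delicate point is to check that the parabolic smoothing is compatible with the local uniformizing groups $\Gamma_{p}\cong\mathbb{Z}_{r}$, so that the limiting metric descends to a genuine orbifold K\"{a}hler metric on $Z$ rather than only to a metric on $Z_{\operatorname{reg}}$; granting this, the continuous descended solution $\varphi$ is upgraded to an orbifold-smooth one for $t>0$, and the weak flow becomes the smooth orbifold Sasaki--Ricci flow immediately.
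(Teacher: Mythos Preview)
Your proposal is correct and matches the paper's approach: the paper presents Corollary~\ref{C51} as an immediate consequence of Theorem~\ref{T51} together with \cite[Theorem~4.3]{st}, without supplying a separate argument, and your outline simply unpacks this---reducing to the transverse Monge--Amp\`ere flow (\ref{1b}), lifting to the minimal resolution via Theorem~\ref{T51} and the diagram (\ref{1e}), and invoking Song--Tian's interior smoothing estimates for the orbifold regularity at $t>0$. Your elaboration of the uniqueness step and the orbifold-compatibility of the parabolic smoothing is more detailed than what the paper records, but it is exactly the content implicit in the citation.
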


\subsection{The Sasaki-Ricci Flow Through Divisorial Contractions}

Now based on the proofs of Theorem \ref{T51}, Corollary \ref{C51} and
\cite[Theorem 5.3]{st}, the weak Sasaki-Ricci flow (\ref{1a}) can be continued
through divisorial contractions on a compact quasi-regular Sasakian
$5$-manifold $M$.

\begin{proposition}
\label{P51}Let $(M,\xi,\eta_{0})$ be a compact quasi-regular Sasakian
$5$-manifold and its leave space $Z$ be normal projective surface with finite
cyclic quotient orbifold singularities of type $\frac{1}{r}(1,a)$ which are
klt singularities. Let $H^{T}$ be an ample $Q$-divisor such that
\[
T_{0}=\sup\{t>0\mathbf{\ |\ }H^{T}+tK_{M}^{T}\text{ \textrm{nef }}\}>0.
\]
is the first singular time and $L_{T_{0}}^{T}:=H^{T}+tK_{M}^{T}$ is the
semi-ample basic divisorwhich induces a divisoral contraction
\[
\psi_{(L_{T_{0}}^{T})^{m}}:M\rightarrow N\subset\mathcal{P(}H_{B}%
^{0}(M,(L_{T_{0}}^{T})^{m})),
\]
for some $m>>1.$ If $\omega_{0}\in K_{H^{T},p}(M)$ for some $p>1$, Let
$\omega(t,\cdot)$ be the unique solution $\omega(t,\cdot)$ of the weak
Sasaki--Ricci flow (\ref{1a}) for $t\in\lbrack0,T_{0})$If $\omega_{0}\in
K_{H^{T},p}(M)$ for some $p>1$, \ starting with $\omega_{0}\in K_{H^{T},p}(M)$
for some $p>1$. Then there esists $\omega_{N}(t,\cdot)$ such that

\begin{enumerate}
\item
\[
\omega_{N}\in K_{L_{T_{0}}^{T},p^{\prime}}(N)\cap C^{\infty}%
(N_{\operatorname{reg}}\backslash\psi(Exc(\psi))
\]
for some $p^{\prime}>1$.

\item
\[
\omega(t,\cdot)\rightarrow\psi^{\ast}\omega_{N}%
\]
in $C^{\infty}(M_{\operatorname{reg}}\backslash Exc(\psi))$-topology as
$t\rightarrow T_{0}^{-}.$

\item There exists a unique solution $\omega(t,\cdot)$ of the weak
Sasaki-Ricci flow on $N$ starting with $\omega_{N}$ at $T_{0}$ for $t\in
(T_{0},T_{N})$ with $T_{0}<T_{N}\leq\infty$, such that
\[
\omega(t,\cdot)\rightarrow\omega_{N}%
\]
in $C^{\infty}(M_{\operatorname{reg}}\backslash Exc(\psi))$-topology as
$t\rightarrow T_{0}^{+}.$
\end{enumerate}

Therefore the weak Sasaki-Ricci flow can be uniquely continued on $N$ starting
with $\omega_{N}$ at $T_{0}.$
\end{proposition}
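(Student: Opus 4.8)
The plan is to reduce the entire problem, via the $S^{1}$-orbibundle structure $\pi\colon M\to Z$ of Proposition \ref{P21}, to the corresponding statement for the weak Kaehler--Ricci flow on the normal projective leaf space, invoke the Song--Tian analysis \cite{st} at that level, and then lift the conclusions back to $M$ and $N$. Since $\tfrac12 d\eta=\pi^{\ast}\omega$ and every basic transverse Kaehler form on $M$ is the pullback of an orbifold Kaehler form on $Z$, the weak Sasaki--Ricci flow (\ref{1a}) is the $S^{1}$-invariant lift of the weak Kaehler--Ricci flow (\ref{1c}) on $Z$, and the semi-ample basic divisor $L_{T_{0}}^{T}$ is $\pi^{\ast}$ of the semi-ample $Q$-divisor on $Z$ inducing the corresponding divisorial contraction $Z\to Z_{N}$ with $Z_{N}=N/\mathcal{F}_{\xi}$. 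Thus $\psi\colon M\to N$ is precisely the $S^{1}$-equivariant lift of this contraction, and $Exc(\psi)$ is the union of the foliation $(-b_{i})$-curves $V_{i}$ lying over the contracted curves $E_{i}$ as in the diagram (\ref{1e}).

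First I would establish a $C^{0}$ estimate for the basic potential $\varphi(t)$ solving the transverse Monge--Ampere flow (\ref{1b}), uniform as $t\to T_{0}^{-}$. The reference metrics $\widehat{\omega}_{t}=\widehat{\omega}_{\infty}+t\chi$ degenerate exactly to the class of $L_{T_{0}}^{T}=\psi^{\ast}\omega_{N}$ at $t=T_{0}$, so the limiting equation is a degenerate transverse Monge--Ampere equation whose reference class is a pullback and is therefore positive only away from $Exc(\psi)$. Following the scheme of Theorem \ref{T51} and Corollary \ref{C51}, I would lift equation (\ref{1b}) to the minimal resolution $\overline{f}\colon\overline{M}\to M$ of Theorem \ref{T32}, where $\overline{M}$ is a genuinely regular Sasakian $5$-manifold, perturb to a non-degenerate family as in (\ref{1d}), and derive estimates independent of the perturbation; the klt condition $a_{i}>-1$ on the discrepancies guarantees that $\overline{f}^{\ast}(\Omega\wedge\eta)$ is $L^{p'}$-integrable for some $p'>1$ near $Exc(\overline{f})$, which is exactly what keeps the solution in the class $K_{L_{T_{0}}^{T},p'}$. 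Local higher-order (Schauder and Evans--Krylov type) estimates on compact subsets of $M_{\operatorname{reg}}\setminus Exc(\psi)$ then yield smooth convergence there.

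With the estimates in hand, I would let $t\to T_{0}^{-}$: the potentials converge to a bounded limit $\varphi_{T_{0}}\in\mathcal{PSH}_{p'}(M,\psi^{\ast}\omega_{N},\Omega\wedge\eta)$ that is constant along the fibres of $\psi$ over each contracted curve, hence descends to a basic potential on $N$ and defines the current $\omega_{N}\in K_{L_{T_{0}}^{T},p'}(N)$, smooth on $N_{\operatorname{reg}}\setminus\psi(Exc(\psi))$; this gives parts $(1)$ and $(2)$. For part $(3)$, I would restart the weak Sasaki--Ricci flow on $N$ with initial datum $\omega_{N}$: since $N$ is again a compact quasi-regular Sasakian $5$-manifold whose leaf space $Z_{N}$ carries at worst klt $\tfrac1r(1,a)$-type singularities and $\omega_{N}\in K_{L_{T_{0}}^{T},p'}(N)$, Corollary \ref{C51} applies verbatim and produces a unique solution on $(T_{0},T_{N})$ with $T_{N}=\sup\{t>T_{0}\mid L_{T_{0}}^{T}+(t-T_{0})K_{N}^{T}\ \text{nef}\}$, while the instant-smoothing property of Corollary \ref{C51} gives the $C^{\infty}$ convergence $\omega(t)\to\omega_{N}$ as $t\to T_{0}^{+}$ on $M_{\operatorname{reg}}\setminus Exc(\psi)$.

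The hard part will be the uniform $C^{0}$ estimate and the verification that the limit descends with the correct $L^{p'}$ integrability across the contraction, equivalently controlling the measure $\Omega\wedge\eta$ and the potential uniformly near the collapsing foliation $(-b_{i})$-curves. This is where the klt nature of the $\tfrac1r(1,a)$-type foliation singularities, namely the discrepancy bound $a_{i}>-1$, is essential, and where the reduction to $Z$ must be married carefully to the resolution picture (\ref{1e}) so that all estimates respect the basic foliated structure and remain genuinely invariant along the Reeb flow. Once this is secured, uniqueness at each stage follows from the uniqueness in Corollary \ref{C51}, and the two one-sided $C^{\infty}$ convergence statements pin down $\omega_{N}$ uniquely, so the weak Sasaki--Ricci flow continues uniquely on $N$ starting with $\omega_{N}$ at $T_{0}$.
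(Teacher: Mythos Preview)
Your proposal is correct and follows essentially the same route as the paper: the paper does not give a standalone argument for Proposition~\ref{P51} but instead derives it directly from Theorem~\ref{T51}, Corollary~\ref{C51}, and \cite[Theorem~5.3]{st}, i.e.\ by reducing via the $S^{1}$-orbibundle $\pi\colon M\to Z$ to the Song--Tian weak K\"ahler--Ricci flow on the leaf space, lifting to the minimal resolution through the diagram~(\ref{1e}), and using the klt discrepancy bound $a_{i}>-1$ to control $L^{p'}$-integrability of the pulled-back volume form. Your outline of the $C^{0}$ estimate, the descent of the limiting potential through $\psi$, and the restart on $N$ via Corollary~\ref{C51} matches this strategy step for step.
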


In particular, we have Definition \ref{D12} for the floating foliation
canonical surgical contraction on a compact quasi-regular Sasakian $5$-manifold.

\section{Foliation MMP with Scaling via the Sasaki-Ricci Flow}

Since the Reeb vector field and the transverse holomorphic structure are both
invariant, all the quantities in this section are only involved with the
transverse K\"{a}hler structure $\omega(t)$ and basic tensors. Hence, under
the Sasaki-Ricci flow, when one applies for instance, the maximal principle
and foliation resolutions etc, the expressions involved behave essentially the
same as the K\"{a}hler-Ricci flow.

\subsection{Canonical Surgical Contraction for Floating Foliation
$(-1)$-Curves}

In this subsection, primarily along the lines of the arguments in \cite{sw2},
we are ready to give the proof of Theorem \ref{T61}. We write the Sasaki-Ricci
flow (\ref{1}) as a transverse parabolic complex Monge-Amp\'{e}re equation.
First, using the assumption (\ref{2021}), define a family of reference
transverse K\"{a}hler metrics $\widehat{\omega}_{t}$ for $t\in\lbrack0,T_{0})$
by%
\begin{equation}%
\begin{array}
[c]{c}%
\widehat{\omega}_{t}=\frac{1}{T_{0}}((T_{0}-t)\omega_{0}-t\psi^{\ast}%
\omega_{N})\in\lbrack\omega(t)]_{B}=[\omega_{0}]_{B}-tc_{1}^{B}(M).
\end{array}
\label{61}%
\end{equation}
We can fix the adapted measure $\Omega$ on the leave space $Z$ and then a
volume form $\Omega\wedge\eta_{0}$ on $M$ such that%
\begin{equation}%
\begin{array}
[c]{c}%
\sqrt{-1}\partial_{B}\overline{\partial}_{B}\log\Omega=\frac{\partial
}{\partial t}\widehat{\omega}_{t}=-\frac{1}{T_{0}}(\omega_{0}+\psi^{\ast
}\omega_{N})\in c_{1}^{B}(M),\text{ }\int_{M}\Omega\wedge\eta_{0}=1.
\end{array}
\label{62}%
\end{equation}
Then the corresponding transverse parabolic Monge-Amp\'{e}re equation for the
basic function $\varphi=\varphi(t)$ to (\ref{2}) on $M\times\lbrack0,T_{0})$
is%
\begin{equation}%
\begin{array}
[c]{c}%
\frac{\partial}{\partial t}\varphi=\log\frac{(\widehat{\omega}_{t}+\sqrt
{-1}\partial_{B}\overline{\partial}_{B}\varphi)^{2}\wedge\eta_{0}}%
{\Omega\wedge\eta_{0}},\text{ }\varphi(0)=0.
\end{array}
\label{63}%
\end{equation}

\subsubsection{Key Estimates for the Sasaki-Ricci Flow}

In the following, by using the Sasaki analogue of Koaira Lemma
(\cite[Proposition 3.3]{chlw}) under assumption (\ref{2022}) and (\ref{D}), we
prove the main estimates for the Sasaki-Ricci flow under the assumptions of
Theorem \ref{T61}. In particular, the estimates of the first two lemmas are
essentially contained in \cite{chlw}.

\begin{lemma}
\label{L61}There is a uniform constant $C$ depending only on $(M,\omega_{0}) $
such that the solution $\varphi=\varphi(t)$ of (\ref{63}) satisfies, for
$t\in\lbrack0,T_{0}),$%
\[
\left\Vert \varphi\right\Vert _{L^{\infty}}\leq C,\text{ }\varphi^{\prime
}=\partial\varphi/\partial t\leq C\text{ }\mathrm{and}\text{ }\omega^{2}%
\wedge\eta_{0}\leq C\Omega\wedge\eta_{0}.
\]
As $t\rightarrow T_{0}^{-}$, $\varphi(t)$ converges pointwise on $M$ to a
bounded basic function $\varphi_{T}$ satisfying%
\[%
\begin{array}
[c]{c}%
\omega_{T_{0}}:=\widehat{\omega}_{T_{0}}+\sqrt{-1}\partial_{B}\overline
{\partial}_{B}\varphi_{T_{0}}\geq0,
\end{array}
\]
and $\omega(t)$ converges weakly in the sense of currents to the closed
positive $(1,1)$ current $\omega_{T_{0}}$.
\end{lemma}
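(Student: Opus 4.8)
The plan is to carry the Song--Weinkove estimates for the K\"ahler--Ricci flow through a divisorial contraction \cite{sw2} into the transverse (basic) category; this is legitimate because the Reeb field and the transverse holomorphic structure are fixed, so $\Delta^{T}_{\omega}$, $\mathrm{tr}_{\omega}$ and the transverse maximum principle behave exactly as in the K\"ahler setting. First I would record the two identities that organise the whole argument. Writing $\omega=\omega(t)=\widehat{\omega}_{t}+\sqrt{-1}\partial_{B}\overline{\partial}_{B}\varphi$, equation (\ref{63}) reads $\varphi'=\log\frac{\omega^{2}\wedge\eta_{0}}{\Omega\wedge\eta_{0}}$, so the asserted volume bound $\omega^{2}\wedge\eta_{0}\le C\,\Omega\wedge\eta_{0}$ is literally the upper bound $\varphi'\le C$; and from the interpolation (\ref{61}) one computes the cancellations $(T_{0}-t)\tfrac{\partial}{\partial t}\widehat{\omega}_{t}+\widehat{\omega}_{t}=\psi^{\ast}\omega_{N}\ge 0$ and $t\tfrac{\partial}{\partial t}\widehat{\omega}_{t}-\widehat{\omega}_{t}=-\omega_{0}<0$, which carry the two sign conditions used below.

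For the uniform bound on $\varphi$, the upper bound is immediate from the transverse maximum principle applied to (\ref{63}), using that the reference volumes $\widehat{\omega}_{t}^{2}\wedge\eta_{0}$ have uniformly bounded mass for $t\in[0,T_{0}]$. The lower bound is the genuinely delicate point, and I expect it to be the main obstacle: since $\widehat{\omega}_{T_{0}}=\psi^{\ast}\omega_{N}$ is only semipositive and degenerates along the contracted foliation $(-1)$-curves $V_{1},\dots,V_{k}=\mathrm{Exc}(\psi)$, a constant barrier is not admissible there. The plan is to invoke the Sasaki analogue of the Kodaira Lemma (\cite[Proposition 3.3]{chlw}) to produce a basic, $\widehat{\omega}_{t}$-plurisubharmonic function $\rho$ with prescribed logarithmic poles along the $V_{i}$; subtracting $\varepsilon\rho$ produces a lower barrier whose singularity absorbs the degeneration of $\psi^{\ast}\omega_{N}$ near $\mathrm{Exc}(\psi)$, and letting $\varepsilon\to 0^{+}$ yields $\varphi\ge -C$ uniformly. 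Keeping every auxiliary function basic so that it descends from the foliation to the leaf space $Z$ is the extra bookkeeping that the Sasakian setting demands.

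Next I would establish $\varphi'\le C$ by applying the maximum principle to $\Xi:=t\varphi'-\varphi-2t$. Using $(\partial_{t}-\Delta^{T}_{\omega})\varphi'=\mathrm{tr}_{\omega}\tfrac{\partial}{\partial t}\widehat{\omega}_{t}$ and $(\partial_{t}-\Delta^{T}_{\omega})\varphi=\varphi'-2+\mathrm{tr}_{\omega}\widehat{\omega}_{t}$ (transverse dimension $n=2$), a short computation collapses to $(\partial_{t}-\Delta^{T}_{\omega})\Xi=\mathrm{tr}_{\omega}\bigl(t\tfrac{\partial}{\partial t}\widehat{\omega}_{t}-\widehat{\omega}_{t}\bigr)=-\mathrm{tr}_{\omega}\omega_{0}\le 0$. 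Hence $\Xi$ attains its maximum at $t=0$, where $\Xi=0$, so $t\varphi'\le\varphi+2t$ and therefore $\varphi'\le \varphi/t+2$. This is uniform for $t$ bounded away from $0$; near $t=0$ the bound follows from smoothness of the flow with the smooth datum $\varphi(0)=0$, giving $\varphi'\le C$ on all of $[0,T_{0})$, equivalently the claimed volume estimate. (The companion quantity $(T_{0}-t)\varphi'+\varphi+2t$, whose heat operator equals $\mathrm{tr}_{\omega}\psi^{\ast}\omega_{N}\ge 0$, is available should a matching lower control on $\varphi'$ be wanted.)

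Finally, the convergence as $t\to T_{0}^{-}$ would follow from monotonicity. The bound $\varphi'\le C$ shows $\varphi(t)-Ct$ is non-increasing, while the $L^{\infty}$ bound shows it is bounded below; hence $\varphi(t)$ converges pointwise on $M$ to a bounded basic function $\varphi_{T_{0}}$. Since $\varphi(t)\to\varphi_{T_{0}}$ is dominated and $\widehat{\omega}_{t}\to\widehat{\omega}_{T_{0}}$ smoothly, $\varphi(t)\to\varphi_{T_{0}}$ in $L^{1}$, so $\sqrt{-1}\partial_{B}\overline{\partial}_{B}\varphi(t)\to\sqrt{-1}\partial_{B}\overline{\partial}_{B}\varphi_{T_{0}}$ weakly and $\omega(t)\to\omega_{T_{0}}:=\widehat{\omega}_{T_{0}}+\sqrt{-1}\partial_{B}\overline{\partial}_{B}\varphi_{T_{0}}$ weakly in the sense of currents. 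Being a weak limit of the positive currents $\omega(t)>0$, the limit $\omega_{T_{0}}$ is a closed positive $(1,1)$-current and $\varphi_{T_{0}}$ is $\widehat{\omega}_{T_{0}}$-plurisubharmonic, which is the last assertion.
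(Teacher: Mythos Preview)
Your proposal is correct and follows essentially the same route the paper indicates: the paper does not spell out a proof of this lemma but states that ``the estimates of the first two lemmas are essentially contained in \cite{chlw}'' and that one uses ``the Sasaki analogue of Kodaira Lemma (\cite[Proposition 3.3]{chlw})'', which is precisely the Song--Weinkove scheme you carry out (maximum principle for the upper bound on $\varphi$, the basic Kodaira Lemma for the lower bound, the auxiliary quantity $t\varphi'-\varphi-nt$ for $\varphi'\le C$, and monotonicity for the limit). Your two interpolation identities for $\widehat{\omega}_{t}$ are the correct ones (note that the displayed formulas (\ref{61})--(\ref{62}) in the paper carry sign typos; the intended reference family satisfies $\widehat{\omega}_{0}=\omega_{0}$, $\widehat{\omega}_{T_{0}}=\psi^{\ast}\omega_{N}$, consistent with your computation and with the paper's later use of $\widehat{\omega}_{T_{0}}=\psi^{\ast}\omega_{N}$ in (\ref{91})).
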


We have the $C^{\infty}$ estimates for the solution $\omega(t)$ of the
Sasaki-Ricci flow on compact subsets of $M\backslash V$ as following.

\begin{lemma}
\label{L62}With the assumptions of Theorem \ref{T61}, the solution
$\omega=\omega(t)$ of the Sasaki-Ricci flow (\ref{1}) satisfies

(i) There exists a uniform constant $c>0$ such that%
\begin{equation}%
\begin{array}
[c]{c}%
\omega\geq c\psi^{\ast}\omega_{N}.
\end{array}
\label{64}%
\end{equation}

(ii) For every compact subset $K\subset M\backslash V$, there exist constants
$C_{K,k}$ for $k=0,1,2,...,$ such that%
\begin{equation}%
\begin{array}
[c]{c}%
\left\Vert \omega(t)\right\Vert _{C^{k}(K,\omega_{0})}\leq C_{K,k}.
\end{array}
\label{65}%
\end{equation}

(iii) The closed $(1,1)$ current $\omega_{T_{0}}$, given in Lemma \ref{L61},
is a smooth transverse K\"{a}hler metric on $M\backslash V$.

(iv) As $t\rightarrow T_{0}^{-}$, the metrics $\omega(t)$ converge to
$\omega_{T_{0}}$ in $C^{\infty}$ on compact subsets of $M\backslash V$.
\end{lemma}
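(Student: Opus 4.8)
The plan is to adapt the Song--Weinkove argument \cite{sw2} (and its Sasaki version in \cite{chlw}) to the transverse K\"{a}hler setting, the point of the opening remark of this section being that in foliated normal coordinates all the relevant quantities obey the same evolution identities as in the K\"{a}hler--Ricci flow. The substantive statement is (i), the uniform lower bound $\omega\geq c\,\psi^{\ast}\omega_{N}$; parts (ii)--(iv) are then essentially formal consequences of (i) together with Lemma \ref{L61}.

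For (i) I would prove an upper bound $\operatorname{tr}_{\omega}(\psi^{\ast}\omega_{N})\leq C$, which is equivalent to $\omega\geq c\,\psi^{\ast}\omega_{N}$ since $\psi^{\ast}\omega_{N}$ is a bounded closed basic semi-positive $(1,1)$-form. Here $\psi^{\ast}\omega_{N}$ is a genuine transverse K\"{a}hler metric on $M\setminus V$, degenerating exactly along the exceptional curves $V$, and its transverse holomorphic bisectional curvature is bounded by a constant $C_{0}=C_{0}(N,\omega_{N})$. A transverse parabolic Schwarz lemma (Yau, Song--Tian) then gives, on $M_{\operatorname{reg}}\setminus V$,
\[
\left(\frac{\partial}{\partial t}-\Delta\right)\log\operatorname{tr}_{\omega}(\psi^{\ast}\omega_{N})\leq C_{0}\operatorname{tr}_{\omega}(\psi^{\ast}\omega_{N}),
\]
where $\Delta=\Delta_{\omega(t)}$ is the transverse Laplacian. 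I would then run the maximum principle on
\[
Q=\log\operatorname{tr}_{\omega}(\psi^{\ast}\omega_{N})-A\varphi
\]
for a large constant $A$. Using $\Delta\varphi=n-\operatorname{tr}_{\omega}(\widehat{\omega}_{t})$ with $n=2$ and the interpolation form $\widehat{\omega}_{t}$ of (\ref{61}), chosen so that $\widehat{\omega}_{T_{0}}=\psi^{\ast}\omega_{N}$ and hence carrying a term proportional to $t\,\psi^{\ast}\omega_{N}$, the quantity $A\Delta\varphi$ contributes a term $-A\tfrac{t}{T_{0}}\operatorname{tr}_{\omega}(\psi^{\ast}\omega_{N})$; for $A$ large (and $t$ bounded away from $0$, the flow being smooth for small $t$) this dominates the curvature term $C_{0}\operatorname{tr}_{\omega}(\psi^{\ast}\omega_{N})$. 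Since for each fixed $t<T_{0}$ the solution $\omega(t)$ is a smooth transverse metric on all of $M$ while $\psi^{\ast}\omega_{N}$ vanishes along $V$, we have $\operatorname{tr}_{\omega}(\psi^{\ast}\omega_{N})\to0$ near $V$, so $Q$ attains its space-time maximum over $M\times[0,t_{1}]$ (any $t_{1}<T_{0}$) in the interior region where the Schwarz computation is valid; there the $C^{0}$ and $\varphi'$-bounds of Lemma \ref{L61} absorb the remaining terms and force a bound on $\operatorname{tr}_{\omega}(\psi^{\ast}\omega_{N})$ independent of $t_{1}$, hence the global bound on $Q$ and (i).

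For (ii)--(iv) I would localize to a fixed compact $K\subset M\setminus V$. As $\psi$ is a transverse biholomorphism off $V$, $\psi^{\ast}\omega_{N}$ is uniformly equivalent to $\omega_{0}$ on $K$, so (i) yields $\omega\geq c_{K}\,\omega_{0}$ on $K$; combined with the volume bound $\omega^{2}\wedge\eta_{0}\leq C\,\Omega\wedge\eta_{0}$ of Lemma \ref{L61} and the identity $\omega^{2}/\omega_{0}^{2}=\lambda_{1}\lambda_{2}$ for the eigenvalues of $\omega$ relative to $\omega_{0}$, this forces a matching upper bound $\omega\leq C_{K}\,\omega_{0}$. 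The Monge--Amp\`ere flow (\ref{63}) is thus uniformly parabolic on $K$, and the local transverse Evans--Krylov and Schauder estimates---identical to the K\"{a}hler case after passing to a foliated chart---give the interior bounds $\|\omega(t)\|_{C^{k}(K,\omega_{0})}\leq C_{K,k}$ of (ii). Parts (iii) and (iv) follow by Arzel\`a--Ascoli: the uniform $C^{k}$ bounds furnish $C^{\infty}$ subsequential limits on compact subsets of $M\setminus V$, the pointwise limit $\varphi_{T_{0}}$ of Lemma \ref{L61} identifies every such limit with $\omega_{T_{0}}=\widehat{\omega}_{T_{0}}+\sqrt{-1}\partial_{B}\overline{\partial}_{B}\varphi_{T_{0}}$, and the inherited bound $\omega_{T_{0}}\geq c\,\psi^{\ast}\omega_{N}>0$ on $M\setminus V$ shows the limit is a smooth transverse K\"{a}hler metric.

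The main obstacle is the curvature bookkeeping in (i): one must check that the transverse Schwarz inequality holds with the correct sign and curvature constant in the foliated setting, and that the degeneration of $\psi^{\ast}\omega_{N}$ along $V$ (together with the $C^{0}$ control of Lemma \ref{L61}) genuinely confines the maximum of $Q$ to the region where the computation is valid, uniformly as $t\to T_{0}^{-}$. Once the uniform two-sided metric bounds on compact $K\subset M\setminus V$ are in hand, (ii)--(iv) are a standard localization of uniformly-parabolic regularity theory.
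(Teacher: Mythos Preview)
Your proposal is correct and follows essentially the same route the paper takes (it defers Lemmas \ref{L61}--\ref{L62} to \cite{chlw}, whose argument is precisely the transverse parabolic Schwarz lemma plus local higher-order regularity you describe, cf.\ also the evolution inequality (\ref{73}) used later in Lemma \ref{L65}). One small simplification: you do not need the ad hoc ``$t$ bounded away from $0$'' step. Since $\omega_{0}$ is transverse K\"ahler one has $\omega_{0}\geq c_{0}\,\psi^{\ast}\omega_{N}$ for some $c_{0}>0$, and hence from (\ref{61}) the reference form satisfies $\widehat{\omega}_{t}\geq \min(c_{0},1)\,\psi^{\ast}\omega_{N}$ uniformly for all $t\in[0,T_{0}]$; the term $-A\,\mathrm{tr}_{\omega}\widehat{\omega}_{t}$ then already dominates $C_{0}\,\mathrm{tr}_{\omega}(\psi^{\ast}\omega_{N})$ for $A$ large, with no need to split off small $t$.
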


Then it follows from Lemma \ref{L62} that part $(1)$ in the Definition
\ref{D12} of foliation canonical surgical contraction holds.

We now compare $\psi^{\ast}\omega_{N}$ with a fixed transverse K\"{a}hler
metric $\omega_{0}$ on $M$ by using the description of the neighborhood of $V$
in the blow-up map
\[%
\begin{array}
[c]{c}%
\psi:M\rightarrow N.
\end{array}
\]
As the notion in section $2,$ for $p=(x_{i},v_{i},s_{i})\in U_{i}%
\subset\mathbb{C}\times\mathbb{C}\times S^{1},$ $i=1,2,$ the composition
\[%
\begin{array}
[c]{ccc}%
\mathcal{S}(-1)\thickapprox\mathbb{C}\times S^{3} & \overset{\pi
_{2}}{\rightarrow} & S^{3}\\
&  & \downarrow H\\
&  & CP^{1}%
\end{array}
\]
send $p$ to $[l_{1},l_{2}].$ we have $x_{1}=\frac{l_{2}}{l_{1}};$ $x_{2}%
=\frac{l_{1}}{l_{2}}$, and $v_{i},$ $i=1,2$ correspond to vectors in a fiber
of $\mathcal{O}(-1).$ Now $s$ is the section of $[V\mathcal{]}$ over
$\mathcal{S}(-1)$ given by%
\[%
\begin{array}
[c]{c}%
s_{i}:U_{i}\rightarrow\mathbb{C}\mathbf{,}\text{\ }s_{i}=v_{i}%
\end{array}
\]
and $h$ is the Hermitian metric on $[V]$ such that
\[%
\begin{array}
[c]{c}%
h_{i}=(\frac{|l_{1}|^{2}+|l_{2}|^{2}}{|l_{i}|^{2}})=(1+|x_{i}|^{2})
\end{array}
\]
on $U_{i}.$ Then we have
\begin{equation}%
\begin{array}
[c]{c}%
\mathrm{dist}^{2}(p,S_{0}^{3})=(\frac{|l_{1}|^{2}+|l_{2}|^{2}}{|l_{i}|^{2}%
})|v_{i}|^{2}:=|s|_{h}^{2}%
\end{array}
\label{A}%
\end{equation}
On the other hand, the distance square to the singular fiber $S_{0}^{1}$ is%
\begin{equation}%
\begin{array}
[c]{c}%
\mathrm{dist}^{2}(p,S_{0}^{1})=|z_{1}|^{2}+|z_{2}|^{2}:=r_{S^{1}}^{2}.
\end{array}
\label{AA}%
\end{equation}

If $\psi:\mathbb{C}^{1}\times S^{3}\rightarrow S^{1}\times\mathbb{C}^{2}$ is
the composition of
\[%
\begin{array}
[c]{ccccc}%
S(-k)\thickapprox\mathbb{C}\times S^{3} & \overset{\pi_{2}}{\rightarrow} &
S^{3} & \overset{}{\longrightarrow} & S^{1}\times\mathbb{C}^{2}\supset
S^{1}\times D^{4},
\end{array}
\]
it follows from (\ref{A}) and (\ref{AA}) that the function $|s|_{h}^{2}$ on
$\psi^{-1}(S^{1}\times D_{1/2}^{4})$ is given by
\begin{equation}%
\begin{array}
[c]{c}%
\psi^{\ast}r_{S^{1}}^{2}=|s|_{h}^{2}.
\end{array}
\label{AAA}%
\end{equation}
and then write
\[
|s|_{h}^{2}(x)=r_{S^{1}}^{2}=|z_{1}|^{2}+|z_{2}|^{2}%
\]
for $\psi(x)=(s,z_{1},z_{2}),$ $s\in S^{1}.$ Hence the transverse Ricci
curvature of $h$ is given by
\[%
\begin{array}
[c]{c}%
R_{h}^{T}=-\frac{\sqrt{-1}}{2\pi}\partial_{B}\overline{\partial}_{B}%
\log(|z_{1}|^{2}+|z_{2}|^{2})
\end{array}
\]
on $\psi^{-1}(S^{1}\times D_{1/2}^{4}\backslash\{0\}).$

We have the following lemmas.

\begin{lemma}
\label{L63}For sufficiently small $\varepsilon_{0}>0$,%
\[
\omega_{M}:=\psi^{\ast}\omega_{N}-\varepsilon_{0}R_{h}^{T}%
\]
is a transverse K\"{a}hler form on $M$. Furthermore, in term of Sasaki normal
coordinate (\ref{AAA3})
\begin{equation}%
\begin{array}
[c]{c}%
\omega_{M}:=\psi^{\ast}\omega_{N}+\frac{\sqrt{-1}}{2\pi}\frac{\varepsilon_{0}%
}{r_{S^{1}}^{2}}\sum_{i,j=1}^{2}(\delta_{ij}-\frac{\overline{z}_{i}z_{j}%
}{r_{S^{1}}^{2}})dz_{i}\wedge d\overline{z}_{j}%
\end{array}
\label{66}%
\end{equation}
on $\psi^{-1}(S^{1}\times D_{1/2}^{4}\backslash\{0\}).$
\end{lemma}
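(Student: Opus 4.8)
The plan is to establish the explicit local expression (\ref{66}) by a direct computation of the transverse curvature $R_h^T$, and then to prove transverse positivity by splitting $M$ into a tubular neighborhood of the exceptional set $V$ and its compact complement, where the two regimes of positivity are of a genuinely different nature.

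First I would verify the displayed formula. Writing $r_{S^1}^2=|z_1|^2+|z_2|^2$ and using $\frac{\partial}{\partial z_i}\log r_{S^1}^2=\bar z_i/r_{S^1}^2$, one obtains
\[
\partial_B\overline{\partial}_B\log r_{S^1}^2=\frac{1}{r_{S^1}^2}\sum_{i,j=1}^2\Big(\delta_{ij}-\frac{\bar z_i z_j}{r_{S^1}^2}\Big)\,dz_i\wedge d\bar z_j .
\]
Since $R_h^T=-\frac{\sqrt{-1}}{2\pi}\partial_B\overline{\partial}_B\log r_{S^1}^2$ on $\psi^{-1}(S^1\times D^4_{1/2}\setminus\{0\})$, substituting into $\omega_M=\psi^{\ast}\omega_N-\varepsilon_0 R_h^T$ yields exactly (\ref{66}); this is the ``Furthermore'' clause. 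As $\psi^{\ast}\omega_N$ and $R_h^T$ are both closed basic $(1,1)$-forms, so is $\omega_M$, and only its positivity remains in question. The Hermitian matrix $\big(\delta_{ij}-\bar z_i z_j/r_{S^1}^2\big)$ is the orthogonal projection off the radial direction $(z_1,z_2)$, hence positive semi-definite with one-dimensional null space, so $-\varepsilon_0 R_h^T\ge 0$ throughout this neighborhood.

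Near $V$ I would pass to the coordinate charts $U_i$ on $\mathcal{S}(-1)$ recalled above, where $V=\{v_i=0\}$ and (for $k=1$) $\psi^{\ast}z_1=v_1$, $\psi^{\ast}z_2=v_1 x_1$. Pulling back, $\psi^{\ast}\omega_N$ extends smoothly across $V$ and restricts on $V$ to a positive multiple of $\sqrt{-1}\,dv_1\wedge d\bar v_1$, i.e.\ it is positive in the normal direction $v_1$ but degenerate along $V$. On the other hand $\psi^{\ast}(\log r_{S^1}^2)=\log(1+|x_1|^2)+\log|v_1|^2$, so $-R_h^T=\frac{\sqrt{-1}}{2\pi}\partial_B\overline{\partial}_B\log(1+|x_1|^2)$ is the transverse Fubini--Study form on the base $\mathbf{CP}^1$, positive in the $x_1$ direction and vanishing in $v_1$. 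Off $V$ the map $\psi$ is a transverse biholomorphism, so $\psi^{\ast}\omega_N>0$ and $\omega_M\ge\psi^{\ast}\omega_N>0$ there; on $V$ itself the null spaces of the two semi-positive forms $\psi^{\ast}\omega_N$ and $-R_h^T$ (normal, respectively tangential to $V$) meet only in $0$, whence their sum is positive definite. Thus $\omega_M>0$ on the whole neighborhood of $V$ for \emph{every} $\varepsilon_0>0$.

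Finally, on the compact complement of a fixed tubular neighborhood of $V$ the form $\psi^{\ast}\omega_N$ is a genuine transverse K\"ahler metric, so $\psi^{\ast}\omega_N\ge c\,\omega_0$ for some $c>0$, while $R_h^T$ is smooth and hence bounded there; therefore $\psi^{\ast}\omega_N-\varepsilon_0 R_h^T>0$ once $\varepsilon_0$ is taken small enough. Combining the two regimes produces a single $\varepsilon_0>0$ for which $\omega_M>0$ on all of $M$. The hard part is the analysis near $V$: in the downstairs $z$-coordinates $-R_h^T$ appears to blow up like $r_{S^1}^{-2}$, an artifact one removes only by passing to the resolution coordinates $(x_i,v_i)$, where the positive directions of $\psi^{\ast}\omega_N$ and of $-R_h^T$ must be identified as exactly complementary (normal versus tangential to $V$) and the cross terms checked not to spoil positivity; the sign-definiteness away from $V$ is then a routine compactness estimate.
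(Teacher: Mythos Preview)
Your proof is correct. The paper states Lemma~\ref{L63} without proof, evidently regarding it as the direct transverse analogue of the standard K\"ahler fact (cf.\ \cite[Lemma~2.3]{sw2}); your argument supplies exactly the expected details---the derivative computation giving (\ref{66}), the observation that $-R_h^T$ is the pullback of the Fubini--Study form in the resolution chart and hence semi-positive with kernel normal to $V$, the complementary degeneracy of $\psi^{\ast}\omega_N$ along $V$, and the compactness bound away from $V$---and is the standard route.
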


\begin{lemma}
\label{L64}There exist positive constants $C_{1},$ $C_{2}$ such that%
\begin{equation}%
\begin{array}
[c]{c}%
\psi^{\ast}\omega_{N}\leq\omega_{M}\leq\frac{C_{1}}{|s|_{h}^{2}}\psi^{\ast
}\omega_{N}%
\end{array}
\label{67}%
\end{equation}
and%
\begin{equation}%
\begin{array}
[c]{c}%
\frac{1}{C_{2}}\psi^{\ast}\omega_{N}\leq\omega_{0}\leq\frac{C_{2}}{|s|_{h}%
^{2}}\psi^{\ast}\omega_{N}.
\end{array}
\label{68}%
\end{equation}

\end{lemma}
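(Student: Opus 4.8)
The plan is to establish all four inequalities by treating separately the region away from the exceptional divisor $V$, where $\psi$ is a transverse biholomorphism, and a fixed tubular neighborhood of $V$, where one has the explicit coordinate description from Lemma \ref{L63}. On any compact $K\subset M\backslash V$ the function $|s|_h^2$ is bounded between two positive constants and $\psi^{\ast}\omega_N$, $\omega_M$, $\omega_0$ are mutually comparable smooth transverse K\"ahler metrics; hence every inequality holds on $K$ once $C_1,C_2$ are taken large enough, and the real work is local near $V$. Throughout I would use the identity $|s|_h^2=r_{S^1}^2$ from (\ref{AAA}) and the observation, read off from (\ref{66}), that $-R_h^T$ is the pullback to $M$ of the Fubini--Study form on the exceptional curve $E\cong\mathbf{CP}^1$, a bounded semi-positive $(1,1)$-form whose coefficient matrix in the $dz_i\wedge d\bar z_j$ expression is $\tfrac{1}{r_{S^1}^2}(\delta_{ij}-\tfrac{\bar z_i z_j}{r_{S^1}^2})$, i.e. $\tfrac{1}{r_{S^1}^2}$ times the orthogonal projection onto $z^{\perp}$.

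The two lower bounds are immediate. For $\psi^{\ast}\omega_N\le\omega_M$ note that by Lemma \ref{L63} the difference $\omega_M-\psi^{\ast}\omega_N=-\varepsilon_0 R_h^T$ is exactly this semi-positive form, so the inequality holds on all of $M$. For $\tfrac{1}{C_2}\psi^{\ast}\omega_N\le\omega_0$ observe that $\psi^{\ast}\omega_N$ is a globally defined smooth $(1,1)$-form on the compact manifold $M$ while $\omega_0$ is a fixed transverse K\"ahler metric, so $\psi^{\ast}\omega_N\le C_2\,\omega_0$ for some constant.

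For the upper bound on $\omega_M$ I would multiply through by $|s|_h^2$ and work in the $z$-coordinates on $N$. Using $|s|_h^2=r_{S^1}^2$ and (\ref{66}) gives
\[
|s|_h^2\,\omega_M=|s|_h^2\,\psi^{\ast}\omega_N+\varepsilon_0\,\tfrac{\sqrt{-1}}{2\pi}\sum_{i,j}\Big(\delta_{ij}-\tfrac{\bar z_i z_j}{r_{S^1}^2}\Big)dz_i\wedge d\bar z_j,
\]
in which the second summand has coefficient matrix the orthogonal projection $\mathbf{P}=(\delta_{ij}-\bar z_iz_j/r_{S^1}^2)$, satisfying $0\le\mathbf{P}\le\mathbf{I}$. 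Writing $\psi^{\ast}\omega_N=\sum G_{i\bar j}\,dz_i\wedge d\bar z_j$ with $(G_{i\bar j})\ge c\,\mathbf{I}$ (since $\omega_N$ is a genuine metric on the bounded image), the matrix inequality $|s|_h^2(G_{i\bar j})+\varepsilon_0\mathbf{P}\le(C+\tfrac{\varepsilon_0}{c})(G_{i\bar j})$ holds because $|s|_h^2$ is bounded and $\varepsilon_0\mathbf{P}\le\tfrac{\varepsilon_0}{c}(G_{i\bar j})$. Since $z_1,z_2$ are holomorphic functions on $M$, pairing with any $(1,0)$-vector turns a matrix inequality in the $dz_i\wedge d\bar z_j$ coefficients into a genuine inequality of $(1,1)$-forms on $M$ (even where $dz_1,dz_2$ fail to be a coframe), and this yields $\omega_M\le\tfrac{C_1}{|s|_h^2}\psi^{\ast}\omega_N$.

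The last inequality, $\omega_0\le\tfrac{C_2}{|s|_h^2}\psi^{\ast}\omega_N$, is the main obstacle, because $\omega_0$ is an arbitrary smooth metric on $M$ and cannot be expressed in the degenerate $z$-coordinates; here one must use the two charts $U_1,U_2$ of $\mathcal{S}(-1)$ with coordinates $(v_i,x_i)$, in which $\psi$ is the blow-down $(z_1,z_2)=(v_i,v_ix_i^{k})$ with $k=1$. In $U_i$ one has $\psi^{\ast}\omega_N=J^{\ast}GJ$ for the Jacobian $J=\partial(z_1,z_2)/\partial(v_i,x_i)$, with $\det J=v_i$, so $J^{\ast}J$ has eigenvalues of order $1+|x_i|^2$ and $|v_i|^2/(1+|x_i|^2)$. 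Restricting to $|x_i|\le 2$ (the two charts then cover $V$) and using $(G_{i\bar j})\ge c\mathbf{I}$ together with $|s|_h^2=r_{S^1}^2=|v_i|^2(1+|x_i|^2)$, the degeneracy of the small eigenvalue of $\psi^{\ast}\omega_N$ is seen to be exactly of order $|s|_h^2$; hence $\tfrac{1}{|s|_h^2}\psi^{\ast}\omega_N$ has both eigenvalues bounded below by a positive constant and dominates the bounded metric $\omega_0$. The crux of the lemma is precisely this bookkeeping: verifying that the single factor $|s|_h^2$ compensates the vanishing of $\det J=v_i$ along $V$, and that the chart restriction $|x_i|\le 2$ keeps all error terms uniform.
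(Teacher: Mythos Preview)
Your argument is correct, but you work much harder on the last inequality than the paper does. The paper's proof begins with the one-line observation that $\omega_M$ and $\omega_0$ are \emph{globally} uniformly equivalent on $M$: Lemma~\ref{L63} asserts that $\omega_M=\psi^{\ast}\omega_N-\varepsilon_0 R_h^T$ is a smooth transverse K\"ahler form on all of $M$ (not merely on $M\setminus V$), and since $M$ is compact any two such metrics are comparable. Hence \eqref{68} follows immediately from \eqref{67}, and there is no need for your Jacobian bookkeeping in the $U_i$-charts. The paper then proves \eqref{67} exactly as you do: the lower bound by semi-positivity of $-R_h^T$ (via Cauchy--Schwarz), and the upper bound by noting that the matrix $(\bar z_i z_j)$ is semi-positive, so $\delta_{ij}-\bar z_iz_j/r_{S^1}^2\le\delta_{ij}$ and the extra term in \eqref{66} is bounded by $\frac{\varepsilon_0}{|s|_h^2}$ times the Euclidean form, which is comparable to $\psi^{\ast}\omega_N$ on the bounded image.

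Your direct verification in the blow-up charts is not wasted effort---it makes transparent exactly why the single power of $|s|_h^2$ is the right compensating factor---but you should recognize that the statement of Lemma~\ref{L63} already hands you the shortcut: once $\omega_M$ is known to be a genuine K\"ahler metric on the compact $M$, the fourth inequality is a corollary of the second, not ``the main obstacle.''
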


\begin{proof}
We observe that $\omega_{M}$ and $\omega_{0}$ are uniformly equivalent
transverse K\"{a}hler metrics on $M$. Hence (\ref{68}) follows easily from
(\ref{67}). Thus it suffices for us to prove (\ref{67}) in $\psi^{-1}%
(S^{1}\times D_{1/2}^{4}\backslash\{0\})$. By using (\ref{66}), the first
inequality of (\ref{67}) follows from the fact that if $X^{i}$ is any
$T^{1,0}$ vector then by the Cauchy-Schwarz inequality,%
\[%
\begin{array}
[c]{c}%
\sum_{i,j=1}^{2}\frac{\overline{z}_{i}z_{j}}{r_{S^{1}}^{2}}X^{i}%
\overline{X^{j}}=\left(  \sum_{i=1}^{2}\frac{\overline{z}_{i}}{r_{S^{1}}}%
X^{i}\right)  \left(  \sum_{j=1}^{2}\frac{z_{j}}{r_{S^{1}}}\overline{X^{j}%
}\right)  \leq\left\vert X\right\vert ^{2}=\sum_{i,j=1}^{2}\delta_{ij}%
X^{i}\overline{X^{j}}.
\end{array}
\]
The second inequality of (\ref{67}) follows from the fact that $\overline
{z}_{i}z_{j}$ is semi-positive definite.
\end{proof}

\begin{lemma}
\label{L65}There exists $\delta>0$ and a uniform constant $C$ such that for
$\omega=\omega(t)$ a solution of the Sasaki-Ricci flow (\ref{1}) satisfies
\[%
\begin{array}
[c]{c}%
\omega\leq\frac{C}{|s|_{h}^{2}}\psi^{\ast}\omega_{N}\text{ \textrm{and}
}\omega\leq\frac{C}{|s|_{h}^{2(1-\delta)}}\omega_{0}.
\end{array}
\]

\end{lemma}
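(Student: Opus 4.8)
The plan is to treat this as the transverse, Sasakian version of the second-order metric estimate of Song--Weinkove \cite{sw2} for contracting exceptional divisors, proving both inequalities by the transverse parabolic Schwarz (Chern--Lu) inequality combined with a maximum-principle argument in which $\log|s|_h^2$ acts as a barrier that forces the supremum away from the exceptional locus $V$. As noted at the start of this section, all quantities are basic and the transverse Laplacian $\Delta_\omega=\Delta_{\omega(t)}^T$ behaves exactly as in the K\"ahler--Ricci flow, so the Schwarz computation carries over verbatim.

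Two preliminary inputs are needed. First, the transverse parabolic Schwarz lemma along the flow (\ref{1}) gives, for a fixed reference metric, $(\partial_t-\Delta_\omega)\log\mathrm{tr}_{\omega_0}\omega\le C_0\,\mathrm{tr}_\omega\omega_0$, with $C_0$ an upper bound for the transverse bisectional curvature of the fixed smooth metric $\omega_0$; likewise $(\partial_t-\Delta_\omega)\log\mathrm{tr}_{\omega_M}\omega\le C_M\,\mathrm{tr}_\omega\omega_M$, where the point is that the explicit form (\ref{66}) exhibits $\omega_M$ as $\psi^\ast\omega_N$ plus a fixed multiple of a pulled-back Fubini--Study-type form, so its transverse bisectional curvature is bounded \emph{above} by a uniform $C_M$ even though $\omega_M$ itself blows up along $V$. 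Second, from Lemma \ref{L63} and the computation of $R_h^T$ one has $\sqrt{-1}\partial_B\overline\partial_B\log|s|_h^2=\tfrac{2\pi}{\varepsilon_0}(\omega_M-\psi^\ast\omega_N)\ge 0$, so $\log|s|_h^2$ is transverse plurisubharmonic, $\Delta_\omega\log|s|_h^2=\tfrac{2\pi}{\varepsilon_0}(\mathrm{tr}_\omega\omega_M-\mathrm{tr}_\omega\psi^\ast\omega_N)\ge 0$, and $\log|s|_h^2\to-\infty$ along $V$.

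I would first prove the second inequality. On $M\times[0,T']$ with $T'<T_0$, consider
\[
Q=\log\mathrm{tr}_{\omega_0}\omega+(1-\delta)\log|s|_h^2-A\varphi,
\]
with $A$ large and $\delta>0$ small. For each fixed $t<T_0$ the metric $\omega(t)$ is smooth on the compact $M$, so $\mathrm{tr}_{\omega_0}\omega$ is bounded and the weight drives $Q\to-\infty$ along $V$; hence the maximum of $Q$ is attained at some $(x_0,t_0)$ with $x_0\notin V$. At that point, combining the Schwarz inequality, the identity $\Delta_\omega\varphi=\mathrm{tr}_\omega(\omega-\widehat\omega_t)=2-\mathrm{tr}_\omega\widehat\omega_t$ coming from (\ref{63}), the bounds on $\varphi$ and $\dot\varphi$ from Lemma \ref{L61}, and the comparisons (\ref{67})--(\ref{68}), the good negative terms $-A\,\mathrm{tr}_\omega\widehat\omega_t$ and $-(1-\delta)\tfrac{2\pi}{\varepsilon_0}\,\mathrm{tr}_\omega\omega_M$ are used to absorb the Schwarz term $C_0\,\mathrm{tr}_\omega\omega_0$; this bounds $Q$ at its maximum, hence everywhere, and yields $\mathrm{tr}_{\omega_0}\omega\le C|s|_h^{-2(1-\delta)}$, i.e. the stated bound $\omega\le C|s|_h^{-2(1-\delta)}\omega_0$. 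I would then obtain the first (and sharper, in the $\psi^\ast\omega_N$ direction) inequality by a second maximum-principle argument applied to $\log\mathrm{tr}_{\omega_M}\omega-A\varphi$, in which the already-established bound on $\mathrm{tr}_{\omega_0}\omega$ together with the volume estimate $\omega^2\wedge\eta_0\le C\,\Omega\wedge\eta_0$ of Lemma \ref{L61} supplies the a priori control of $\mathrm{tr}_\omega\omega_M$ needed to show $\mathrm{tr}_{\omega_M}\omega\le C$; feeding this through (\ref{67}) gives $\omega\le C|s|_h^{-2}\psi^\ast\omega_N$.

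The main obstacle is exactly the degeneration of the reference data near $V$ and as $t\to T_0^-$: the good negative term $-A\,\mathrm{tr}_\omega\widehat\omega_t$ only controls $\mathrm{tr}_\omega\psi^\ast\omega_N$ (since $\widehat\omega_t$ is comparable to $\psi^\ast\omega_N$, not to $\omega_M$, near $V$), and the coefficient of its $\omega_0$-part degenerates as $t\to T_0$. This is precisely why the weight $(1-\delta)\log|s|_h^2$ is inserted: its Laplacian contributes the extra negative multiple $-(1-\delta)\tfrac{2\pi}{\varepsilon_0}\mathrm{tr}_\omega\omega_M$, and the interplay at the maximum point between the value of $|s|_h^2(x_0)$ and $\mathrm{tr}_{\omega_0}\omega(x_0)$ closes the inequality only after one checks that the loss introduced is consistent with the exponent $2(1-\delta)$. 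Verifying that a single small $\delta>0$ (depending on $C_0,C_M,\varepsilon_0$ and the comparison constants) makes all these absorptions simultaneously valid is the delicate bookkeeping on which the argument rests.
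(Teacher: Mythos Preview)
Your outline has the right ingredients (parabolic Schwarz, $\log|s|_h^2$ as barrier, maximum principle), but both of your two separate arguments contain a genuine gap, and the paper's single coupled test function is designed precisely to avoid it.

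\textbf{First argument (the bound against $\omega_0$).} With $Q=\log\mathrm{tr}_{\omega_0}\omega+(1-\delta)\log|s|_h^2-A\varphi$, the only ``good'' negative term that can see the $\omega_0$--direction is $-(1-\delta)\tfrac{2\pi}{\varepsilon_0}\,\mathrm{tr}_\omega\omega_M$, because $A\,\mathrm{tr}_\omega\widehat\omega_t$ only controls $\mathrm{tr}_\omega\psi^\ast\omega_N$ uniformly in $t$. To absorb the Schwarz term $C_0\,\mathrm{tr}_\omega\omega_0$ you would need $(1-\delta)\tfrac{2\pi}{\varepsilon_0}\,\omega_M\ge C_0\,\omega_0$. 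But $\omega_M=\psi^\ast\omega_N-\varepsilon_0R_h^T$ is comparable to $\omega_0$ with a constant of order $\varepsilon_0$ along the fiber direction of $V$, so $c_M/\varepsilon_0$ is a fixed number determined by $h$ and $\omega_N$; there is no reason it dominates the curvature bound $C_0$ of the \emph{arbitrary} initial metric $\omega_0$. The coefficient $(1-\delta)$ is not a free parameter here (taking it $>1$ kills the barrier), so the absorption cannot be forced.

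\textbf{Second argument (the bound against $\psi^\ast\omega_N$).} The conclusion $\mathrm{tr}_{\omega_M}\omega\le C$ is too strong: since $\omega_M$ is uniformly equivalent to $\omega_0$, it would give $\omega\le C\omega_0$, which is exactly what fails near $V$. The quantity that must be bounded is $|s|_h^{2}\,\mathrm{tr}_{\psi^\ast\omega_N}\omega$, and your test function $\log\mathrm{tr}_{\omega_M}\omega-A\varphi$ has no barrier (it is smooth across $V$), so its maximum can sit on $V$ where $A\,\mathrm{tr}_\omega\widehat\omega_t$ cannot absorb $C_M\,\mathrm{tr}_\omega\omega_M$.

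\textbf{How the paper fixes both issues at once.} The paper applies the Schwarz inequality to \emph{both} $\omega_0$ and $\psi^\ast\omega_N$ and works with the single quantity
\[
Q_\varepsilon=\log\mathrm{tr}_{\omega_0}\omega+A\log\bigl(|s|_h^{2+2\varepsilon}\,\mathrm{tr}_{\psi^\ast\omega_N}\omega\bigr)-A^2\varphi.
\]
The point is that the barrier $|s|_h^{2+2\varepsilon}$ still drives $Q_\varepsilon\to-\infty$ on $V$, while its coefficient is now the \emph{free} large parameter $A$. At the maximum one must make $A\bigl(A\widehat\omega_{t_0}-(1+\varepsilon)R_h^T-C_2\psi^\ast\omega_N\bigr)\ge(C_1+1)\omega_0$; using $\psi^\ast\omega_N-\varepsilon_0R_h^T=\omega_M\gtrsim\omega_0$ this holds once $A$ is large. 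From the resulting bound on $Q_\varepsilon$ (and (\ref{71})) one reads off $|s|_h^{2}\,\mathrm{tr}_{\psi^\ast\omega_N}\omega\le C$ directly, and then, using $\mathrm{tr}_{\omega_0}\omega\le C\,\mathrm{tr}_{\psi^\ast\omega_N}\omega$, one gets $(A+1)\log\mathrm{tr}_{\omega_0}\omega+A\log|s|_h^2\le C$, i.e.\ $\delta=\tfrac{1}{A+1}$. So the small $\delta$ is an \emph{output}, not an input coefficient on the barrier.
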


\begin{proof}
From Lemma \ref{L64} we have%
\begin{equation}%
\begin{array}
[c]{c}%
|s|_{h}^{2}\mathrm{tr}_{\psi^{\ast}\omega_{N}}\omega\leq C\mathrm{tr}%
_{\omega_{0}}\omega.
\end{array}
\label{71}%
\end{equation}
We fix $0<\varepsilon\leq1$ and will apply the maximum principle to the
quantity
\[%
\begin{array}
[c]{c}%
Q_{\varepsilon}=\log\mathrm{tr}_{\omega_{0}}\omega+A\log(|s|_{h}%
^{2+2\varepsilon}\mathrm{tr}_{\psi^{\ast}\omega_{N}}\omega)-A^{2}\varphi,
\end{array}
\]
on $M\backslash V$ where $A$ is a constant to be determined later. It follows
that at any fixed time $t$, $Q_{\varepsilon}(x,t)$ tends to negative infinity
as $x\in M$ tends to $V$. Suppose there exists $(x_{0},t_{0})\in M\backslash
V\times(0,T_{0})$ with $\sup_{M\backslash V\times(0,t_{0}]}Q_{\varepsilon
}=Q_{\varepsilon}(x_{0},t_{0})$. For any fixed transverse K\"{a}hler metric
$\widetilde{\omega}$ on $M$, if $\omega=\omega(t)$ solves the Sasaki-Ricci
flow, we have the following estimate like the K\"{a}hler-Ricci flow%
\begin{equation}%
\begin{array}
[c]{ll}
& (\frac{\partial}{\partial t}-\Delta_{B})\log\mathrm{tr}_{\widetilde{\omega}%
}\omega\\
= & -\frac{1}{\mathrm{tr}_{\widetilde{\omega}}\omega}\left(  g^{Ti\overline
{j}}\widetilde{R}_{i\overline{j}}^{T\text{ \ }k\overline{l}}g_{k\overline{l}%
}^{T}+g^{Ti\overline{j}}\widetilde{g}^{Tk\overline{l}}g^{Tp\overline{q}%
}\widetilde{\bigtriangledown}_{i}^{T}g_{k\overline{q}}^{T}%
\widetilde{\bigtriangledown}_{\overline{j}}^{T}g_{p\overline{l}}^{T}%
-\frac{\left\vert \bigtriangledown^{T}\mathrm{tr}_{\widetilde{\omega}}%
\omega\right\vert ^{2}}{\mathrm{tr}_{\widetilde{\omega}}\omega}\right) \\
\leq & \widetilde{C}\mathrm{tr}_{\omega}\widetilde{\omega},
\end{array}
\label{73}%
\end{equation}
for $\widetilde{C}$ depending only on the lower bound of the transverse
bisectional curvature of $\widetilde{g}^{T}.$ At $(x_{0},t_{0})$ we have
\begin{equation}%
\begin{array}
[c]{lll}%
0\leq(\frac{\partial}{\partial t}-\Delta_{B})Q_{\varepsilon} & \leq &
C_{1}\mathrm{tr}_{\omega}\omega_{0}-A\mathrm{tr}_{\omega}(A\widehat{\omega
}_{t_{0}}-(1+\varepsilon)R_{h}^{T}-C_{2}\psi^{\ast}\omega_{N})\\
&  & -A^{2}\log\frac{\omega^{2}\wedge\eta_{0}}{\Omega\wedge\eta_{0}}+16A,
\end{array}
\label{72}%
\end{equation}
for some uniform constants $C_{1},$ $C_{2}$. Here we apply (\ref{73}) with
$\widetilde{\omega}=\omega_{0}$ and then $\widetilde{\omega}=\psi^{\ast}%
\omega_{N}$ at the point $(x_{0},t_{0})$. From Lemma \ref{L63} and the
definition of $\widehat{\omega}_{t}$ we can choose $A$ sufficiently large and
independent of $\varepsilon$, $t_{0}$ so that
\[%
\begin{array}
[c]{c}%
A(A\widehat{\omega}_{t_{0}}-(1+\varepsilon)R_{h}^{T}-C_{2}\psi^{\ast}%
\omega_{N})\geq(C_{1}+1)\omega_{0}.
\end{array}
\]
Then we have
\[%
\begin{array}
[c]{l}%
(A^{2}\log\frac{\omega^{2}\wedge\eta_{0}}{\Omega\wedge\eta_{0}}+\mathrm{tr}%
_{\omega}\omega_{0})(x_{0},t_{0})\leq16A,
\end{array}
\]
which implies that%
\[%
\begin{array}
[c]{l}%
(\mathrm{tr}_{\omega}\omega_{0})(x_{0},t_{0})\leq C.
\end{array}
\]
Since the volume form $\omega^{2}\wedge\eta_{0}$ is uniformly bounded from
above by Lemma \ref{L61},
\[%
\begin{array}
[c]{c}%
(\mathrm{tr}_{\omega_{0}}\omega)(x_{0},t_{0})\leq(\mathrm{tr}_{\omega}%
\omega_{0})(x_{0},t_{0})\left(  \frac{\omega^{2}\wedge\eta_{0}}{\omega_{0}%
^{2}\wedge\eta_{0}}\right)  (x_{0},t_{0})\leq C.
\end{array}
\]
From (\ref{71}), we obtain%
\[%
\begin{array}
[c]{c}%
(|s|_{h}^{2}\mathrm{tr}_{\psi^{\ast}\omega_{N}}\omega)(x_{0},t_{0})\leq C.
\end{array}
\]
Since $\varphi$ is uniformly bounded, we can get%
\[
Q_{\varepsilon}\leq C,
\]
for $C$ independent of $\varepsilon$. Letting $\varepsilon\rightarrow0$ and
applying (\ref{71}) again we get
\[%
\begin{array}
[c]{c}%
\omega\leq\frac{C}{|s|_{h}^{2}}\psi^{\ast}\omega_{N}.
\end{array}
\]

Using the inequality $\mathrm{tr}_{\omega_{0}}\omega\leq C\mathrm{tr}%
_{\psi^{\ast}\omega_{N}}\omega$ in Lemma \ref{L64}, we also have
\[%
\begin{array}
[c]{l}%
\log(|s|_{h}^{2A}(\mathrm{tr}_{\omega_{0}}\omega)^{A+1})\leq C,
\end{array}
\]
this yields that $\mathrm{tr}_{\omega_{0}}\omega\leq\frac{C}{|s|_{h}%
^{2(1-\delta)}}$ for $\delta=\frac{1}{A+1}>0,$ and also $\omega\leq\frac
{C}{|s|_{h}^{2(1-\delta)}}\omega_{0}.$
\end{proof}

For $(s,z_{1},z_{2})\in S^{1}\times D^{4},$ we consider the transverse
holomorphic vector field
\[%
\begin{array}
[c]{c}%
\sum_{i=1}^{2}z_{i}\frac{\partial}{\partial z_{i}}%
\end{array}
\]
on $S^{1}\times D^{4},$ which defines a transverse holomorphic vector field
$X$ on $\psi^{-1}(S^{1}\times D^{4})\subset M$ via $\psi$ and we extend $X$ to
be a smooth $T^{1,0}$ vector field on $M$. We then have the following lemma.

\begin{lemma}
\label{L66}For $\omega=\omega(t)$ be a solution of the Sasaki-Ricci flow, we
have the estimate%
\begin{equation}%
\begin{array}
[c]{c}%
|X|_{\omega}^{2}\leq C|s|_{h}.
\end{array}
\label{77}%
\end{equation}
for a uniform constant $C$. Locally, in $S^{1}\times D_{1/2}^{4}%
\backslash\{0\}$ we have%
\begin{equation}%
\begin{array}
[c]{c}%
|W|_{g^{T}}^{2}\leq\frac{C}{r_{S^{1}}},
\end{array}
\label{78}%
\end{equation}
for $W=\sum_{i=1}^{2}\frac{1}{r_{S^{1}}}(x_{i}\frac{\partial}{\partial x_{i}%
}+y_{i}\frac{\partial}{\partial y_{i}})$ the unit transverse radial vector
field with respect to $g_{Eucl}^{T}$, where $z_{i}=x_{i}+\sqrt{-1}y_{i}$.
\end{lemma}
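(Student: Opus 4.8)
The plan is to reduce the second estimate (\ref{78}) to the first (\ref{77}) and then to prove (\ref{77}) by a maximum-principle comparison of $\log|X|^2_{\omega}$ against a barrier built from $\log|s|_h^2$. For the reduction, note that $X=\sum_i z_i\frac{\partial}{\partial z_i}$ generates the fibrewise $\mathbf{C}^{\ast}$-scaling on $S^{1}\times D^{4}$; it is $S^{1}$-invariant and hence descends to a transverse holomorphic vector field on $\psi^{-1}(S^{1}\times D^{4})$. Its real part satisfies $2\,\mathrm{Re}(X)=\sum_i(x_i\frac{\partial}{\partial x_i}+y_i\frac{\partial}{\partial y_i})=r_{S^{1}}W$, so that (up to the fixed constant coming from the Hermitian/Riemannian normalisation) $|W|^2_{g^{T}}=2\,r_{S^{1}}^{-2}\,|X|^2_{\omega}$. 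Since $|s|_h^2=r_{S^{1}}^2$ by (\ref{AAA}), the bound $|X|^2_{\omega}\le C|s|_h=Cr_{S^{1}}$ immediately yields $|W|^2_{g^{T}}\le 2Cr_{S^{1}}/r_{S^{1}}^{2}=C'/r_{S^{1}}$, which is (\ref{78}). Thus it suffices to establish (\ref{77}).

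The second step is the evolution identity for a transverse holomorphic field. Because $X$ is transverse holomorphic and $\frac{\partial}{\partial t}g^{T}=-\mathrm{Ric}^{T}$, a transverse Bochner computation (the Sasaki analogue of the Kaehler--Ricci flow identity, where the Ricci terms produced by $\frac{\partial}{\partial t}g^{T}$ cancel the curvature terms arising in $\Delta_{B}|X|^2$) gives
\[
\left(\frac{\partial}{\partial t}-\Delta_{B}\right)|X|_{\omega}^{2}=-|\nabla^{T}X|^{2}\le 0 ,
\]
and hence, by the Cauchy--Schwarz (Kato) inequality $|\nabla^{T}|X|^2|^2\le |\nabla^{T}X|^2\,|X|^2$,
\[
\left(\frac{\partial}{\partial t}-\Delta_{B}\right)\log|X|_{\omega}^{2}\le 0 ,
\]
so $\log|X|^2_{\omega}$ is a subsolution of the transverse heat equation on $M\backslash V$. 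I also record the curvature of the barrier: $\sqrt{-1}\partial_{B}\overline{\partial}_{B}\log|s|_h^2=-2\pi R_h^{T}\ge 0$ is the pullback of the Fubini--Study form, and by Lemma \ref{L63} and Lemma \ref{L64} one has $-R_h^{T}\le \varepsilon_0^{-1}\omega_{M}\le C|s|_h^{-2}\psi^{\ast}\omega_{N}$, while Lemma \ref{L62}(i) gives $\psi^{\ast}\omega_{N}\le c^{-1}\omega$ and hence $\mathrm{tr}_{\omega}\psi^{\ast}\omega_{N}\le 2/c$.

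The third step is the maximum principle, comparing $\log|X|^2_{\omega}$ with $\tfrac{1}{2}\log|s|_h^2$. Since $M\backslash V$ is incomplete, I would run the argument on a fixed tubular neighbourhood $\psi^{-1}(S^{1}\times D^{4}_{1/2})$ using the test function
\[
Q_{\varepsilon}=\log|X|_{\omega}^{2}-\tfrac{1}{2}\log|s|_h^{2}+A\log\!\big(|s|_h^{2\varepsilon}\,\mathrm{tr}_{\psi^{\ast}\omega_{N}}\omega\big)-A^{2}\varphi ,
\]
mirroring the test function used in the proof of Lemma \ref{L65}. The correction terms are engineered so that, using $\mathrm{tr}_{\psi^{\ast}\omega_{N}}\omega\le C|s|_h^{-2}$ from Lemma \ref{L65}, the factor $|s|_h^{2\varepsilon}\,\mathrm{tr}_{\psi^{\ast}\omega_{N}}\omega\to 0$ and hence $Q_{\varepsilon}\to-\infty$ as $x\to V$, while the term $A\,\mathrm{tr}_{\omega}(\cdots)$ dominates the positive curvature contribution $\tfrac12\Delta_{B}\log|s|_h^{2}=\mathrm{tr}_{\omega}(-\pi R_h^{T})$ at an interior maximum. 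On the fixed boundary $\{|s|_h=1/2\}$ the interior $C^{\infty}$-estimates of Lemma \ref{L62} bound $Q_{\varepsilon}$, and at $t=0$ the smoothness of $\omega_{0}$ together with the first-order vanishing of $X$ along $V$ gives $|X|^2_{\omega_{0}}\le C|s|_h^2$, hence $Q_{\varepsilon}\le C$ there. Thus $\sup Q_{\varepsilon}$ is bounded independently of $\varepsilon$; letting $\varepsilon\to 0$ yields $\log|X|^2_{\omega}-\tfrac12\log|s|_h^2\le C$, i.e. $|X|^2_{\omega}\le C|s|_h$.

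The hard part will be exactly the absorption of the positive term $\mathrm{tr}_{\omega}\big(\sqrt{-1}\partial_{B}\overline{\partial}_{B}\log|s|_h^2\big)$, which blows up like $|s|_h^{-2}$ near $V$: the naive comparison fails because both $\log|X|^2_{\omega}$ and $\tfrac12\log|s|_h^2$ are subsolutions, so $\tfrac12\log|s|_h^2$ is not itself a barrier. One must simultaneously dominate this term by the auxiliary trace quantity (controlled through Lemma \ref{L65}) and arrange the $-\infty$ behaviour at $V$ so that the maximum principle is legitimate on the incomplete manifold $M\backslash V$. Balancing these requirements by choosing $A$ large and uniform and then sending $\varepsilon\to0$ is the delicate point, and it is handled precisely as in the metric estimate of Lemma \ref{L65}; I note that the crude metric bounds alone only give $|X|^2_{\omega}\le C|s|_h^{2\delta}$ with the small $\delta=\tfrac{1}{A+1}$ of Lemma \ref{L65}, so the sharp exponent in (\ref{77}) genuinely requires the subsolution structure above.
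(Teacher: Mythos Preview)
Your reduction of (\ref{78}) to (\ref{77}) via $2\mathrm{Re}(X)=r_{S^{1}}W$ and $|s|_{h}^{2}=r_{S^{1}}^{2}$ is correct and matches the paper, and you are right that $(\partial_{t}-\Delta_{B})\log|X|_{\omega}^{2}\le 0$ is the heart of the matter. However, your maximum-principle argument for (\ref{77}) has a genuine gap.

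First, a concrete error: with the weight $|s|_{h}^{2\varepsilon}$ your barrier does \emph{not} tend to $-\infty$ along $V$. At any fixed $t<T_{0}$ one has $\omega\sim\omega_{0}$, so by Lemma~\ref{L64} the trace $\mathrm{tr}_{\psi^{\ast}\omega_{N}}\omega$ is comparable to $|s|_{h}^{-2}$, whence $|s|_{h}^{2\varepsilon}\mathrm{tr}_{\psi^{\ast}\omega_{N}}\omega\sim|s|_{h}^{2\varepsilon-2}\to\infty$. Combined with $\log|X|_{\omega}^{2}-\tfrac{1}{2}\log|s|_{h}^{2}\sim\tfrac{1}{2}\log|s|_{h}^{2}$, the net coefficient of $\log|s|_{h}^{2}$ in $Q_{\varepsilon}$ is $\tfrac{1}{2}+A(\varepsilon-1)$, which is negative once $A$ is large, so $Q_{\varepsilon}\to+\infty$ and the interior maximum fails. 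Replacing $2\varepsilon$ by $2+2\varepsilon$ (as in Lemma~\ref{L65}) restores the $-\infty$ behaviour, but then a second, structural problem appears: after the maximum principle you obtain
\[
\log|X|_{\omega}^{2}-\tfrac{1}{2}\log|s|_{h}^{2}+A\log\bigl(|s|_{h}^{2}\,\mathrm{tr}_{\psi^{\ast}\omega_{N}}\omega\bigr)\le C,
\]
and to isolate $\log|X|_{\omega}^{2}$ you need a \emph{lower} bound on $|s|_{h}^{2}\,\mathrm{tr}_{\psi^{\ast}\omega_{N}}\omega$. The only one available is $\mathrm{tr}_{\psi^{\ast}\omega_{N}}\omega\ge c$ from Lemma~\ref{L62}(i), which yields $|s|_{h}^{2}\,\mathrm{tr}_{\psi^{\ast}\omega_{N}}\omega\ge c|s|_{h}^{2}$ and hence $\log|X|_{\omega}^{2}\le C+(1-2A)\log|s|_{h}$; for large $A$ this is vacuous near $V$. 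In other words, the auxiliary $A\log(\cdot)$ term that makes the maximum principle work destroys the conclusion when you unpack it. Your last paragraph correctly diagnoses the difficulty but does not overcome it.

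The paper's argument avoids this trap by two devices you are missing. First, it replaces $\omega_{N}$ by a metric $\widetilde{\omega}_{N}$ that is \emph{flat} on $S^{1}\times D^{4}$; then $(\partial_{t}-\Delta_{B})\log\mathrm{tr}_{\psi^{\ast}\widetilde{\omega}_{N}}\omega\le 0$ exactly (no curvature error), so the simple quantity $Q_{\varepsilon}=\log\bigl(|X|_{\omega}^{2+2\varepsilon}\,\mathrm{tr}_{\psi^{\ast}\widetilde{\omega}_{N}}\omega\bigr)-t$ already satisfies $(\partial_{t}-\Delta_{B})Q_{\varepsilon}<0$, with no $\varphi$ term and no large constant $A$. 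This yields the clean product bound $|X|_{\omega}^{2}\,\mathrm{tr}_{\psi^{\ast}\omega_{N}}\omega\le C$, hence $|X|_{\omega}^{2}\,\mathrm{tr}_{\omega_{0}}\omega\le C$ by Lemma~\ref{L64}. Second, the sharp power $|s|_{h}$ is obtained \emph{algebraically} a posteriori: since $|X|_{\omega}^{2}\le|X|_{\omega_{0}}^{2}\,\mathrm{tr}_{\omega_{0}}\omega$ always, multiplying gives $|X|_{\omega}^{4}\le C|X|_{\omega_{0}}^{2}\sim C|s|_{h}^{2}$, i.e.\ $|X|_{\omega}^{2}\le C|s|_{h}$. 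It is this squaring trick, not a sharper barrier, that produces the exponent $1$ instead of the $2\delta$ you would get from Lemma~\ref{L65} alone.
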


\begin{proof}
By the expression (\ref{66}) of $\omega_{M},$ we have
\[
|X|_{\omega_{M}}^{2}=|X|_{\psi^{\ast}\omega_{N}}^{2}%
\]
in $S^{1}\times D_{1/2}^{4}.$ It follows that $|X|_{\omega_{0}}^{2}$ is
uniformly equivalent to $|s|_{h}^{2}=r_{S^{1}}^{2}$ in $S^{1}\times
D_{1/2}^{4}$. Hence there exists a positive constant $C(t)$ depending on $t$
such that%
\begin{equation}%
\begin{array}
[c]{c}%
\frac{1}{C(t)}|s|_{h}^{2}\leq|X|_{\omega}^{2}\leq C(t)|s|_{h}^{2}.
\end{array}
\label{6b}%
\end{equation}
Now define a transverse K\"{a}hler metric $\widetilde{\omega}_{N}$ on $N$ by
$\widetilde{\omega}_{N}=\omega_{Eucl}$ on $S^{1}\times D^{4}$, and extending
in an arbitrary way to be a smooth transverse K\"{a}hler metric on $N$. For
small $\varepsilon>0$, we apply the maximum principle to the quantity%
\[%
\begin{array}
[c]{c}%
Q_{\varepsilon}=\log(|X|_{\omega}^{2+2\varepsilon}\mathrm{tr}_{\psi^{\ast
}\widetilde{\omega}_{N}}\omega)-t.
\end{array}
\]
For any fixed $t$, using (\ref{71}), (\ref{6b}) and $\widetilde{\omega}_{N}$
is uniformly equivalent to $\omega_{N}$, we know that $(|X|_{\omega
}^{2+2\varepsilon}\mathrm{tr}_{\psi^{\ast}\widetilde{\omega}_{N}}\omega)(x,t)$
tends to zero as $x$ tends to $V$ and thus $Q_{\varepsilon}(x,t)$ tends to
negative infinity. Since $Q_{\varepsilon}$ is uniformly bounded from above on
the complement of $S^{1}\times D_{1/2}^{4}\backslash\{0\}$, thus
$Q_{\varepsilon}$ may attain its maximum at a point in $S^{1}\times
D_{1/2}^{4}\backslash\{0\}$. So we assume that at some point $(x_{0},t_{0})\in
S^{1}\times D_{1/2}^{4}\backslash\{0\}\times(0,T)$ with $\sup_{(M\backslash
V)\times(0,t_{0}]}Q_{\varepsilon}=Q_{\varepsilon}(x_{0},t_{0})$. By
(\ref{73}), we have
\begin{equation}%
\begin{array}
[c]{c}%
(\frac{\partial}{\partial t}-\Delta_{B})\mathrm{tr}_{\psi^{\ast}%
\widetilde{\omega}_{N}}\omega\leq0,
\end{array}
\label{6e}%
\end{equation}
since the transverse bisectional curvature of $\widetilde{\omega}_{N}$ is
zero. Using the Cauchy-Schwarz inequality to get%
\begin{equation}%
\begin{array}
[c]{c}%
(\frac{\partial}{\partial t}-\Delta_{B})\log|X|_{\omega}^{2}=\frac
{1}{|X|_{\omega}^{2}}\left(  -g^{Ti\overline{j}}g_{k\overline{l}}^{T}%
(\partial_{i}^{k}X)(\overline{\partial_{j}^{l}X})+\frac{\left\vert
\bigtriangledown^{T}|X|_{\omega}^{2}\right\vert _{\omega}^{2}}{|X|_{\omega
}^{2}}\right)  \leq0.
\end{array}
\label{6f}%
\end{equation}
All these (\ref{6e}) and (\ref{6f}) yield that
\[%
\begin{array}
[c]{c}%
0\leq(\frac{\partial}{\partial t}-\Delta_{B})Q_{\varepsilon}(x_{0},t_{0})<0.
\end{array}
\]
This implies $Q_{\varepsilon}$ is uniformly bounded from above. Letting
$\varepsilon\rightarrow0$ we obtain%
\[
|X|_{\omega}^{2}\mathrm{tr}_{\psi^{\ast}\omega_{N}}\omega\leq C,
\]
for some uniform constant $C$. By Lemma \ref{L64} we have $|X|_{\omega}%
^{2}\mathrm{tr}_{\omega_{0}}\omega\leq C$, and since $|X|_{\omega}^{2}%
\leq|X|_{\omega_{0}}^{2}\mathrm{tr}_{\omega_{0}}\omega,$ this gives
$|X|_{\omega}^{4}\leq C|X|_{\omega_{0}}^{2}$ and (\ref{77}) follows from the
fact that $|X|_{\omega_{0}}^{2}$ is uniformly equivalent to $|s|_{h}^{2} $ in
$S^{1}\times D_{1/2}^{4}$.

Since $2\operatorname{Re}(\frac{1}{r_{S^{1}}}X)=W$ in $S^{1}\times D_{1/2}%
^{4}\backslash\{0\}$ and hence (\ref{78}) follows from (\ref{77}).
\end{proof}

From Lemma \ref{L66} we have the bound of the lengths of spherical and
transverse radial paths in $S^{1}\times D_{1/2}^{4}\backslash\{0\}.$ For
$0<r_{S^{1}}<1/2$, the diameter of the product sphere $S^{1}\times
S_{r_{S^{1}}}^{3}$ in $S^{1}\times D^{4}$ with the metric induced from
$\omega$ is uniformly bounded from above, independent of $r_{S^{1}}$. For any
$(s_{0},z_{1},z_{2})\in S^{1}\times D_{1/2}^{4}\backslash\{0\}$, the length of
a transverse radial path $\gamma(u)=(s_{0},uz_{1},uz_{2})$ for $u\in(0,1]$
with respect to $\omega$ is uniformly bounded from above by $Cr_{S^{1}}$ with
$r_{S^{1}}=\sqrt{|z_{1}|^{2}+|z_{2}|^{2}}$. Hence the diameter of $S^{1}\times
D_{1/2}^{4}\backslash\{0\}$ with respect to $\omega$ is uniformly bounded from
above and%
\begin{equation}
\mathrm{diam}_{g^{T}(t)}M\leq C.\label{2022-4}%
\end{equation}
We then obtain a diameter bound of $M$ with respect to $g^{T}(t)$.

\subsubsection{Gromov-Hausdorff Convergence as $t\rightarrow T_{0}^{-}$}

In this subsection, we will derive the part $(2)$ in the Definition \ref{D12}
of floating foliation canonical surgical contraction under the assumption
(\ref{2022}). For the simplicity, we assume that there is only one exceptional
foliation $(-1)$-curve $V$.

More precisely, we will show that $(M,g^{T}(t))$ converges to $(N,d_{T_{0}%
}^{T})$ in the sense of Gromov-Hausdorff as $t\rightarrow T_{0}^{-}$ and
$\omega(t)$ converges in $C^{\infty}$ on any compact subset of $M\backslash V$
to a smooth transverse K\"{a}hler metric $\omega_{T}$ on $M\backslash V$. We
will first define a compact metric space $(N,d_{T_{0}})$ and show that
$(M,g(t))$ converges in the Gromov-Hausdorff sense to this metric space.

Recall that $(M,\eta,\xi,\Phi,g)$ is a compact quasi-regular Sasakian
manifold, then by the%
first structure theorem on Sasakian manifolds, 
$M$\ is a principal $S^{1}$-orbibundle ($V$-bundle) over $Z$ which is also a
$Q$-factorial, polarized, normal projective orbifold such that there is an
orbifold Riemannian submersion $\pi_{M}:(M,g,\omega)\rightarrow(Z,h,\omega
_{h})$ with%
\[%
\begin{array}
[c]{c}%
g=g^{T}+\eta\otimes\eta
\end{array}
\]
such that
\[%
\begin{array}
[c]{c}%
g^{T}=\pi_{M}{}^{\ast}(h);\text{ \ }\frac{1}{2}d\eta=\pi_{M}{}^{\ast}%
(\omega_{h}).
\end{array}
\]
The orbit $\xi_{x}$ is compact for any $x\in M,$ we then define the transverse
distance function as
\[%
\begin{array}
[c]{c}%
d^{T}(x_{1},x_{2})\triangleq d_{g}(\xi_{x_{1}},\xi_{x_{2}}),
\end{array}
\]
where $d_{g}$ is the distance function defined by the Sasaki metric $g.$ Then
\[%
\begin{array}
[c]{c}%
d_{M}^{T}(x_{1},x_{2})=d_{h}(\pi_{M}(x_{1}),\pi_{M}(x_{2})).
\end{array}
\]
We define a transverse ball $B_{\xi,g}(x,r)$ as follows:
\[%
\begin{array}
[c]{c}%
B_{\xi,g}(x,r)=\left\{  \widetilde{x}:d_{M}^{T}(x,\widetilde{x})<r\right\}
=\left\{  y:d_{h}(\pi_{M}(x),\pi_{M}(\widetilde{x}))<r\right\}  .
\end{array}
\]

Since $M\backslash V$ can be identified with $N\backslash S_{y_{0}}^{1}$ via
the map $\psi$. Abusing notation, we denote $g_{T_{0}}$ for the smooth
K\"{a}hler metric $(\psi^{-1})^{\ast}g_{T_{0}}$ on $N\backslash S_{y_{0}}^{1}$
and extend $g_{T_{0}}$ to a nonnegative $(1,1$)-tensor $\overline{g}_{T_{0}}$
on the whole space $N$ by setting $\overline{g}_{T_{0}}|_{S_{y_{0}}^{1}}%
(\cdot,\cdot)=0.$ Since $N$ is a $S^{1}$-bundle over $Y,$ there is a
Riemannian submersion
\begin{equation}
\pi_{N}:(N,\overline{g}_{T_{0}},\omega_{T_{0}})\rightarrow(Y,k_{T_{0}}%
,\omega_{k})\label{2022-1}%
\end{equation}
such that
\[
\pi_{N}^{\ast}(\omega_{k})=\omega_{T_{0}}.
\]
We define a distance function $d_{T_{0}}^{T}$ on $N$ as follows: For
$\overline{g}_{T_{0}}^{T}:=\overline{g}_{T_{0}}(\pi_{N}(\cdot),\pi_{N}%
(\cdot)),$
\[%
\begin{array}
[c]{c}%
d_{T_{0}}^{T}(y_{1},y_{2})=\inf\int_{0}^{1}\sqrt{\overline{g}_{T_{0}}%
^{T}(\gamma^{\prime}(s),\gamma^{\prime}(s))}ds
\end{array}
\]
where the infimum is taken over all piecewise smooth paths $\gamma
:[0,1]\rightarrow N$ with $\gamma(0)=y_{1}$ and $\gamma(1)=y_{2}$ such that
\[%
\begin{array}
[c]{c}%
d_{T_{0}}^{T}(y_{1},y_{2})=d_{\overline{g}_{T_{0}}}(\pi_{N}(y_{1}),\pi
_{N}(y_{2})).
\end{array}
\]
Then
\[%
\begin{array}
[c]{c}%
d_{T_{0}}^{T}=\pi_{N}^{\ast}(d_{k_{T_{0}}}),
\end{array}
\]
where $d_{k_{T_{0}}}$ is the distance with respect to $k_{T_{0}}$ on $Y$ as in
(\ref{2022-1}).

Adapting from section $3$, it follows from the previous set-up that there is a
commutative diagram
\begin{equation}%
\begin{array}
[c]{cclcl}%
S(-1)\thickapprox\mathbb{C}\times S^{3} & \overset{\pi_{2}}{\rightarrow} &
S^{3}\subset(M,g^{T},\omega,d^{T}) & \overset{}{\longrightarrow} & S_{y_{0}%
}^{1}\subset(N,\overline{g}_{T_{0}},\omega_{T},d_{T_{0}}^{T})\\
\downarrow\overline{H} &  & \downarrow\pi_{M} &  & \downarrow\pi_{N}\\
O(-1) & \overset{\pi_{d}}{\rightarrow} & CP^{1}\subset(Z,h,\omega_{h},d_{h}) &
\overset{}{\longrightarrow} & \pi_{N}(y_{0})\in(Y,k_{T_{0}},d_{k_{T_{0}}}).
\end{array}
\label{2022-2}%
\end{equation}
Here $d_{h}$ is the distance with respect to $h$ on $Z$. Note that
\begin{equation}
d^{T}=\pi_{M}^{\ast}(d_{h})\text{ \ \textrm{and} \ }\omega=\pi_{M}^{\ast
}(\omega_{h}).\label{2022-3}%
\end{equation}
Hence, in view of the commutative diagram (\ref{2022-2}), all the estimates as
in \cite{sw2} can be adapted on $Z,$ $Y$ and then on $M,$ $N$ via
(\ref{2022-3}). More precisely, it follows from \cite[Lemma 3.2]{sw2} that

\begin{lemma}
\label{b}There exists a uniform constant $C$ such that for any points
$x_{1},x_{2}\in V\subset(M,g^{T},\omega,d^{T})$%
\[%
\begin{array}
[c]{c}%
d_{\omega}^{T}(x_{1},x_{2})\leq C(T-t)^{\frac{1}{3}}%
\end{array}
\]
for any $t\in\lbrack0,T_{0})$.
\end{lemma}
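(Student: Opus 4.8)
The plan is to reduce the transverse diameter estimate on $M$ to the corresponding Kähler diameter estimate on the leaf space $Z$ and then to run a two--scale path argument. First I would invoke the identity $d^{T}=\pi_{M}^{\ast}(d_{h})$ from (\ref{2022-3}) together with the commutative diagram (\ref{2022-2}): since $\pi_{M}(V)=E\thickapprox\mathbf{CP}^{1}\subset Z$ and $d_{\omega}^{T}(x_{1},x_{2})=d_{h}(\pi_{M}(x_{1}),\pi_{M}(x_{2}))$ with both $\pi_{M}(x_{i})\in E$, the asserted inequality is equivalent to the statement $\mathrm{diam}_{h(t)}(E)\le C(T_{0}-t)^{1/3}$ for the exceptional curve of the blow--down $\phi:Z\rightarrow Y$. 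Because the Reeb field and the transverse holomorphic structure are fixed, every quantity here is basic and descends to the transverse Kähler flow on $(Z,\omega_{h}(t))$, so this is precisely the Kähler setting of \cite[Lemma 3.2]{sw2}, and the whole argument may be carried out on the projective surface $Z$ and transported back by the Riemannian submersion $\pi_{M}$. (Here $T=T_{0}$ is the first singular time.)

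Second, I would pin down the rate at which $V$ (equivalently $E$) collapses. Since $[\omega(t)]_{B}=[\omega_{0}]_{B}-tc_{1}^{B}(M)$ and, by the hypothesis (\ref{2022}), $[\omega_{0}]_{B}-T_{0}c_{1}^{B}(M)=[\psi^{\ast}\omega_{N}]_{B}$, the number $\int_{E}\omega(t)=[\omega(t)]_{B}\cdot E$ is affine--linear in $t$ and vanishes at $t=T_{0}$, because $\psi$ contracts $V$ to the fibre $S^{1}_{y_{0}}$ and hence $\int_{E}\psi^{\ast}\omega_{N}=\int_{\psi(E)}\omega_{N}=0$ by the projection formula. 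Thus there is $\lambda>0$ with $\int_{E}\omega(t)=\lambda(T_{0}-t)$; writing $a:=T_{0}-t$, the transverse area of every slice $\mathbf{CP}^{1}_{r}$ (homologous to the zero section) equals $a$ and shrinks linearly in $T_{0}-t$.

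Third, I would bound $d_{\omega}^{T}(x_{1},x_{2})$ by a path that leaves $V$ radially, traverses at a fixed small radius $r=r_{S^{1}}$, and returns. Using the unit transverse radial estimate $|W|_{g^{T}}^{2}\le C/r_{S^{1}}$ of Lemma \ref{L66}, the two radial segments from $V$ out to radius $r$ have $\omega$--length at most $C\int_{0}^{r}(r')^{-1/2}\,dr'\le Cr^{1/2}$. For the traversal at radius $r$ I would combine the metric upper bound $\omega\le \tfrac{C}{|s|_{h}^{2}}\psi^{\ast}\omega_{N}$ of Lemma \ref{L65} (together with the comparison in Lemma \ref{L64} and the fact that $\psi^{\ast}\omega_{N}$ degenerates exactly along the directions tangent to $E$) with the area identity $\int_{\mathbf{CP}^{1}_{r}}\omega(t)=a$, which controls the $\omega$--diameter of the slice by $Ca/r$. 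This yields $d_{\omega}^{T}(x_{1},x_{2})\le C\bigl(r^{1/2}+a/r\bigr)$ for all $0<r<1/2$; balancing the two terms at $r\sim a^{2/3}$ gives $d_{\omega}^{T}(x_{1},x_{2})\le Ca^{1/3}=C(T_{0}-t)^{1/3}$. For $t$ bounded away from $T_{0}$ the bound is trivial from the diameter estimate (\ref{2022-4}), so the interesting regime is $t\rightarrow T_{0}^{-}$.

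The main obstacle is the angular (traversal) estimate: converting the purely cohomological area bound $\int_{\mathbf{CP}^{1}_{r}}\omega=a$ into the pointwise $\omega$--diameter bound $Ca/r$ on the radius--$r$ slice, uniformly in $t$. This is exactly the delicate point of \cite{sw2}, and it is where Lemmas \ref{L64} and \ref{L65} together with the local model of $\mathcal{S}(-1)$ are needed; the radial estimate and the final optimization are routine by comparison. Since all the estimates are transverse and basic, the transfer between $M$ and $Z$ is automatic, so once the angular estimate is in place the conclusion follows verbatim from the Kähler argument of \cite[Lemma 3.2]{sw2}.
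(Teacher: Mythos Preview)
Your proposal is correct and takes essentially the same approach as the paper: the paper's proof of this lemma consists precisely of invoking the identification $d^{T}=\pi_{M}^{\ast}(d_{h})$ from (\ref{2022-3}) and the commutative diagram (\ref{2022-2}) to reduce to the K\"ahler leaf space, and then citing \cite[Lemma~3.2]{sw2} directly. You carry out the same reduction and then go further by sketching the Song--Weinkove two--scale path argument (radial length $Cr^{1/2}$ from Lemma~\ref{L66}, angular length controlled via Lemmas~\ref{L64}--\ref{L65} and the linear area decay, optimize at $r\sim a^{2/3}$); this is additional detail rather than a different route, and your identification of the angular estimate as the delicate step matches the structure of \cite{sw2}.
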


Note that when $r$ small enough, $B_{\xi,\overline{g}_{T_{0}}}(y,r)$ is a
trivial $S^{1}$-bundle over the geodesic ball $B_{k}(\pi_{N}(y),r)$. Then it
follows from \cite[Lemma 3.3]{sw2}, Lemma \ref{b} and (\ref{2022-4}) that

\begin{lemma}
\label{c}For any $\varepsilon>0,$ there exist $\delta$ and $T\in\lbrack
0,T_{0})$ such that
\[%
\begin{array}
[c]{c}%
\mathrm{diam}_{d_{T_{0}}^{T}}B_{\xi,\overline{g}_{T_{0}}}(y,\delta
)<\varepsilon
\end{array}
\]
and
\[%
\begin{array}
[c]{c}%
\mathrm{diam}_{\omega(t)}\psi^{-1}(B_{\xi,\overline{g}_{T_{0}}}(y,\delta
))<\varepsilon
\end{array}
\]
for all $t\in\lbrack0,T_{0}).$
\end{lemma}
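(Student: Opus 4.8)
The plan is to transport the whole statement to the leaf spaces by means of the commutative diagram (\ref{2022-2}), where it reduces to the corresponding assertion for the K\"{a}hler--Ricci flow on the projective surface $Z$ contracting the exceptional curve $E\cong\mathbf{CP}^{1}$ to the point $\pi_{N}(y_{0})\in Y$, so that \cite[Lemma 3.3]{sw2} may be invoked on the base. The bridge is that every distance in the statement is a \emph{transverse} distance, hence a pullback of a base distance: by (\ref{2022-3}) we have $d^{T}=\pi_{M}^{\ast}(d_{h})$ and $\omega(t)=\pi_{M}^{\ast}(\omega_{Z}(t))$ on $M$, while $d_{T_{0}}^{T}=\pi_{N}^{\ast}(d_{k_{T_{0}}})$ on $N$, and the two $S^{1}$-(orbi)bundle projections intertwine the two contractions through $\pi_{N}\circ\psi=\phi\circ\pi_{M}$.

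First I would dispose of the inequality for $d_{T_{0}}^{T}$, which is $t$-independent. Since $d_{T_{0}}^{T}=\pi_{N}^{\ast}(d_{k_{T_{0}}})$, the transverse ball is exactly $B_{\xi,\overline{g}_{T_{0}}}(y,\delta)=\pi_{N}^{-1}\big(B_{k_{T_{0}}}(\pi_{N}(y),\delta)\big)$, so any two of its points have transverse distance equal to the $k_{T_{0}}$-distance of their images in $Y$, which is at most $2\delta$ by the triangle inequality. Choosing $\delta<\varepsilon/2$ settles this part at once.

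For the second inequality I would use $\pi_{N}\circ\psi=\phi\circ\pi_{M}$ to compute
\[
\pi_{M}\big(\psi^{-1}(B_{\xi,\overline{g}_{T_{0}}}(y,\delta))\big)=\phi^{-1}\big(B_{k_{T_{0}}}(\pi_{N}(y),\delta)\big),
\]
so that, since $\mathrm{diam}_{\omega(t)}$ is the transverse diameter and the preimage set is $\pi_{M}$-saturated with $\omega(t)=\pi_{M}^{\ast}(\omega_{Z}(t))$ as in the proof of Theorem \ref{T51},
\[
\mathrm{diam}_{\omega(t)}\psi^{-1}(B_{\xi,\overline{g}_{T_{0}}}(y,\delta))=\mathrm{diam}_{\omega_{Z}(t)}\phi^{-1}\big(B_{k_{T_{0}}}(\pi_{N}(y),\delta)\big).
\]
The right-hand side is precisely the quantity estimated in \cite[Lemma 3.3]{sw2} for the K\"{a}hler--Ricci flow $\omega_{Z}(t)$ on $Z$ contracting $E$: there one fixes $\delta$ and a time $T\in[0,T_{0})$ so that the diameter of the preimage ball stays below $\varepsilon$ for $t\in[T,T_{0})$. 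The two inputs that this lemma requires, namely the collapsing of the exceptional curve and a uniform total diameter bound, are supplied on the Sasaki side by Lemma \ref{b} and (\ref{2022-4}), which descend to $Z$ through (\ref{2022-3}); accordingly $\delta$ and $T$ are chosen together.

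The main obstacle is the genuine degeneration of $\omega(t)$ along $V$ as $t\rightarrow T_{0}^{-}$, where the metric blows up in the section (fibre) direction, and this is what forces the splitting of a preimage ball into a piece lying in a fixed compact subset of $M\setminus V$ and a piece near $V$. On the compact piece the uniform equivalence of metrics from Lemma \ref{L62} yields a diameter $\leq C\delta$; on the piece near $V$ one joins each point to $V$ along a transverse radial path of $\omega(t)$-length $\leq C\,r_{S^{1}}$ by Lemma \ref{L66}, and then crosses $V$ itself using Lemma \ref{b}. Making both contributions smaller than $\varepsilon/2$ is exactly what pins down $\delta$ and the threshold $T$, and the rate $C(T_{0}-t)^{1/3}$ in Lemma \ref{b} is the reason the estimate becomes effective only as $t\rightarrow T_{0}^{-}$.
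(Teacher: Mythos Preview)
Your proposal is correct and follows essentially the same approach as the paper. The paper's own argument is little more than a citation of the ingredients---noting that the transverse ball is an $S^{1}$-bundle over a geodesic ball in $(Y,k_{T_{0}})$ and then invoking \cite[Lemma 3.3]{sw2} together with Lemma \ref{b} and the diameter bound (\ref{2022-4})---and you have supplied the details of the reduction to the leaf spaces via the commutative diagram (\ref{2022-2}) and (\ref{2022-3}) in exactly this spirit.
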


Combining Lemma \ref{c} and \cite[Theorem 6.2]{chlw}, it follows that

\begin{proposition}
$(M,g^{T}(t))$ converges to $(N,d_{T_{0}}^{T})$ and $(M,g(t))$ converges to
$(N,d_{T_{0}})$ in the sense of Gromov-Hausdorff as $t\rightarrow T_{0}^{-}.$
In particular, $(N,d_{T_{0}})$ is compact metric space homeomorphic to the
Sasakian $5$-manifold $N$.
\end{proposition}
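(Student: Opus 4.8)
The plan is to establish the \emph{transverse} Gromov--Hausdorff convergence first and then to lift it to the full Sasakian metric via the $S^{1}$-orbibundle structure. The decisive reduction is the commutative diagram (\ref{2022-2}) together with the identities (\ref{2022-3}), which say that every transverse quantity is pulled back from the leaf spaces: $d^{T}=\pi_{M}^{\ast}(d_{h})$ on $M$ and $d_{T_{0}}^{T}=\pi_{N}^{\ast}(d_{k_{T_{0}}})$ on $N$. Consequently, proving that $(M,g^{T}(t))$ converges to $(N,d_{T_{0}}^{T})$ in the Gromov--Hausdorff sense is equivalent to proving that the orbifold leaf spaces $(Z,h(t))$ converge to $(Y,d_{k_{T_{0}}})$, in which the exceptional $\mathbf{CP}^{1}=E\subset Z$ is contracted to the single point $\pi_{N}(y_{0})\in Y$. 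This is precisely the orbifold analogue of the Castelnuovo contraction for the K\"{a}hler--Ricci flow of Song--Weinkove, so all the analytic estimates needed on $Z$ and $Y$ transfer essentially verbatim from \cite{sw2}.

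First I would construct an explicit $\varepsilon_{t}$-Gromov--Hausdorff approximation using the blow-down map $\psi$. Since $\psi$ is onto, almost surjectivity is immediate; the content is the almost distance-preserving property $|d_{g^{T}(t)}(x_{1},x_{2})-d_{T_{0}}^{T}(\psi(x_{1}),\psi(x_{2}))|<\varepsilon_{t}$ with $\varepsilon_{t}\rightarrow 0$ as $t\rightarrow T_{0}^{-}$. On compact subsets $K\subset M\backslash V$ this follows from the $C^{\infty}$ convergence $\omega(t)\rightarrow\omega_{T_{0}}$ of Lemma \ref{L62}, which gives uniform convergence of the lengths of paths lying in $K$. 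Near $V$ one invokes Lemma \ref{b}, bounding $\mathrm{diam}_{\omega(t)}V$ by $C(T_{0}-t)^{1/3}$, and Lemma \ref{c}, bounding the diameter of $\psi^{-1}$ of a small ball about the collapsed fibre $S_{y_{0}}^{1}$. Together with the global diameter bound (\ref{2022-4}), these allow any near-minimizing path joining two points to be deformed so that it avoids a shrinking tubular neighborhood of $V$ at asymptotically negligible extra cost, which yields the estimate.

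Next I would lift the transverse convergence to the full Sasakian metric $g(t)=g^{T}(t)+\eta(t)\otimes\eta(t)$. Because the Reeb field $\xi$ is fixed along the flow and only the basic part of $\eta$ evolves, the Reeb orbits $\xi_{x}$ remain compact circles of uniformly bounded length, so the fibre directions stay controlled and do not contribute to the Gromov--Hausdorff limit beyond passing from the base to the total space. Combining this with Lemma \ref{c}, the convergence $(M,g(t))\rightarrow(N,d_{T_{0}})$ then follows from \cite[Theorem 6.2]{chlw}, which converts the orbifold base convergence into the total-space Sasakian convergence through the Riemannian submersions $\pi_{M}$ and $\pi_{N}$. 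Finally, that $(N,d_{T_{0}})$ is a compact metric space homeomorphic to $N$ follows since $d_{T_{0}}$ restricts to the smooth Sasakian metric on $N\backslash S_{y_{0}}^{1}$ and the single collapsed fibre $S_{y_{0}}^{1}$ is a genuine regular fibre of $N$; one then checks directly that $d_{T_{0}}$ induces the manifold topology of $N$.

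The hard part will be the almost distance-preserving estimate across the degenerating region around $V$: one must show that a geodesic (or near-geodesic) for $g^{T}(t)$ joining two points outside $V$ can be replaced by a path avoiding a neighborhood of $V$ whose length differs by an amount tending to zero uniformly in $t$. The diameter estimates of Lemmas \ref{b} and \ref{c} are the decisive inputs, and the subtlety is to control the extra length incurred in rerouting around the shrinking exceptional locus so that the rerouting cost is $o(1)$ as $t\rightarrow T_{0}^{-}$.
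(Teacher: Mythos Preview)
Your proposal is correct and follows essentially the same route as the paper: the paper's proof is the one-line invocation ``Combining Lemma \ref{c} and \cite[Theorem 6.2]{chlw}'', and your write-up simply unpacks this by making explicit the reduction via the commutative diagram (\ref{2022-2})--(\ref{2022-3}) to the Song--Weinkove argument on the leaf spaces, together with the diameter estimates of Lemmas \ref{b} and \ref{c} and the global bound (\ref{2022-4}), before appealing to \cite[Theorem 6.2]{chlw} for the lift to the full Sasakian metric. Nothing you propose diverges from the paper's intended argument.
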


This establishes the part $(2)$ in the Definition \ref{D12} of floating
foliation canonical surgical contraction under the assumption (\ref{2022}).

\subsubsection{Higher Order Estimates for $\omega(t)$ as $t\rightarrow
T_{0}^{-}$}

Under the assumption of Theorem \ref{T61}, we know that $(M,g(t))$ converges
to $(N,d_{T_{0}})$ in the sense of Gromov-Hausdorff as $t\rightarrow T_{0}%
^{-}$ and $\omega(t)$ converges in $C^{\infty}$ on any compact subset of
$M\backslash\cup_{i=1}^{k}V_{i}$ to a smooth transverse K\"{a}hler metric
$\omega_{T}$ on $M\backslash\cup_{i=1}^{k}V_{i}$. In particular, we already
have $C^{\infty}$ estimates for $\omega(t)$ away from the floating foliation
$(-1)$-curves $V_{i}$ as $t$ approaches $T_{0}^{-}$. However, to understand
how the Sasaki-Ricci flow can be continued past the singular time we need more
precise estimates.

We assume as before that there is only one exceptional foliation $(-1)$-curve
$V$. From Lemmas \ref{L62}, \ref{L64} and \ref{L65}, there exist positive
constants $\delta$ and $C$ such that%
\begin{equation}%
\begin{array}
[c]{c}%
\frac{|s|_{h}^{2}}{C}\omega_{0}\leq\omega(t)\leq\frac{C}{|s|_{h}^{2(1-\delta
)}}\omega_{0}.
\end{array}
\label{81}%
\end{equation}
For simple we assume that $|s|_{h}\leq1$ on $M$. Following the same method in
\cite{pss}, we define an endomorphism $H=H(t)$ of the transverse tangent
bundle by $H_{\ell}^{i}=g_{0}^{Ti\overline{j}}g_{\ell\overline{j}}^{T}$ and
consider the quantity $S=S(t)$ given by%
\[%
\begin{array}
[c]{c}%
S=\left\vert \nabla^{T}\log H\right\vert ^{2},
\end{array}
\]
here the transverse covariant derivative $\nabla^{T}$ and the norm $|\cdot| $
are taken with respect to the evolving metric $g^{T}$.

\begin{proposition}
There exist positive constants $\alpha$ and $C$ such that for $t\in
\lbrack0,T)$,%
\begin{equation}%
\begin{array}
[c]{c}%
S\leq\frac{C}{|s|_{h}^{2\alpha}}.
\end{array}
\label{83}%
\end{equation}

\end{proposition}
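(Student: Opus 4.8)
The plan is to transplant the third-order (Calabi-type) estimate of Phong--Sturm--Sesum \cite{pss}, in the form used by Song--Weinkove \cite{sw2}, to the transverse K\"ahler structure. Since $\xi$ and the transverse holomorphic structure are fixed, every object is basic and the computation is formally identical to the K\"ahler--Ricci flow; the weight $|s|_{h}^{2\alpha}$ in (\ref{83}) is forced on us precisely because $\omega(t)$ degenerates along the foliation $(-1)$-curve $V$, as quantified by the second-order bounds (\ref{81}) of Lemma \ref{L65}. The whole argument is run on $M\backslash V$, where $S=|\nabla^{T}\log H|^{2}$ is smooth, and the conclusion is read off from a single application of the maximum principle to a weighted version of $\log S$.

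First I would set $\Psi=\nabla^{T}\log H$, so that $S=|\Psi|_{\omega}^{2}$, and compute the evolution of $S$ under the Sasaki--Ricci flow (\ref{1}). Via the transverse Bochner--Calabi identities, and using that the fixed metric $g_{0}^{T}$ has uniformly bounded transverse bisectional curvature, one is led to a parabolic inequality of the schematic shape
\[
\Big(\frac{\partial}{\partial t}-\Delta_{B}\Big)S\leq-c\big(|\nabla^{T}\Psi|_{\omega}^{2}+|\overline{\nabla}^{T}\Psi|_{\omega}^{2}\big)+C\,(1+S)\,\mathrm{tr}_{\omega}\omega_{0},
\]
with $c>0$ and $C$ depending only on $(M,\omega_{0})$. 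The decisive structural point is the Calabi cancellation: when one passes to $\log S$, the positive gradient term $|\nabla_{B}S|_{\omega}^{2}/S^{2}\leq C|\nabla^{T}\Psi|_{\omega}^{2}/S$ is absorbed by the good negative third-order terms above, leaving, at any point where $S\geq1$,
\[
\Big(\frac{\partial}{\partial t}-\Delta_{B}\Big)\log S\leq C\,\mathrm{tr}_{\omega}\omega_{0}.
\]

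Next I would apply the maximum principle on $M\backslash V$ to the test quantity $Q=\log S+\alpha\log|s|_{h}^{2}-A\varphi$, with $\alpha,A>0$ to be fixed. The weight $\alpha\log|s|_{h}^{2}\to-\infty$ as $x\to V$, and by the interior $C^{\infty}$-estimates of Lemma \ref{L62} the quantity $Q$ is bounded above away from $V$; hence $Q$ attains its supremum on $(M\backslash V)\times[0,t_{0}]$ at an interior point $(x_{0},t_{0})$. Two coercive mechanisms feed into the estimate there. Since $h$ is fixed, $(\partial_{t}-\Delta_{B})\log|s|_{h}^{2}$ equals a fixed positive multiple of $\mathrm{tr}_{\omega}R_{h}^{T}$, and $R_{h}^{T}\leq0$ by the explicit form (\ref{66}), so the weight contributes a nonpositive term that is strongly negative near $V$. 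Away from $V$, the term $-A\varphi$ supplies coercivity: from (\ref{63}) we have $\Delta_{B}\varphi=2-\mathrm{tr}_{\omega}\widehat{\omega}_{t}$, so
\[
\Big(\frac{\partial}{\partial t}-\Delta_{B}\Big)(-A\varphi)=-A\frac{\partial\varphi}{\partial t}+A\big(2-\mathrm{tr}_{\omega}\widehat{\omega}_{t}\big),
\]
and with $\widehat{\omega}_{t}\geq c\,\omega_{0}$ from (\ref{61}) the term $-A\,\mathrm{tr}_{\omega}\widehat{\omega}_{t}\leq-Ac\,\mathrm{tr}_{\omega}\omega_{0}$ is coercive. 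Collecting the three contributions at $(x_{0},t_{0})$ produces an inequality of the form $0\leq(C+\alpha C'-Ac)\,\mathrm{tr}_{\omega}\omega_{0}+CA$; fixing $\alpha$ first and then $A$ large makes the coefficient of $\mathrm{tr}_{\omega}\omega_{0}$ negative, forcing $\mathrm{tr}_{\omega}\omega_{0}(x_{0},t_{0})\leq C$ and hence $Q(x_{0},t_{0})\leq C$. As $\varphi$ is uniformly bounded (Lemma \ref{L61}), this reads $\log S+\alpha\log|s|_{h}^{2}\leq C$ on $M\backslash V$, which is exactly (\ref{83}).

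The main obstacle is the absorption of the bad term $C\,\mathrm{tr}_{\omega}\omega_{0}$ near $V$, where $\mathrm{tr}_{\omega}\omega_{0}$ blows up like $|s|_{h}^{-2}$ by (\ref{81}), the $\widehat{\omega}_{t}$-coercivity degenerates as $t\to T_{0}^{-}$, and $\partial\varphi/\partial t$ tends to $-\infty$ because the limiting volume form collapses along $V$. Handling this requires that the strong negativity of the weight term $\alpha\,\mathrm{tr}_{\omega}R_{h}^{T}$ (dictated by the explicit semi-negative form (\ref{66}) and the relation $\omega_{M}-\psi^{\ast}\omega_{N}=-\varepsilon_{0}R_{h}^{T}$ from Lemma \ref{L63}) dominate both $C\,\mathrm{tr}_{\omega}\omega_{0}$ and the $-A\,\partial\varphi/\partial t$ contribution in a neighborhood of $V$, while $-A\varphi$ governs the region away from $V$; balancing the single exponent $\alpha$ and constant $A$ against the curvature constant $c$ and the second-order estimates (\ref{67})--(\ref{68}) uniformly on all of $[0,T_{0})$ is the delicate point. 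The secondary difficulty is establishing the transverse evolution inequality for $S$ itself and verifying the Calabi cancellation, i.e. reorganizing the commutator and background-curvature error terms so that every surviving bad term carries the factor $\mathrm{tr}_{\omega}\omega_{0}$ and the good negative third-order terms genuinely dominate the gradient of $\log S$.
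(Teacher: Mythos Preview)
Your maximum-principle loop does not close. At the spacetime maximum of $Q=\log S+\alpha\log|s|_{h}^{2}-A\varphi$ you derive an inequality of the form $0\leq -c\,\mathrm{tr}_{\omega}\omega_{0}+C$ and obtain $\mathrm{tr}_{\omega}\omega_{0}(x_{0},t_{0})\leq C$; but this is a second-order bound at a single point and carries no information whatsoever about the third-order quantity $S(x_{0},t_{0})$. There is no algebraic relation tying $\mathrm{tr}_{\omega}\omega_{0}$ to $S$ pointwise, so the step ``hence $Q(x_{0},t_{0})\leq C$'' is a non sequitur. The underlying problem is structural: the auxiliary term $-A\varphi$ contributes $-A\,\mathrm{tr}_{\omega}\widehat{\omega}_{t}$ to the parabolic operator, which is the right coercive device for a \emph{trace} estimate (as in Lemma~\ref{L65}), but is inert at third order --- nowhere does it manufacture a term $-cS$ capable of absorbing the growth of $S$. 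The weight $\alpha\log|s|_{h}^{2}$ has the same defect: its contribution $2\pi\alpha\,\mathrm{tr}_{\omega}R_{h}^{T}$ is again only second order, and moreover $-R_{h}^{T}$ has a null eigendirection (the radial direction in (\ref{66})), so it cannot even dominate $\mathrm{tr}_{\omega}\omega_{0}$. You correctly flag, but do not resolve, the further obstructions that $\widehat{\omega}_{t}\not\geq c\,\omega_{0}$ uniformly as $t\to T_{0}^{-}$ and that $-A\dot\varphi\to +\infty$ along $V$; these make your proposed absorption near $V$ fail outright.

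The paper's argument replaces $-A\varphi$ by $A\,|s|_{h}^{K}\mathrm{tr}_{\omega_{0}}\omega$ and works with $|s|_{h}^{4K}S$ rather than $\log S$. The point is that the evolution of $\mathrm{tr}_{\omega_{0}}\omega$ contains the genuinely third-order good term $-g_{0}^{Tk\bar l}g^{Ti\bar j}g^{Tp\bar q}\,\overset{0}{\nabla^{T}_{i}}g^{T}_{k\bar q}\,\overset{0}{\nabla^{T}_{\bar j}}g^{T}_{p\bar l}$; after weighting by $|s|_{h}^{K}$ and invoking (\ref{81}) this yields
\[
\Big(\tfrac{\partial}{\partial t}-\Delta_{B}\Big)\big(|s|_{h}^{K}\mathrm{tr}_{\omega_{0}}\omega\big)\leq -\tfrac{1}{C}\,|s|_{h}^{2K}S+C,
\]
and this $-|s|_{h}^{2K}S$ directly cancels the $+C\,|s|_{h}^{2K}S$ coming from $(\partial_{t}-\Delta_{B})(|s|_{h}^{4K}S)$. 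A linear $-Bt$ term then makes $Q=|s|_{h}^{4K}S+A\,|s|_{h}^{K}\mathrm{tr}_{\omega_{0}}\omega-Bt$ strictly decreasing at any interior maximum, forcing $Q\leq Q|_{t=0}\leq C$ and hence $S\leq C\,|s|_{h}^{-4K}$. The missing idea in your sketch is precisely this use of $\mathrm{tr}_{\omega_{0}}\omega$ (not $\varphi$) as the source of a negative $S$-term.
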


\begin{proof}
Following the same computation as in \cite{pss}, we have%
\begin{equation}%
\begin{array}
[c]{l}%
(\frac{\partial}{\partial t}-\Delta_{B})S\\
\leq-|\nabla^{T}\nabla^{T}\log H|^{2}-|\overline{\nabla}^{T}\nabla^{T}\log
H|^{2}+CS+C|\nabla^{T}Rm^{T}(g_{0}^{T})|^{2},
\end{array}
\label{84}%
\end{equation}
where $Rm^{T}(g_{0}^{T})$ is the transverse Riemannian curvature of $g_{0}%
^{T}$. Using (\ref{81})%
\begin{equation}%
\begin{array}
[c]{c}%
|\nabla^{T}Rm^{T}(g_{0}^{T})|^{2}\leq C|s|_{h}^{-K}(S+1),
\end{array}
\label{85}%
\end{equation}
for a positive constant $K$. By the Cauchy-Schwarz inequality we have%
\begin{equation}
|\nabla^{T}S|\leq S^{1/2}(|\nabla^{T}\nabla^{T}\log H|+|\overline{\nabla}%
^{T}\nabla^{T}\log H|),\label{85a}%
\end{equation}
and the inequalities%
\begin{equation}
|\nabla^{T}|s|_{h}^{4K}|\leq C|s|_{h}^{3K},\text{ }|\Delta_{B}|s|_{h}%
^{4K}|\leq C|s|_{h}^{3K},\label{85b}%
\end{equation}
where we are choosing $K$ sufficiently large. Combining (\ref{84}),
(\ref{85}), (\ref{85a}) and (\ref{85b}), we obtain%
\begin{equation}%
\begin{array}
[c]{ll}
& (\frac{\partial}{\partial t}-\Delta_{B})(|s|_{h}^{4K}S)\\
= & |s|_{h}^{4K}(\frac{\partial}{\partial t}-\Delta_{B})S-2\operatorname{Re}%
(\nabla^{T}|s|_{h}^{4K}\cdot\overline{\nabla}^{T}S)-(\Delta_{B}|s|_{h}%
^{4K})S\\
\leq & -|s|_{h}^{4K}(|\nabla^{T}\nabla^{T}\log H|^{2}+|\overline{\nabla}%
^{T}\nabla^{T}\log H|^{2})+C|s|_{h}^{2K}S\\
& +C|s|_{h}^{3K}S^{1/2}(|\nabla^{T}\nabla^{T}\log H|+|\overline{\nabla}%
^{T}\nabla^{T}\log H|)+C\\
\leq & C(1+|s|_{h}^{2K}S).
\end{array}
\label{86}%
\end{equation}
Moreover, we know that $\mathrm{tr}_{\omega_{0}}\omega$ satisfies%
\[%
\begin{array}
[c]{lll}%
(\frac{\partial}{\partial t}-\Delta_{B})\mathrm{tr}_{\omega_{0}}\omega & = &
-g^{Ti\overline{j}}R_{i\overline{j}}^{T\text{ \ }k\overline{l}}(g_{0}%
^{T})g_{k\overline{l}}^{T}-g_{0}^{Tk\overline{l}}g^{Ti\overline{j}%
}g^{Tp\overline{q}}\overset{0}{\nabla_{i}^{T}}g_{k\overline{q}}^{T}%
\overset{0}{\nabla_{\overline{j}}^{T}}g_{p\overline{l}}^{T}\\
& \leq & C|s|_{h}^{-K}-\frac{1}{C}|s|_{h}^{K}S-\frac{1}{2}g_{0}^{Tk\overline
{l}}g^{Ti\overline{j}}g^{Tp\overline{q}}\overset{0}{\nabla_{i}^{T}%
}g_{k\overline{q}}^{T}\overset{0}{\nabla_{\overline{j}}^{T}}g_{p\overline{l}%
}^{T},
\end{array}
\]
where we used (\ref{81}) again. Here$\overset{0}{\text{ }\nabla^{T}}$ denotes
the transverse covariant derivative with respect to $g_{0}^{T}$. Since%
\begin{equation}%
\begin{array}
[c]{c}%
|\nabla^{T}\mathrm{tr}_{\omega_{0}}\omega|^{2}\leq(\mathrm{tr}_{\omega_{0}%
}\omega)g_{0}^{Tk\overline{l}}g^{Ti\overline{j}}g^{Tp\overline{q}%
}\overset{0}{\nabla_{i}^{T}}g_{k\overline{q}}^{T}\overset{0}{\nabla
_{\overline{j}}^{T}}g_{p\overline{l}}^{T},
\end{array}
\label{88}%
\end{equation}
we have%
\begin{equation}%
\begin{array}
[c]{ll}
& (\frac{\partial}{\partial t}-\Delta_{B})(|s|_{h}^{K}\mathrm{tr}_{\omega_{0}%
}\omega)\\
\leq & -\frac{1}{C}|s|_{h}^{2K}S+C-2\operatorname{Re}(\nabla^{T}|s|_{h}%
^{K}\cdot\overline{\nabla}^{T}\mathrm{tr}_{\omega_{0}}\omega)\\
& -\frac{1}{2}|s|_{h}^{K}g_{0}^{Tk\overline{l}}g^{Ti\overline{j}%
}g^{Tp\overline{q}}\overset{0}{\nabla_{i}^{T}}g_{k\overline{q}}^{T}%
\overset{0}{\nabla_{\overline{j}}^{T}}g_{p\overline{l}}^{T}\\
\leq & -\frac{1}{C}|s|_{h}^{2K}S+C,
\end{array}
\label{89}%
\end{equation}
where
\[%
\begin{array}
[c]{lll}%
2|\operatorname{Re}(\nabla^{T}|s|_{h}^{K}\cdot\overline{\nabla}^{T}%
\mathrm{tr}_{\omega_{0}}\omega)| & \leq & C+\frac{1}{C}|\nabla^{T}|s|_{h}%
^{K}|^{2}|\nabla^{T}\mathrm{tr}_{\omega_{0}}\omega|^{2}\\
& \leq & C+\frac{1}{2}|s|_{h}^{K}g_{0}^{Tk\overline{l}}g^{Ti\overline{j}%
}g^{Tp\overline{q}}\overset{0}{\nabla_{i}^{T}}g_{k\overline{q}}^{T}%
\overset{0}{\nabla_{\overline{j}}^{T}}g_{p\overline{l}}^{T},
\end{array}
\]
which follows from (\ref{81}) and (\ref{88}).

By applying the maximum principle to $Q=|s|_{h}^{4K}S+A|s|_{h}^{K}%
\mathrm{tr}_{\omega_{0}}\omega-Bt,$ for constants $A$ and $B$ be choosing
sufficiently large, from (\ref{86}) and (\ref{89}) we obtain
\[%
\begin{array}
[c]{c}%
(\frac{\partial}{\partial t}-\Delta_{B})Q<0.
\end{array}
\]
This gives a uniform upper bound for $Q$ and (\ref{83}) follows.
\end{proof}

We have the estimates of curvature of $g^{T}$ and all its higher order
estimates as in \cite{sw2}.

\begin{proposition}
For each integer $m\geq0$ there exist $C_{m},$ $\alpha_{m}>0$ such that for
$t\in\lbrack0,T_{0})$,%
\begin{equation}%
\begin{array}
[c]{c}%
|(\nabla_{\mathbb{R}}^{T})^{m}Rm^{T}(g^{T}(t))|\leq\frac{C_{m}}{|s|_{h}%
^{2\alpha_{m}}}\text{ \ \textrm{and \ }}|(\overset{0}{\nabla_{\mathbb{R}}^{T}%
})^{m}g^{T}(t)|_{g_{0}^{T}}\leq\frac{C_{m}}{|s|_{h}^{2\alpha_{m}}},
\end{array}
\label{90}%
\end{equation}
where $\nabla_{\mathbb{R}}^{T}=\frac{1}{2}(\nabla^{T}+\overline{\nabla}^{T})$
and $\overset{0}{\nabla_{\mathbb{R}}^{T}}$ denote the real covariant
derivative of $g^{T}$ and $g_{0}^{T},$ respectively.
\end{proposition}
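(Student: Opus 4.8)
The plan is to follow the local higher-order estimate scheme of Song-Weinkove \cite{sw2}, bootstrapping from the two ingredients already in hand: the two-sided metric bound (\ref{81}) and the first-order estimate $S\leq\frac{C}{|s|_{h}^{2\alpha}}$ of (\ref{83}). Since the Sasaki-Ricci flow is transversally the K\"{a}hler-Ricci flow and every quantity involved is basic, the evolution equations for the transverse curvature $Rm^{T}(g^{T})$ and its covariant derivatives are formally identical to those in the K\"{a}hler-Ricci case. The essential difficulty is that $\omega(t)$ degenerates like $|s|_{h}^{2}$ in the directions of $V$ by (\ref{81}), so no uniform metric equivalence is available near $V$ and the standard interior Shi estimates do not apply directly. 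The remedy, exactly as in the proof of (\ref{83}), is to run the maximum principle on quantities multiplied by a sufficiently high power of the weight $|s|_{h}$, which vanishes precisely along $V$ and therefore forces any interior maximum to occur away from the degeneration locus while absorbing the terms that blow up there.

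I would first establish the curvature bound, i.e. the case $m=0$. Starting from the evolution inequality
\[
(\frac{\partial}{\partial t}-\Delta_{B})|Rm^{T}|^{2}\leq-|\nabla^{T}Rm^{T}|^{2}-|\overline{\nabla}^{T}Rm^{T}|^{2}+C|Rm^{T}|^{3}+C|Rm^{T}|^{2},
\]
I would apply the maximum principle to a test quantity of the form $|s|_{h}^{2p}|Rm^{T}|^{2}+A|s|_{h}^{K}\mathrm{tr}_{\omega_{0}}\omega-Bt$ for constants $p,K,A,B$ chosen sufficiently large. The gradient terms produced by differentiating the weight $|s|_{h}^{2p}$ are controlled by the good negative terms together with the already-known bound on $S$ from (\ref{83}), in the same way the term $A|s|_{h}^{K}\mathrm{tr}_{\omega_{0}}\omega$ was used in the proof of (\ref{83}) to soak up $-\frac{1}{C}|s|_{h}^{2K}S$. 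This yields $|Rm^{T}(g^{T}(t))|\leq\frac{C}{|s|_{h}^{2\alpha_{0}}}$ for some $\alpha_{0}>0$.

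The higher derivatives are then obtained by induction on $m$. Assuming $|(\nabla_{\mathbb{R}}^{T})^{j}Rm^{T}|\leq\frac{C_{j}}{|s|_{h}^{2\alpha_{j}}}$ for all $j<m$, I would use the schematic evolution inequality
\[
(\frac{\partial}{\partial t}-\Delta_{B})|(\nabla^{T})^{m}Rm^{T}|^{2}\leq-|(\nabla^{T})^{m+1}Rm^{T}|^{2}+C\sum_{i+j\leq m}|(\nabla^{T})^{i}Rm^{T}|\,|(\nabla^{T})^{j}Rm^{T}|\,|(\nabla^{T})^{m}Rm^{T}|,
\]
in which every factor except $(\nabla^{T})^{m}Rm^{T}$ is controlled by the inductive hypothesis and (\ref{81}). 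Applying the maximum principle to $|s|_{h}^{2p_{m}}|(\nabla^{T})^{m}Rm^{T}|^{2}$ augmented by the lower-order weighted quantities $|s|_{h}^{2p_{m-1}}|(\nabla^{T})^{m-1}Rm^{T}|^{2}$ and $A|s|_{h}^{K}\mathrm{tr}_{\omega_{0}}\omega$, with $p_{m}$ large enough that the weight dominates the accumulated negative powers of $|s|_{h}$, produces $|(\nabla_{\mathbb{R}}^{T})^{m}Rm^{T}|\leq\frac{C_{m}}{|s|_{h}^{2\alpha_{m}}}$ with a new exponent $\alpha_{m}$.

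Finally, the metric-derivative bounds in (\ref{90}) follow from the curvature bounds. Writing $H=H(t)$ for the endomorphism $H_{\ell}^{i}=g_{0}^{Ti\overline{j}}g_{\ell\overline{j}}^{T}$ as in the proof of (\ref{83}), the first real derivative of $g^{T}$ relative to $g_{0}^{T}$ is governed by $\nabla^{T}\log H$, whose size is exactly $S^{1/2}$ and hence bounded by (\ref{83}); each further derivative $(\overset{0}{\nabla_{\mathbb{R}}^{T}})^{m}g^{T}$ can be expressed, after commuting covariant derivatives, in terms of $(\nabla^{T})^{j}Rm^{T}$ for $j\leq m-1$ and lower covariant derivatives of $\log H$, all now controlled with the metric equivalence (\ref{81}). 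The main obstacle throughout is the bookkeeping: the degeneration at $V$ forces the exponents $\alpha_{m}$ and $p_{m}$ to grow with $m$, and one must verify at each stage that the chosen power of $|s|_{h}$ is strong enough both to keep the test quantity proper (tending to $0$ toward $V$, so that the supremum is attained in the interior) and to dominate the negative powers introduced by the inductive hypothesis and by differentiating the weight. Because all the quantities are basic and the flow is transversally K\"{a}hler-Ricci, no new phenomena arise beyond those in \cite{sw2} and \cite{pss}.
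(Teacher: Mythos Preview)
Your proposal is correct and follows exactly the approach the paper intends: the paper does not give a detailed proof of this proposition but simply states that the curvature and higher-order estimates are obtained ``as in \cite{sw2}'', and your outline is precisely the Song--Weinkove weighted maximum-principle bootstrap adapted to the transverse (basic) setting, using (\ref{81}) and (\ref{83}) as input. There is nothing to add.
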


\subsubsection{Continuing the Sasaki-Ricci Flow at $T_{0}$}

Now we will show how to continue the Sasaki-Ricci flow past time $T_{0}$ on
the manifold $N$ by following the methods in \cite{sw2}. First we know that
$(M,g^{T}(t))$ converges to $(N,d_{T_{0}}^{T})$ in the sense of
Gromov-Hausdorff as $t\rightarrow T_{0}^{-}$. We explain how one can continue
the Sasaki-Ricci flow through the singularity. We replace $M$ with the
manifold $N$ at the singular time $T_{0}$. Again, we assume for simplicity
that we have only one exceptional floating foliation $(-1)$-curve $V$.

Write $\widehat{\omega}_{T_{0}}=\psi^{\ast}\omega_{N}$, where $\omega_{N}$ is
the smooth transverse K\"{a}hler metric on $N$. From Lemma \ref{L61}, there is
a closed positive $(1,1)$ current $\omega_{T_{0}}$ and a bounded basic
function $\varphi_{T_{0}}$ with%
\begin{equation}%
\begin{array}
[c]{c}%
\omega_{T_{0}}=\widehat{\omega}_{T_{0}}+\sqrt{-1}\partial_{B}\overline
{\partial}_{B}\varphi_{T_{0}}\geq0.
\end{array}
\label{91}%
\end{equation}
Moreover from Lemma \ref{L62}, $\varphi(t)$ converges to $\varphi_{T_{0}}$
pointwise on $M$ and smoothly on any compact subset of $M\backslash V$. Since
$\varphi_{T_{0}}$ is constant on $V,$ there exists a bounded function
$\phi_{T_{0}}$ on $N$ and is smooth on $N\backslash S^{1}$ such that
$\psi^{\ast}\phi_{T_{0}}=\varphi_{T_{0}}.$ We then define a closed positive
$(1,1)$ current $\omega^{\prime}$ on $N$ by%
\begin{equation}%
\begin{array}
[c]{c}%
\omega^{\prime}=\omega_{N}+\sqrt{-1}\partial_{B}\overline{\partial}_{B}%
\phi_{T_{0}}\geq0.
\end{array}
\label{92}%
\end{equation}
This implies $\omega^{\prime}$ is smooth on $N\backslash S^{1}$ which
satisfies $\omega_{T_{0}}=\psi^{\ast}\omega^{\prime}$. It follows from
\cite{kol1}, \cite{kol3} and \cite[Lemma 5.1 and Lemma 5.2]{sw2} that there
exists $p>1$ such that
\begin{equation}
\frac{\omega^{\prime n}\wedge\eta_{N}}{\omega_{N}^{n}\wedge\eta_{N}}\in
L^{p}(N)\label{2022aaa}%
\end{equation}
and then $\phi_{T_{0}}$ is a bounded basic continuous function on $N$, which
is smooth on $N\backslash S^{1}$, such that $\varphi_{T_{0}}=\psi^{\ast}%
\phi_{T_{0}}$.

Then, by using the method as in section $4$ and \cite{st}, we construct a
solution of the Sasaki-Ricci flow on $N$ starting at $\omega^{\prime}$. First
one fix a smooth closed $(1,1)$ form $\chi\in c_{1}^{B}(N)$. There exists
$T^{\prime}>T_{0},$ this $T^{\prime}$ is strictly less than the maximal time
$T_{N},$ such that the closed $(1,1)$ form%
\[%
\begin{array}
[c]{c}%
\widehat{\omega}_{t,N}:=\omega_{N}+(t-T)\chi
\end{array}
\]
is a transverse K\"{a}hler metric for $t\in\lbrack T_{0},T^{\prime}]$. We use
the metrics $\widehat{\omega}_{t,N}$ as our reference metrics for the
Sasaki-Ricci flow as it continues on $N$. Fix a smooth adapted measure
$\Omega_{N}$ and then a volume form $\Omega_{N}\wedge\eta_{N}$ on $N$
satisfying
\[%
\begin{array}
[c]{c}%
\sqrt{-1}\partial_{B}\overline{\partial}_{B}\log\Omega_{N}=\frac{\partial
}{\partial t}\widehat{\omega}_{t,N}=\chi\in c_{1}^{B}(N),
\end{array}
\]
for $t\in\lbrack T_{0},T^{\prime}]$.

Next by using the method as in section $4$ and \cite{st}$,$ we will now
construct a family of basic functions $\phi_{T_{0},\varepsilon}$ on $N$ which
converge to $\phi_{T_{0}}$. For $\varepsilon>0$ sufficiently small and $K$
fixed be sufficiently large, define a family of volume forms $\Omega
_{\varepsilon}\wedge\eta_{N}$ on $N$ by%
\[%
\begin{array}
[c]{c}%
\Omega_{\varepsilon}\wedge\eta_{N}=(\psi|_{M\backslash V}^{-1})^{\ast}%
(\frac{|s|_{h}^{K}\omega^{2}(T-\varepsilon)}{|s|_{h}^{K}+\varepsilon}%
)\wedge\eta_{N}+\varepsilon\Omega_{N}\wedge\eta_{N}\text{ \ }\mathrm{on}\text{
}N\backslash S^{1},
\end{array}
\]
and $(\Omega_{\varepsilon}\wedge\eta_{N})|_{S^{1}}=\varepsilon(\Omega
_{N}\wedge\eta_{N})|_{S^{1}}$. We then define the basic functions $\phi
_{T_{0},\varepsilon}$ to be solutions of the transverse complex
Monge-Amp\'{e}re equations%
\begin{equation}%
\begin{array}
[c]{c}%
(\omega_{N}+\sqrt{-1}\partial_{B}\overline{\partial}_{B}\phi_{T_{0}%
,\varepsilon})^{2}\wedge\eta_{N}=C_{\varepsilon}\Omega_{\varepsilon}\wedge
\eta_{N};\sup(\phi_{T_{0},\varepsilon}-\phi_{T_{0}})=\sup(\phi_{T_{0}}%
-\phi_{T_{0},\varepsilon})
\end{array}
\label{93}%
\end{equation}
\textrm{with}\ $C_{\varepsilon}\int_{N}\Omega_{\varepsilon}\wedge\eta_{N}%
=\int_{N}\omega_{N}^{2}\wedge\eta_{N}.$ Such $\phi_{T_{0},\varepsilon}$ exist
and are unique due to El Kacimi-Alaoui (\cite{eka}) These solution functions
$\phi_{T_{0},\varepsilon}$ are continuous on $N$ and smooth on $N\backslash
S^{1}.$ Furthermore, it follows from (\ref{2022aaa}) and \cite{kol2} that
\[
||\frac{C_{\varepsilon}\Omega_{\varepsilon}\wedge\eta_{N}}{\Omega_{N}%
\wedge\eta_{N}}-\frac{\omega^{\prime n}\wedge\eta_{N}}{\Omega_{N}\wedge
\eta_{N}}||_{L^{1}(N)}\rightarrow0\text{ \ \ }\mathrm{as}\text{\quad
}\varepsilon\rightarrow0
\]
and thus
\[
|\phi_{T_{0},\varepsilon}-\phi_{T_{0}}|_{L^{\infty}(N)}\rightarrow0\text{
\ \ }\mathrm{as}\text{\quad}\varepsilon\rightarrow0.
\]

Now let $\varphi_{\varepsilon}=\varphi_{\varepsilon}(t)$ to be the basic
functions which are solutions of the transverse parabolic complex
Monge-Amp\'{e}re equations%

\[%
\begin{array}
[c]{c}%
\frac{\partial}{\partial t}\varphi_{\varepsilon}=\log\frac{(\widehat{\omega
}_{t,N}+\sqrt{-1}\partial_{B}\overline{\partial}_{B}\varphi_{\varepsilon}%
)^{2}\wedge\eta_{N}}{\Omega_{N}\wedge\eta_{N}},\text{ \ }\varphi_{\varepsilon
}(T_{0})=\phi_{T_{0},\varepsilon},
\end{array}
\]
for $t\in\lbrack T_{0},T^{\prime}]$. It follows from \cite[Proposition
5.1]{sw2} and section $4$ that there exists a basic function $\varphi$ in
$C^{0}([T_{0},T^{\prime}]\times N)\cap C^{\infty}((T_{0},T^{\prime}]\times N)$
such that $\varphi_{\varepsilon}\rightarrow\varphi$ in $L^{\infty}%
([T_{0},T^{\prime}]\times N)$ as $\varepsilon\rightarrow0,$ such a $\varphi$
is continuous on $[T_{0},T^{\prime}]\times N$ and smooth on any compact subset
of $(T_{0},T^{\prime}]\times N.$ This basic function $\varphi$ satisfies the
transverse complex Monge-Amp\'{e}re equation%
\begin{equation}%
\begin{array}
[c]{c}%
\frac{\partial}{\partial t}\varphi=\log\frac{(\widehat{\omega}_{t,N}+\sqrt
{-1}\partial_{B}\overline{\partial}_{B}\varphi)^{2}\wedge\eta_{N}}{\Omega
_{N}\wedge\eta_{N}},\text{ }\varphi(T_{0})=\phi_{T_{0}},t\in(T_{0},T^{\prime}]
\end{array}
\label{94}%
\end{equation}
in $C^{0}([T_{0},T^{\prime}]\times N)\cap C^{\infty}((T_{0},T^{\prime}]\times
N)$. Then we define for $t\in\lbrack T_{0},T^{\prime}],$%
\[%
\begin{array}
[c]{c}%
\omega_{T_{0},\varepsilon}(t):=T_{0}\omega_{N}+\sqrt{-1}\partial_{B}%
\overline{\partial}_{B}\phi_{T_{0},\varepsilon},
\end{array}
\]
and
\[%
\begin{array}
[c]{c}%
\omega_{\varepsilon}(t)=\widehat{\omega}_{t,N}+\sqrt{-1}\partial_{B}%
\overline{\partial}_{B}\varphi_{\varepsilon}.
\end{array}
\]
We thus obtain estimates on $\omega_{T_{0},\varepsilon}$ and $\omega
_{\varepsilon}$ and its volume form $\omega_{\varepsilon}^{2}\wedge\eta_{N}$
for $t\in\lbrack T_{0},T^{\prime}]$ as Lemmas \ref{L62}, \ref{L64} and
\ref{L65}.

\begin{lemma}
\label{L51} There exist positive constants $\alpha$ and $C,$ independent of
$\varepsilon$, such that
\begin{equation}%
\begin{array}
[c]{c}%
\frac{|s|_{h}^{2\alpha}}{C}\omega_{N}\leq\omega_{T,\varepsilon}\leq\frac
{C}{|s|_{h}^{2\alpha}}\omega_{N}%
\end{array}
\label{95}%
\end{equation}
and also
\begin{equation}%
\begin{array}
[c]{c}%
\frac{|s|_{h}^{2\alpha}}{C}\omega_{N}\leq\omega_{\varepsilon}\leq\frac
{C}{|s|_{h}^{2\alpha}}\omega_{N}\text{ \ \textrm{and} \ }\omega_{\varepsilon
}^{2}\wedge\eta_{N}\leq\frac{C}{|s|_{h}^{2\alpha}}\Omega_{N}\wedge\eta_{N}%
\end{array}
\label{96}%
\end{equation}
on $[T_{0},T^{\prime}]\times(N\backslash S^{1})$. Here we write $(\psi
|_{M\backslash V}^{-1})^{\ast}|s|_{h}^{2}$ on $N\backslash S^{1}$ as
$|s|_{h}^{2}.$ For a fix large positive integer $L$, for each integer $0\leq
m\leq L$ there exist $C_{m},$ $\alpha_{m}>0$ such that%
\[%
\begin{array}
[c]{c}%
|(\nabla_{\mathbb{R}}^{T})^{m}g^{T}(t)|_{g_{N}^{T}}\leq\frac{C_{m}}%
{|s|_{h}^{2\alpha_{m}}}\text{ \ \textrm{and } }|(\nabla_{\mathbb{R}}^{T}%
)^{m}g_{\varepsilon}^{T}(t)|_{g_{N}^{T}}\leq\frac{C_{m}}{|s|_{h}^{2\alpha_{m}%
}}.
\end{array}
\]
where $\nabla_{\mathbb{R}}^{T}$ denote the real covariant derivative with
respect to the fixed metric $g_{N}^{T}$.
\end{lemma}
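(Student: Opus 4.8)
The plan is to establish Lemma \ref{L51} as the post-surgery, $\varepsilon$-regularized counterpart of the pre-surgery estimates, namely Lemmas \ref{L62}, \ref{L64}, \ref{L65} and the higher-order proposition above, now transplanted to $N$ and made \emph{uniform in} $\varepsilon$. The two starting inputs are already in hand: first, the uniform $L^{\infty}$ bound on the potentials $\phi_{T_{0},\varepsilon}$ together with the convergence $\phi_{T_{0},\varepsilon}\to\phi_{T_{0}}$, which follow from the $L^{p}$-integrability (\ref{2022aaa}) of $\omega^{\prime 2}\wedge\eta_{N}/(\omega_{N}^{2}\wedge\eta_{N})$ and the Ko\l odziej--El Kacimi-Alaoui estimate (\cite{kol2}, \cite{eka}) for the transverse Monge-Amp\`ere equation (\ref{93}); second, the explicit local model of the regularized volume form $\Omega_{\varepsilon}\wedge\eta_{N}$ near $S^{1}$, obtained by transporting the coordinates on $\mathcal{S}(-1)$ and the distance function $|s|_{h}^{2}$ of Section $3$ through $\psi$. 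Throughout, since $N$ is an $S^{1}$-orbibundle over $Y$ and every transverse quantity descends along $\pi_{N}$, one may reduce each Sasaki estimate to the corresponding K\"ahler estimate on $Y$ (and on the minimal resolution of $Y$), exactly as the commutative diagram (\ref{2022-2}) was used in the Gromov-Hausdorff argument, so that the bounds of Song-Weinkove \cite{sw2} apply verbatim.

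For the two-sided bound (\ref{95}) on $\omega_{T,\varepsilon}$ I would argue from the elliptic equation (\ref{93}): the uniform $L^{\infty}$ bound on $\phi_{T_{0},\varepsilon}$ feeds an Aubin-Yau type second-order estimate whose right-hand side degenerates precisely like a power of $|s|_{h}$, because $\Omega_{\varepsilon}$ carries the factor $|s|_{h}^{K}$; combining the resulting upper bound with the determinant identity (\ref{93}) yields the matching lower bound, giving $\frac{|s|_{h}^{2\alpha}}{C}\omega_{N}\leq\omega_{T,\varepsilon}\leq\frac{C}{|s|_{h}^{2\alpha}}\omega_{N}$ with $C,\alpha$ independent of $\varepsilon$. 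For the flowing metric $\omega_{\varepsilon}(t)$ in (\ref{96}) I would run the parabolic maximum principle exactly as in Lemma \ref{L65}, applied to a barrier of the form
\[
Q_{\varepsilon}=\log\mathrm{tr}_{\omega_{N}}\omega_{\varepsilon}+A\log(|s|_{h}^{2+2\sigma}\mathrm{tr}_{\psi^{\ast}\omega_{N}}\omega_{\varepsilon})-A^{2}\varphi_{\varepsilon},
\]
using that $Q_{\varepsilon}\to-\infty$ as one approaches $S^{1}$ so the supremum is attained on $N\backslash S^{1}$, together with the trace inequality analogous to (\ref{71}) and the lower bound on the transverse bisectional curvature of the fixed metric $\omega_{N}$. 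Because $\omega_{N}$, $\widehat{\omega}_{t,N}$ and $\chi$ are all $\varepsilon$-independent, the constants $A,\sigma,C$ can be fixed once and for all, and the volume bound $\omega_{\varepsilon}^{2}\wedge\eta_{N}\leq\frac{C}{|s|_{h}^{2\alpha}}\Omega_{N}\wedge\eta_{N}$ then follows from the flow equation (\ref{94}) as in Lemma \ref{L61}.

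The higher-order estimates I would obtain by the Phong-Sturm-Sesum calculation used in the proposition above (\cite{pss}): set $H_{\varepsilon}=g_{N}^{-1}g_{\varepsilon}^{T}$ and $S=|\nabla^{T}\log H_{\varepsilon}|^{2}$, derive the evolution inequality for $S$ as in (\ref{84})--(\ref{86}), and apply the maximum principle to $|s|_{h}^{4K}S+A|s|_{h}^{K}\mathrm{tr}_{\omega_{N}}\omega_{\varepsilon}-Bt$ to get $S\leq C|s|_{h}^{-2\alpha}$; differentiating the flow and bootstrapping with interior parabolic Schauder estimates on compact subsets of $N\backslash S^{1}$ then produces $|(\nabla_{\mathbb{R}}^{T})^{m}g_{\varepsilon}^{T}(t)|_{g_{N}^{T}}\leq C_{m}|s|_{h}^{-2\alpha_{m}}$ for $0\leq m\leq L$, and likewise for the limit metric $g^{T}(t)$. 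Each power $\alpha_{m}$ is allowed to grow with $m$, which is harmless since $L$ is fixed in advance.

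The hard part is keeping every constant $C,\alpha,A,K,B$ and every $\alpha_{m}$ \emph{independent of} $\varepsilon$ while the geometry degenerates along $S^{1}$. The regularization $\Omega_{\varepsilon}$ is designed so that $C_{\varepsilon}\Omega_{\varepsilon}\wedge\eta_{N}\to\omega^{\prime 2}\wedge\eta_{N}$ in $L^{1}$, but for this to yield $\varepsilon$-uniform bounds one must check that the $L^{p}$ estimate (\ref{2022aaa}) dominates the $\varepsilon\,\Omega_{N}$ term uniformly and that the barriers $|s|_{h}^{\pm2\alpha}$ beat the degeneration of $\Omega_{\varepsilon}$ for all small $\varepsilon$ simultaneously; equivalently, one must verify that $Q_{\varepsilon}$ and $|s|_{h}^{4K}S$ still tend to $-\infty$ (respectively stay bounded) at $S^{1}$ with thresholds not depending on $\varepsilon$. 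Granting this uniformity, letting $\varepsilon\to0$ transfers all the bounds to the actual solution $g^{T}(t)$ on $N$, following \cite{sw2} and Section $4$, and completes the proof.
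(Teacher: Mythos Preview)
Your proposal is correct and mirrors the paper's own treatment: the paper does not give a standalone proof of Lemma \ref{L51} but simply states that the estimates follow ``as Lemmas \ref{L62}, \ref{L64} and \ref{L65}'' together with the higher-order proposition and the Song--Weinkove machinery from \cite{sw2}, which is exactly the scheme you have spelled out. One small notational slip: in your barrier for (\ref{96}) you write $\mathrm{tr}_{\psi^{\ast}\omega_{N}}\omega_{\varepsilon}$, but since the argument now takes place on $N$ the second reference metric should be $(\psi|_{M\backslash V}^{-1})^{\ast}\omega_{0}$ (or a flat local model) rather than $\psi^{\ast}\omega_{N}$; the paper's subsequent lemma on $t\to T_{0}^{+}$ makes this explicit.
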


We can now prove that the Sasaki-Ricci flow can be smoothly connected at time
$T_{0}$ between $[0,T_{0})\times M$ and $(T_{0},T^{\prime}]\times N$, outside
$T_{0}\times S^{1}\cong T_{0}\times V$ via the map $\psi$.

Recall that for $t\in\lbrack T_{0},T^{\prime}],$ $\varphi(t)$ is the limit of
$\varphi_{\varepsilon}$ as $\varepsilon\rightarrow0$ and as in (\ref{94}), the
metric
\[
\omega=\widehat{\omega}_{t,N}+\sqrt{-1}\partial_{B}\overline{\partial}%
_{B}\varphi
\]
is the solution of the Sasaki-Ricci flow on $N$%
\begin{equation}%
\begin{array}
[c]{c}%
\frac{\partial}{\partial t}\omega=-\mathrm{Ric}^{T}(\omega)\text{
}\mathrm{with}\text{ }\omega(T_{0})=\omega^{\prime},
\end{array}
\label{94a}%
\end{equation}
for $t\in(T_{0},T^{\prime}]$. We define%
\[%
\begin{array}
[c]{c}%
Z=([0,T_{0})\times M)\cup(T_{0}\times N\backslash S^{1})\cup((T_{0},T^{\prime
}]\times N).
\end{array}
\]
Consider a family of metrics $\omega(t,x)$, for $(t,x)\in Z$. We define what
is $\omega$ is smooth on $Z$. If $(t,x)\in\lbrack0,T_{0})\times M$ or
$(t,x)\in(T_{0},T^{\prime}]\times N,$ then we have $\omega$ to be smooth in
the usual sense. Recall that we have the estimates for $\omega(t,x)$ on
$[0,T_{0})\times M\backslash V$ by (\ref{90}) and on $[T_{0},T^{\prime}]\times
N\backslash S^{1}$ from (\ref{96}). When $(t,x)=(T_{0},x)\in T_{0}\times
N\backslash S^{1}\cong T_{0}\times M\backslash V$, we take a small
neighborhood $U$ of $x$ in $M\backslash V$ and consider $\omega$ as a metric
on $(T_{0}-\delta,T_{0}+\delta)\times U$, for some $\delta>0$. We say $\omega$
is smooth at $(T_{0},x)$ if $\omega$ is smooth at $(T_{0},x)$ in the
$(T_{0}-\delta,T_{0}+\delta)\times U$. It follows from Lemma \ref{L51} that
$\omega=\omega(t)$ satisfies the Sasaki-Ricci flow (\ref{94}) and is smooth a
time $T_{0}$ in the sense above. Hence the metric $\omega=\omega(t)$ is a
smooth solution of the Sasaki-Ricci flow in the space-time region $Z$.

This establishes the part $(3)$ in the Definition \ref{D12} of floating
foliation canonical surgical contraction under the assumption (\ref{2022}).

\subsubsection{Gromov-Hausdorff Convergence as $t\rightarrow T_{0}^{+}$}

To complete the proof of Theorem \ref{T61}, it remains to show that
$(N,\omega(t))$ converges in the Gromov-Hausdorff sense to $(N,d_{T})$ as
$t\rightarrow T_{0}^{+}$.

\begin{lemma}
There exists $\delta>0$ and a uniform constant $C$ such that for
$\omega=\omega(t)$ a solution of the Sasaki-Ricci flow (\ref{94}) satisfies
\[%
\begin{array}
[c]{c}%
\omega\leq C\frac{\omega_{N}}{(\psi|_{M\backslash V}^{-1})^{\ast}|s|_{h}^{2}%
}\text{ \ \textrm{and} \ }\omega\leq C(\psi|_{M\backslash V}^{-1})^{\ast
}(\frac{\omega_{0}}{|s|_{h}^{2(1-\delta)}}),
\end{array}
\]
for $t\in\lbrack T_{0},T^{\prime}].$ Here $\omega_{0}$ is the initial metric
on $M$.
\end{lemma}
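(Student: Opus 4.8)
The plan is to establish the post-singular-time counterpart of Lemma \ref{L65}, with $M$ replaced by $N$ and the reference metric $\psi^{\ast}\omega_{N}$ replaced by $\omega_{N}$ itself. Since $\psi$ identifies $M\backslash V$ with $N\backslash S^{1}$, the section $s$ of $[V]$, its Hermitian metric $h$, and the transverse Ricci form $R_{h}^{T}$ all transport to $N\backslash S^{1}$ through $(\psi|_{M\backslash V}^{-1})^{\ast}$, and following Lemma \ref{L51} I write $|s|_{h}^{2}$ for $(\psi|_{M\backslash V}^{-1})^{\ast}|s|_{h}^{2}$. In particular the comparison inequalities (\ref{67})--(\ref{68}) of Lemma \ref{L64} and the positivity statement of Lemma \ref{L63} carry over verbatim to $N\backslash S^{1}$, so that there $\omega_{N}\leq\omega_{M}\leq C_{1}|s|_{h}^{-2}\omega_{N}$ and $|s|_{h}^{2}\mathrm{tr}_{\omega_{N}}\omega\leq C\,\mathrm{tr}_{\omega_{0}}\omega$ exactly as in (\ref{71}). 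The estimates are first derived for the regularized solutions $\omega_{\varepsilon}=\omega_{\varepsilon}(t)$ of the previous subsection, which are smooth on $N\backslash S^{1}$, and then, being uniform in $\varepsilon$, passed to the limit $\varepsilon\to0$ to yield the bounds for $\omega=\omega(t)$.

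First I would fix $0<\varepsilon^{\prime}\leq1$ and apply the maximum principle on $(N\backslash S^{1})\times(T_{0},t_{0}]$ to the barrier
\[
Q_{\varepsilon^{\prime}}=\log\mathrm{tr}_{\omega_{0}}\omega_{\varepsilon}+A\log\bigl(|s|_{h}^{2+2\varepsilon^{\prime}}\mathrm{tr}_{\omega_{N}}\omega_{\varepsilon}\bigr)-A^{2}\varphi_{\varepsilon},
\]
with $A$ large to be chosen. The weight $|s|_{h}^{2+2\varepsilon^{\prime}}$ forces $Q_{\varepsilon^{\prime}}(\cdot,t)\to-\infty$ on approach to $S^{1}$, so that any interior supremum is attained away from $S^{1}$; there I would insert the evolution inequality (\ref{73}) for $\log\mathrm{tr}_{\widetilde{\omega}}\omega_{\varepsilon}$ taken successively with $\widetilde{\omega}=\omega_{0}$ and $\widetilde{\omega}=\omega_{N}$ (the latter being transversely flat near $S^{1}$), combined with the lower bound $A(A\widehat{\omega}_{t,N}-(1+\varepsilon^{\prime})R_{h}^{T}-C_{2}\omega_{N})\geq(C_{1}+1)\omega_{0}$ furnished by Lemma \ref{L63}. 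Feeding in the uniform bound on $\omega_{\varepsilon}^{2}\wedge\eta_{N}$ from Lemma \ref{L51} and the uniform $L^{\infty}$ bound on $\varphi_{\varepsilon}$, I expect $Q_{\varepsilon^{\prime}}\leq C$ with $C$ independent of $\varepsilon^{\prime}$; letting $\varepsilon^{\prime}\to0$ and reusing (\ref{71}) then gives $\omega_{\varepsilon}\leq C|s|_{h}^{-2}\omega_{N}$, while the inequality $\mathrm{tr}_{\omega_{0}}\omega_{\varepsilon}\leq C\,\mathrm{tr}_{\omega_{N}}\omega_{\varepsilon}$ upgrades this to $\mathrm{tr}_{\omega_{0}}\omega_{\varepsilon}\leq C|s|_{h}^{-2(1-\delta)}$ with $\delta=(A+1)^{-1}$, which is the second asserted bound.

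The principal obstacle is the bookkeeping required to make every constant genuinely independent of the regularization parameter $\varepsilon$ and of $t\in[T_{0},T^{\prime}]$, since the maximum principle is now run on the non-compact $N\backslash S^{1}$ and the measure $\Omega_{\varepsilon}\wedge\eta_{N}$ degenerates along $S^{1}$. The decisive point is that the positive lower bound $A(A\widehat{\omega}_{t,N}-(1+\varepsilon^{\prime})R_{h}^{T}-C_{2}\omega_{N})\geq(C_{1}+1)\omega_{0}$ must hold with $A$ and the $C_{i}$ chosen once and for all, which is exactly why $\widehat{\omega}_{t,N}$ was arranged to remain a transverse K\"{a}hler metric throughout $[T_{0},T^{\prime}]$; once this uniformity is in place, the limit $\varepsilon\to0$ is routine and produces the stated bounds for $\omega=\omega(t)$.
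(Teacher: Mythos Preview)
Your proposal is correct and follows precisely the route the paper itself indicates: the authors' proof consists only of the remark that the argument is ``similar to the estimate as in Lemma \ref{L65}'' together with a reference to \cite[Proposition 6.1]{sw2}, and you have supplied exactly that adaptation, with the roles of $M$, $\psi^{\ast}\omega_{N}$, and $\widehat{\omega}_{t}$ taken over by $N\backslash S^{1}$, $\omega_{N}$, and $\widehat{\omega}_{t,N}$, and with the regularized flows $\omega_{\varepsilon}$ used to make the maximum principle rigorous before passing to the limit. One small inaccuracy: $\omega_{N}$ need not be transversely flat near $S^{1}$ (that was the auxiliary metric $\widetilde{\omega}_{N}$ in Lemma \ref{L66}); but the argument of Lemma \ref{L65} only requires the curvature-dependent bound (\ref{73}), not flatness, so this does not affect your proof.
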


\begin{proof}
The arguments of the proof involved is similar to the estimate as in Lemma
\ref{L65}, we omit it. We refer to \cite[Proposition 6.1]{sw2} for some
details as in the K\"{a}hler-Ricci flow.
\end{proof}

Finally, it follows by the arguments of earlier sections that $(N,\omega(t)) $
converges in the Gromov-Hausdorff sense to $(N,d_{T})$ as $t\rightarrow
T_{0}^{+}$. This establishes the part $(4)$ in the Definition \ref{D12} of
floating foliation canonical surgical contraction under the assumption
(\ref{2022}).

This completes the proof of Theorem \ref{T61}.

\subsection{Analytic Foliation Minimal Model Program with Scaling}

As a consequence of Theorem \ref{T61} and Proposition \ref{P41}, we will prove
our main result in the paper on the analytic f%
oliation minimal model program with scaling in a compact
quasi-regular Sasakian 
$%
5
$%
-Mmanifold. We will follow 
the lines of the arguments of the proof of Proposition \ref{P41} via the
Sasaki-Ricci flow.

\textbf{The proof of Theorem \ref{T62} : }

\begin{proof}
Let $(M,\xi,\omega_{0})$ be a compact quasi-regular Sasakian $5$-manifold with
a smooth transverse Kaehler metric $\omega_{0}$. In view of the cohomological
characterization of the maximal solution of the Sasaki-Ricci flow (\ref{1}),
we start with a pair $(M,H^{T}),$ where $M$ is a Sasakian manifold with an
ample basic divisor $H^{T}$. Let
\[
T_{0}=\sup\{t>0\mathbf{\ |\ }H^{T}+tK_{M}^{T}\text{ \ is \textrm{nef}}\}.
\]
Denote
\[
L_{0}^{T}:=H^{T}+T_{0}K_{M}^{T}%
\]
which is a basic $Q$-line bundle and semi-ample. In fact, it follows from
Kleiman criterion that $\ $%
\[
mL_{0}^{T}-T_{0}K_{M}^{T}%
\]
is ample and then nef and big for some sufficiently large $m$. Then by
Kawamata criterion for base-point free, we have the semi-ample for $L_{0}%
^{T}.$

Next we define a subcone
\[
R:=\overline{NE(M)}_{K_{M}^{T}<0}\cap(L_{0}^{T})^{\perp}%
\]
which is a foliation extremal ray $R$ with the generic choice of $H^{T}.$ For
$V$ with $0=L_{0}^{T}\cdot V,$ we have
\[
K_{M}^{T}\cdot V=-\frac{1}{T_{0}}(H^{T}\cdot V)<0.
\]
That is the map $\Psi:M\rightarrow N$ induced from $(L_{0}^{T})^{m}$ contract
all foliation curves whose class lies in the foliation extremal ray $R$ with
\[
L_{0}^{T}\cdot V=0.
\]
and
\[
K_{M}^{T}\cdot V<0.
\]
We observe that $K_{M}^{T}\cdot V=0$ as $T_{0}\rightarrow\infty.$

Now we consider a sequence of contarctions $g(t)$ on the manifolds
$M_{0},M_{1},...,M_{k}$ on the time intervals $[0,T_{0}),$ $(T_{1},T_{2}),$
$\cdot\cdot\cdot,(T_{k-1},T_{k}).$ Denote $L_{-1}^{T}=H^{T}$ as above and
\[
L_{l}^{T}:=L_{l-1}^{T}+T_{i}K_{M_{l}}^{T}.
\]

Then the nef class $L_{l}^{T}$ is semi-ample, there exists a map $\psi
:M_{l}\rightarrow N$ where $N$ is a quasi-regular Sasakian $5$-manifold with
foliation cyclic quotient singularities. The exceptional locus of $\psi$ is a
sum of irreducible foliation curves $W=\Sigma_{i}W_{i}$ with
\[
W_{i}\cdot L_{l}^{T}=0
\]
and
\[
K_{M}^{T}\cdot W_{i}<0.
\]

\textbf{(I) If }$L_{l}^{T}$\textbf{\ is big :}

\textbf{(i)} \ If all foliation curves $W=\Sigma_{i}W_{i}$ do not pass the
singularity on $M_{l}$ : We show that the Sasaki-Ricci flow will perform a
canonical surgical contraction in $M_{l}$ as in Theorem \ref{T61}.\ \ Since
$L_{l}^{T}$ is big and all foliation curves $W_{i}$ do not pass the
singularity on $M_{l}$, thus by index theorem we have $W_{i}^{2}<0,$ and by
Adjunction Formula, all foliation curves $W_{i}$ are foliation $(-1)$-curves
and floating on $M_{l}.$ On the other hand,
\[
W_{i}\cdot L_{l}^{T}=0\text{ \ \ and \ \ }W_{j}\cdot L_{l}^{T}=0,i\neq j
\]
and
\[
W_{i}^{2}=-1=W_{j}^{2}.
\]
Then%
\[
(W_{i}+W_{j})\cdot L_{l}^{T}=0
\]
and index theorem again
\[
(W_{i}+W_{j})^{2}<0.
\]
Hence
\[
W_{i}\cdot W_{j}=0
\]
and $\{W_{i}\}$ is a finite number of disjoint floating foliation
$(-1)$-curves in $M_{l}.$ Thus by Sasaki analogue of Castelnuovo's contraction
theorem (Theorem \ref{T33}), Thus $\psi$ is a map blowing down the basic
exceptional curves $W_{i}$. It follows from Proposition \ref{P31} that
$L_{l}^{T}$ is the pull-back of an ample line bundle over $M_{l},$ we obtain
(\ref{2022}) for some transverse $\omega_{M_{l}}$ on $M_{l}$. It follows from
Theorem \ref{T61} that the Sasaki-Ricci flow $g(t)$ performs a foliation
canonical surgical contraction with respect to the data $W_{1},...,W_{k}$, $N$
and $\psi$.

\textbf{(ii)} If some of $W_{i_{0}}:=\Gamma$ pass the foliation singularity of
type $\frac{1}{r}(1,a)$ : We show that the Sasaki-Ricci flow will perform
\textbf{f}oliation extremal contractions of foliation $K_{M}^{T}$-negative
curves. Following above notions, it follows from Theorem \ref{T32} that we can
have the minimal resolutions of foliation singularities of $M_{l}$ and $N $
\[
\varphi:\widetilde{M_{l}}\rightarrow M_{l}\text{ \ \ \textrm{and}
\ \ \ }\widetilde{\varphi}:\widetilde{N}\rightarrow N.
\]

Our goal is to find $\widetilde{\psi}:\widetilde{M_{l}}\rightarrow
\widetilde{N}$ and $\psi:M_{l}\rightarrow N$ such that the following diagram
is commutative :%
\begin{equation}%
\begin{array}
[c]{lcl}%
\widetilde{M_{l}}\supset V_{i},\text{ }\widetilde{\Gamma} &
\overset{\widetilde{\psi}}{\longrightarrow} & \widetilde{N}\\
\downarrow\varphi & \circlearrowright & \downarrow\widetilde{\varphi}\\
M_{l}\supset\Gamma,\text{ }\mathbf{S}_{p}^{1} & \overset{\psi}{\longrightarrow
} & N\ni\mathbf{S}_{q}^{1}.
\end{array}
\label{2022-8}%
\end{equation}
Suppose $\widetilde{M_{l}}$ and $\widetilde{N}$ are not isomorphic; then as
they are both regular Sasakian manifolds, by Theorem \ref{T33}, there must
exist a foliation $(-1)$-curve $\widetilde{\Gamma}$ such that
$\widetilde{\varphi}\circ\widetilde{\psi}(\widetilde{\Gamma})=\mathbf{S}%
_{q}^{1}.$ Let $\Theta$ be a set of the foliation curves in $\widetilde{M_{l}%
}$ such that $\Theta\mathcal{=}$ $\widetilde{\psi}^{-1}\circ\widetilde{\varphi
}^{-1}(\mathbf{S}_{q}^{1})$ and $D^{T}=$ $\{V_{i}\}$ be the exceptional locus
of $\varphi$ at the singular fibre $\mathbf{S}_{p}^{1}.$ Then the Hirzebruch
Jung continued fraction
\[
\frac{r}{a}=[b_{1},\cdot\cdot\cdot,b_{m}]
\]
say that
\[
V_{i}^{2}=-b_{i}.
\]

Now there is at most one such a foliation exceptional curve $V_{j}$ for some
$j$ so that $V_{j}\cdot\widetilde{\Gamma}\neq\emptyset$ with $V_{j}^{2}%
=-b_{j}$. Then
\begin{equation}
K_{M_{l}}^{T}\cdot\Gamma<0\label{2022-7}%
\end{equation}
and $\Gamma$ passed the foliation singularity of type $\frac{1}{r}(1,a).$ As
in section $4$, we lift the Sasaki-Ricci flow to the foliation minimal
resolution $\widetilde{M_{l}}$ of $M_{l}$ by lifting the Monge--Ampere
equation (\ref{1b}) to (\ref{1d}), we thus obtain uniform estimates\ as in
subsection $5.1.$ (cf Theorem \ref{T51} and Proposition \ref{P51}) so that the
Sasaki-Ricci flow (\ref{1d}) performs a canonical surgical contraction of
floating foliation $(-1)$-curves
\[
\widetilde{\psi_{l}}:\widetilde{M_{l}}\rightarrow\widetilde{N_{l}}%
\]
by Corollary \ref{C61}. Furthermore, the canonical surgical contraction maps
$V_{i}$ onto $V_{i}^{\prime}$ so that
\[
V_{i}^{\prime}\cdot V_{i}^{\prime}=-b_{i}%
\]
for all $i\neq j$ and
\[
V_{j}^{\prime}\cdot V_{j}^{\prime}=-(b_{j}-1).
\]
Finally, by (\ref{2022-7}) and (\ref{2022-6}), it descends to perform
foliation extremal contractions of foliation $K_{M_{l}}^{T}$-negative curves
\[
\psi_{l}:M_{l}\rightarrow M_{l+1}%
\]
with at worst singularity type $\frac{1}{r_{\widetilde{N_{l}}}}%
(1,a_{\widetilde{N_{l}}})$ so that the Hirzebruch Jung continued fraction%
\[
\frac{r_{\widetilde{N_{l}}}}{a_{\widetilde{N_{l}}}}=[b_{1},\cdot\cdot
\cdot,(b_{j}-1),\cdot\cdot\cdot,b_{m}]
\]
and it will be $\frac{r_{\widetilde{N_{l}}}}{a_{\widetilde{N_{l}}}}%
=[b_{1},\cdot\cdot\cdot,b_{j-1},b_{j+1},\cdot\cdot\cdot,b_{m}]$ if
$(b_{j}-1)=1.$

Now take $\widetilde{\psi_{l}}=\widetilde{\psi}$ and $\psi_{l}=\psi$ into
(\ref{2022-8}) with $N=M_{l+1}$ and $\widetilde{N}=\widetilde{N}_{l}$, it
follows that the following diagram is commutative :%
\[%
\begin{array}
[c]{lcl}%
\widetilde{M_{l}}\supset\widetilde{\Gamma},\text{ }V_{j} &
\overset{\widetilde{\psi_{l}}}{\longrightarrow} & \widetilde{N_{l}}\supset
V_{j}^{\prime}\\
\downarrow\varphi & \curvearrowright & \downarrow\widetilde{\varphi_{l}}\\
M_{l}\supset\Gamma,\text{ }\mathbf{S}_{p}^{1} & \overset{\psi_{l}%
}{\longrightarrow} & M_{l+1}.
\end{array}
\]
Then we are done.

Finally, it follows that the divisoral contractions end with either
\[
T_{k}=\infty
\]
and $M_{k}$ is nef which has no foliation $K_{M_{k}}^{T}$-negative curves or
\[
L_{k}^{T}\text{ \ \textrm{is \ not \ big} }%
\]
on $M_{k}$.

Note that similar method can be applied to a finite number of cyclic quotient
foliation singularities.

\textbf{(II) If }$L_{k}^{T}$\textbf{\ is not big : }In this situation, we
have\textbf{\ }
\[
VolM=\int_{M}\omega^{2}(t)\wedge\eta_{0}\rightarrow(c_{1}^{B}(L_{k}^{T}%
))^{2}=0
\]
as $t\rightarrow T_{k}^{-}$ and we can not have a canonical surgical
contraction. However since it is semi-ample, as in Proposition \ref{P41}, we have

(i) there exists a transverse morphism
\[
\phi:M_{k}\rightarrow pt,
\]
then $K_{M_{k}}^{T}<0$ and thus $M_{k}$ is transverse minimal Fano and the
foliation space $M_{k}/\mathcal{F}_{\xi}$ is minimal log del Pezzo surface of
at worst $\frac{1}{r}(1,a)$-type singularities. Or

(ii)
\[
\phi:M_{k}\rightarrow\Sigma_{h},
\]
then $M_{k}$ is an $S^{1}$-orbibundle of a rule surface over \ Riemann
surfaces $\Sigma_{h}$\ of genus $h$.
\end{proof}

\appendix

\section{ \ \ Foliation Minimal Model Program}

\subsection{%
Basic Holomorphic Line Bundles, Basic Divisors on Sasakian
Manifolds
}

For a completeness, we will address basic holomorphic line bundles, basic
divisors over Sasakian manifolds. We refer to \cite{bg}, \cite{m}, and
references therein for some details.

Let $(M,\eta,\xi,\Phi,g)$ be a compact Sasakian $(2n+1)$-manifold $M.$ We
define $D=\ker\eta$ to be the holomorphic contact vector bundle of $TM$ such
that
\[
TM=D\oplus<\xi>=T^{1,0}(M)\oplus T^{0,1}(M)\oplus<\xi>.
\]
Then its associated strictly pseudoconvex CR $(2n+1)$-manifold to be denoted
by $(M,T^{1,0}(M),\xi,\Phi).$

\begin{definition}
( \cite{ta}) Let $(M,T^{1,0}(M))$ be a strictly pseudoconvex CR $(2n+1)$%
-manifold and $E\rightarrow M$ a $C^{\infty}$ complex vector bundle over $M. $
A pair $(E,\overline{\partial}_{b})$ is a CR-holomorphic vector bundle if the
differential operator
\[
\overline{\partial}_{b}:\Gamma^{\infty}(E)\rightarrow\Gamma^{\infty}%
(T^{0,1}(M)^{\ast}\otimes E)
\]
is defined by%
\ 

(i)
\[
\overline{\partial}_{\overline{Z}}(fs)=(\overline{\partial}_{b}f)(\overline
{Z})\otimes s+f\overline{\partial}_{\overline{Z}}s,
\]

(ii)
\[
\overline{\partial}_{\overline{Z}}\overline{\partial}_{\overline{W}%
}s-\overline{\partial}_{\overline{W}}\overline{\partial}_{\overline{Z}%
}s-\overline{\partial}_{[\overline{Z},\overline{W}]}s=0,
\]
for any $f\in C^{\infty}(M)\otimes\mathbf{C}$, $s\in\Gamma^{\infty}(E)$ and
$Z,W\in\Gamma^{\infty}(T^{1,0}(M))$.
\end{definition}

The condition (ii) of the definition means that $(0,2)$-component of the
curvature operator $R(E)$ is vanishing when $E$ admits a connection $D$ whose
$(0,1)$-part is the operator $\overline{\partial}_{b}$ as in the following Lemma.

\begin{lemma}
Let $(M,T^{1,0}(M),\theta)$ be a strictly pseudoconvex CR $(2n+1)$-manifold
and $(E,\overline{\partial}_{b})$ a CR-holomorphic vector bundle over $M$. Let
$h=<,>_{h}$ be a Hermitian structure in $E$. Then there exists a unique
(Tanaka) connection $D$ in $E$ such that

(i)
\[
D_{\overline{Z}}s=(\overline{\partial}_{b}s)\overline{Z},
\]

(ii)
\[
Z<s_{1},s_{2}>_{h}=<D_{Z}s_{1},s_{2}>_{h}+<s_{1},D_{\overline{Z}}s_{2}>_{h},
\]

(iii) The $(0,2)$-component of the curvature operator $\Theta(E)$ is
vanishing. Here $\Theta(E):=D^{2}s.$
\end{lemma}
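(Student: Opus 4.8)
The plan is to treat $D$ as the CR analogue of the Chern (Hermitian holomorphic) connection, so the proof follows the classical template adapted to the three-fold type decomposition on a strictly pseudoconvex CR manifold. First I would fix a local CR-holomorphic frame $\{e_\alpha\}$ of $E$, that is, a frame with $\overline{\partial}_b e_\alpha=0$, over a coordinate chart, and write $D e_\alpha=\omega^\beta_\alpha e_\beta$ for a matrix $\omega=(\omega^\beta_\alpha)$ of one-forms. Using the decomposition $d=\partial_b+\overline{\partial}_b+(\xi\,\cdot\,)\,\theta$ coming from $T^\ast M\otimes\mathbf{C}=\Lambda^{1,0}\oplus\Lambda^{0,1}\oplus\langle\theta\rangle$, I would split each $\omega^\beta_\alpha$ into its $\Lambda^{1,0}$, $\Lambda^{0,1}$ and $\theta$ parts. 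Condition (i) applied to the frame forces $D_{\overline{Z}}e_\alpha=(\overline{\partial}_b e_\alpha)(\overline{Z})=0$ for all $\overline{Z}\in T^{0,1}(M)$, so the $\Lambda^{0,1}$-component of $\omega$ vanishes in this frame.

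Next I would use the Hermitian condition (ii) to pin down the $(1,0)$-part. Writing $h_{\alpha\overline{\beta}}=\langle e_\alpha,e_\beta\rangle_h$ and evaluating (ii) on $Z\in T^{1,0}(M)$, the term $\langle e_\alpha,D_{\overline{Z}}e_\beta\rangle_h$ drops out by the previous step, leaving $Z(h_{\alpha\overline{\beta}})=\omega^\gamma_\alpha(Z)\,h_{\gamma\overline{\beta}}$. This is exactly the Chern-type equation and determines the $(1,0)$-component uniquely as $\omega_{(1,0)}=(\partial_b h)\,h^{-1}$ in matrix form, giving uniqueness on the contact distribution. For existence I would run the argument in reverse: define $\omega_{(1,0)}$ by this formula (and the $(0,1)$-part to be zero) in each CR-holomorphic frame, and check that under a CR-holomorphic change of frame $e'_\alpha=g^\beta_\alpha e_\beta$ with $\overline{\partial}_b g=0$ the connection matrix obeys the gauge law $\omega'=g^{-1}\omega g+g^{-1}\partial_b g$, so that the local pieces patch to a global connection on the distribution; the $\theta$-direction component $D_\xi$ is then fixed by Tanaka's canonical normalization along the Reeb field.

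Part (iii) I expect to be the cleanest step and essentially a restatement of the integrability built into the definition of a CR-holomorphic bundle. For $\overline{Z},\overline{W}\in T^{0,1}(M)$ the $(0,2)$-component of $\Theta(E)=D^2$ is $D_{\overline{Z}}D_{\overline{W}}s-D_{\overline{W}}D_{\overline{Z}}s-D_{[\overline{Z},\overline{W}]}s$. Because the CR structure is formally integrable, $[\overline{Z},\overline{W}]\in T^{0,1}(M)$, so condition (i) lets me replace each $D_{\overline{Z}}$ by $\overline{\partial}_{\overline{Z}}$; the expression becomes $\overline{\partial}_{\overline{Z}}\overline{\partial}_{\overline{W}}s-\overline{\partial}_{\overline{W}}\overline{\partial}_{\overline{Z}}s-\overline{\partial}_{[\overline{Z},\overline{W}]}s$, which vanishes by condition (ii) in the definition of $(E,\overline{\partial}_b)$. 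Hence the $(0,2)$-part of $\Theta(E)$ is zero.

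The hard part will be the bookkeeping introduced by the extra $\theta$-direction, which has no counterpart in the holomorphic Chern connection. Since a one-form on $M$ carries a Reeb component, conditions (i) and (ii) only determine $D$ on the contact distribution $\ker\theta=T^{1,0}(M)\oplus T^{0,1}(M)$, and the $\xi$-direction derivative $D_\xi$ must be supplied by the separate canonical normalization of the Tanaka connection. The care is therefore in verifying that this $\theta$-component is itself frame-independent and metric-compatible, so that the globally glued object is a genuine connection on all of $TM$ satisfying (i)–(iii); once that is checked, uniqueness on the distribution together with the canonical choice in the Reeb direction yields uniqueness of the full Tanaka connection.
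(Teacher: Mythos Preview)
The paper does not supply a proof of this lemma; it is stated as background from Tanaka's monograph [Ta], and the sentence immediately preceding it already remarks that condition (iii) is precisely the content of the integrability axiom (ii) in the definition of a CR-holomorphic bundle. Your approach---the Chern-connection template transported to the CR setting, with $\omega_{(0,1)}=0$ in a CR-holomorphic frame and $\omega_{(1,0)}=(\partial_b h)h^{-1}$ forced by the metric compatibility---is the standard one and is correct on the contact distribution; your derivation of (iii) from the $\overline{\partial}_b$-integrability is exactly the observation the paper makes in passing.

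Your caveat about the Reeb direction is well taken and is the one genuine subtlety: conditions (i) and (ii) as written involve only $Z\in T^{1,0}(M)$ and $\overline{Z}\in T^{0,1}(M)$, so they leave $D_\xi$ unconstrained. The uniqueness claim therefore implicitly relies on a further normalization along $\xi$ (in Tanaka's original treatment this comes from prescribing the torsion, or equivalently from compatibility with the transverse holomorphic structure via $\mathcal{L}_\xi$). You are right to flag this rather than paper over it; in the quasi-regular Sasakian setting the authors care about, the bundle is basic and the Reeb action is by isometries, so the natural choice $D_\xi=\mathcal{L}_\xi$ (or its Hermitian adjustment) is the one that makes everything descend to the leaf space, and that is what is being used downstream in the paper.
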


\begin{definition}
Let $(M,\eta,\xi,\Phi,g)$ be a compact Sasakian $(2n+1)$-manifold. A
CR-holomorphic vector bundle $(E,\overline{\partial}_{b})$ over $M$ is a
\textbf{basic transverse holomorphic }vector bundle over $(M,T^{1,0}(M))$ if
\ there exists an open cover $\{U_{\alpha}\}$ of $M$ and the trivializing
frames on $U_{\alpha}$ , such that its transition functions are matrix-valued
basic CR functions. The trivializing frames is called the basic tansverse
holomorphic frame.
\end{definition}

\begin{example}
Let $(M,\eta,\xi,\Phi,g)$ be a compact Sasakian $(2n+1)$-manifold. Then, with
respect to the trivializing frames
\[
\{\frac{\partial}{\partial x},Z_{j}=\left(  \frac{\partial}{\partial z^{j}%
}+ih_{j}\frac{\partial}{\partial x}\right)  ,\ \ \ j=1,2,\cdot\cdot\cdot,n\}
\]
the transition functions of such frames are basic transverse holomorphic
functions, that is $h$ is basic. Thus $T^{1,0}(M)$ is a basic transverse
holomorphic vector bundle. Moreover, the canonical (determinant) bundle
$K_{M}^{T}$ of $T^{1,0}(M)$ is a basic transverse holomorphic line bundle
whose transition functions are given by $t_{\alpha\beta}=\det(\partial
z_{\beta}^{i}/\partial z_{\alpha}^{j})$ on $U_{\alpha}\cap U_{\beta}$, where
$(x,z_{\alpha}^{1},...,z_{\alpha}^{n})$ is the normal coordinate on
$U_{\alpha}.$
\end{example}

\begin{definition}
(i) Let $(M,\eta,\xi,\Phi,g)$ be a Sasakian $(2n+1)$-manifold and $L$ be a
basic transverse holomorphic bundle over $M$. A basic transverse holomorphic
section $s$ of $L$ is a collection $\{s_{\alpha}\}$ of CR-holomorphic maps
$s_{\alpha}:U_{\alpha}\rightarrow\mathbf{C}$ satisfying the transformation
rule $s_{\alpha}=t_{\alpha\beta}s_{\beta}$ on $U_{\alpha}\cap U_{\beta}$. The
transition function $t_{\alpha\beta}$ is \textbf{basic}. A basic Hermitian
metric $h$ on $L$ is a collection $\{h_{\alpha}\}$ of smooth positive
functions $h_{\alpha}:U_{\alpha}\rightarrow\mathbf{R}$ satisfying the
transformation rule
\[
h_{\alpha}=|t_{\beta\alpha}|^{2}h_{\beta}%
\]
on $U_{\alpha}\cap U_{\beta}$. Given a basic transverse holomorphic section
$s$ and a Hermitian metric $h$, we can define the pointwise norm squared of
$s$ with respect to $h$ by
\[
|s|_{h}^{2}=h_{\alpha}s_{\alpha}\overline{s_{\alpha}}%
\]
on $U_{\alpha}$. The reader can check that $|s|_{h}^{2}$ is a well-defined
function on $M$.

(ii) A Hermitian metric is called a \textbf{basic hermitian metric} if
$h_{\alpha}$ is \textbf{basic. It always exists if }$L$ is a basic transverse
holomorphic line bundle.

(iii) We define the curvature $R_{h}^{T}$ of a basic Hermitian metric $h$ on
$L$ to be the basic closed $(1,1)$-form on $M$ given by
\[
R_{h}^{T}=-\frac{\sqrt{-1}}{2\pi}\partial_{B}\overline{\partial}_{B}\log
h_{\alpha}%
\]
on $U_{\alpha}$. This is well-defined. The basic first Chern class $c_{1}%
^{B}(L)$ of $L$ to be the cohomology class $[R_{h}^{T}]_{B}\in H_{\overline
{\partial}_{B}}^{1,1}(M,R)$. Since any two basic Hermitian metrics
$h,h^{\prime}$ on $L$ are related by $h^{\prime}=e^{-\phi}h$ for some smooth
\textbf{basic} function $\phi$, we see that $R_{h^{\prime}}^{T}=R_{h}%
^{T}+\frac{\sqrt{-1}}{2\pi}\partial_{B}\overline{\partial}_{B}\phi$ and hence
$c_{1}^{B}(L)$ is well-defined,independent of choice of basic Hermitian metric
$h.$ We say that $(L,h)$ is positive if the curvature $R_{h}^{T}$ is positive
definite at every $p\in M.$
\end{definition}

\begin{example}
Let $(M,\eta,\xi,\Phi,g)$ be a compact Sasakian $(2n+1)$-manifold. If $g^{T}$
is a transverse Kaehler metric on $M,$ then $h_{\alpha}=\det((g_{i\overline
{j}}^{\alpha})^{T})$ on $U_{\alpha}$ defines a basic Hermitian metric on the
canonical bundle $K_{M}^{T}$. The inverse $(K_{M}^{T})^{-1}$ of $K_{M}^{T}$ is
sometimes called the anti-canonical bundle. Its basic first Chern class
$c_{1}^{B}((K_{M}^{T})^{-1})$ is called the basic first Chern class of $M$ and
is often denoted by $c_{1}^{B}(M).$Then it follows from the previous result
that $c_{1}^{B}(M)$ that $c_{1}^{B}(M)=[Ric^{T}(\omega)]_{B}$ for any
transversal Kaehler metric $\omega$ on a Sasakian manifold $M$.
\end{example}

\begin{definition}
(i) Let $(L,h)$ be a basic transverse holomorphic line bundle over a Sasakian
manifold $(M,\eta,\xi,\Phi,g)$ with the basic hermitian metric $h.$ We say
that $L$ is \textbf{very ample} if for any ordered basis $\underline{s}%
=(s_{0},...,s_{N})$ of $H_{B}^{0}(M,L)$, the map $i_{\underline{s}%
}:M\rightarrow\mathbf{CP}^{N}$ given by%
\[
i_{\underline{s}}(x)=[s_{0}(x),...,s_{N}(x)]
\]
is well-defined and an embedding which is $S^{1}$-equivariant with respect to
the weighted $\mathbf{C}^{\ast}$action in $\mathbf{C}^{N+1}$ as long as not
all the $s_{i}(x)$ vanish. We say that $L$ is ample if there exists a positive
integer $m_{0}$ such that $L^{m}$ is very ample for all $m\geq m_{0}.$

(ii) $L$ is a \textbf{semi-ample} basic transverse holomorphic line bundle if
there exists a basic Hermitian metric $h$ on $L$ such that $R_{h}^{T}$ is a
nonnegative $(1,1)$-form. In fact, there exists a foliation basepoint-free
holomorphic map
\[
\Psi:M\rightarrow(\mathbf{CP}^{N},\omega_{FS})
\]
defined by the basic transverse holomorphic section $\{s_{0},s_{1},...s_{N}\}
$ of $H_{B}^{0}(M.L^{m})$ which is $S^{1}$-equivariant with respect to the
weighted $\mathbf{C}^{\ast}$action in $\mathbf{C}^{N+1}$ with $N=\dim
H^{0}(M.L^{m})-1$ for a large positive integer $m$ and
\[
0\leq\frac{1}{m}\Psi^{\ast}(\omega_{FS})=\widehat{\omega}_{\infty}\in
c_{1}^{B}(L).
\]

\end{definition}

There is a Sasakian analogue of
Kodaira embedding theorem on 
a compact quasi-regular Sasakian $(2n+1)$-manifold
due to
\cite{rt}, and \cite{hlm} :

\begin{proposition}
Let $(M,\eta,\xi,\Phi,g)$ be a compact quasi-regular Sasakian $(2n+1)$%
-manifold and $(L,h)$ be a basic transverse holomorphic line bundle over $M$
with the basic hermitian metric $h.$ Then $L$ is ample if and only if $L$ is positive.
\end{proposition}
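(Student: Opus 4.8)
The plan is to reduce the statement to the orbifold version of the Kodaira embedding theorem on the leaf space $Z$ and then to lift the resulting embedding back to $M$ through the $S^{1}$-orbibundle structure. One implication is elementary: if $L$ is ample, then for $m\geq m_{0}$ the bundle $L^{m}$ is very ample, so the basic sections $\underline{s}=(s_{0},\dots,s_{N})$ of $H_{B}^{0}(M,L^{m})$ define an $S^{1}$-equivariant map $i_{\underline{s}}:M\rightarrow\mathbf{CP}^{N}$ which is an embedding. Pulling back the Fubini--Study form yields $\frac{1}{m}i_{\underline{s}}^{\ast}\omega_{FS}$, which is basic (because the $s_{i}$ are basic and the map is $S^{1}$-equivariant with respect to the weighted $\mathbf{C}^{\ast}$-action) and strictly positive, and it represents $c_{1}^{B}(L)$. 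Choosing a basic Hermitian metric whose curvature $R_{h}^{T}$ equals this form shows that $L$ is positive.

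For the converse, I would first use the first structure theorem (Proposition \ref{P21}) to realize $M$ as a principal $S^{1}$-orbibundle $\pi:M\rightarrow Z$ over the compact Hodge orbifold $\mathbf{Z}=(Z,\Delta)$, together with the identification $H_{orb}^{\ast}(Z,\mathbf{R})=H_{B}^{\ast}(F_{\xi})$ of orbifold cohomology with basic cohomology. Under this dictionary a basic transverse holomorphic line bundle $(L,h)$ with basic curvature $R_{h}^{T}$ descends to an orbifold holomorphic line bundle $(L_{Z},h_{Z})$ on $Z$ whose curvature is $\pi$-related to $R_{h}^{T}$; in particular, positivity of $R_{h}^{T}$ as a basic $(1,1)$-form is equivalent to positivity of the orbifold curvature of $L_{Z}$. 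Thus ``$L$ positive'' translates to ``$L_{Z}$ a positive orbifold line bundle over the compact complex orbifold $Z$.''

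Next I would invoke Baily's orbifold ($V$-manifold) version of the Kodaira embedding theorem: a positive orbifold line bundle over a compact complex orbifold is ample, so for $m$ sufficiently large and divisible the orbifold global sections of $L_{Z}^{m}$ define an embedding of $Z$ into projective space. The key identification is then $H_{orb}^{0}(Z,L_{Z}^{m})\cong H_{B}^{0}(M,L^{m})$, which promotes the orbifold sections to basic transverse holomorphic sections on $M$. The induced map is $S^{1}$-equivariant with respect to the weighted $\mathbf{C}^{\ast}$-action on $\mathbf{C}^{N+1}$ determined by the Reeb weights of the $s_{i}$, exactly as in the definition of very ampleness. Finally, the converse part of Proposition \ref{P21}, namely that the construction yields a Sasakian manifold precisely when all local uniformizing groups inject into the structure group $U(1)$, guarantees that this $S^{1}$-equivariant map is an embedding of the total space $M$ and not merely of $Z$.

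The main obstacle will be carrying out this last lifting step carefully at the orbifold singular fibers: one must verify that the basic sections separate points and tangent directions of $M$ near each singular fibre $\mathbf{S}_{p}^{1}$, and that the weighting of the $\mathbf{C}^{\ast}$-action is compatible with the injection of the local uniformizing groups into $U(1)$, so that the quotient embedding of $Z$ genuinely lifts to an embedding of $M$ rather than to a map with fibrewise identifications. The analytic input, namely the $\overline{\partial}_{B}$-type construction of enough basic peak sections to separate points and jets, is supplied by the orbifold Kodaira theorem on $Z$; it is the equivariant bookkeeping tying these sections to the $S^{1}$-action on $M$ where the genuinely Sasakian content enters.
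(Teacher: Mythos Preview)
The paper does not supply a proof of this proposition at all: it is stated as a known Sasakian analogue of the Kodaira embedding theorem and attributed directly to \cite{rt} and \cite{hlm}. So there is no in-paper argument to compare against; the only question is whether your sketch is a sound route to the cited result.

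Your outline is broadly in the spirit of the Ross--Thomas reference \cite{rt}: descend the basic line bundle to an orbifold line bundle $L_{Z}$ on the leaf space $Z$, apply Baily's orbifold Kodaira embedding, and then reinterpret the resulting projective embedding in terms of basic sections on $M$. The identification $H_{orb}^{0}(Z,L_{Z}^{m})\cong H_{B}^{0}(M,L^{m})$ and the positivity transfer via $\pi^{\ast}$ are the right moves. By contrast, \cite{hlm} works directly on the CR manifold with circle action, using equivariant Bergman/Szeg\H{o} kernel asymptotics to produce enough equivariant peak sections; that approach never passes through $Z$ and handles the irregular case as well, which your reduction cannot.

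The point you flag as ``the main obstacle'' is real and is where your sketch is thinnest. If the $s_{i}$ were literally $\xi$-invariant functions, the map $[s_{0}:\dots:s_{N}]$ would factor through $Z$ and collapse every Reeb orbit, so it could never embed $M$. What makes the statement true is that basic transverse holomorphic sections are $S^{1}$-\emph{equivariant} (with nontrivial weights coming from the orbibundle structure of $L$), not invariant; this is exactly why the paper's definition of very ample insists the map be equivariant for a \emph{weighted} $\mathbf{C}^{\ast}$-action on $\mathbf{C}^{N+1}$. To turn your sketch into a proof you must track these weights explicitly: show that the weight decomposition of $H_{B}^{0}(M,L^{m})$ separates points along each fibre (regular and singular), and that at a singular fibre of type $\frac{1}{r}(1,a)$ the injection $\Gamma_{p}\hookrightarrow U(1)$ is compatible with the weights so that distinct points on $\mathbf{S}_{p}^{1}$ map to distinct points in $\mathbf{CP}^{N}$. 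This weight bookkeeping is precisely the content of the weighted projective embedding in \cite{rt}; invoking only Baily plus the converse of Proposition~\ref{P21} does not by itself supply it.
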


\begin{definition}
(i) First we say that a subset $V$ of a (quasi-regular) Sasakian manifold
$(M^{2n+1},\eta,\xi,\Phi,g)$ an \textbf{invariant (Sasakian) submanifold (with
or without singularities) }of dimension $2n-1$. if $\xi$ is tangent to $V$ and
$\Phi TV\subset TV$ at all points of $V$ and is locally given as the zero set
$\{f=0\}$ of a locally defined \textbf{basic} CR holomorphic function $f$.
\ In general, $V$ may not be a submanifold. Denote by $V^{reg}$ the set of
points $p\in V$ for which $V$ is a submanifold of $M$ near $p$. We say that
$V$ is irreducible if $V^{reg}$ is connected. A transverse divisor $D^{T}$ on
$M$ is a formal finite sum $\sum_{i}a_{i}V_{i}$ where $a_{i}\in\mathbf{Z}$ and
each $V_{i}$ is an irreducible \textbf{invariant submanifold} of dimension
$2n-1$. We say that $D^{T}$ is effective if the $a_{i}$ are all nonnegative.
The support of $D^{T}$ is the union of the $V_{i} $ for each $i$ with
$a_{i}\neq0.$

(ii) Given a transverse divisor $D^{T},$ we define an associated line bundle
as follows. Suppose that $D^{T}$ is given by local defining basic functions
$f_{\alpha}$ (vanishing on $D^{T}$ to order $1$) over an open cover
$U_{\alpha}$. Define transition functions $f_{\alpha}=t_{\alpha\beta}f_{\beta
}$ on $U_{\alpha}\cap U_{\beta}$. These are basic CR holomorphic and
nonvanishing in $U_{\alpha}\cap U_{\beta}$, and satisfy
\[
t_{\alpha\beta}t_{\beta\alpha}=1;t_{\alpha\beta}t_{\beta\gamma}=t_{\alpha
\gamma}.
\]
Write $[D^{T}]$ for the associated basic line bundle, which is well-defined
independent of choice of local defining functions.

(iii) One can define
\begin{equation}
L_{M}\cdot V=\int_{V}R_{h}^{T}\wedge\eta\label{A2}%
\end{equation}
for all invariant Sasakian $3$-manifold $V$ in $M.$ $h$ is a \textbf{basic}
Hermitian metric on the basic line bundle $L_{M}$ \ From (ii), for a compact
Sasakian $5$-manifold $M,$ a transverse divisor $D^{T}$ defines an element of
$H_{\overline{\partial}_{B}}^{1,1}(M,R)$ by $D^{T}\rightarrow\lbrack R_{h}%
^{T}]\in H_{\overline{\partial}_{B}}^{1,1}(M,R)$ for a basic Hermitian metric
on the associate basic line bundle $[D^{T}]$, and we define
\[
\alpha\cdot\beta=\int_{M}\alpha\wedge\beta\wedge\eta
\]
for $\alpha,\beta\in H_{\overline{\partial}_{B}}^{1,1}(M,R).$ Then for an
invariant $3$-manifold $V_{i}$ which is both a foliation curve and a
transverse divisor, the $V\cdot V$ is well-defined and we may write $V^{2}$
instead of $V\cdot V.$
\end{definition}

\begin{remark}
(\cite{gei}) Note that the Sasakain $3$-manifold $V$ is either canonical,
anticanonical or null. $V$ is up to finite quotient a regular Sasakian
$3$-manifold, i.e., a circle bundle over a Riemann surface of positive genus.
In the positive case, $V$ is covered by $S^{3}$ and its Sasakian structure is
a deformation of a standard Sasakian structure.
\end{remark}

\begin{definition}
(i) We say that a \textbf{basic} line bundle $L$ is \textbf{nef} if $L\cdot
V\geq0$ for any invariant Sasakian $3$-manifold $V$ in $M.$ In particular if
$M$ is quasi-regular, then $V$ is the $\mathbf{S}^{1}$-oribundle over the
curve $C$ in $Z$ so that
\begin{equation}
L_{Z}\cdot C=\int_{C}R_{h_{Z}}\geq0.\label{A1}%
\end{equation}
Here $c_{1}^{B}(L_{M})=\pi^{\ast}c_{1}^{orb}(L_{Z})$ and $h_{Z}$ is the
hermitian metric in the corresponding line bundle $L_{Z}.$ Define
\begin{equation}
C_{M}^{B}=\{[\alpha]_{B}\in H_{B}^{1,1}(M,\mathbf{R})|\text{ }\exists\text{
}\omega>0\text{ such that }[\omega]_{B}=[\alpha]_{B}\}.\text{ }\label{D}%
\end{equation}
Then we can also define a class $[\alpha]_{B}$ called nef class if
$[\alpha]_{B}\in\overline{C_{M}^{B}}$ and a class $[\alpha]_{B}$ called big
if
\[
\int_{M}\alpha^{n}\wedge\eta>0.
\]

(ii) If the Sasakian manifold $(M^{2n+1},\eta,\xi,\Phi,g)$ has the canonical
basic line bundle $K_{M}^{T}$ nef, then we say that $M$ is a smooth transverse
\textbf{minimal model}. If $M$ has $K_{M}^{T}$ big, then we say that $M$ is of
general type.
\end{definition}

\subsection{Minimal Model Program on Compact Quasi-Regular Sasakian
$5$-Manifolds}

We will focus on the proof of foliation minimal model program on a compact
\textbf{quasi-regular} Sasakian $5$-manifold with the foliation singularitie
of type $\frac{1}{r}(1,a).$

Let $(M,\eta,\xi,\Phi,g)$ be a compact quasi-regular Sasakian $5$-manifold and
the space $Z$ of leaves be normal projective orbifold with finite cyclic
quotient \textbf{singularities of type }$\frac{1}{r}(1,a)$ which are klt
singularities. It follows from the first strcture theorem for Sasakian
structures that
\[
K_{M}^{T}=\pi^{\ast}(K_{\emph{Z}}^{orb}).
\]
On the other hand, the leave space $Z$ is well-formed and the orbifold
canonical divisor $K_{\emph{Z}}^{orb}$ and canonical divisor $K_{Z}$ are the
same, then via the $S^{1}$-orbibundle
\[
\pi:(M,g)\rightarrow(Z,\omega),
\]
we have the following Sasaki analogue of basepoint-free theorem, rationality
theorem, cone and contraction theorem (\cite{km}, \cite{kmm}, \cite{m}) :

\begin{proposition}
(Foliation Base-point free Theorem) Let $(M,\eta,\xi,\Phi,g)$ be a compact
quasi-regular Sasakian $5$-manifold and the space $Z$ of leaves be normal
projective orbifold with finite cyclic quotient \textbf{singularities of type
}$\frac{1}{r}(1,a)$ which are klt singularities. Suppose that $L^{T}$ is a nef
Cartier basic line bundle and $aL^{T}-K_{M}^{T}$ is nef and big Cartier basic
line bundle for some $a\in\mathbf{N}$. Then $|mL^{T}|$ is transverse
basepoint-free for $m>>0.$ More precisely, there exists a $S^{1}$-equivariant
foliation basepoint-free holomorphic map%
\[
\Psi_{|mL^{T}|}:M\rightarrow\mathcal{P(}H_{B}^{0}(M,(L^{T})^{m}))
\]
which is $S^{1}$-equivariant with respect to the weighted $\mathbf{C}^{\ast}
$action in $\mathbf{C}^{N+1}$ with $N=\dim H^{0}(M.L^{m})-1.$
\end{proposition}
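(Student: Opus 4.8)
The plan is to reduce the assertion to the classical Kawamata--Shokurov base-point free theorem on the leaf space $Z$ and then to lift the resulting morphism back to $M$ by means of the first structure theorem. First I would use Proposition \ref{P21} to transfer all of the hypotheses from $M$ to $Z$. Since the Sasakian structure is quasi-regular, the projection $\pi:(M,g,\omega)\rightarrow(Z,h,\omega_{h})$ is a principal $S^{1}$-orbibundle and identifies the basic cohomology of $(M,\mathcal{F}_{\xi})$ with the orbifold cohomology of $Z$; in particular $c_{1}^{B}(L^{T})=\pi^{\ast}c_{1}^{orb}(L_{Z})$ for a unique orbifold line bundle $L_{Z}$ on $Z$, and, because $Z$ is well-formed, $K_{M}^{T}=\pi^{\ast}(K_{Z}^{orb})=\pi^{\ast}(K_{Z})$. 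By the pairing formula (\ref{A1}), nefness of $L^{T}$ on $M$, tested against invariant Sasakian $3$-manifolds $V$, is equivalent to nefness of $L_{Z}$ on $Z$, tested against the curves $C=\pi(V)$; likewise bigness transfers through the fibre integration $\int_{M}\alpha^{2}\wedge\eta=c\int_{Z}(\alpha_{Z})^{2}$. Hence $L_{Z}$ is a nef Cartier line bundle on $Z$ and $aL_{Z}-K_{Z}$ is nef and big.

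Second, I would apply the classical base-point free theorem on $Z$. By Theorem \ref{T34}, $Z$ is a $Q$-factorial normal projective surface whose singularities are the cyclic quotient singularities $\frac{1}{r}(1,a)$, and these are klt. Thus the Kawamata--Shokurov base-point free theorem (\cite{km}, \cite{kmm}) applies to the klt pair $(Z,0)$: for $m\gg0$ the linear system $|mL_{Z}|$ is base-point free and defines a morphism $\Psi_{|mL_{Z}|}:Z\rightarrow\mathcal{P}(H^{0}(Z,L_{Z}^{m}))$.

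Third, I would descend base-point freeness back to $M$. Pulling back sections along $\pi$ furnishes a linear isomorphism $H_{B}^{0}(M,(L^{T})^{m})\cong H^{0}(Z,L_{Z}^{m})$ between basic transverse holomorphic sections on $M$ and orbifold holomorphic sections on $Z$, since the basic Dolbeault complex of $M$ computes the orbifold Dolbeault cohomology of $Z$. As the resulting generators $\{s_{0},\dots,s_{N}\}$ of $H_{B}^{0}(M,(L^{T})^{m})$ are basic and hence $\xi$-invariant, the induced map $\Psi_{|mL^{T}|}=\Psi_{|mL_{Z}|}\circ\pi$ is $S^{1}$-equivariant for the weighted $\mathbf{C}^{\ast}$ action on $\mathbf{C}^{N+1}$, exactly as in the very ample and semi-ample definitions of the appendix; moreover a common zero of the $s_{i}$ on $M$ would descend to a base point of $|mL_{Z}|$ on $Z$, which is excluded. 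This yields the required foliation base-point-free holomorphic map $\Psi_{|mL^{T}|}:M\rightarrow\mathcal{P}(H_{B}^{0}(M,(L^{T})^{m}))$.

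The main obstacle I expect lies in making the dictionary between the two sides precise at the singular fibres: one must verify that nefness, bigness, the Cartier condition, and the section correspondence $H_{B}^{0}(M,(L^{T})^{m})\cong H^{0}(Z,L_{Z}^{m})$ are all compatible with the orbifold structure at the klt points, and that the equivalence between the orbifold and the algebro-geometric formulations of the base-point free theorem is valid precisely because $Z$ is well-formed, so that $K_{Z}^{orb}=K_{Z}$ and no branch divisor contributes. Once this correspondence is secured, all of the analytic content is carried by the classical theorem on $Z$, and the lifting is formal.
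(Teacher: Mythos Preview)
Your proposal is correct and follows essentially the same route as the paper: reduce to the classical Kawamata base-point free theorem on the leaf space $Z$ via the first structure theorem (Proposition~\ref{P21}) and the identification $K_{M}^{T}=\pi^{\ast}K_{Z}$, $L^{T}=\pi^{\ast}L$, then lift the resulting morphism by setting $\Psi_{|mL^{T}|}=\psi_{|mL|}\circ\pi$. Your version is in fact more explicit than the paper's about why nefness and bigness transfer and why the section spaces agree, which addresses exactly the points the paper takes for granted.
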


\begin{proof}
Note that the leave space $Z$ is a normal projective orbifold surface with
finite cyclic quotient \textbf{singularities of type }$\frac{1}{r}(1,a)$ which
are klt, Suppose that $L^{T}$ is a nef Cartier basic line bundle and
$aL^{T}-K_{M}^{T}$ is nef and big Cartier basic line bundle for some
$a\in\mathbf{N}$. On the other hand by applying Proposition \ref{P21}, there
exists a Riemannian submersion, $S^{1}$-orbibundle $\pi:M\rightarrow Z,$ such
that
\[
K_{M}^{T}=\pi^{\ast}(K_{\emph{Z}}^{orb})=\pi^{\ast}(K_{\emph{Z}})
\]
and
\[
\pi^{\ast}(L)=L^{T}.
\]
Then $L$ is a nef Cartier line bundle and $aL-K_{Z}$ is nef and big Cartier
line bundle. Therefore by Kawamata base-point free theorem, there is a
basepoint-free holomorphic map
\[
\psi_{|mL|}:Z\rightarrow\mathcal{P(}H^{0}(Z,L^{m})).
\]
Define
\[
\Psi_{|mL^{T}|}=\psi_{|mL|}\circ\pi
\]
such that
\[
\Psi_{|mL^{T}|}:M\rightarrow\mathcal{P(}H_{B}^{0}(M,(L^{T})^{m})).
\]
It follows that $\Psi_{|mL^{T}|}$ is a $S^{1}$-equivariant foliation
basepoint-free holomorphic map with respect to the weighted $\mathbf{C}^{\ast
}$action in $\mathbf{C}^{N+1}$ with $N=\dim H^{0}(M.L^{m})-1$ for a large
positive integer $m$.
\end{proof}

\begin{proposition}
(Rationality Theorem) Let $(M,\eta,\xi,\Phi,g)$ be a compact quasi-regular
Sasakian $5$-manifold and the space $Z$ of leaves be normal projective
orbifold with finite cyclic quotient \textbf{singularities of type }$\frac
{1}{r}(1,a)$ which are klt singularities. Suppose that $K_{M}^{T} $ is not
nef. Let $a(Z)$ be an integer such that $a(Z)K_{M}^{T}$ is Cartier. Let
$H^{T}$ be a nef and big Cartier basic divisor and define
\[
r=\sup\{t\in\mathbf{R:}H^{T}+tK_{M}^{T}\text{ \ \ \textrm{is nef} }\}.
\]
Then $r$ is a rational number of the form $\frac{p}{q}$ with
\[
0<q\leq a(Z)(\dim Z+1).
\]

\end{proposition}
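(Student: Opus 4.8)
The plan is to reduce the statement to the classical Kawamata Rationality Theorem on the leaf space $Z$, exactly in the spirit of the proof of the Foliation Base-point Free Theorem above. First I would invoke Proposition \ref{P21}: since $M$ is compact quasi-regular and its leaf space $Z$ is well-formed, there is an orbifold Riemannian submersion $\pi:M\rightarrow Z$ exhibiting $M$ as an $S^{1}$-orbibundle over a $Q$-factorial normal projective orbifold surface with at worst klt (type $\frac{1}{r}(1,a)$) singularities, and
\[
K_{M}^{T}=\pi^{\ast}(K_{Z}^{orb})=\pi^{\ast}(K_{Z}).
\]
Because the assignment $L\mapsto\pi^{\ast}L=L^{T}$ identifies basic transverse holomorphic line bundles on $M$ with orbifold line bundles on $Z$, the nef and big Cartier basic divisor $H^{T}$ descends to a nef and big Cartier divisor $H_{Z}$ on $Z$ with $\pi^{\ast}H_{Z}=H^{T}$, and the integer $a(Z)$ with $a(Z)K_{M}^{T}$ Cartier is precisely the Cartier index of $a(Z)K_{Z}$.

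The decisive step is the nefness dictionary supplied by the intersection formula (\ref{A1}): for every invariant Sasakian $3$-manifold $V\subset M$, which — as $M$ is quasi-regular — is the $S^{1}$-orbibundle over an irreducible curve $C\subset Z$, one has
\[
(H^{T}+tK_{M}^{T})\cdot V=(H_{Z}+tK_{Z})\cdot C.
\]
Hence $H^{T}+tK_{M}^{T}$ is nef on $M$ if and only if $H_{Z}+tK_{Z}$ is nef on $Z$, so that
\[
r=\sup\{t:H^{T}+tK_{M}^{T}\text{ nef}\}=\sup\{t:H_{Z}+tK_{Z}\text{ nef on }Z\}.
\]
In the same way $K_{M}^{T}$ is nef if and only if $K_{Z}$ is nef, so the standing hypothesis that $K_{M}^{T}$ is not nef translates to $K_{Z}$ not nef on $Z$.

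I would then apply the classical Rationality Theorem for klt pairs (\cite{km}, \cite{kmm}, \cite{m}) to the projective surface $Z$ with branch divisor $\Delta=\emptyset$ (well-formedness), the nef and big Cartier divisor $H_{Z}$, and the $\mathbf{Q}$-Cartier divisor $K_{Z}$ whose index divides $a(Z)$. Since $K_{Z}$ is not nef, the threshold $r$ is rational, and writing $r=\frac{p}{q}$ in lowest terms gives $0<q\leq a(Z)(\dim Z+1)=3\,a(Z)$, as $\dim Z=2$. Transporting this back through $\pi$ via the identity $r=\sup\{t:H_{Z}+tK_{Z}\text{ nef}\}$ established above yields the desired bound for the Sasakian threshold.

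The main obstacle is making the nefness dictionary rigorous: one must check that every invariant Sasakian $3$-submanifold of $M$ indeed arises as the $S^{1}$-orbibundle over an irreducible curve in $Z$, and that the transverse intersection number $L^{T}\cdot V$ matches the orbifold intersection $L_{Z}\cdot C$ uniformly in the parameter $t$ when $L^{T}=H^{T}+tK_{M}^{T}$. This is precisely where the well-formedness of $Z$ is essential — it guarantees $K_{Z}^{orb}=K_{Z}$ with no fractional branch-divisor contributions — and where the formula (\ref{A1}), together with (\ref{A2}), must be applied carefully to the $\mathbf{Q}$-divisor $K_{M}^{T}$ so that the Cartier-index bookkeeping on $Z$ controlling the denominator $q$ is correct.
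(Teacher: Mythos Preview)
Your proposal is correct and follows exactly the approach the paper intends: the paper does not give a separate proof for this Rationality Theorem, but the sentence introducing the block of results states that all three --- base-point freeness, rationality, and cone/contraction --- are obtained ``via the $S^{1}$-orbibundle $\pi:(M,g)\rightarrow(Z,\omega)$'' from the classical theorems in \cite{km}, \cite{kmm}, \cite{m}, and your reduction to $Z$ together with the nefness dictionary (\ref{A1})--(\ref{A2}) is precisely that argument, spelled out in the same manner as the proof of the Foliation Base-point Free Theorem immediately preceding it.
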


A basic $1$-cycle $V$ on $M$ \ is a formal finite sum $V=\sum a_{i}V_{i}$, for
$a_{i}\in\mathbf{Z}$ and $V_{i}$ is the irreducible invariant Sasakian
$3$-manifold. We denote by $N_{1}(M)_{\mathbf{Z}}$ the space of $1$-cycles
modulo numerical equivalence. Write%
\[
N_{1}(M)_{\mathbf{Q}}=N_{1}(M)_{\mathbf{Z}}\otimes_{\mathbf{Z}}\mathbf{Q}%
\text{ \ \ \ \textrm{and} \ \ \ }N_{1}(M)_{\mathbf{R}}=N_{1}(M)_{\mathbf{Z}%
}\otimes_{\mathbf{Z}}\mathbf{R.}%
\]
Then write $NE(M)$ for the cone of effective elements of $N_{1}(M)_{\mathbf{R}%
}$ and $\overline{NE(M)}$ for its closure. Furthermore, a basic divisor
$D^{T}$ is ample if and only if
\[
D^{T}\cdot V>0
\]
for all nonzero $V\in\overline{NE(M)}$. It is the Kleiman criterion for the
ample line bundle. Also we define the Picard number%
\[
\rho(M):=\dim N_{\mathbf{R}}^{1}(M)\leq\dim H_{B}^{2}(M,\mathbf{R})<\infty.
\]

\begin{proposition}
\label{P31}(Foliation Contraction Theorem) Let $(M,\eta,\xi,\Phi,g)$ be a
compact quasi-regular Sasakian $5$-manifold and the space $Z$ of leaves be
normal projective orbifold surface with finite cyclic quotient
\textbf{singularities of type }$\frac{1}{r}(1,a)$ which are klt singularities.
Then there exists a countable collection of Sasakain $3$-spheres $\{V_{i}\}$
such that%
\[
0<-K_{M}^{T}\cdot V_{i}\leq2\dim Z
\]
and
\[
\overline{NE(M)}=\overline{NE(M)}_{K_{M}^{T}\geq0}+\sum a_{i}|V_{i}|,\text{
\ \ }a_{i}\geq0.
\]
The rays $a_{i}|V_{i}|$ are locally discrete in the half space $\{K_{M}%
^{T}<0\}.$ If $R\in\overline{NE(M)}$ is a $K_{M}^{T}$-negative foliation
extremal ray such that
\[
R=\overline{NE(M)}_{K_{M}^{T}<0}\cap(L^{T})^{\perp}%
\]
for some nef basic line bundle $L^{T}$ which can be chosed by
\[
\pi^{\ast}L=L^{T}%
\]
for a nef line bundle $L$ over $Z$. Then there is a unique foliation extremal
ray contraction
\[
\psi_{(L^{T})^{m}}=cont_{V}:M\rightarrow N
\]
for some $m>>1$ such that an irreducible foliation curve $V\subset M$ is
mapped to a leave $\mathbf{S}^{1}$ by $\psi$ if and only if $[V]_{B}\in R$.
Furthermore, $L^{T}=\psi^{\ast}A$ for some basic ample line bundle on $N.$
\end{proposition}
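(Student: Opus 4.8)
The plan is to descend the whole statement to the leaf space $Z$ via the $S^{1}$-orbibundle $\pi:M\rightarrow Z$ and then import the classical cone and contraction machinery for klt surfaces. By the first structure theorem (Proposition \ref{P21}), $Z$ is a $Q$-factorial normal projective orbifold surface with klt cyclic quotient singularities of type $\frac{1}{r}(1,a)$, and since the leaf space is well-formed one has $K_{M}^{T}=\pi^{\ast}(K_{Z}^{orb})=\pi^{\ast}(K_{Z})$. The first thing I would establish carefully is that $\pi^{\ast}$ sets up an isomorphism of the spaces of $1$-cycles $N_{1}(M)_{\mathbf{R}}\cong N_{1}(Z)_{\mathbf{R}}$ carrying $\overline{NE(M)}$ onto $\overline{NE(Z)}$: an irreducible invariant Sasakian $3$-manifold $V$ is exactly the $S^{1}$-orbibundle over an irreducible curve $C\subset Z$, and the intersection formulas (\ref{A1})--(\ref{A2}) give $L^{T}\cdot V=L_{Z}\cdot C$ and $K_{M}^{T}\cdot V=K_{Z}\cdot C$ whenever $L^{T}=\pi^{\ast}L$. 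Combined with the cohomological identification $H_{orb}^{\ast}(Z,\mathbf{R})=H_{B}^{\ast}(F_{\xi})$ and $c_{1}^{B}(M)=\pi^{\ast}c_{1}^{orb}(Z)$ from Proposition \ref{P21}(v), this makes $\pi^{\ast}$ an isometry of the relevant intersection pairings.

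Next I would apply the cone theorem for the projective klt surface $Z$ (\cite{km}, \cite{kmm}, \cite{m}): there is a countable family of rational curves $C_{i}$ spanning the $K_{Z}$-negative extremal rays with $0<-K_{Z}\cdot C_{i}\leq 2\dim Z$, with $\overline{NE(Z)}=\overline{NE(Z)}_{K_{Z}\geq 0}+\sum a_{i}|C_{i}|$ and the rays locally discrete in $\{K_{Z}<0\}$. Setting $V_{i}$ to be the $S^{1}$-orbibundle over $C_{i}$ and pulling the decomposition back under $\pi^{\ast}$ yields the stated cone decomposition on $M$ together with the length bound $0<-K_{M}^{T}\cdot V_{i}\leq 2\dim Z$. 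Each extremal $C_{i}$ is a rational curve of positive type on which the restricted orbibundle has nonzero Euler number, so by the Seifert classification recorded in the Remark above, $V_{i}$ is, up to finite quotient, a Sasakian $3$-sphere.

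For the contraction, given a $K_{M}^{T}$-negative foliation extremal ray $R=\overline{NE(M)}_{K_{M}^{T}<0}\cap(L^{T})^{\perp}$ with $L^{T}=\pi^{\ast}L$ nef, the descended ray $R_{Z}=\overline{NE(Z)}_{K_{Z}<0}\cap L^{\perp}$ is $K_{Z}$-negative extremal and $L$ is its supporting nef divisor. Since $aL-K_{Z}$ is nef and big for a suitable $a\in\mathbf{N}$, the Kawamata base-point-free theorem makes $|mL|$ base-point-free for $m\gg 0$ and produces the contraction $\mathrm{cont}_{R_{Z}}:Z\rightarrow W$ onto a normal projective orbifold $W$ with $L=\mathrm{cont}_{R_{Z}}^{\ast}A_{W}$ for an ample $A_{W}$. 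I would then lift this upstairs through the Foliation Base-point free Theorem proved above: the composite $\Psi_{|m(L^{T})|}=\psi_{|mL|}\circ\pi$ defines the $S^{1}$-equivariant morphism $\psi_{(L^{T})^{m}}:M\rightarrow N:=\mathcal{P}(H_{B}^{0}(M,(L^{T})^{m}))$, and the converse half of Proposition \ref{P21} shows that $N$, being the $S^{1}$-orbibundle over $W$, is again a compact quasi-regular Sasakian $5$-manifold; taking $A=\pi_{N}^{\ast}A_{W}$ gives $L^{T}=\psi^{\ast}A$ with $A$ basic ample. A foliation curve $V$ is contracted to a leaf $\mathbf{S}^{1}$ precisely when $[V]_{B}\in R$, because this holds downstairs for the corresponding $C$ and $R_{Z}$, and uniqueness of $\psi$ follows from uniqueness of $\mathrm{cont}_{R_{Z}}$.

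The hard part will be the lifting step rather than the descent: one must verify that the image orbifold $W$ of the surface contraction again satisfies the injectivity condition on local uniformizing groups into $U(1)$ demanded by the converse of the first structure theorem, so that $N$ carries a genuine quasi-regular Sasakian structure (with the polarization descending compatibly) rather than only a topological $S^{1}$-orbibundle. The second delicate point is confirming that $\pi^{\ast}$ is an honest isomorphism of cones of curves --- equivalently, that every extremal class on $M$ is represented by an invariant Sasakian $3$-manifold that descends to an actual curve on $Z$ --- which rests on the basic-cohomology identification and on the fact that $M_{\operatorname{reg}}\rightarrow Z_{\operatorname{reg}}$ is a genuine principal circle bundle away from the finite singular fibre set.
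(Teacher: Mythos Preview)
Your proposal is correct and follows essentially the same route as the paper: descend along the $S^{1}$-orbibundle $\pi:M\to Z$, invoke the classical cone and contraction theorem for the klt surface $Z$ from \cite{km}, and lift the resulting contraction back via $\psi_{(L^{T})^{m}}=\psi_{(L)^{m}}\circ\pi$. In fact you are more thorough than the paper's own proof, which treats only the contraction half and leaves the cone decomposition, the identification $N_{1}(M)_{\mathbf{R}}\cong N_{1}(Z)_{\mathbf{R}}$, and the Sasakian structure on $N$ implicit; the two delicate points you flag (injectivity of local uniformizing groups into $U(1)$ for the converse of Proposition~\ref{P21}, and that every extremal class on $M$ descends to a curve class on $Z$) are genuine but are likewise not addressed in the paper.
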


\begin{proof}
Since $Z$ is a normal projective orbifold surface with finite cyclic quotient
\textbf{singularities of type }$\frac{1}{r}(1,a)$ which are klt singularities,
it follows (\cite{km}) that there exists a nef and semi-ample line bundle $L$
over $Z$ such that if $R_{Z}\in\overline{NE()}$ is a $K_{Z}$-negative extremal
ray with
\[
R_{Z}=\overline{NE(Z)}_{K_{Z}<0}\cap(L)^{\perp}.
\]
Then there is a unique extremal ray contraction
\[
\psi_{(L)^{m}}=cont_{C}:Z\rightarrow Y\subset\mathcal{P(}H^{0}(Z,mL))
\]
for some $m>>1$ such that an irreducible curve $C\subset Z$ is mapped to a
point by $\psi_{(L)^{m}}$ if and only if $[C]\in R_{Z}$.

Now for a Riemannian submersion, $S^{1}$-orbibundle $\pi:M\rightarrow Z,$ we
choose $L^{T}$ such that
\[
\pi^{\ast}L=L^{T}%
\]
ann thus $L^{T}$ is a basic nef and semi-ample line bundle over $M$ such that
there is a unique $S^{1}$-equivariant foliation extremal ray contraction
\[
\psi_{(L^{T})^{m}}=cont_{V}:M\rightarrow N\subset\mathcal{P(}H_{B}^{0}%
(M_{k},(L^{T})^{m}))
\]
with%
\[
\psi_{(L^{T})^{m}}=\psi_{(L)^{m}}\circ\pi
\]
for which an irreducible foliation curve $V\subset M$ is mapped to a leave
$\mathbf{S}^{1}$ by $\psi_{(L^{T})^{m}}$ if and only if
\[
\lbrack V]_{B}\in R=\overline{NE(M)}_{K_{M}^{T}<0}\cap(L^{T})^{\perp}.
\]

\end{proof}

Now we are ready to prove the following foliation minimal model program :

\begin{proposition}
\label{P41} Let $(M,\eta,\xi,\Phi,g)$ be a compact quasi-regular Sasakian
$5$-manifold with finite cyclic quotient foliation singularities of type
$\frac{1}{r}(1,a).$ Then there exists a finite sequence of foliation extremal
ray contractions
\[
\psi_{i}:M_{i-1}\rightarrow M_{i}\text{ , \ }i=1,...k
\]
such that every $M_{i}$ is a Sasakian manifold having at worst foliation
cyclic quotient singularities and for every $\psi_{i}$ one of the following
holds :

\begin{enumerate}
\item Foliation divisorial contraction : (The locus of the foliation extremal
ray is an irreducible basic divisor) : $\psi_{i}$ is a foliation divisorial
contraction of a foliation curve $V$ with $V^{2}<0$, and the Picard number
satisfies $\rho(M_{i})=\rho(M_{i-1})-1.$ or

\item Foliation fibre contraction (transverse Mori fibre space) : (The locus
of the foliation extremal ray is $M_{i-1}$) : $\psi_{i}$ is a singular
fibration such that either

\begin{enumerate}
\item there is a map%
\[
f:M_{k}\rightarrow pt,
\]
then $K_{M_{k}}^{T}<0$ and thus $M_{k}$ is transverse minimal Fano and the
leave space $Z_{k}$ is minimal log del Pezzo surface of $\frac{1}{r}%
(1,a)$-type singularities and Picard number one. Or

\item
\[
f:M_{k}\rightarrow\Sigma_{h},
\]
then $Z_{k}$ is a rule surface over \ Riemann surfaces $\Sigma_{h}$\ of genus
$h$ and thus $M_{k}$ is an $S^{1}$-orbibundle over $Z_{k}.$ Or
\end{enumerate}

\item $M_{k}$ is nef : $\psi_{i}=\psi_{k}$, and $M_{k}$ has at worst foliation
cyclic quotient singularities and has no foliation $K_{M}^{T}$-negative curves.
\end{enumerate}
\end{proposition}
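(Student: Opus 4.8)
The plan is to lift the classical minimal model program for the leaf space $Z$ up to the Sasakian five-manifold $M$ through the $S^{1}$-orbibundle $\pi:M\rightarrow Z$, in exactly the way the Foliation Contraction Theorem (Proposition \ref{P31}) lifts a single extremal ray contraction. The structural input is that $Z$ is a $Q$-factorial normal projective orbifold surface whose only singularities are the finite cyclic quotient singularities of type $\frac{1}{r}(1,a)$, and by Theorem \ref{T34} these are klt. Because the leaf space is well-formed, one has the fundamental compatibility $K_{M}^{T}=\pi^{\ast}(K_{Z}^{orb})=\pi^{\ast}(K_{Z})$, together with $\pi^{\ast}L=L^{T}$ for nef basic line bundles and the numerical correspondence $c_{1}^{B}(L_{M})=\pi^{\ast}c_{1}^{orb}(L_{Z})$ of Proposition \ref{P21}(v). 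These identities let one translate every numerical statement on $M$ into one on $Z$ and back.

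First I would run the surface minimal model program on $Z$. Since $Z$ is a klt projective surface, the Sasaki analogues of the base-point-free, rationality, and cone and contraction theorems (equivalently, the classical statements applied downstairs) yield a finite sequence of $K_{Z}$-extremal ray contractions $\phi_{i}:Z_{i-1}\rightarrow Z_{i}$, which terminates because the Picard number strictly drops at each divisorial contraction and is bounded below. Each $\phi_{i}$ is either a divisorial contraction of a curve $C$ with $C^{2}<0$, a Mori fibre contraction onto a point (a log del Pezzo surface of Picard number one) or onto a curve $\Sigma_{h}$ (a ruled surface), or else $K_{Z_{k}}$ is nef. I would then lift each $\phi_{i}$ to a foliation extremal ray contraction $\psi_{i}:M_{i-1}\rightarrow M_{i}$ by taking $M_{i}$ to be the $S^{1}$-orbibundle over $Z_{i}$; the converse direction of the first structure theorem (Proposition \ref{P21}) guarantees that $M_{i}$ is again a compact quasi-regular Sasakian five-manifold, as $Z_{i}$ remains a polarized Hodge orbifold with local uniformizing groups injecting into $U(1)$, and the singularities stay of type $\frac{1}{r}(1,a)$ by Reid's resolution model (Theorem \ref{T32}).

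Next I would match the trichotomy fibrewise. Using $L^{T}\cdot V=\int_{V}R_{h}^{T}\wedge\eta$ and $L_{Z}\cdot C=\int_{C}R_{h_{Z}}$ from (\ref{A1}) and (\ref{A2}), together with $K_{M}^{T}=\pi^{\ast}(K_{Z})$, the sign and intersection data transfer directly: a divisorial contraction of $C$ with $C^{2}<0$ lifts to the contraction of the invariant Sasakian $3$-manifold $V=\pi^{-1}(C)$ with $V^{2}<0$ and $\rho(M_{i})=\rho(M_{i-1})-1$; a fibration $Z_{k}\rightarrow\mathrm{pt}$ forces $-K_{Z_{k}}$ ample with $\rho=1$, hence $K_{M_{k}}^{T}<0$ and $M_{k}$ transverse minimal Fano; a fibration $Z_{k}\rightarrow\Sigma_{h}$ exhibits $M_{k}$ as an $S^{1}$-orbibundle over a ruled surface; and $K_{Z_{k}}$ nef gives $K_{M_{k}}^{T}=\pi^{\ast}(K_{Z_{k}})$ nef with no foliation $K_{M}^{T}$-negative curves. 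The cone-theorem bound $0<-K_{M}^{T}\cdot V_{i}\le 2\dim Z$ is inherited from the corresponding bound downstairs on $Z$.

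The main obstacle will be verifying that the orbibundle lifting is consistent with the klt and Sasakian structures at every stage, that is, that each contraction on $Z$ neither creates singularities outside the admissible $\frac{1}{r}(1,a)$ family nor destroys the polarization, so that the reconstructed $M_{i}$ genuinely carries a quasi-regular Sasakian structure. This is precisely where one must invoke the Hirzebruch--Jung analysis of Theorem \ref{T32} to control the singularity type through each step and the converse part of Proposition \ref{P21} to restore the Sasakian data on $M_{i}$.
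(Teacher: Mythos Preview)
Your proposal is correct in outline and reaches the same conclusion, but it takes a genuinely different route from the paper. You run the classical surface MMP \emph{downstairs} on the leaf space $Z$ and then lift each contraction $\phi_i:Z_{i-1}\to Z_{i}$ to $M$ via the converse direction of the first structure theorem. The paper instead works \emph{upstairs}: given a transverse birational morphism $\psi:M\to N$, it passes to the minimal resolutions $\varphi:\overline{M}\to M$ and $\overline{\varphi}:\overline{N}\to N$, so that $\overline{M}$ and $\overline{N}$ are \emph{regular} Sasakian, and then invokes the Sasaki Castelnuovo theorem (Theorem~\ref{T33}) to locate a foliation $(-1)$-curve $\overline{\Gamma}\subset\overline{M}$. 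The argument then splits on whether $\overline{\Gamma}$ meets the exceptional locus $\{V_i\}$ of $\varphi$: if it is disjoint, its transform is a floating $(-1)$-curve on $M$ and one contracts as in case~(i); if it meets some $V_j$, the paper tracks the change in the foliation singularity explicitly by the Hirzebruch--Jung update $[b_1,\dots,(b_j-1),\dots,b_l]$, which is exactly what guarantees that $M_1$ again has only cyclic quotient foliation singularities.

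What each approach buys: your descent-to-$Z$ method is cleaner conceptually and lets you import the standard surface MMP wholesale, but you must then argue separately that the contracted $Z_i$ retains only cyclic (not arbitrary quotient) singularities and that the orbibundle reconstruction of $M_i$ is globally compatible with the map $\psi_i$ --- this is the obstacle you flag, and Theorem~\ref{T32} alone does not quite close it, since that theorem describes resolutions rather than contractions. The paper's resolution-to-$\overline{M}$ method is heavier but yields this singularity control for free via the continued-fraction bookkeeping, and, more importantly, it sets up precisely the commutative square and the Hirzebruch--Jung tracking that are reused verbatim in the proof of Theorem~\ref{T62} for the Sasaki--Ricci flow. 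So the paper's choice is dictated by its downstream analytic needs rather than by economy here.
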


\begin{proof}
Let $\mathbf{S}_{p}^{1}$ be the singular fibre of type $\frac{1}{r}(1,a),p\in
M.$ Let $\psi:M\rightarrow N$ be a proper transverse birational morphism. By
applying Theorem \ref{T32} and \cite{cu}, we consider the minimal resolution
of foliation singularities of $M$ to be $\varphi:\overline{M}\rightarrow M$
and $\overline{\varphi}:\overline{N}\rightarrow N $ such that the following
diagram is commutative :%
\[%
\begin{array}
[c]{lcl}%
(\overline{M},\overline{\Gamma}) & \overset{\overline{\psi}}{\longrightarrow}
& \overline{N}\\
\downarrow\varphi & \circlearrowright & \downarrow\overline{\varphi}\\
(M,\Gamma,\mathbf{S}_{p}^{1}) & \overset{\psi}{\longrightarrow} &
(N,\mathbf{S}_{q}^{1}).
\end{array}
\]
Since $\overline{M}$ and $\overline{N}$ are regular Sasakian manifolds, by
Theorem \ref{T33}, the transverse birational morphism $\overline{\psi}$ can be
factored into a sequence of ordinary blow ups. Suppose $\overline{M}$ and
$\overline{N}$ are not isomorphic, there must exist a foliation $(-1)$-curve
$\overline{\Gamma}$ such that $\overline{\varphi}\circ\overline{\psi
}(\overline{\Gamma})=\mathbf{S}_{q}^{1},$ for $q\in N.$ Let $\Theta$ be a set
of foliation curves such that $\Theta=$ $\overline{\psi}\circ\overline
{\varphi}^{-1}(\mathbf{S}_{q}^{1})$ and $D^{T}=$ $\{V_{i}\}$ be the
exceptional locus of $\varphi$ at the singular fibre $\mathbf{S}_{p}^{1}.$
Then the Hirzebruch Jung continued fraction
\[
\frac{r}{a}=[b_{1},\cdot\cdot\cdot,b_{l}]
\]
say that
\[
V_{i}^{2}=-b_{i}.
\]

(i) If $V_{i}\cdot\overline{\Gamma}=\emptyset$, for all $i,$ then
$\overline{\Gamma}$ is not exceptional for $\varphi$ and its birational
transform $\Gamma$ is a floating foliation $(-1)$-curve on $M$. \ Then by
Theorem \ref{T33}, we obtain a transverse birational morphism $\psi^{\prime
}:M\rightarrow M_{1}$ corresponding to the blow up of a smooth point with
exceptional locus given by $\Gamma.$ Thus $\psi$ factors through $\psi
^{\prime}$ and we obtain a transverse birational morphism $M_{1}\rightarrow
N.$

(ii) If $V_{j}\cdot\overline{\Gamma}\neq\emptyset$ for some $j$ ( at most one
such a foliation exceptional curve) , then $\ V_{j}^{2}=-b_{j}$ and $\Gamma$
is passing the singular fibre $\mathbf{S}_{p}^{1}$ of the foliation
singularities of type $\frac{1}{r}(1,a)$ and $K_{M}^{T}\cdot\Gamma<0.$
Moreover, $\ \overline{\Gamma}$ is also a foliation $(-1)$-curve on
$\overline{M}$. Again by Theorem \ref{T33}, we obtain a transverse birational
morphism $\psi^{\prime\prime}:M\rightarrow M_{1}$ corresponding to the
transverse extremal ray contraction of $\Gamma$ into a singular fibre of
foliation singularities of at worst type $\frac{1}{r_{M_{1}}}(1,a_{M_{1}})$
such that
\begin{equation}
\frac{r_{M_{1}}}{a_{M_{1}}}=[b_{1},\cdot\cdot\cdot,(b_{i}-1),\cdot\cdot
\cdot,b_{l}].\label{2022-6}%
\end{equation}
Again $\psi$ factors through $\psi^{\prime}$ and we obtain a transverse
birational morphism $M_{1}\rightarrow N.$ Similar method can be applied to a
finite number of cyclic quotient foliation singularities.

On the other hand, since for divisoral extremal contraction, we have
$\rho(M_{i})=\rho(M_{i-1})-1,$ then after a finite number of foliation
extremal ray contractions, we end up
\[
M=M_{0}\overset{\psi_{1}}{\rightarrow}M_{1}\overset{\psi_{2}}{\rightarrow
}M_{2}\overset{\psi_{3}}{\rightarrow}\cdot\cdot\cdot\overset{\psi
_{k}}{\rightarrow}M_{k}%
\]

with $M_{k}$ is nef or

by \cite[Lemma 2.1.27]{l} and $L^{T\text{ }}$is semi-ample over $M_{k}$ (see
Proposition \ref{P31}) to see that there exist a morphism
\[
\lambda:Z_{k}\rightarrow\mathcal{P(}H^{0}(Z_{k},mL))
\]
such that $f=\lambda\circ\pi$ is a $S^{1}$-equivariant transverse morphism
\[
f:M_{k}\rightarrow\mathcal{P(}H_{B}^{0}(M_{k},mL^{T})),
\]
with $L^{\text{ }}$is semi-ample over $Z_{k}$ for $\pi^{\ast}L=L^{T}$.

Then $Y:=h(Z_{k})$ is either a point or a nonsingular curve.

(a). If $Y$ is a point, then $K^{orb}(Z_{k})<0$ and $K_{M_{k}}^{T}<0$. Hence
$M_{k}$ is transverse Fano.

(b). If $Y=\Sigma_{h}$ is a non-singular curve, then a generic fiber $C$ of
$\lambda$ is a smooth curve and then foliation curve $V$ over $C$ satisfying
$L_{k}\cdot C=0$ and $K_{Z_{k}}^{orb}\cdot C<0$ as well as $L_{k}^{T}\cdot
V=0$ and $K_{M_{k}}^{T}\cdot V<0$. It follows from the adjunction formula that
$C$ is a rational curve and thus $V$ is covered by the three sphere
$\mathbf{S}^{3}$. Hence $Z_{k}$ is a rule surface over \ Riemann surfaces
$\Sigma_{h}$\ of genus $h$ and thus $M_{k}$ is an $S^{1}$-orbibundle over
$Z_{k}.$
\end{proof}

\begin{remark}
Let $(Z,\emptyset)$ be a log del Pezzo orbifold surface with no branch
divisors (well-formed). It follows from \cite[Theorem 4.7.14]{bg} that the
total space of an $S^{1}$-orbibundle over $Z$ is diffeomorphic to some
$k(\mathbf{S}^{2}\mathbf{\times S}^{3})$ for some $k=0,1,...$ ($k=0$ means
$\mathbf{S}^{5}$).
\end{remark}

\begin{corollary}
Let $(M,\eta,\xi,\Phi,g)$ be a compact \textbf{regular} Sasakian $5$-manifold.
Then there exists a transverse morphism $\psi:M\rightarrow M^{\prime},$ which
is a composition of blowing down foliation $(-1)$-curves and a morphism
$\varphi:M^{\prime}\rightarrow N$ such that one of the following holds :

\begin{enumerate}
\item Nef : $M^{\prime}\simeq N$ is a compact regular Sasakian $5$-manifold
with $K_{N}^{T}$ nef;

\item Mori fibre space :

\begin{enumerate}
\item $N$ is a Riemann surface $\Sigma_{h}$ and $M^{\prime}$ is $\Sigma
_{h}\mathbf{\times S}^{3}$ or $X_{\infty,h}=\Sigma_{h}\widetilde{\times
}\mathbf{S}^{3}$ or

\item $N$ is a point and $M^{\prime}$ is isomorphic to $\mathbf{S}^{5}$.
\end{enumerate}
\end{enumerate}
\end{corollary}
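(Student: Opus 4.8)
The plan is to specialize the foliation minimal model program already established in Proposition \ref{P41} to the regular case, where the leave space is \emph{smooth}, and then run the classical Enriques--Kodaira/Mori classification on the base surface before reading off the topology. First I would invoke the first structure theorem (Proposition \ref{P21}): since $M$ is regular, $\pi:M\rightarrow Z$ is an honest principal $S^{1}$-bundle over a smooth projective surface $Z$, with $K_{M}^{T}=\pi^{\ast}(K_{Z})$. Because $Z$ is smooth, there are no $\frac{1}{r}(1,a)$ singularities with $r>1$ to resolve, so every exceptional curve produced by Castelnuovo's criterion on $Z$ is an honest $(-1)$-curve lying in the regular locus; by Theorem \ref{D31} its lift to $M$ is therefore a \emph{floating} foliation $(-1)$-curve in the sense of Definition \ref{D11}, with $V\cdot V=-1$.

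Next I would run the program of Proposition \ref{P41}, noting that in the regular case every foliation extremal ray contraction reduces to a divisorial contraction of floating foliation $(-1)$-curves via the Sasaki Castelnuovo contraction theorem (Theorem \ref{T33}); each such contraction drops the Picard number by one, so after finitely many steps I obtain the transverse birational morphism $\psi:M\rightarrow M^{\prime}$ realized as a composition of blow-downs, where $M^{\prime}$ is the regular Sasakian $5$-manifold over the minimal model $Z^{\prime}$ of $Z$. At this point $Z^{\prime}$ is a smooth minimal surface, and the classical trichotomy applies verbatim through $K_{M}^{T}=\pi^{\ast}K_{Z}$: either $K_{Z^{\prime}}$ is nef, so $K_{M^{\prime}}^{T}$ is nef and $M^{\prime}\simeq N$, giving case $(1)$; or $K_{Z^{\prime}}$ is not nef and $Z^{\prime}$ carries a Mori fibre space structure $\varphi_{Z}:Z^{\prime}\rightarrow Y$, which I lift through the $S^{1}$-bundle to the transverse fibration $\varphi:M^{\prime}\rightarrow N$. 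Here $Y$ is either a point, forcing $Z^{\prime}\cong\mathbf{CP}^{2}$ as the only smooth minimal del Pezzo of Picard number one, or a curve, forcing $Z^{\prime}$ to be a ruled surface over a Riemann surface $\Sigma_{h}$.

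The remaining and genuinely delicate step is the identification of the diffeomorphism types of the Mori fibre outputs, and this is the part I expect to be the main obstacle. When $Y$ is a point, I would argue that the regular Sasakian hypothesis forces the Euler class of $\pi$ over $\mathbf{CP}^{2}$ to be primitive, so $M^{\prime}$ is the circle bundle of unit Euler class over $\mathbf{CP}^{2}$, namely $\mathbf{S}^{5}$ via the Hopf fibration. When $Y=\Sigma_{h}$, restricting the $S^{1}$-bundle $M^{\prime}\rightarrow Z^{\prime}$ to the $\mathbf{CP}^{1}$-fibres of $Z^{\prime}$ yields $\mathbf{S}^{3}$, so $M^{\prime}$ is an $\mathbf{S}^{3}$-bundle over $\Sigma_{h}$; since $\pi_{1}(\mathrm{Diff}(\mathbf{S}^{3}))\simeq\pi_{1}(SO(4))\simeq\mathbf{Z}_{2}$, there are exactly two such bundles, namely the trivial $\Sigma_{h}\mathbf{\times S}^{3}$ and the twisted $X_{\infty,h}=\Sigma_{h}\widetilde{\times}\mathbf{S}^{3}$, which establishes case $(2)$. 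The hard part is pinning down the Euler class under the regularity assumption and ruling out other total spaces; this requires combining the structure of $S^{1}$- and $S^{3}$-bundles over the relevant surfaces with the Smale--Barden classification of simply connected $5$-manifolds (\cite{s}, \cite{b}), exactly as foreshadowed by the examples computing $\mathbf{S}^{5}$, $\mathbf{S}^{2}\mathbf{\times S}^{3}$ and its twist earlier in section $3$.
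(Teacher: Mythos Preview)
Your proposal is correct and matches the paper's approach: the Corollary is stated in the paper without a separate proof, being an immediate specialization of Proposition~\ref{P41} to the regular case where $Z$ is smooth, and your derivation---run the foliation MMP, note that all extremal contractions are Castelnuovo blow-downs of floating foliation $(-1)$-curves, then read off the classical surface trichotomy through $K_{M}^{T}=\pi^{\ast}K_{Z}$---is exactly that specialization.

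You in fact go further than the paper on the topological identification of the Mori fibre outputs: the paper simply asserts the outcomes $\mathbf{S}^{5}$, $\Sigma_{h}\times\mathbf{S}^{3}$, $\Sigma_{h}\widetilde{\times}\mathbf{S}^{3}$ (here and again in Corollary~\ref{C11}) without justifying the Euler-class or $\mathbf{S}^{3}$-bundle classification, whereas you correctly flag this as the delicate step and supply the $\pi_{1}(\mathrm{Diff}(\mathbf{S}^{3}))\simeq\mathbf{Z}_{2}$ argument for the ruled case. Your caution about the primitivity of the Euler class over $\mathbf{CP}^{2}$ is warranted; as stated, the Corollary appears to implicitly assume this (otherwise circle bundles over $\mathbf{CP}^{2}$ with non-primitive Euler class would give finite quotients of $\mathbf{S}^{5}$ rather than $\mathbf{S}^{5}$ itself), and the paper does not address the point.
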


As a consequence, we have

\begin{corollary}
Let $(M,\eta,\xi,\Phi,g)$ be a compact simply connected \textbf{regular}
Sasakian $5$-manifold.

\begin{enumerate}
\item If the Sasakian structure is positive, then $M=\mathbf{S}^{5}$ $(k=0)$;
or $k(\mathbf{S}^{2}\mathbf{\times S}^{3})$ or $M=X_{\infty,0}%
\#(k-1)(\mathbf{S}^{2}\mathbf{\times S}^{3}),1\leq k\leq8.$

\item If the Sasakian structure is indefinite ($K_{M}^{T}$ is nef), then
$M=k(\mathbf{S}^{2}\mathbf{\times S}^{3})$; or $M=X_{\infty,0}%
\#(k-1)(\mathbf{S}^{2}\mathbf{\times S}^{3}),1\leq k$.
\end{enumerate}
\end{corollary}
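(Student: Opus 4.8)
The plan is to reduce the classification to the structure of the leaf space $Z$ furnished by the foliation minimal model program, and then to read off the diffeomorphism type of $M$ from the Smale--Barden classification of simply connected $5$-manifolds. First I would record the topological set-up: since $M$ is regular, Proposition \ref{P21}(iv) gives that $Z=M/S^{1}$ is a smooth projective surface and $\pi:M\rightarrow Z$ is a principal circle bundle with Euler class $e=[\tfrac{1}{2}d\eta]_{B}\in H^{2}(Z;\mathbf{Z})$, an integral ample (polarizing) class. The homotopy exact sequence of the fibration $\pi_{2}(Z)\xrightarrow{\,\cup e\,}\pi_{1}(S^{1})\rightarrow\pi_{1}(M)\rightarrow\pi_{1}(Z)\rightarrow 0$ then shows that $M$ is simply connected precisely when $Z$ is simply connected and $e$ is primitive; by the Gysin sequence (using $H^{1}(Z)=0$) this forces $H_{2}(M;\mathbf{Z})\cong H_{2}(Z;\mathbf{Z})/\langle\mathrm{PD}(e)\rangle$ to be free abelian of rank $k:=b_{2}(Z)-1$, with no torsion.

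Next I would invoke the Smale--Barden classification (\cite{s}, \cite{b}): a compact simply connected $5$-manifold with torsion-free $H_{2}$ of rank $k$ is diffeomorphic to $k(\mathbf{S}^{2}\times\mathbf{S}^{3})$ if it is spin, and to $X_{\infty,0}\,\#\,(k-1)(\mathbf{S}^{2}\times\mathbf{S}^{3})$ if it is not, where $X_{\infty,0}=\mathbf{S}^{2}\widetilde{\times}\mathbf{S}^{3}$ is the nontrivial $\mathbf{S}^{3}$-bundle over $\mathbf{S}^{2}$. The spin type is detected by $w_{2}(M)=\pi^{\ast}w_{2}(Z)$, since $TM\cong\pi^{\ast}TZ\oplus\mathbf{R}$, and the kernel of $\pi^{\ast}$ on mod-$2$ cohomology is spanned by $e\bmod 2$; hence $M$ is spin if and only if $c_{1}(Z)=-K_{Z}$ is congruent to $0$ or to $e$ modulo $2$. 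This pins down the diffeomorphism type once $k$ and the parity are known.

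For part (1) I would use that a positive Sasakian structure is equivalent to $c_{1}^{B}(M)=\pi^{\ast}c_{1}^{orb}(Z)>0$, i.e.\ $Z$ is del Pezzo; being regular and simply connected, $Z$ is one of $\mathbf{CP}^{2}$, $\mathbf{CP}^{1}\times\mathbf{CP}^{1}$, or a blow-up $\mathrm{Bl}_{m}\mathbf{CP}^{2}$ with $1\le m\le 8$. These are exactly the cases in which the foliation MMP of Proposition \ref{P41} ends in a transverse Fano, and they satisfy $b_{2}(Z)\in\{1,\dots,9\}$, so $k\in\{0,\dots,8\}$. When $k=0$ one has $Z=\mathbf{CP}^{2}$ with primitive $e$, whence $M=\mathbf{S}^{5}$ is the Hopf fibration; for $1\le k\le 8$ the previous paragraph yields $k(\mathbf{S}^{2}\times\mathbf{S}^{3})$ or $X_{\infty,0}\#(k-1)(\mathbf{S}^{2}\times\mathbf{S}^{3})$ according to spin type, matching the stated list and consistent with \cite[Theorem 4.7.14]{bg} quoted in the Remark above.

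For part (2), $K_{M}^{T}$ nef is equivalent to $K_{Z}$ nef, so the foliation MMP terminates with $Z$ a smooth simply connected minimal surface of nonnegative Kodaira dimension (K3, minimal properly elliptic, or minimal of general type). Such $Z$ necessarily have $b_{2}(Z)\ge 2$, since the only simply connected surface with $b_{2}=1$ is homeomorphic to $\mathbf{CP}^{2}$, whose canonical class is not nef. Hence $k=b_{2}(Z)-1\ge 1$, the value $k=0$ (that is $M=\mathbf{S}^{5}$) is excluded, and Smale--Barden again gives $k(\mathbf{S}^{2}\times\mathbf{S}^{3})$ or $X_{\infty,0}\#(k-1)(\mathbf{S}^{2}\times\mathbf{S}^{3})$ for every $k\ge 1$. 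The main obstacle I anticipate is the bookkeeping of the spin/non-spin dichotomy: one must verify that the parity of $c_{1}(Z)$ relative to the primitive polarization $e$ genuinely realizes both the trivial and the twisted $\mathbf{S}^{3}$-bundles, and that this is compatible with the blow-downs produced by the transverse Castelnuovo contractions of Theorem \ref{T33}, each of which drops $b_{2}(M)$ by one while controlling $w_{2}$.
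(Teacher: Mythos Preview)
The paper states this corollary without proof, leaving it as an immediate consequence of the preceding corollary on the foliation MMP for regular Sasakian $5$-manifolds, the Example on foliation blow-ups producing connected sums with $\mathbf{S}^{2}\times\mathbf{S}^{3}$ or $\mathbf{S}^{2}\widetilde{\times}\mathbf{S}^{3}$, and the Remark citing \cite[Theorem 4.7.14]{bg}. Your approach---classifying the smooth leaf space $Z$ directly, computing $H_{2}(M;\mathbf{Z})$ from the Gysin sequence of the primitive circle bundle, and then invoking Smale--Barden---is the natural way to fill in this argument and is consistent with the paper's framework; indeed your part (1) is essentially the content of the cited Boyer--Galicki result specialized to smooth del Pezzo bases.

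One point in part (2) needs sharpening. You argue that $b_{2}(Z)\geq 2$ because a simply connected surface with $b_{2}=1$ is \emph{homeomorphic} to $\mathbf{CP}^{2}$, ``whose canonical class is not nef.'' But a homeomorphism to $\mathbf{CP}^{2}$ says nothing about the complex structure or the nefness of $K_{Z}$. What you actually need is that a smooth simply connected projective surface with $b_{2}=1$ is \emph{biholomorphic} to $\mathbf{CP}^{2}$. This is true: from $b_{2}=1$ one gets $p_{g}=q=0$, and Noether's formula gives $K_{Z}^{2}=9=3c_{2}(Z)$; if $K_{Z}$ were nef it would be big, and equality in Bogomolov--Miyaoka--Yau forces $Z$ to be a ball quotient, contradicting $\pi_{1}(Z)=1$. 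Equivalently, fake projective planes are never simply connected. With this correction your argument for $k\geq 1$ in the indefinite case goes through, and the rest of your proof is correct.
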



\begin{thebibliography}{9999}                                                                                             %
\bibitem[B]{b}D. Barden, \textit{Simply connected five-manifolds}, Ann. of
Math. (2) 82 (1965), 365-385.

\bibitem[BCHM]{bchm}C. Birkar, P. Cascini, C. D. Hacon, and J. McKernan,
\textit{Existence of minimal models for varieties of log general type}, J.
Amer. Math. Soc. 23 no. 2(2010), 405-468.

\bibitem[BG]{bg}C. P. Boyer and K. Galicki, \textit{Sasaki geometry}. Oxford
Mathematical Monographs. Oxford University Press, Oxford (2008).

\bibitem[Bir1]{bir1}C. Birkar, \textit{Lecture on birational geometry}, arXiv:1210.2670.

\bibitem[Bir2]{bir2}C. Birkar, \textit{Singularities of linearsystems and
boundedness of Fano varieties}, Annals of Mathematics 193 (2021), 347-405.

\bibitem[Cao]{cao}H. Cao, \textit{Deformation of K\"{a}hler metrics to
K\"{a}hler--Einstein metrics on compact K\"{a}hler manifolds}, Invent. Math.
81(1985), 359--372.

\bibitem[CHLW]{chlw}S.-C. Chang, Y. Han, C. Lin and C.-T. Wu, Convergence of
the Sasaki-Ricci Flow on Sasakian $5$-Manifolds of General Type, preprint.

\bibitem[Co]{co}T. Collins, \textit{The transverse entropy functional and the
Sasaki-Ricci flow}, Trans. AMS., Volume 365, Number 3, March 2013, Pages 1277-1303.

\bibitem[CT]{ct}T. Collins and V. Tosatti, \textit{K\"{a}hler currents and
null loci,} Invent. math. (2015) 202, 1167-1198.

\bibitem[Cu]{cu}Alice Cuzzucoli, \textit{On the classification of orbifold del
Pezzo surfaces}, Thesis submitted to the University of Warwick for the degree
of Doctor of Philosophy, 2020.

\bibitem[DP]{dp}J. P. Demailly and M. Paun, \textit{Numerical characterization
of the K\"{a}hler cone of a compact K\"{a}hler manifold}, Annals of
Mathematics, 159 (2004), 1247-1274.

\bibitem[EKA]{eka}A. El Kacimi-Alaoui, Operateurs transversalement elliptiques
sur un feuilletage riemannien et applications, Compos. Math. 79 (1990) 57--106.

\bibitem[FOW]{fow}A. Futaki, H. Ono and G.Wang, \textit{Transverse K\"{a}hler
geometry of Sasaki manifolds and toric Sasaki--Einstein manifolds}, J.
Differential Geom. 83 (2009) 585--635.

\bibitem[Gei]{gei}H. Geiges, Normal Contact Structures on $3$-manifolds,
T\'{o}hoku Math. J. 49 (1997), 415-422.

\bibitem[GKN]{gkn}M. Godlinski, W. Kopczynski and P. Nurowski, \textit{Locally
Sasakian manifolds}, Classical Quantum Gravity 17 (2000) L105--L115.

\bibitem[GSW]{gsw}B. Guo, J. Song and B. Weinkovw\textit{, Geometric
convergence of the K\"{a}hler-Ricci flow on complex surfaces of general type},
IMRN 2016, No. 18, 5652-5669.

\bibitem[H]{h}R. S. Hamilton, \textit{Three-manifolds with positive Ricci
curvature}, J. Differential Geom. 17 (1982), no. 2, 255--306.

\bibitem[He]{he}W. He, \textit{The Sasaki-Ricci flow and compact Sasaki
manifolds of positive transverse holomorphic bisectional curvature}, J. Geom.
Anal. (2013) 23:1876-931.

\bibitem[HLM]{hlm}C.-Y. Hsiao, X. Li and G. Marinescu, \textit{Equivariant
Kodaira embedding for CR manifolds with circle action}, Michigan Math. J. 70
(2021), no. 1, 55--113.

\bibitem[K1]{k1}J. Kollar, \textit{Singular of pairs}, Algebraic geometry%
$\vert$
Santa Cruz 1995, Proc. Sympos. Pure Math., vol. 62, Amer. Math. Soc.,
Providence, RI, 1997, pp. 221-287.

\bibitem[K2]{k2}J. Kollar,\textit{\ Einstein metrics on connected sums of
}$S^{2}\times S^{3},$ J. Differential Geom. 75 (2007), no. 2, 259--272.

\bibitem[K3]{k3}Kollar, \textit{Einstein metrics on five-dimensional Seifert
bundles}, J. Geom. Anal. 15 (2005), no. 3, 445--476.

\bibitem[KM]{km}J. Kollar and S. Mori, \textit{Birational geometry of
algebraic varieties}. With the collaboration of C. H. Clemens and A. Corti.
Translated from the 1998 Japanese original. Cambridge Tracts in Mathematics,
134. Cambridge University Press, Cambridge, 1998. viii+254 pp.

\bibitem[KMM]{kmm}Y. Kawamata, K. Matsuda and K. Matsuki, \textit{Introduction
to the minimal model problem, }Algebraic geometry, Sendai, 1985, 283--360,
Adv. Stud. Pure Math., 10, North-Holland, Amsterdam, 1987.

\bibitem[Kol1]{kol1}S. Kolodziej, The complex Monge-Amp\`{e}re equation, Acta
Math. 180 (1998), no. 1, 69--117.

\bibitem[Kol2]{kol2}S. Kolodziej, The Monge-Amp\`{e}re equation on compact
Kahler manifolds, Indiana Univ. Math. J. 52 (2003), no. 3, 667--686.

\bibitem[Kol3]{kol3}S. Kolodziej, The complex Monge-Amp\`{e}re equation and
pluripotential theory. Mem. Amer. Math. Soc. 178 (2005), no. 840, x+64 pp.

\bibitem[L]{l}R. Lazarsfeld, Positivity in algebraic geometry. I. Classical
setting: line bundles and linear series. Ergebnisse der Mathematik und ihrer
Grenzgebiete. 3. Folge. A Series of Modern Surveys in Mathematics [Results in
Mathematics and Related Areas. 3rd Series. A Series of Modern Surveys in
Mathematics], 48. Springer-Verlag, Berlin, 2004. xviii+387 pp.

\bibitem[LZ]{lz}J. Liu and X. Zhang, The conical Kaehler-Ricci flow on Fano
manifolds, Advances in Mathematics, 307(2017), 1324--1371.

\bibitem[M]{m}Kenji Matsuki, \textit{An introduction to Mori program},
Springer-Verlag New York, lnc. 2002.

\bibitem[P1]{p1}G. Perelman, \textit{The entropy formula for the Ricci flow
and its geometric applications}, preprint, arXiv: math.DG/0211159.

\bibitem[P2]{p2}G. Perelman, \textit{Ricci flow with surgery on
three-manifolds}, preprint, arXiv: math.DG/0303109.

\bibitem[P3]{p3}G. Perelman, \textit{Finite extinction time for the solutions
to the Ricci flow on certain three-manifolds}, preprint, arXiv: math.DG/0307245.

\bibitem[PSS]{pss}D.-H. Phong, J. Sturm and N. Sesum, Multiplier ideal sheaves
and the Kaehler-Ricci flow, Comm. Anal. Geom. 15 (2007), no. 3, 613--632.

\bibitem[R]{r}M. Reid, \textit{Surface cyclic quotient singularities and
Hirzebruch-Jung resolutions.}

\bibitem[RT]{rt}J. Ross and R.P. Thomas, \textit{Weighted projective
embeddings, stability of orbifolds and constant scalar curvature K\"{a}hler
metrics,} JDG 88 (2011), no. 1, 109-160.

\bibitem[Ru]{ru}P. Rukimbira, \textit{Chern-Hamilton's conjecture and
K-contactness}, Houston J. Math. 21 (1995), no. 4, 709-718.

\bibitem[S]{s}S. Smale, \textit{On the structure of 5-manifolds,} Ann. of
Math. (2) 75 (1962), 38-46.

\bibitem[Shen]{shen}L. Shen, \textit{Maximal time existence of unnormalized
conical K\"{a}hler-Ricci flow}, J. reine anbew. Math. 760 (2020), 169-193.

\bibitem[Sp]{sp}James Sparks, Sasaki-Einstein Manifolds, Surveys Diff. Geom.
16 (2011) 265-324.

\bibitem[ST]{st}J. Song and G. Tian, \textit{The K\"{a}hler-Ricci flow through
singularities}. Invent. Math. 207 (2017), no. 2, 519--595.

\bibitem[SW1]{sw1}J. Song and B. Weinkove, \textit{Lecture notes on the
K\"{a}hler-Ricci flow}, introduction to the Kaehler-Ricci flow, Chapter 3 of
`Introduction to the Kaehler-Ricci flow', eds S. Boucksom, P. Eyssidieux, V.
Guedj, Lecture Notes Math. 2086, Springer 2013.

\bibitem[SW2]{sw2}J. Song and B. Weinkove, \textit{Contracting exceptional
divisors by the K\"{a}hler-Ricci flow}, Duke Math. J. 162 (2013), no. 2, 367--415.

\bibitem[SW3]{sw3}J. Song and B. Weinkove, \textit{Contracting exceptional
divisors by the K\"{a}hler-Ricci flow, II}, Proc. Lond. Math. Soc. 108 (2014),
no. 6, 1529--1561.

\bibitem[SWZ]{swz}K. Smoczyk, G. Wang and Y. Zhang, \textit{The Sasaki-Ricci
flow}, Internat. J. Math. 21 (2010), no. 7, 951--969.

\bibitem[T]{t}G. Tian, \textit{On Calabi's conjecture for complex surfaces
with positive first Chern class}, Invent. Math., 101, (1990), 101-172.

\bibitem[Ta]{ta}N. Tanaka, A Differential Geometric Study on Strongly
Pseudo-Convex Manifold, Kinokuniya, Tokyo, 1975.

\bibitem[To]{to}V. Tosatti, \textit{Kawa lecture notes on the K\"{a}hler-Ricci
flow}, Ann. Fac. Sci. Toulouse Math. 27 (2018), no.2, 285-376..

\bibitem[Tsu]{tsu}H. Tsuji, \textit{Existence and degeneration of
K\"{a}hler-Einstein metrics on minimal algebraic varieties of general type},
Math. Ann. 281 (1988), no. 1, 123--133.

\bibitem[W]{w}Yi-Sheng Wang, Resolution of singular fibers in an $S^{1}%
$-fibered $5$-manifold, preprint.

\bibitem[Y]{y}S.-T. Yau, \textit{On the Ricci curvature of a compact
K\"{a}hler manifold and the complex Monge-Ampere equation, I}, Comm. Pure.
Appl. Math. 31 (1978), 339-411.
\end{thebibliography}
\end{document}